\newcommand{\xnleftrightarrow}[2][]{%
  \mathrel{\ooalign{%
    $\xleftrightarrow[#1]{#2}$\cr
\hidewidth$\mkern2mu\not\phantom{\xleftrightarrow[#1]{#2}}$\hidewidth\cr
  }}}
\DeclareMathOperator{\addr}{addr}
\theoremstyle{plain}
\newtheorem{theorem}{Theorem}[section]
\newtheorem{definition}[theorem]{Definition}
\newtheorem{lemma}[theorem]{Lemma}
\newtheorem{proposition}[theorem]{Proposition}
\newtheorem{corollary}[theorem]{Corollary}
\newtheorem{example}[theorem]{Example}
\newtheorem{assumption}[theorem]{Assumption}
\newtheorem{remark}[theorem]{Remark}
\newtheorem{construction}[theorem]{Construction}
\newtheorem{conjecture}[theorem]{Conjecture}
\numberwithin{equation}{section}
\numberwithin{figure}{section}
\newcommand\ol{\overline}
\newcommand\RR{{\mathbb R}}
\newcommand\PP{{\mathbb P}}
\newcommand\HH{{\mathbb H}}
\newcommand\pcs{{p_c^{\mathrm{site}}}}
\newcommand\si{\sigma}
\newcommand\eps{\epsilon}
\renewcommand\ell{l}
\title{Planar Site Percolation, End Structure, and the Benjamini-Schramm Conjecture}
\author{Zhongyang Li}
\address{Department of Mathematics,
University of Connecticut,
Storrs, Connecticut 06269-3009, USA}
\email{zhongyang.li@uconn.edu}
\urladdr{\url{https://mathzhongyangli.wordpress.com}}
\begin{document}
\maketitle

\begin{abstract}
Let $G$ be an infinite, connected, locally finite planar graph and consider i.i.d.\ Bernoulli$(p)$ site percolation.
Write $p_c^{\mathrm{site}}(G)$ and $p_u^{\mathrm{site}}(G)$ for the critical and uniqueness thresholds.
Using a well--separated Freudenthal embedding $G\hookrightarrow\mathbb S^2$, we introduce a cycle--separation equivalence on ends and associated
``directional'' thresholds $p^{\mathrm{site}}_{c,F}(G)$.

When the set of end--equivalence classes is countable, we show that
$p_c^{\mathrm{site}}(G)=\inf_F p^{\mathrm{site}}_{c,F}(G)$ and that for every
$p\in\bigl(\tfrac12,\,1-p_c^{\mathrm{site}}(G)\bigr)$ there are almost surely infinitely many infinite open clusters.
Combined with the $0/\infty$ theorem of Glazman--Harel--Zelesko for $p\le \tfrac12$, this yields non--uniqueness throughout the full coexistence interval
$\bigl(p_c^{\mathrm{site}}(G),\,1-p_c^{\mathrm{site}}(G)\bigr)$, and hence $p_u^{\mathrm{site}}(G)\ge 1-p_c^{\mathrm{site}}(G)$ in this setting.
This resolves the extension problem posed by Glazman--Harel--Zelesko for the upper half of the coexistence regime under a natural countability hypothesis.

In contrast, for graphs with uncountably many end--equivalence classes we give criteria guaranteeing infinitely many infinite clusters above criticality,
and we construct an explicit locally finite planar graph of minimum degree at least $7$ for which
$p_u^{\mathrm{site}}(G)<1-p_c^{\mathrm{site}}(G)$.
Consequently, the Benjamini--Schramm conjecture (Conjecture 7 in \cite{bs96}) that planarity together with minimal vertex degree at least 7 forces infinitely many infinite clusters for all
$p\in(p_c,1-p_c)$ does not hold in full generality.

Our proofs combine a cutset characterization of $p_c^{\mathrm{site}}$ with a planar alternating--arm exploration organized by an end--adapted boundary decomposition.
\end{abstract}

\section{Introduction}

Percolation theory, introduced by Broadbent and Hammersley in the 1950s (\cite{BH57}),
is one of the simplest and most fundamental probabilistic models exhibiting phase transitions.
It captures the emergence of large-scale connectivity in random media,
and serves as a cornerstone of modern probability and statistical physics.
We refer to the surveys of Duminil-Copin~\cite{DuminilCopin2017Sixty}
and Beffara--Duminil-Copin~\cite{BeffaraDuminilCopin2013},
as well as the classical monographs by Grimmett~\cite{Grimmett1999}
and Bollob{\'a}s--Riordan~\cite{BollobasRiordan2006},
for an overview of the field and its connections to critical phenomena and universality.

\medskip

\begin{definition}[Critical and uniqueness thresholds]\label{def:pc_pu}
Let \(G = (V,E)\) be an infinite, connected graph, and consider i.i.d.\ Bernoulli site percolation on \(G\) with parameter \(p \in [0,1]\).
Each vertex \(v \in V\) is declared \emph{open} with probability \(p\) and \emph{closed} otherwise, independently of all other vertices.
Let \(\mathbb{P}_p\) denote the corresponding product measure.

\smallskip
\noindent
The \emph{critical threshold} (or \emph{critical probability}) for site percolation on \(G\) is defined by
\[
  p_c^{\mathrm{site}}(G)
  := \inf\bigl\{\, p \in [0,1] :
    \mathbb{P}_p(\text{there exists an infinite open cluster}) > 0
  \bigr\}.
\]
Equivalently, for every \(p < p_c^{\mathrm{site}}(G)\), all open clusters are \(\mathbb{P}_p\)-a.s.\ finite, whereas for every \(p > p_c^{\mathrm{site}}(G)\), there exists at least one infinite open cluster a.s., by the Kolmogorov 0--1 law for tail events.

\smallskip
\noindent
The \emph{uniqueness threshold} is defined by
\begin{equation}\label{dpu}
  p_u^{\mathrm{site}}(G)
  := \inf\bigl\{\, p \in [0,1] :
     \mathbb{P}_p(\text{there is a unique infinite open cluster}) > 0
  \bigr\}.
\end{equation}
\end{definition}

\begin{assumption}\label{ap12}
Throughout the paper, whenever we study i.i.d.~Bernoulli site percolation on a graph $G$, we assume \emph{without loss of generality} that $G$ is \textbf{simple}: it contains no self-loops (edges joining a vertex to itself), and any pair of distinct vertices are connected by at most one edge.
\end{assumption}

For planar transitive graphs with a self-dual (or matching) structure, such as the triangular lattice for site percolation, one has
\(p_c^{\mathrm{site}}=1/2\) (see, e.g., Kesten~\cite{Kesten1980,Kesten1982}). 
At criticality ($p=\frac{1}{2}$) there is almost surely no infinite cluster, by Harris's theorem~\cite{HL60}. 
Above \(p_c^{\mathrm{site}}\), the infinite cluster is a.s.\ unique for quasi-transitive amenable graphs by the Burton--Keane theorem~\cite{BurtonKeane1989}. 
Consequently, for quasi-transitive amenable graphs,
\[
  p_c^{\mathrm{site}} = p_u^{\mathrm{site}}.
\]

\medskip
In contrast, for non-transitive or irregular planar graphs,
duality and uniqueness arguments are generally unavailable.
The possibility of multiple infinite clusters in such non-symmetric settings
is subtle and largely open.
Benjamini and Schramm~\cite{bs96} initiated the systematic study of
percolation on general planar graphs and formulated the following conjectures.

\begin{conjecture}\label{c7bs}
(\emph{Conjecture~7} in \cite{bs96}).
Let \(G\) be a planar graph with minimum vertex degree at least \(7\).
Then \(p_c^{\mathrm{site}}(G) < \tfrac{1}{2}\), and for every
\(p \in \bigl(p_c^{\mathrm{site}}(G),\, 1 - p_c^{\mathrm{site}}(G)\bigr)\),
there are almost surely infinitely many infinite open clusters.
\end{conjecture}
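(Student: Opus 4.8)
The plan is to establish the conjecture's two assertions in turn: the strict bound $\pcs(G)<\tfrac12$, and then, separately, that on the whole coexistence interval $\bigl(\pcs(G),\,1-\pcs(G)\bigr)$ the number of infinite open clusters is almost surely infinite. The second assertion is the subtle one; the first I expect to be essentially known.

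\emph{The threshold bound.} Fix a well--separated Freudenthal embedding $G\hookrightarrow\mathbb S^2$. Since $G$ is simple, every face has at least three sides, so the combinatorial Gauss--Bonnet curvature at a vertex $v$ satisfies
\[
\kappa(v)\;=\;1-\tfrac{\deg v}{2}+\sum_{\text{corners at }v}\tfrac{1}{\deg f}\;\le\;1-\tfrac{\deg v}{2}+\tfrac{\deg v}{3}\;=\;1-\tfrac{\deg v}{6}\;\le\;-\tfrac16 .
\]
Uniformly negative combinatorial curvature forces exponential volume growth and a linear (anchored) isoperimetric inequality with an explicit constant; feeding the resulting cutset estimate into the cutset characterization of $\pcs$ --- so that the generating function counting open ``interfaces'' separating a fixed vertex from infinity is already subcritical at $p=\tfrac12$ --- yields $\pcs(G)<\tfrac12$. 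This is in essence Benjamini--Schramm's isoperimetric bound; the one quantitative point to check is that the curvature-to-isoperimetry constant (which only improves as the degrees, hence the numbers of incident faces, grow past $7$) is strong enough to push the threshold strictly below $\tfrac12$ rather than merely below $1$.

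\emph{Coexistence.} For $p>\pcs(G)$ there is almost surely at least one infinite open cluster, so the conjecture amounts to ruling out a finite positive number of them. By the $0/\infty$ theorem of Glazman--Harel--Zelesko this is already known for every $p\le\tfrac12$, so I may assume $p\in\bigl(\tfrac12,\,1-\pcs(G)\bigr)$; then $q:=1-p\in\bigl(\pcs(G),\tfrac12\bigr)$, and the closed vertices form a Bernoulli$(q)$ site configuration with $q>\pcs(G)$, so almost surely there is also an infinite \emph{closed} cluster. The plan for this range is a planar coexistence argument. Run a planar alternating-arm exploration in large annuli $B_{2n}\setminus B_n$, organized by a decomposition of $\partial B_n$ into boundedly many arcs adapted to the ends of $G$, and show that if only finitely many infinite open clusters existed then, at a positive density of scales $n$, one of them would be forced to realize a separating structure in the annulus --- an open circuit, or a system of disjoint open crossings --- that topologically confines the infinite closed cluster. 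The square-root trick applied to the arcs then produces, with probability bounded away from $0$ and uniformly in $n$, disjoint open crossings landing in distinct regions of the complement of the closed cluster, hence in distinct infinite open clusters; running this over disjoint annuli manufactures more infinite open clusters than the assumed bound, a contradiction. Thus the count is infinite throughout $\bigl(\pcs(G),\,1-\pcs(G)\bigr)$.

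\emph{The main obstacle} is precisely this topological step. On a transitive planar graph, the separating-circuit mechanism together with Zhang's symmetrization closes the argument routinely; but here $G$ is neither transitive nor of bounded degree, the four end-adapted arcs of $\partial B_n$ need not behave comparably (so the square-root trick does not by itself assemble them into a separating circuit), and --- most fundamentally --- a unique infinite open cluster can in principle escape to infinity along the ends of $G$ without ever confining the infinite closed cluster. Making the implication ``open percolates and closed percolates $\Rightarrow$ infinitely many infinite open clusters'' rigorous therefore seems to require genuinely understanding the end structure of $G$: I would introduce a cycle--separation equivalence on the ends (using the Freudenthal embedding), define associated directional thresholds $p^{\mathrm{site}}_{c,F}(G)$, prove $\pcs(G)=\inf_F p^{\mathrm{site}}_{c,F}(G)$, and carry out the alternating-arm exploration one equivalence class at a time so that each class plays the role of the transitive case. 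The scheme then closes cleanly when the class structure is tame --- for instance when there are only countably many classes --- and determining exactly which regularity of the ends is indispensable is the heart of the matter.
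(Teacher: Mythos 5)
There is a fundamental problem you should be aware of before any of the technical steps: the statement you are trying to prove is a \emph{conjecture} quoted from Benjamini--Schramm, and the paper does not prove it --- it \emph{refutes} its non-uniqueness conclusion in full generality. Theorem~\ref{thm:main}(iii)(b), proved in Section~\ref{s8} via Construction~\ref{c75} and Theorem~\ref{t76}, exhibits an infinite, connected, locally finite planar graph with minimum degree at least $7$ (and uncountably many end-equivalence classes) for which there exists $p\in\bigl(\tfrac12,\,1-p_c^{\mathrm{site}}(G)\bigr)$ with $\mathbb{P}_p$-a.s.\ only finitely many infinite open clusters, hence (by finite energy) a unique one with positive probability; so $p_u^{\mathrm{site}}(G)<1-p_c^{\mathrm{site}}(G)$ and the conjectured coexistence on all of $\bigl(p_c^{\mathrm{site}},1-p_c^{\mathrm{site}}\bigr)$ is false. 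Consequently no proof along your lines (or any other) can close in the generality you aim for.

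The place where your outline breaks is exactly the step you flag as ``the main obstacle'': the implication that if open and closed vertices both percolate, then some infinite open cluster must realize a separating structure confining the infinite closed cluster, from which the square-root trick manufactures extra clusters. In the counterexample this implication fails structurally: one chooses $p$ so that $q=1-p$ satisfies $p_c^{\mathrm{site}}(G)<q<\inf_{\mathbf b}p_c^{\mathrm{site}}(G_{\mathbf b})$ (Lemma~\ref{le85}), so closed percolation occurs globally but dies inside every end-corridor $\widehat G_{\mathbf b}$; Lemmas~\ref{lem:ed}, \ref{lem713} and \ref{le711} then show the closed clusters can never produce the two-arm/separating configurations your annulus exploration needs, and the number of infinite open clusters stays a.s.\ finite. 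Your closing paragraph is in fact the correct diagnosis: the argument does go through when the cycle-separation classes of ends are countable --- that is precisely Theorem~\ref{thm:main}(ii), proved via Sections~\ref{sec4}--\ref{s5} together with the $0/\infty$ law of \cite{GHZ25} --- but the ``regularity of the ends'' is not a technicality to be optimized away; dropping countability admits genuine counterexamples. (Separately, your Gauss--Bonnet route to $p_c^{\mathrm{site}}<\tfrac12$ presupposes a face structure, i.e.\ essentially a proper embedding; for general locally finite planar graphs the paper instead relies on \cite{HP21,ZL231}, and in the counterexample the bound $p_c^{\mathrm{site}}\le M^{-d}$ is obtained directly from the embedded tree.)
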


\begin{conjecture}\label{c8bs}
(\emph{Conjecture~8} in \cite{bs96}).
Let \(G\) be a planar graph and set \(p=\tfrac{1}{2}\).
If percolation occurs almost surely (i.e., there exists an infinite open cluster a.s.),
then almost surely there are infinitely many infinite open clusters.
\end{conjecture}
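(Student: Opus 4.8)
The plan is to derive Conjecture~\ref{c8bs} from the open/closed symmetry of $\mathbb{P}_{1/2}$ together with a purely topological restriction on the coexistence of infinite clusters in planar graphs; this restriction is essentially the content of the $0/\infty$ theorem of Glazman--Harel--Zelesko (who in fact treat all $p\le\tfrac12$), and what follows is the argument in the form I would carry it out. Suppose toward a contradiction that $\mathbb{P}_{1/2}(\exists\text{ an infinite open cluster})=1$ while $\mathbb{P}_{1/2}(1\le N_o<\infty)>0$, where $N_o$ denotes the number of infinite open clusters.

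First I would reduce to the uniqueness case. Since $\{1\le N_o<\infty\}=\bigcup_{k\ge 1}\{N_o=k\}$, some finite $k$ has $\mathbb{P}_{1/2}(N_o=k)>0$; if $k\ge 2$, fix a finite exhaustion $B_1\subset B_2\subset\cdots$ of $V$, observe that a finite family of infinite clusters must all meet some $B_n$, so $\mathbb{P}_{1/2}(N_o=k,\ \text{all }k\text{ infinite open clusters meet }B_n)>0$ for $n$ large, and on this event reveal every vertex of $B_n$ as open---a positive-probability modification by insertion tolerance. Opening $B_n$ merges the $k$ infinite clusters into one, changes only the finitely many clusters meeting $B_n$, and creates no new infinite cluster, so $N_o=1$ afterwards; hence $\mathbb{P}_{1/2}(N_o=1)>0$ in every case. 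Next, the involution swapping the states of all vertices preserves $\mathbb{P}_{1/2}$, so $N_c$, the number of infinite \emph{closed} clusters, has the same law as $N_o$; in particular $\mathbb{P}_{1/2}(N_c\ge 1)=\mathbb{P}_{1/2}(N_o\ge 1)=1$. Intersecting, $\mathbb{P}_{1/2}(N_o=1\text{ and }N_c\ge 1)>0$.

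The contradiction comes from the topological \emph{coexistence lemma}: on a locally finite planar graph, no site configuration can have a unique infinite open cluster together with at least one infinite closed cluster. To prove it I would fix a well--separated Freudenthal embedding $G\hookrightarrow\mathbb{S}^2$ and work with the end structure: the unique infinite open cluster $C$, together with the compact set of ends it accumulates to, is a closed connected subset of $\mathbb{S}^2$; adjoining a suitable finite set of separating vertices one reads off an open cutset in the cutset characterization of $p_c^{\mathrm{site}}$, and any infinite closed cluster---being connected and disjoint from $C$---would be confined to a single complementary region, hence finite, a contradiction. Granting the lemma, $\{N_o=1,\ N_c\ge 1\}$ is impossible, contradicting the previous paragraph; therefore $\mathbb{P}_{1/2}(1\le N_o<\infty)=0$, so $N_o\in\{0,\infty\}$ almost surely, and since $N_o\ge 1$ almost surely by hypothesis, $N_o=\infty$ almost surely.

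The soft probabilistic steps (exhaustion, insertion tolerance, the $p=\tfrac12$ symmetry) are routine; the weight of the proof lies in the coexistence lemma, and within it in the case where an infinite closed cluster and an infinite open cluster accumulate at the \emph{same} end, or at ends in one cycle--separation class. There interleaving of open and closed vertices near infinity has to be tamed---one must show the cluster structure can still be severed by finitely many vertices---and it is precisely the well--separatedness of the embedding and the cycle--separation equivalence on ends that make this possible. I expect this end analysis to be the main obstacle, and it is exactly what has no analogue on non--planar graphs, where (for instance on $\mathbb{Z}^3$ with $p\in(p_c,\,1-p_c)$) a unique infinite open cluster coexists with a unique infinite closed cluster.
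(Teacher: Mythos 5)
First, a point of framing: the paper does not prove this statement at all. It is quoted verbatim as Benjamini--Schramm's Conjecture~8 and is attributed, for \emph{locally finite} planar graphs, to Glazman--Harel--Zelesko \cite{GHZ25}; moreover the paper's own Example in Section~\ref{s82} shows that the statement as you (and the conjecture) phrase it, without local finiteness, is simply false, so any argument must add that hypothesis explicitly. So your proposal cannot be compared with a proof in the paper; it has to stand on its own, and it does not.

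The genuine gap is your ``coexistence lemma'': the claim that on a locally finite planar graph \emph{no site configuration} can have a unique infinite open cluster together with an infinite closed cluster is false as a deterministic, configuration-level statement. On $\mathbb Z^2$, the configuration in which the upper half-plane is open and the lower half-plane is closed has exactly one infinite open cluster and one infinite closed cluster; your topological sketch breaks exactly where you assert that a closed infinite cluster ``confined to a single complementary region'' of $\overline{C}$ is ``hence finite'' --- a complementary region of the closure of the unique infinite open cluster can perfectly well contain infinitely many vertices of $G$. Worse for your plan, this paper itself manufactures a counterexample in the probabilistic sense: by Theorem~\ref{t76} and Lemma~\ref{le711} (Section~\ref{s8}) there is a locally finite planar graph with minimum degree at least $7$ and a parameter $p\in(\tfrac12,1-p_c^{\mathrm{site}}(G))$ at which, with positive probability, a unique infinite open cluster coexists with at least one infinite closed cluster (the $0$-phase is supercritical since $1-p>p_c^{\mathrm{site}}(G)$). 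Hence the exclusion of coexistence cannot be a consequence of planar topology alone; it must use the probabilistic structure at $p=\tfrac12$ (or $p\le\tfrac12$) in the hard step itself, which is precisely what \cite{GHZ25} do and what your outline relegates to ``routine'' soft steps. Your reduction via insertion tolerance and the open/closed symmetry at $p=\tfrac12$ is fine, but it only reformulates the problem; the lemma carrying all the weight is false as stated and is not repaired by the end/cycle-separation machinery you invoke.
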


These conjectures, formulated by Benjamini and Schramm, have remained open for nearly three decades.
They generalize the “zero–one–infinity’’ law proved for transitive graphs by Häggström and Peres~\cite{HaggstromPeres1999}.
They are also closely related to the bond-percolation duality relation
\(p_c^{\mathrm{bond}}(G)+p_c^{\mathrm{bond}}(G^\dagger)=1\),
which is known to hold for transitive~\cite{bsjams}, quasi-transitive~\cite{GrZL22,GrZL221}and some semi-transitive~\cite{ZL23} planar graphs,
but does not automatically extend to arbitrary planar graphs.

\medskip
Partial progress toward these conjectures has been made under various structural assumptions.
Newman and Schulman~\cite{NewmanSchulman1981}
and Gandolfi--Grimmett--Russo~\cite{GGR1988}
established uniqueness results for product measures on lattices.
Benjamini, Lyons, Peres, and Schramm~\cite{BLPS1999}
provided general criteria for uniqueness and nonuniqueness
on unimodular quasi-transitive graphs.

Subsequent work on nonamenable and hyperbolic planar graphs
revealed a wealth of new phenomena.
Lalley~\cite{SL98,SL01}
analyzed percolation on Fuchsian groups and on hyperbolic tessellations,
establishing the existence of a nontrivial interval
\((p_c,p_u)\) of coexistence where infinitely many infinite clusters appear.
 Benjamini and Schramm~\cite{BenjaminiSchramm2001}
studied percolation in the hyperbolic plane, giving sharp thresholds
and a geometric characterization of nonamenable planar graphs.
Despite these advances, the inequality
\(p_u^{\mathrm{site}} \ge 1 - p_c^{\mathrm{site}}\)
remained open even for bounded-degree planar graphs.

\medskip
Li~\cite{ZL231} introduced a \emph{tree--embedding framework}
for planar site percolation and for arbitrary locally finite graphs,
linking probabilistic cutset estimates with the critical percolation probability~\cite{ZL24}.
Building on this foundation,
Li~\cite{ZL24} developed a coupling characterization
of the critical threshold \(p_c^{\mathrm{site}}(G)\)
via subcritical cutset probabilities,
providing a new analytic approach beyond transitive or periodic settings.
This line of work follows a classical contour/cutset tradition
dating back to Broadbent--Hammersley~\cite{BH57} and Hammersley~\cite{HM57},
and it connects with the modern sharpness framework of
Duminil--Copin and Tassion~\cite{DCT16}.
Li’s results resolved conjectures of
Kahn~\cite{JK03}, Peres--Lyons~\cite{LP16}, and Tang~\cite{Tang2023}
for site percolation, demonstrating that probabilistic, analytic, and topological tools
can be unified to study planar percolation without any symmetry assumptions.

For \emph{properly embedded} planar graphs
(i.e., embeddings in the plane with no accumulation points of edges or vertices),
Haslegrave and Panagiotis~\cite{HP21} proved that \(p_c^{\mathrm{site}}<\tfrac12\).
Conjecture~\ref{c7bs}
for properly embedded planar graphs was resolved in~\cite{ZL231},
which also provided an alternative proof of \(p_c^{\mathrm{site}}<\tfrac12\).
Furthermore, Conjecture~\ref{c8bs} was proved in~\cite{GHZ25} for locally finite planar graphs:
for every \(p\in(p_c^{\mathrm{site}},\,\tfrac12]\),
almost surely there exist infinitely many infinite \(1\)--clusters,
implying \(p_u^{\mathrm{site}}\ge\tfrac12\),
where \(p_u^{\mathrm{site}}\) is the uniqueness threshold defined in~\eqref{dpu}.
On Page~3 of~\cite{bs96}, it was further \emph{expected} that
the nonuniqueness regime extends up to \(1-p_c^{\mathrm{site}}\);
the present paper discusses this expectation.

\medskip
\noindent\textbf{Main result.}
\begin{definition}[Planar graph]
A graph \(G\) is \emph{planar} if it admits an embedding in the plane \(\RR^{2}\); that is, \(G\) can be drawn
with vertices as points and edges as simple arcs that meet only at vertices.
\end{definition}

\begin{remark}
By stereographic projection, \(G\) admits such a drawing (i.e.~ with vertices as points and edges as simple arcs that meet only at vertices) on the sphere \(\mathbb{S}^{2}\) if it does on
\(\RR^{2}\). Since the hyperbolic plane \(\HH^{2}\) is homeomorphic to \(\RR^{2}\) (e.g., via the Poincaré disk model),
\(G\) also admits such a drawing on \(\HH^{2}\).
\end{remark}

\noindent\textbf{Freudenthal embedding and end-equivalence (informal).}
By Section~\ref{sect2}, every infinite, connected, locally finite planar graph admits a
well--separated embedding $\phi:G\hookrightarrow\mathbb S^2$ extending the Freudenthal compactification,
so that ends of $G$ correspond bijectively to the accumulation set $\mathcal{A}(\phi)\subset\mathbb S^2$.
Following Section~\ref{sec3}, we call two ends \emph{equivalent} if no cycle of $G$ separates
their accumulation points; we denote by $\mathcal F(G)$ the set of equivalence classes and
by $p^{\mathrm{site}}_{c,F}(G)$ the corresponding $F$--connectivity threshold.
(Precise definitions are given in Sections~\ref{sect2}-\ref{sec3}.)

Our main result is Theorem~\ref{thm:main1} below. 

\begin{theorem}\label{thm:main1}
Let $G=(V,E)$ be an infinite, connected, locally finite planar graph and consider
i.i.d.\ Bernoulli$(p)$ \emph{site} percolation on $G$.
Write $p_c^{\mathrm{site}}(G)$ and $p_u^{\mathrm{site}}(G)$ for the critical and uniqueness thresholds.

Fix a well--separated Freudenthal embedding $G\hookrightarrow \mathbb S^2$.
Let $\mathcal{A}$ be the corresponding set of ends (accumulation points), and let $\mathcal F:=\mathcal{A}/\!\sim$
be the cycle--separation equivalence classes of ends.
For each $F\in\mathcal F(G)$ let $p^{\mathrm{site}}_{c,F}(G)$ be the corresponding
$F$--connectivity threshold (Definition~\ref{df36}).
Set
\[
p_*^{\mathrm{site}}(G)\ :=\ \inf_{F\in\mathcal F(G)} p^{\mathrm{site}}_{c,F}(G).
\]

\begin{enumerate}[label=\textup{(\roman*)}]
\item \textbf{Upper-half nonuniqueness up to $1-p_*$.}
For every
\[
p\in\Bigl(\tfrac12,\;1-p_*^{\mathrm{site}}(G)\Bigr),
\]
there are $\mathbb P_p$--almost surely infinitely many infinite open clusters.
Consequently,
\[
p_u^{\mathrm{site}}(G)\ \ge\ 1-p_*^{\mathrm{site}}(G).
\]

\item \textbf{When $\mathcal F(G)$ is countable, the bound becomes $1-p_c$.}
If $\mathcal F(G)$ is countable (in particular, if $G$ has countably many ends), then
\[
p_c^{\mathrm{site}}(G)\ =\ p_*^{\mathrm{site}}(G),
\]
and hence for every
\[
p\in\bigl(p_c^{\mathrm{site}}(G),\;1-p_c^{\mathrm{site}}(G)\bigr)
\]
there are $\mathbb P_p$--a.s.\ infinitely many infinite open clusters.
In particular,
\[
p_u^{\mathrm{site}}(G)\ \ge\ 1-p_c^{\mathrm{site}}(G).
\]

\item \textbf{Uncountably many end equivalence classes}
\begin{enumerate}
\item \textbf{A sufficient condition for infinitely many clusters.}
Assume $\mathcal F$ is uncountable and let $p$ satisfy
\[
\max\Bigl\{\tfrac12,\ 1-p_*^{\mathrm{site}}(G)\Bigr\}\ \le\ p\ <\ 1-p_c^{\mathrm{site}}(G).
\]
Define the (deterministic) set
\[
A_1(p):=\Bigl\{a\in \mathcal{A}:\ \mathbb P_p\bigl(\exists v\in V:\ v \stackrel{1}{\longleftrightarrow} a\bigr)=1\Bigr\}.
\]
If
\begin{equation}\label{eq:case2a}
\mathbb P_p\Bigl(\ \bigcup_{a\in A\setminus A_1(p)}\ \bigcup_{v\in V}\{v \stackrel{1}{\longleftrightarrow} a\}\ \Bigr)=1,
\end{equation}
 then $\mathbb P_p$--almost surely there are infinitely many
infinite open clusters.

\item \textbf{$p_u\ge 1-p_c$ can be violated.}
There exists an infinite, connected, locally finite planar graph $G$ with minimum degree at least $7$
and uncountably many end--equivalence classes such that
\[
p_c^{\mathrm{site}}(G)<\tfrac12
\qquad\text{and}\qquad
p_u^{\mathrm{site}}(G)\ <\ 1-p_c^{\mathrm{site}}(G).
\]
More precisely, for this $G$ there exists $p\in(\tfrac12,\ 1-p_c^{\mathrm{site}}(G))$ such that
$\mathbb P_p(1\le N_\infty(p)<\infty)=1$, and hence (by finite energy on a connected graph)
$\mathbb P_p(N_\infty(p)=1)>0$.
\end{enumerate}
Here $N_{\infty}$ is the number of infinite 1-clusters.
\end{enumerate}

\end{theorem}

\noindent\textbf{When is $\mathcal F(G)$ countable?}
The countability assumption in Theorem~\ref{thm:main1}\,(ii) is automatic whenever $G$ has only
countably many ends, since $\mathcal F(G)$ is a partition of the end space.
In particular, Theorem~\ref{thm:main1}\,(i) applies to every countably ended planar graph.

It also applies to every planar graph admitting a \emph{proper} embedding into $\mathbb R^2$
(i.e.\ with no finite accumulation points of vertices or edges). Indeed, in a proper embedding,
the closed interior of any cycle is compact and hence meets $V(G)$ in finitely many vertices,
so no cycle can separate two ends. Consequently all ends are equivalent and $\mathcal F(G)$ is a singleton.

\medskip
\noindent\textbf{A countability threshold in planar uniformization.}
The role of countability in our main theorem parallels a classical rigidity phenomenon
in planar conformal geometry.
He and Schramm proved that every planar domain with at most countably many boundary
components is conformally equivalent to a circle domain, uniquely up to M\"obius transformations,
whereas such uniqueness may fail in the presence of uncountably many boundary components
\cite{HeSchramm93}.
In a similar spirit, we show that the inequality $p_u^{\mathrm{site}}\ge 1-p_c^{\mathrm{site}}$
holds under a countability hypothesis on end--equivalence classes, and we construct a
locally finite planar counterexample once this hypothesis is dropped.

\medskip

\noindent\textbf{Relation to existing work.}
Glazman--Harel--Zelesko \cite{GHZ25} proved a $0/\infty$ law for Bernoulli site percolation on
infinite, connected, locally finite planar graphs in the range $p\le \tfrac12$; in particular,
throughout $(p_c^{\mathrm{site}}(G),\,\tfrac12]$ there are almost surely infinitely many infinite open clusters.
They further anticipated that non--uniqueness should persist up to $1-p_c^{\mathrm{site}}(G)$
without transitivity or degree assumptions (cf.\ also \cite[Page 3]{GHZ25}).

Theorem~\ref{thm:main1}\,(ii) confirms this upper--half extension under the additional hypothesis
that the set $\mathcal F(G)$ of end--equivalence classes is countable: for every
$p\in(\tfrac12,\,1-p_c^{\mathrm{site}}(G))$ there are almost surely infinitely many infinite open clusters.
Combined with \cite{GHZ25}, this yields non--uniqueness throughout the full coexistence interval
$(p_c^{\mathrm{site}}(G),\,1-p_c^{\mathrm{site}}(G))$ for all graphs covered by the countability hypothesis,
and in particular $p_u^{\mathrm{site}}(G)\ge 1-p_c^{\mathrm{site}}(G)$ in this setting.

By contrast, Theorem~\ref{thm:main1}\,(iii) shows that such an extension cannot hold in complete generality:
we give a criterion in the uncountable--class regime guaranteeing infinitely many infinite clusters
(see Section~\ref{s6}), and we also construct a locally finite planar counterexample
(see Section~\ref{s8}) for which $p_u^{\mathrm{site}}(G)<1-p_c^{\mathrm{site}}(G)$.
In particular, this yields a counterexample to the non--uniqueness conclusion of
Benjamini--Schramm's Conjecture~7 in \cite{bs96} (Conjecture \ref{c7bs}) even under the minimum--degree condition $\delta(G)\ge 7$.

\medskip
\noindent\textbf{The FCA framework.}
This paper introduces an \emph{FCA framework} (Freudenthal–Cutset–Arms) for planar percolation, distilling our method into three structural components:
\begin{enumerate}
\item[\textbf{(F1)}] \emph{Freudenthal Embedding.}
Every locally finite planar graph admits a well – separated embedding on \(\mathbb{S}^2\), which identifies ends with accumulation points and permits the definition of disjoint arm corridors aimed at distinct ends.
\item[\textbf{(F2)}] \emph{Cutset Characterization of \(p_c^{\mathrm{site}}(G)\).}
A coupling formulation via supercritical cutset bounds yields uniform control of connectivity probabilities and an analytic handle on the critical threshold, without transitivity or symmetry assumptions.
\item[\textbf{(F3)}] \emph{Alternating–Arm Exploration.}
A multi–arm scheme together with boundary–layer explorations enforces conditional independence across separated regions, forcing infinitely many disjoint infinite clusters throughout \((p_c^{\mathrm{site}},\,1-p_c^{\mathrm{site}})\).
\end{enumerate}

\noindent
Implemented in full for Bernoulli site percolation, the FCA framework yields the non-uniqueness result in the entire coexistence region for all infinite, connected, locally finite planar graphs  with countably many end equivalent classes (Theorem \ref{thm:main1}(ii)). Its structure appears flexible and may extend—once analogues of \textup{(F2)}–\textup{(F3)} are verified—to other planar statistical–mechanical models, such as the bond percolation model and the Ising model.

Beyond planarity, the FCA framework yields quantitative supercritical bounds for Bernoulli
\emph{site} percolation.
For any infinite, connected, locally finite graph $G=(V,E)$, any parameter $p>p^{\mathrm{site}}_{c}(G)$,
and any (finite or infinite) set of vertices $S\subset V$, in \cite{ZL262} we derive explicit exponential-type
upper bounds on the disconnection probability $\mathbb{P}_{p}(S\nleftrightarrow\infty)$.
The estimates are expressed in terms of a packing profile of $S$, encoded by a
$(p,\varepsilon,c)$--packing number,  which counts how many well-separated vertices in $S$
exhibit controlled local-to-global connectivity.
The proof combines a local functional characterization of $p^{\mathrm{site}}_{c}$ from \cite{ZL24}
with a packing construction and an amplification-by-independence argument, providing
progress toward Problem~1.6 of \cite{DC20}.

\medskip
We also exhibit a planar but non–locally–finite example
to demonstrate that local finiteness is indispensable.
This completes the resolution of the planar coexistence problem
for Bernoulli site percolation; see Section \ref{s82}.

\medskip
\noindent\textbf{Organization of the paper.}
In Section~\ref{sect2} we recall the Freudenthal compactification of a locally finite graph
and establish the existence of a well--separated planar embedding $G\hookrightarrow\mathbb S^2$
for every infinite, connected, locally finite planar graph.  This identifies the space of ends
with the accumulation set of the embedding and provides the topological language needed
to aim arms toward prescribed ends.

In Section~\ref{sec3} we introduce the cycle--separation equivalence on the accumulation set,
define the end--equivalence classes $\mathcal F(G)$, and attach to each $F\in\mathcal F(G)$ the
$F$--connectivity threshold $p^{\mathrm{site}}_{c,F}(G)$.  We also record the basic dichotomy between
the countable and uncountable regimes for $\mathcal F(G)$ that will guide the remainder of the paper.

Section~\ref{sec4} develops the geometric input for the exploration argument.
Starting from a finite connected vertex set $S$ and an end class $F$, we define an $F$--adapted
boundary cycle $\partial_F S$ and formulate alternating arm events across $\partial_F S$.

In Section~\ref{sec5} we prove the analytic estimates driving the supercritical argument:
a cutset--based differential inequality for $F$--connectivity, its reformulation in terms of the
$\phi$--functional, and a collection of uniform connectivity bounds stable under truncations
away from other ends.  As a consequence, when $\mathcal F(G)$ is countable we obtain
$p^{\mathrm{site}}_c(G)=\inf_{F\in\mathcal F(G)}p^{\mathrm{site}}_{c,F}(G)$.

Section~\ref{s5} combines the above ingredients in an end--targeted multi--arm
exploration to force conditional independence across separated regions, yielding infinitely many
infinite clusters throughout the relevant part of the coexistence regime.

In Section~\ref{s6} we treat graphs with uncountably many end--equivalence classes
in the positive regime (Case~(2a)), deriving criteria that still guarantee the existence of infinitely
many infinite clusters.

Finally, Section~\ref{s8} contains sharpness examples.  We construct a locally finite planar
graph in the uncountable regime for which $p^{\mathrm{site}}_u(G)<1-p^{\mathrm{site}}_c(G)$ (Case~(2b)),
and we also exhibit a planar but non--locally finite example showing that local finiteness is indispensable
for coexistence statements of Benjamini--Schramm type.

\section{Ends, Freudenthal compactification, and well--separated embeddings}
\label{sect2}

The purpose of this section is to put locally finite planar graphs into a canonical topological
setting.  We recall the Freudenthal compactification $|G|$ of a connected locally finite graph and
use classical embeddability results to obtain a well--separated embedding $G\hookrightarrow\mathbb S^2$
in which the ends of $G$ correspond bijectively to the accumulation points of the embedded image.
We also record two consequences that will be used repeatedly later: any infinite connected subgraph
determines an end together with a nested family of infinite subgraphs inside the corresponding
components, and under the embedding this yields an accumulation point at which every neighborhood
contains an infinite connected subgraph.

\begin{definition}\label{df21}
Let $G=(V,E)$ be a graph. An end of G is a map $\omega$ that assigns to every finite set
of vertices $K\subset V$ a connected component $\omega(K)$ of $G\setminus K$, and satisfies the consistency
condition 
$\omega(K)\subseteq \omega(K_0)$ whenever $K_0\subseteq K$.

It is not hard to see that if $K \subset V$ is finite and $F$ is an infinite component of $G \setminus K$, then
there is an end of $G$ satisfying $e(K)=F$.  The collection of
sets of the form $\{e : e(K) = F\}$ is a sub-basis for a topology on the set of ends of a connected
graph. 

Let $\Omega(G)$ be the set consisting of all the ends of $G$.
\end{definition}

We shall first define a topological space $|G|$ associated with a graph $G=(V,E,\Omega)$ and its ends. See also Sect.~8.6 of \cite{Diestel2017}. 

We start with the set $V\cup \Omega(G)$. For every edge $e=uv\in E$, we add a set $\mathring{e}=(u,v)$ of continuum many points, making these sets $\mathring{e}$ disjoint from each other and from $V\cup \Omega$. We then choose for each $e$ some fixed bijection between $\mathring{e}$ and the real interval $(0,1)$, and extend this bijection to one between $[u,v]:=\{u\}\cup\mathring{e}\cup\{v\}$ and $[0,1]$. We call $[u,v]$ a topological edge with inner points $x\in \mathring{e}$. Given any $F\subseteq E$ we write $\mathring{F}:=\cup\{\mathring{e}:e\in F\}$. When we speak of a ``graph" $H\subseteq G$, we shall often also mean its corresponding point set $V(H)\cup \mathring{E}(H)$.

Having defined the point set of $|G|$, we now choose a basis of open sets to define its topology. For every edge $uv$, declare as open all subsets of $(u,v)$ that correspond, by our fixed bijection between $(u,v)$ and $(0,1)$, to an open set in $(0,1)$. For every vertex $u$ and $\epsilon>0$, declare as open the ``open star around $u$ of radius $\epsilon$", that is, the set of all points on edges $[u,v]$ at distance less than $\epsilon$ from $u$, measured individually for each edge in its metric inherited from $[0,1]$. Finally, for every end $\omega$ and every finite set $S\subseteq V$, there is a unique component $\omega(S)$ of $G\setminus S$. Let
\begin{align*}
\Omega(S,\omega):=\{\omega'\in \Omega: \omega'(S)=\omega(S)\}.
\end{align*}
For every $\epsilon>0$, write $\mathring{E}_{\epsilon}(S,\omega)$ for the set of all inner points of edges joining a vertex in $S$ and a vertex in $\omega(S)$ at distance less than $\epsilon$ from their endpoint in $\omega(S)$. Then declare as open all sets of the form
\begin{align}
C_{\epsilon}(S,\omega):=\omega(S)\cup \Omega(S,\omega)\cup \mathring{E}_{\epsilon}(S,\omega).\label{dfce}
\end{align}
This completes the definition of $|G|$, whose open sets are the unions of the sets we explicitly chose as open above.
$|G|$ is called the Freudenthal compactification of $G$.

\begin{proposition}(Proposition 8.6.1 in \cite{Diestel2017})If $G$ is connected and locally finite, then $|G|$ is a compact Hausdorff space.
\end{proposition}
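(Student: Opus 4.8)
The plan is to establish the two assertions separately: Hausdorffness directly from the chosen basis of open sets, and compactness by combining compactness of the end space with the fact—special to the locally finite case—that deleting a finite vertex set leaves only finitely many components.

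For the Hausdorff property I would separate two distinct points $x,y\in|G|$ by cases according to their types. If they lie on a common edge, disjoint open subintervals work; if they lie on different edges, or one of them is a vertex, sufficiently small open stars and subintervals (radii below $1$) suffice. If $x$ is a vertex $v$ (respectively an inner point of an edge $uv$) and $y=\omega$ is an end, I would take $S=\{v\}$ (respectively $S=\{u,v\}$): a small open star around $v$ (respectively a small subinterval around the point) is then disjoint from $C_\epsilon(S,\omega)$ once $\epsilon$ is small. Finally, if $x=\omega_1\neq\omega_2=y$ are distinct ends, I choose a finite $S\subseteq V$ with $\omega_1(S)\neq\omega_2(S)$ (this exists by definition of distinct ends); then $C_\epsilon(S,\omega_1)$ and $C_\epsilon(S,\omega_2)$ are disjoint for every $\epsilon\le 1$, since they consist of disjoint components, the ends living over them, and partial edges pointing into those disjoint components.

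For compactness I would first record two preliminary facts. (a) Since $G$ is connected and locally finite, for every finite $S\subseteq V$ the graph $G\setminus S$ has only finitely many components, because each component sends an edge into $S$ and $S$ meets only finitely many edges; in particular $G\setminus S$ has only finitely many infinite components, each of the form $\omega(S)$ for an end $\omega$. (b) The end space $\Omega(G)$ is compact, being naturally the inverse limit $\varprojlim_S\{\text{infinite components of }G\setminus S\}$ over finite $S\subseteq V$, hence a nonempty compact totally disconnected space. Now let $\mathcal O$ be an open cover, which I may assume consists of basic open sets. Each end $\omega$ lies in some $O_\omega\in\mathcal O$, and by the neighbourhood-basis property there are a finite $S_\omega\subseteq V$ and $\epsilon_\omega>0$ with $C_{\epsilon_\omega}(S_\omega,\omega)\subseteq O_\omega$. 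The sets $\Omega(S_\omega,\omega)$ cover the compact space $\Omega(G)$, so finitely many ends $\omega_1,\dots,\omega_k$ suffice; put $S:=\bigcup_{i=1}^k S_{\omega_i}$. By (a), $G\setminus S$ has finitely many components; for each infinite one, written $\omega(S)$, I pick $i$ with $\omega\in\Omega(S_{\omega_i},\omega_i)$, and then consistency of ends together with $S_{\omega_i}\subseteq S$ gives $\omega(S)\subseteq\omega_i(S_{\omega_i})$, so that this component together with the ends it carries and with the inner points of its incident edges near their endpoint in $\omega(S)$ is contained in $C_{\epsilon_{\omega_i}}(S_{\omega_i},\omega_i)\subseteq O_{\omega_i}$. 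Hence $\bigcup_{i=1}^k O_{\omega_i}$ covers all of $|G|$ except a leftover set $K$: the finitely many vertices of $S$, the finitely many edges of $G[S]$, and, on each of the finitely many edges from $S$ to $V\setminus S$, a closed subarc near the $S$-endpoint. Since $K$ is a finite $1$-complex it is compact, hence covered by finitely many members of $\mathcal O$, and adjoining these to $O_{\omega_1},\dots,O_{\omega_k}$ yields a finite subcover.

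The hard part will be the bookkeeping in the last step: verifying that every partial-edge point near an infinite component $\omega(S)$ really is absorbed by some $C_{\epsilon_{\omega_i}}(S_{\omega_i},\omega_i)$—distinguishing whether the far endpoint of the edge lies in $S_{\omega_i}$ (so only the near tail is covered) or in $\omega_i(S_{\omega_i})$ (so the whole open edge is covered, being interior to $\omega_i(S_{\omega_i})$)—and confirming that the leftover set $K$ is genuinely compact and correctly separated from the already-covered region. This is precisely where local finiteness is used in an essential way, and where compactness would fail without it.
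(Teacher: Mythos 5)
The paper offers no proof of this proposition at all: it is quoted verbatim from Diestel (Proposition~8.6.1 in \cite{Diestel2017}), so there is no in-paper argument to match. Your write-up is a legitimate self-contained proof, and it takes a slightly different route from Diestel's. Diestel argues compactness directly: enumerating $V$, assuming no finite subcover, and using a K\"onig-type infinity-lemma extraction of a nested sequence of ``bad'' components whose limiting end yields a contradiction with the basic neighbourhoods $C_\epsilon(S,\omega)$. You instead front-load the compactness of the end space $\Omega(G)$ (as an inverse limit of the finite sets of infinite components -- finiteness being exactly where local finiteness and connectedness enter via your fact (a)), and then run a cover-reduction argument on $|G|$. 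This is modular and clean, but note that compactness of $\Omega(G)$ is itself a K\"onig/Tychonoff fact of essentially the same nature as what Diestel proves directly, so your proof is not more elementary, merely differently organized. The Hausdorff part, the neighbourhood-basis step at ends, and the key inclusion $\omega(S)\subseteq\omega_i(S_{\omega_i})$ via consistency are all correct, as is your case distinction for the partial edges according to whether the far endpoint lies in $S_{\omega_i}$ or in $\omega_i(S_{\omega_i})$.

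One bookkeeping slip needs fixing: your leftover set $K$ is described as the vertices of $S$, the edges of $G[S]$, and subarcs of edges from $S$ to $V\setminus S$, but $\bigcup_i O_{\omega_i}$ need not cover the \emph{finite} components of $G\setminus S$ (nor the edges joining them to $S$); e.g.\ a finite pendant piece attached to $S$ may lie in no $\omega_i(S_{\omega_i})$. The repair is immediate: by your fact (a) there are only finitely many components of $G\setminus S$, each finite one has finitely many vertices and (by local finiteness) finitely many edges, so adjoining them to $K$ keeps $K$ a finite, hence compact, $1$-complex and the finite-subcover argument closes exactly as you wrote it. With that correction the proof is complete.
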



\begin{lemma}\label{l23}Let $G$ be a connected, locally finite, planar graph, then there exists a homeomorphism from $|G|$, the Freudenthal compactification of $G$, onto a subset of $\mathbb{S}^2$.
\end{lemma}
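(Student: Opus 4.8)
The plan is to realize $|G|$ as a subset of the sphere by first embedding the graph $G$ in the plane, then carefully extending the embedding to the ends. By hypothesis $G$ is planar, so by stereographic projection it admits a topological embedding $\psi_0\colon G\hookrightarrow \mathbb S^2$ (vertices to points, edges to internally disjoint arcs). The difficulty is that a priori $\psi_0$ may be ``badly behaved'' near infinity: the images of infinitely many edges may accumulate in a complicated way, and distinct ends need not correspond to distinct accumulation points. So the first step is to replace $\psi_0$ by a well-behaved embedding. I would invoke a theorem of Richter--Thomassen (or the classical results surveyed in \cite{Diestel2017}, Section~8.6, on embeddings of locally finite graphs and their ``VAP-free'' or accumulation-point-free drawings) to obtain an embedding $\psi\colon G\hookrightarrow\mathbb S^2$ for which the closure $\overline{\psi(G)}$ in $\mathbb S^2$ consists of $\psi(G)$ together with a set of points $\mathcal A$, each of which is an accumulation point of the embedded vertex set, and such that the natural map sending an end $\omega$ to the unique accumulation point of $\{\psi(v): v\in\omega(S),\ S\text{ finite}\}$ is a bijection $\Omega(G)\to\mathcal A$. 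Concretely, one may also do this by hand: enumerate the ends, choose a nested exhaustion of $G$ by finite connected subgraphs, and successively ``contract'' the drawing of each far-away component into a small disk so that diameters shrink to zero; local finiteness guarantees only finitely many edges cross any compact region not containing an accumulation point, so the limiting drawing is still an embedding.

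Granting such a $\psi$, define $\Psi\colon |G|\to\mathbb S^2$ by $\Psi=\psi$ on $V(G)\cup\mathring E(G)$ and $\Psi(\omega)=$ the accumulation point of $\omega$ for each end $\omega$. The next step is to verify that $\Psi$ is a continuous injection. Injectivity on $\psi(G)$ is clear; injectivity on $\mathcal A$ is exactly the bijectivity clause above; and no end can map to a point of $\psi(G)$ because vertex images and edge-interior images are not accumulation points of $\psi(V)$ in a well-separated drawing. For continuity, I would check it basis-element by basis-element using the explicit basis for $|G|$ recalled in the excerpt: on open stars around vertices and on open subsets of edge interiors, continuity is immediate since $\psi$ restricted to $G$ is a homeomorphism onto its image; the only real point is continuity at an end $\omega$, where one must show that for a basic neighborhood $C_\epsilon(S,\omega)$ the image is ``eventually inside'' any prescribed spherical ball about $\Psi(\omega)$. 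This follows because, in the well-separated drawing, for every spherical ball $B$ about the accumulation point $a=\Psi(\omega)$ there is a finite $S$ such that the entire component $\omega(S)$, together with the stubs of edges leaving it and all ends accumulating into that component, is drawn inside $B$ --- precisely the statement that $a$ is an isolated-in-the-appropriate-sense accumulation point capturing exactly the ends equivalent to $\omega$ under ``same accumulation point.''

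Finally, $|G|$ is compact (Proposition 8.6.1 in \cite{Diestel2017}) and $\mathbb S^2$ is Hausdorff, so a continuous bijection $|G|\to\Psi(|G|)$ is automatically a homeomorphism onto its image; this upgrades the continuous injection of the previous paragraph to the desired homeomorphism onto a subset of $\mathbb S^2$, completing the proof.

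I expect the main obstacle to be the construction of the well-separated embedding $\psi$ in the first step --- that is, passing from an arbitrary planar embedding to one in which the accumulation set is in bijection with the end space and each accumulation point ``sees'' a full neighborhood basis of the form $\Psi(C_\epsilon(S,\omega))$. Everything after that (defining $\Psi$, checking continuity against the explicit basis, invoking compact-to-Hausdorff) is routine point-set topology. In the paper this construction is presumably packaged as the ``well-separated Freudenthal embedding'' of Section~\ref{sect2}, so in a clean write-up I would state the embedding existence as a separate lemma and use it here as a black box.
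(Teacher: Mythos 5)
Your outline (extend a suitable plane/sphere drawing of $G$ over the ends, check continuity against the basic open sets $C_\epsilon(S,\omega)$, then use compactness of $|G|$ and Hausdorffness of $\mathbb S^2$ to upgrade a continuous bijection onto its image to a homeomorphism) is the standard strategy, and the last two steps are indeed routine. But note that the paper does not prove this lemma at all: its ``proof'' is a citation (Proposition~1.22 of \cite{JPM23}; see also \cite{hb09}). Measured against that, your proposal has a genuine gap, because its first step --- producing an embedding $\psi:G\hookrightarrow\mathbb S^2$ whose accumulation set is disjoint from $\psi(G)$ and in end-faithful bijection with $\Omega(G)$, with the neighbourhood-basis property you need for continuity at ends --- \emph{is} the entire content of the lemma. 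The sources you name do not supply it: VAP-free (accumulation-point-free) plane drawings do not exist for general locally finite planar graphs (the paper treats properly embeddable graphs as a strictly smaller subclass, and its Section~\ref{s8} construction is of the non-proper kind), Diestel's Section~8.6 develops $|G|$ but does not prove spherical embeddability of $|G|$ for arbitrary connected locally finite planar $G$, and the Richter--Thomassen sphere-embedding theorems carry connectivity hypotheses ($2$- or $3$-connectedness) not assumed here. Using the paper's own ``well-separated Freudenthal embedding'' (Proposition~\ref{th52}) as a black box would be circular, since in the paper that proposition is deduced from this very lemma.

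The by-hand fallback is also not adequate as stated: ``enumerate the ends'' is unavailable when $\Omega(G)$ is uncountable (exactly the regime the paper cares about), and ``contract each far-away component into a small disk'' must be carried out coherently along the whole (possibly uncountably branching) hierarchy of components of $G\setminus S$ over all finite $S$, while guaranteeing that distinct ends acquire distinct accumulation points, that no accumulation point lands on $\psi(G)$, and that the resulting extension is injective and continuous on the end space. Organizing that limit construction is precisely the nontrivial work done in the cited references, so as written your argument reduces the lemma to an unproved statement of essentially the same strength. A clean fix is either to cite a result that states spherical embeddability of $|G|$ for all connected locally finite planar graphs (as the paper does), or to actually carry out the hierarchical contraction argument with explicit diameter control (e.g.\ assigning to each finite-order component a round disk of prescribed radius inside its parent's disk, with radii summing geometrically), which is a substantial proof rather than a remark.
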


\begin{proof}See Proposition 1.22 of \cite{JPM23}; see also Page 45 of \cite{hb09}, after Theorem 4.4.
\end{proof}


\begin{definition}\label{df31}
Let $G=(V,E)$ be an infinite, connected, locally finite, planar graph. Given an embedding $\phi:G\rightarrow \mathbb{S}^2$, $a\in \mathbb{S}^2$ is an accumulation point if its arbitrary neighborhood intersects infinitely many edges of $G$. Let $\mathcal{A}$ be the set of all accumulation points. The embedding $\phi$ is called well-separated if $\phi(G)$ is disjoint from $\mathcal{A}$.
\end{definition}

\begin{proposition}[Freudenthal embedding is well--separated and matches ends]\label{th52}
Let $G=(V,E)$ be an infinite, connected, locally finite planar graph. Then there exists a planar embedding
$\phi:G\to\mathbb{S}^2$ with the following properties:
\begin{enumerate}[label=(\roman*)]
\item \textbf{(Well--separated)} Each edge $e\in E$ is mapped homeomorphically onto a simple arc,
distinct edges meet only at common endpoints, and
\[
 \mathcal{A}\cap \phi(G)=\emptyset;
\]
where $\mathcal{A}$ is the set of accumulation points of $G$ under the embedding $\phi$.
\item \textbf{(Ends $\leftrightarrow$ accumulation points)} There exists a topological embedding (homeomorphism onto the image)
$\widehat\phi:|G|\hookrightarrow \mathbb{S}^2$ such that $\phi=\widehat\phi\!\upharpoonright_{|G|\setminus \Omega(G)}$ and
\[
\overline{\phi(G)}=\widehat\phi(|G|),\qquad
\mathcal{A}=\widehat\phi(\Omega(G)).
\]
In particular, the map $\Omega(G)\to \mathcal{A}$ given by $\omega\mapsto \widehat\phi(\omega)$ is a bijection.
\end{enumerate}
\end{proposition}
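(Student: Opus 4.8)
The plan is to build the embedding $\phi$ directly from Lemma~\ref{l23}, which already gives a homeomorphism $\widehat\phi:|G|\to\mathbb S^2$ onto its image. First I would set $\phi:=\widehat\phi\!\upharpoonright_{V\cup\mathring E}$, i.e. the restriction of $\widehat\phi$ to the graph part $|G|\setminus\Omega(G)$ of the Freudenthal compactification. Since $\widehat\phi$ is a homeomorphism onto its image, each topological edge $[u,v]\subset|G|$ is carried homeomorphically onto a simple arc in $\mathbb S^2$, two such arcs meet only where the corresponding topological edges meet in $|G|$ (namely at shared endpoints), and distinct vertices go to distinct points. This gives a genuine planar embedding of $G$ in the combinatorial sense, establishing the first half of~(i) and the equation $\phi=\widehat\phi\!\upharpoonright_{|G|\setminus\Omega(G)}$ in~(ii).

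Next I would identify $\overline{\phi(G)}$ with $\widehat\phi(|G|)$. Since $G$ is connected and locally finite, $|G|$ is compact Hausdorff (Proposition 8.6.1 in \cite{Diestel2017}), so $\widehat\phi(|G|)$ is a compact, hence closed, subset of $\mathbb S^2$ containing $\phi(G)$; thus $\overline{\phi(G)}\subseteq\widehat\phi(|G|)$. For the reverse inclusion, the graph part $|G|\setminus\Omega(G)=V\cup\mathring E$ is dense in $|G|$ (every basic neighborhood $C_\epsilon(S,\omega)$ of an end meets an edge leaving $S$ toward the component $\omega(S)$, since $G$ is infinite and connected), and $\widehat\phi$ is a homeomorphism, so $\widehat\phi(|G|)=\widehat\phi\bigl(\overline{|G|\setminus\Omega(G)}\bigr)\subseteq\overline{\widehat\phi(|G|\setminus\Omega(G))}=\overline{\phi(G)}$. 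Hence $\overline{\phi(G)}=\widehat\phi(|G|)$, and consequently $\widehat\phi(\Omega(G))=\widehat\phi(|G|)\setminus\phi(G)=\overline{\phi(G)}\setminus\phi(G)$ is the topological frontier of $\phi(G)$.

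It remains to show $\mathcal A=\widehat\phi(\Omega(G))$, after which well--separatedness $\mathcal A\cap\phi(G)=\emptyset$ and the bijection $\omega\mapsto\widehat\phi(\omega)$ follow immediately (injectivity of the latter is just injectivity of $\widehat\phi$). For the inclusion $\widehat\phi(\Omega(G))\subseteq\mathcal A$: given an end $\omega$ with image $a=\widehat\phi(\omega)$, any neighborhood $U$ of $a$ pulls back under $\widehat\phi$ to a neighborhood of $\omega$ in $|G|$, which therefore contains some basic set $C_\epsilon(S,\omega)$; since $\omega(S)$ is infinite and $G$ is locally finite and connected, $\omega(S)$ contains infinitely many edges, and these (or their $\phi$--images inside $U$, at least the sub--arcs near $\omega(S)$) meet $U$ in infinitely many edges, so $a\in\mathcal A$. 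Conversely, if $a\in\mathbb S^2\setminus\widehat\phi(\Omega(G))$, then either $a\notin\widehat\phi(|G|)$, in which case a neighborhood of $a$ misses $\phi(G)$ entirely, or $a=\phi(x)$ for a vertex or edge--point $x$; using that $\widehat\phi$ is a homeomorphism and $G$ is locally finite, $x$ has a neighborhood in $|G|$ meeting only finitely many edges (an open star around a vertex of bounded radius, or an open sub--arc of an edge), whose $\widehat\phi$--image is a neighborhood of $a$ meeting finitely many edges; so $a\notin\mathcal A$. Thus $\mathcal A=\widehat\phi(\Omega(G))$, completing the proof.

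The main obstacle I anticipate is the careful bookkeeping in the last paragraph: one must translate "intersects infinitely many edges" (the accumulation--point condition, phrased in $\mathbb S^2$) into the intrinsic neighborhood basis of $|G|$ (open stars and the sets $C_\epsilon(S,\omega)$), and in particular check that an end's basic neighborhoods genuinely see infinitely many \emph{distinct} edges rather than infinitely many points on finitely many edges --- this is where local finiteness and the infiniteness of the component $\omega(S)$ are both essential. Everything else is a routine consequence of $\widehat\phi$ being a homeomorphism onto its image together with the compactness of $|G|$.
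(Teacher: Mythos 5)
Your proposal is correct and follows essentially the same route as the paper: obtain $\widehat\phi$ from Lemma~\ref{l23}, restrict it to the graph part to get $\phi$, use compactness of $|G|$ and density of $G$ in $|G|$ to get $\overline{\phi(G)}=\widehat\phi(|G|)$, and use local finiteness to separate points of $\phi(G)$ from accumulation points. The only difference is cosmetic: where the paper asserts $\mathcal A=\overline{\phi(G)}\setminus\phi(G)$ rather tersely, you verify $\mathcal A=\widehat\phi(\Omega(G))$ by an explicit two--inclusion argument via the basic neighbourhoods $C_\epsilon(S,\omega)$, which is a slightly more detailed rendering of the same step.
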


\begin{proof}
\textbf{Step 1: Existence of a planar embedding of $G$.}
Since $G$ is locally finite and planar, the Freudenthal compactification $|G|$ admits a topological
embedding into the sphere; that is, there exists a topological embedding
\[
\widehat\phi:\,|G|\hookrightarrow \mathbb{S}^2
\]
whose restriction to $G$ is a planar embedding of $G$; see Lemma \ref{l23}.


\smallskip
\textbf{Step 2: $\phi$ is well--separated.}
Because $\widehat\phi$ is a topological embedding of the $1$--complex $|G|$, each edge of $G$
(the image of a closed interval) is mapped to a simple arc, and distinct edges meet only at common
endpoints. Moreover, $G$ is open in $|G|$ and locally a finite union of arcs (by local finiteness
of $G$), hence every point of $G$ has a neighbourhood in $|G|$ that is disjoint from the boundary
$\Omega(G)$. Applying $\widehat\phi$, every point of $\phi(G)=\widehat\phi(|G|)$ has a neighbourhood
in $\mathbb{S}^2$ disjoint from $\widehat\phi(\Omega(G))$. Therefore no point of $\phi(G)$ lies in
$\mathcal{A}$, i.e.\ $\mathcal{A}\cap\phi(G)=\varnothing$. This is exactly
the well--separatedness in Definition \ref{df31}.

\smallskip
\textbf{Step 3: Ends $\leftrightarrow$ accumulation points.}
Since $G$ is dense in $|G|$ and $\widehat\phi$ is continuous, we have
\[
\overline{\phi(G)}=\widehat\phi(|G|).
\]
Thus
\[
\mathcal{A}=\overline{\phi(G)}\setminus \phi(G)
=\widehat\phi(|G|)\setminus \widehat\phi(G)
=\widehat\phi\!\big(|G|\setminus G\big)
=\widehat\phi(\Omega(G)).
\]
Because $\widehat\phi$ is injective, $\Omega(G)\to \mathcal{A}$ is bijective. This proves (ii).
\end{proof}

The Freudenthal embedding plays an essential role in this paper. In particular,
the following two lemmas show that the existence of an infinite percolation
cluster yields an end with a nested family of infinite subclusters, and hence,
under the Freudenthal embedding, an accumulation point $a\in\mathbb{S}^2$ at
which every neighbourhood contains an infinite subcluster.

\begin{lemma}\label{lem:end-ray}
Let $G=(V,E)$ be an infinite, connected, locally finite graph, and let $\xi$
be an infinite connected subgraph of $G$. Then there exists an end
$e\in\Omega(G)$ and, for every finite vertex set $K\subset V$, an infinite
connected subgraph
\[
  U(K) \subset e(K)\cap\xi
\]
such that
\[
  K_0 \subseteq K \quad\Longrightarrow\quad U(K) \subseteq U(K_0).
\]
\end{lemma}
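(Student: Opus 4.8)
The plan is to produce the end $e$ and the nested family $\bigl(U(K)\bigr)_{K}$ \emph{simultaneously} from a single ray obtained by K\"onig's lemma, applied to a tree whose nodes are the infinite components left by deleting finite vertex sets from $\xi$. This way the ray does double duty: its terms are a compatible system of components (giving the end) and at the same time a descending chain of infinite subgraphs of $\xi$ (giving the sets $U(K)$).

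\emph{Setup and tree.} Since $G$ is infinite, connected and locally finite it is countable, so first I would fix a finite exhaustion $\varnothing=K_0\subseteq K_1\subseteq K_2\subseteq\cdots$ of $V$ with $\bigcup_n K_n=V$ (e.g.\ balls about a base vertex). Two routine observations: for any finite $K\subset V$ the graph $\xi\setminus K$ has only finitely many components (at most $\sum_{v\in K\cap V(\xi)}\deg_\xi(v)<\infty$, since $\xi$ is connected and locally finite), while $\xi\setminus K$ is still infinite, so at least one of these components is infinite; moreover each component of $\xi\setminus K$ sits inside a single component of $G\setminus K$. Let $A_n$ be the finite, nonempty set of infinite components of $\xi\setminus K_n$ (so $A_0=\{\xi\}$), and make $T=\bigsqcup_n A_n$ a rooted tree by declaring, for $W\in A_n$ with $n\ge 1$, the parent of $W$ to be the component of $\xi\setminus K_{n-1}$ containing $W$ (which is infinite, as it contains $W$, hence lies in $A_{n-1}$). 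Every $W\in A_n$ has at least one child: writing $K_{n+1}=K_n\cup\{v_{n+1}\}$, the graph $W\setminus\{v_{n+1}\}$ has finitely many components but infinitely many vertices, so it has an infinite component, which lies in $A_{n+1}$ with parent $W$. Thus $T$ is infinite and locally finite, and K\"onig's lemma yields a ray $\xi=W_0\supseteq W_1\supseteq W_2\supseteq\cdots$ with $W_n\in A_n$ for all $n$.

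\emph{Extraction.} For a finite $K\subset V$ set $n(K):=\min\{n:K\subseteq K_n\}$, which is monotone in $K$, define $U(K):=W_{n(K)}$, and let $e(K)$ be the component of $G\setminus K$ containing $U(K)$; this is legitimate because $U(K)\subseteq\xi\setminus K_{n(K)}$ is connected and disjoint from $K$. Then $U(K)$ is an infinite connected subgraph of $\xi$ with $U(K)\subseteq e(K)\cap\xi$. Monotonicity of $U$ is immediate: $K_0\subseteq K$ forces $n(K_0)\le n(K)$, hence $U(K)=W_{n(K)}\subseteq W_{n(K_0)}=U(K_0)$. Finally $e$ is an end in the sense of Definition~\ref{df21}: for $K_0\subseteq K$ we have $U(K)\subseteq U(K_0)\subseteq e(K_0)$, and since $e(K)$ is connected, disjoint from $K_0$, and meets $e(K_0)$ in $U(K)$, we conclude $e(K)\subseteq e(K_0)$, which is exactly the required consistency.

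\emph{Main obstacle.} There is no genuine analytic difficulty here; the only point demanding care is the bookkeeping that ties the single K\"onig ray to both roles at once, i.e.\ making sure the chosen components form a legitimate end \emph{and} that $U(K)\subseteq e(K)\cap\xi$ with $U$ descending. The crucial structural check is that the tree $T$ never dies out, and this reduces to the simple observation that passing from $K_n$ to $K_{n+1}$ removes only one extra vertex, so every infinite $\xi$-component survives with an infinite child; once that is in place, everything else is formal.
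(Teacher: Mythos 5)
Your argument is correct. It differs in mechanics from the paper's proof: the paper simply extracts an infinite self-avoiding path $(z_n)$ inside $\xi$ (possible by local finiteness) and uses its tail as a single anchor, defining $e(K)$ as the component of $G\setminus K$ containing the tail and $U(K)$ as the infinite component of $e(K)\cap\xi$ containing the tail, with nesting read off from the tail directly for arbitrary finite $K$. You instead fix an exhaustion $(K_n)$, build the tree of infinite components of $\xi\setminus K_n$, apply K\"onig's lemma to get a descending chain $(W_n)$, and then reduce an arbitrary finite $K$ to the exhaustion via $n(K)$, setting $U(K)=W_{n(K)}$ and letting $e(K)$ be the $G\setminus K$--component containing it; the verification that $e$ is consistent and $U$ is nested is then formal, exactly as you say. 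The trade-off: the paper's ray-tail argument is shorter and needs no exhaustion or finite-components lemma, while your component-tree argument avoids choosing a ray and makes the ``coherent choice of infinite components'' explicit, at the cost of the bookkeeping through $n(K)$. Two cosmetic points you may want to clean up: your bound ``at most $\sum_{v\in K\cap V(\xi)}\deg_\xi(v)$ components'' fails in the trivial case $K\cap V(\xi)=\varnothing$ (where $\xi\setminus K=\xi$ has one component), though only finiteness is used; and you first suggest taking the $K_n$ to be balls but later write $K_{n+1}=K_n\cup\{v_{n+1}\}$ --- either choice works (child existence only needs that $W$ minus a finite set has an infinite component), but the text should commit to one.
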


\begin{proof}
Since $\xi$ is infinite and connected and $G$ is locally finite, $\xi$
contains an infinite self-avoiding path $(z_n)_{n\ge1}$.

For each finite $K\subset V$ there exists $N(K)$ such that
$\{z_n:n\ge N(K)\}\cap K=\varnothing$. Define $e(K)$ to be the (necessarily
infinite) connected component of $G\setminus K$ that contains the tail
$\{z_n:n\ge N(K)\}$. If $K_0\subseteq K$, then $G\setminus K\subseteq G\setminus K_0$,
so $e(K)$ is a connected subgraph of $G\setminus K_0$; moreover, for all large
$n$ the tail of $(z_n)$ lies in both $e(K)$ and $e(K_0)$. Since $e(K_0)$ is the
unique connected component of $G\setminus K_0$ containing this tail, we must
have $e(K)\subseteq e(K_0)$. Thus $e$ satisfies Definition~\ref{df21} and is
an end of $G$.

For each finite $K\subset V$, the set $\{z_n:n\ge N(K)\}$ is an infinite
connected subset of $e(K)\cap\xi$, so $e(K)\cap\xi$ has an infinite connected
component; denote by $U(K)$ the infinite connected component of $e(K)\cap\xi$
containing this tail. If $K_0\subseteq K$, then $e(K)\subseteq e(K_0)$, hence
$e(K)\cap\xi\subseteq e(K_0)\cap\xi$ and in particular
$\{z_n:n\ge N(K)\}\subset U(K_0)$. Since $U(K)$ is the infinite connected
component of $e(K)\cap\xi$ containing the same tail, we have
$U(K)\subseteq U(K_0)$. This is the desired nesting.
\end{proof}

\begin{lemma}\label{lm27}
Let $G=(V,E)$ be an infinite, connected, locally finite planar graph, and let
$\phi:G\to\mathbb{S}^2$ be a Freudenthal embedding as in
Proposition~\ref{th52}. Let $\xi$ be an infinite connected subgraph of $G$.
Then there exists an accumulation point $a\in\mathbb{S}^2$ of $\phi(G)$ such
that for every open neighbourhood $U\subset\mathbb{S}^2$ of $a$, the set
$U\cap\phi(\xi)$ contains an infinite connected subgraph.
\end{lemma}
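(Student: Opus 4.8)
The plan is to reduce everything to Lemma~\ref{lem:end-ray} and transport the resulting data to $\mathbb S^2$ through the topological embedding $\widehat\phi:|G|\hookrightarrow\mathbb S^2$ of Proposition~\ref{th52}, using that the basic open neighbourhoods of an end in the Freudenthal compactification $|G|$ are exactly the sets $C_\epsilon(S,\omega)$ of \eqref{dfce}.

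\medskip
First I would apply Lemma~\ref{lem:end-ray} to the infinite connected subgraph $\xi$. This produces an end $e\in\Omega(G)$ together with, for every finite $K\subset V$, an infinite connected subgraph $U(K)\subseteq e(K)\cap\xi$ satisfying the nesting $K_0\subseteq K\Rightarrow U(K)\subseteq U(K_0)$. I then set $a:=\widehat\phi(e)$. By Proposition~\ref{th52} we have $\mathcal A=\widehat\phi(\Omega(G))$, so $a\in\mathcal A$ is an accumulation point of $\phi(G)$ (alternatively this will also follow a posteriori, since every neighbourhood of $a$ will be shown to contain an infinite connected subgraph and hence to meet infinitely many edges).

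\medskip
Next, fix an arbitrary open neighbourhood $U\subset\mathbb S^2$ of $a$. Since $\widehat\phi$ is a homeomorphism onto its image, $\widehat\phi^{-1}(U)$ is an open neighbourhood of $e$ in $|G|$, so by the definition of the topology on $|G|$ there exist a finite set $S\subset V$ and $\epsilon>0$ with $C_\epsilon(S,e)\subseteq\widehat\phi^{-1}(U)$, i.e.\ $\widehat\phi\bigl(C_\epsilon(S,e)\bigr)\subseteq U$. I would then verify the point-set inclusions: because $e(S)$ is an \emph{induced} component of $G\setminus S$ and $U(S)\subseteq e(S)\cap\xi$, any edge of $\xi$ with both endpoints in $e(S)$ is itself an edge of $e(S)$, so $U(S)$ has no edge leaving $e(S)$; hence the point set $V(U(S))\cup\mathring E(U(S))$ is contained in the point set of $e(S)$, and therefore in $C_\epsilon(S,e)$. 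Since $\phi=\widehat\phi\!\upharpoonright_{|G|\setminus\Omega(G)}$ and $U(S)$ contains no ends, this gives
\[
\phi(U(S))=\widehat\phi(U(S))\subseteq\widehat\phi\bigl(C_\epsilon(S,e)\bigr)\subseteq U,
\]
while $U(S)\subseteq\xi$ gives $\phi(U(S))\subseteq\phi(\xi)$. Finally, $U(S)$ is an infinite connected subgraph of $G$ and $\phi$ restricted to $G$ is a homeomorphism onto its image (Proposition~\ref{th52}(i)), so $\phi(U(S))$ is an infinite connected subgraph of $\phi(\xi)$ contained in $U$. As $U$ was arbitrary, this is precisely the assertion of the lemma.

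\medskip
I do not expect a genuine obstacle: the substance lies entirely in Lemma~\ref{lem:end-ray} (production of the end together with the nested infinite subgraphs inside the corresponding components) and in the homeomorphism statement of Proposition~\ref{th52}. The only point requiring care is the bookkeeping between a subgraph $H\subseteq G$ and its associated point set $V(H)\cup\mathring E(H)\subseteq|G|$, and in particular the verification that $C_\epsilon(S,e)$ really contains the point set of $U(S)$ — which, as noted above, uses that $e(S)$ is an induced subgraph so that $U(S)$ cannot use an edge with an endpoint outside $e(S)$.
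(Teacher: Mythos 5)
Your proposal is correct and follows essentially the same route as the paper's proof: invoke Lemma~\ref{lem:end-ray} to obtain the end $e$ and the nested infinite subgraphs $U(K)\subseteq e(K)\cap\xi$, set $a:=\widehat\phi(e)$, and use that the images of the basic Freudenthal neighbourhoods $C_\epsilon(S,e)$ form a neighbourhood basis of $a$ to place $\phi(U(S))$ inside an arbitrary neighbourhood $U$. Your extra check that the point set of $U(S)$ lies in $C_\epsilon(S,e)$ (via $e(S)$ being induced) is a slightly more careful version of the paper's implicit step and is fine.
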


\begin{proof}
Apply Lemma~\ref{lem:end-ray} to obtain an end $e\in\Omega(G)$ and, for every
finite $K\subset V(G)$, an infinite connected subgraph
\[
  U(K) \subset e(K)\cap\xi
\]
with the nesting property $K_0\subseteq K \Rightarrow U(K)\subseteq U(K_0)$.

By Proposition~\ref{th52}\,(ii), there exists a topological embedding
$\widehat\phi:|G|\hookrightarrow\mathbb{S}^2$ extending $\phi$ such that
\[
  \overline{\phi(G)}=\widehat\phi(|G|)\qquad\text{and}\qquad
  \mathcal{A}=\widehat\phi(\Omega(G)),
\]
where $\mathcal{A}$ is the set of accumulation points of $\phi(G)$ in the sense of
Definition~\ref{df31}. In particular, the map
\[
  \Omega(G)\to \mathcal{A},\qquad \omega\mapsto \widehat\phi(\omega),
\]
is a bijection. Let $a:=\widehat\phi(e)\in \mathcal{A}$. Then $a$ is an accumulation
point of $\phi(G)$.

Since $\widehat\phi$ is a homeomorphism from $|G|$ onto its image
$\widehat\phi(|G|)\subset\mathbb{S}^2$, the images of the basic Freudenthal
neighbourhoods at $e$ form a neighbourhood basis of $a$ in $\mathbb{S}^2$.
By the definition of the Freudenthal topology (see the description of the
sets $C_\varepsilon(S,\omega)$ above), this means that for every open
neighbourhood $U\subset\mathbb{S}^2$ of $a$ there exist a finite set
$S\subset V(G)$ and $\varepsilon>0$ such that
\[
  \widehat\phi\bigl(C_\varepsilon(S,e)\bigr) \subset U,
\]
and in particular
\[
  \phi\bigl(e(S)\bigr) \subset U,
\]
since $e(S)$ is a subset of $C_\varepsilon(S,e)$.

Fix such an $S$ and write $K:=S$. By Lemma~\ref{lem:end-ray}, $U(K)$ is an
infinite connected subgraph contained in $e(K)\cap\xi$. Applying $\phi$ and
using that $\phi$ is a homeomorphism onto its image on $G$, we obtain that
$\phi\bigl(U(K)\bigr)$ is an infinite connected subgraph of $\phi(G)$
contained in
\[
  \phi\bigl(e(K)\cap\xi\bigr) \subset \phi\bigl(e(K)\bigr)\cap\phi(\xi)
  \subset U\cap\phi(\xi).
\]
Thus every open neighbourhood $U$ of $a$ in $\mathbb{S}^2$ meets $\phi(\xi)$
in an infinite connected subgraph, as claimed.
\end{proof}

\section{End--equivalence classes and $F$--connectivity thresholds}
\label{sec3}

Fix a well--separated planar embedding of $G$ into $\mathbb S^2$.
We introduce a cycle--separation equivalence on the accumulation set: two accumulation points (ends)
are equivalent if no cycle of $G$ separates them on the sphere.
This produces the family $\mathcal F(G)$ of end--equivalence classes, each of which is compact.
For every $F\in\mathcal F(G)$ we then define the corresponding $F$--connectivity threshold
$p^{\mathrm{site}}_{c,F}(G)$ governing percolative connections aimed at $F$.
The remainder of the paper splits according to whether $\mathcal F(G)$ is countable or uncountable,
since these regimes exhibit different coexistence behavior.

\begin{definition}[Cycle]
Let $G=(V,E)$ be a planar graph. A \emph{cycle} in $G$ is a finite sequence of distinct
vertices $(v_0,\ldots,v_{k-1})$ with $k\ge 3$ such that
$\{v_i,v_{i+1}\}\in E$ for $i=0,\ldots,k-2$ and $\{v_{k-1},v_0\}\in E$.
Equivalently, a cycle is a finite connected $2$-regular subgraph of $G$.

If $G$ is equipped with a planar embedding $\phi\colon G\hookrightarrow \mathbb S^2$,
we also identify a cycle with its geometric image
$\phi(v_0v_1\cdots v_{k-1}v_0)$, which is a simple closed curve in $\mathbb S^2$
consisting of finitely many edges of $G$.
\end{definition}

\begin{definition}[Equivalence classes of accumulation points]\label{df52}
Let $G=(V,E)$ be a connected, locally finite infinite graph and let
$\phi:G\hookrightarrow\mathbb S^2$ be a well--separated embedding.  
Let $A(\phi)$ denote the set of accumulation points of $\phi(G)$
(as in Definition~\ref{df31}).

We say that a cycle $C$ of $G$ \emph{separates} two points
$x,y\in \mathcal{A}(\phi)$ if $C\cap A(\phi)=\varnothing$ and $x,y$ lie in distinct
connected components of $\mathbb S^2\!\setminus C$.
Define a relation $\sim_\phi$ on $A(\phi)$ by
\[
x\sim_\phi y
\quad\Longleftrightarrow\quad
\text{no cycle of $G$ separates $x$ and $y$ in the above sense.}
\]
Equivalently, for every cycle $C$ with $C\cap A(\phi)=\varnothing$,
the points $x$ and $y$ belong to the same connected component of
$\mathbb S^2\!\setminus C$.
\end{definition}

\begin{lemma}\label{lem:equiv}
The relation $\sim_\phi$ on $A(\phi)$ is an equivalence relation.
\end{lemma}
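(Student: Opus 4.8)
The plan is to verify the three axioms of an equivalence relation, with reflexivity and symmetry immediate from the shape of the definition and transitivity the only substantive point.

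For reflexivity, observe that for any $x\in A(\phi)$ and any cycle $C$ of $G$ with $C\cap A(\phi)=\varnothing$, the point $x$ lies in a single connected component of $\mathbb S^2\setminus C$, so $C$ does not separate $x$ from itself; hence $x\sim_\phi x$. For symmetry, the condition ``$x$ and $y$ lie in distinct connected components of $\mathbb S^2\setminus C$'' appearing in Definition~\ref{df52} is symmetric in $x$ and $y$, so no cycle separates $x$ from $y$ if and only if no cycle separates $y$ from $x$; thus $x\sim_\phi y\iff y\sim_\phi x$.

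For transitivity, suppose $x\sim_\phi y$ and $y\sim_\phi z$, and argue by contradiction assuming $x\not\sim_\phi z$. Then there is a cycle $C$ of $G$ with $C\cap A(\phi)=\varnothing$ such that $x$ and $z$ lie in distinct connected components of $\mathbb S^2\setminus C$. Here $C$ (identified with its geometric image under $\phi$) is a simple closed curve in $\mathbb S^2$, being a finite concatenation of edge arcs meeting only at the cycle's vertices; by the Jordan curve theorem on the sphere, $\mathbb S^2\setminus C$ has exactly two connected components, say $D_1$ and $D_2$, and after relabeling we may take $x\in D_1$ and $z\in D_2$. Since $y\in A(\phi)$ and $C\cap A(\phi)=\varnothing$, we have $y\in\mathbb S^2\setminus C$, so $y\in D_1$ or $y\in D_2$. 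If $y\in D_2$, then $C$ separates $x$ and $y$ (the curve avoids $A(\phi)$ and the two points lie in distinct components), contradicting $x\sim_\phi y$; if $y\in D_1$, then $C$ separates $y$ and $z$, contradicting $y\sim_\phi z$. Either way we reach a contradiction, so $x\sim_\phi z$.

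The only genuinely topological ingredient is the ``exactly two components'' clause of the Jordan curve theorem on $\mathbb S^2$, which is precisely what makes the dichotomy ``$y\in D_1$ or $y\in D_2$'' exhaustive; everything else is bookkeeping, so I do not anticipate a real obstacle. The one point to be careful about is to keep the notion of ``$C$ separates'' consistent with Definition~\ref{df52}---in particular that the constraint $C\cap A(\phi)=\varnothing$ is part of the definition of separation---so that the two contradictions obtained above are genuinely contradictions with the hypotheses $x\sim_\phi y$ and $y\sim_\phi z$ as stated.
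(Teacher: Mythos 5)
Your proposal is correct and follows essentially the same route as the paper: reflexivity and symmetry from the definition, and transitivity by contradiction with a case split on which component of $\mathbb S^2\setminus C$ contains $y$; your explicit appeal to the Jordan curve theorem only spells out what the paper leaves implicit when it speaks of ``the two components.''
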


\begin{proof}
Reflexivity and symmetry are immediate from the definition.

For transitivity, let $x,y,z\in A(\phi)$ with $x\sim_\phi y$ and $y\sim_\phi z$.
Suppose, towards a contradiction, that $x\not\sim_\phi z$. Then there exists a
cycle $C$ with $C\cap A(\phi)=\varnothing$ such that $x$ and $z$ lie in distinct
components of $\mathbb S^2\!\setminus C$. Since $C$ is disjoint from $A(\phi)$,
in particular $y\notin C$; hence $y$ lies in one of the two components of
$\mathbb S^2\!\setminus C$. If $y$ is in the component of $x$, then $C$ separates
$y$ from $z$, contradicting $y\sim_\phi z$. If $y$ is in the component of $z$,
then $C$ separates $x$ from $y$, contradicting $x\sim_\phi y$. Thus no such $C$
exists and $x\sim_\phi z$.
\end{proof}

\begin{lemma}\label{le64}
Let $G$ be connected and locally finite, and $\phi:G\hookrightarrow\mathbb S^2$ a well–separated embedding. 
Let $\sim_\phi$ be the cycle–separation equivalence on the accumulation set $\mathcal{A}(\phi)$ as in Definition~\ref{df52}.
For every equivalence class $[x]\subset \mathcal{A}(\phi)$, the subset $[x]$ is closed in $\mathbb S^2$. 
Consequently $[x]$ is compact.
\end{lemma}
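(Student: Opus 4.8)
The plan is to prove that $\mathcal{A}(\phi)\setminus[x]$ is relatively open in $\mathcal{A}(\phi)$, and then to promote this to closedness in $\mathbb S^2$ using that $\mathcal{A}(\phi)$ is itself closed in $\mathbb S^2$; compactness will then be immediate.

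First I would record that $\mathcal{A}(\phi)$ is closed in $\mathbb S^2$. Indeed, by Proposition~\ref{th52}(ii) we have $\mathcal{A}(\phi)=\widehat\phi(\Omega(G))$, and $\Omega(G)=|G|\setminus G$ is closed in the compact Hausdorff space $|G|$ (the point set of $G$ being open in $|G|$, as it is covered by open stars and edge-interior opens); hence $\Omega(G)$ is compact, so its continuous image $\mathcal{A}(\phi)$ is compact and therefore closed in $\mathbb S^2$. (Alternatively one checks directly from Definition~\ref{df31} that a limit of accumulation points is again an accumulation point, since a neighbourhood of the limit contains a neighbourhood of some nearby accumulation point and hence meets infinitely many edges.)

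Next comes the core step. Fix $y\in\mathcal{A}(\phi)$ with $y\notin[x]$, i.e.\ $y\not\sim_\phi x$. By Definition~\ref{df52} there is a cycle $C$ of $G$ with $C\cap\mathcal{A}(\phi)=\varnothing$ such that $x$ and $y$ lie in distinct connected components of $\mathbb S^2\setminus C$. Since $\phi$ restricted to $G$ is a genuine planar embedding (Proposition~\ref{th52}(i)), the geometric image of $C$ is a simple closed curve in $\mathbb S^2$, so by the Jordan curve theorem $\mathbb S^2\setminus C$ has exactly two components, both open; let $D_y$ be the one containing $y$. Then $D_y$ is an open neighbourhood of $y$ in $\mathbb S^2$. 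I claim $D_y\cap\mathcal{A}(\phi)\subseteq\mathcal{A}(\phi)\setminus[x]$: if $z\in D_y\cap\mathcal{A}(\phi)$, then $z\notin C$ (because $C\cap\mathcal{A}(\phi)=\varnothing$), so $z$ lies in $D_y$, i.e.\ in the component of $y$, which is distinct from the component of $x$; hence the \emph{same} cycle $C$ separates $x$ and $z$ in the sense of Definition~\ref{df52}, so $z\not\sim_\phi x$, that is $z\notin[x]$. Thus $D_y\cap\mathcal{A}(\phi)$ is a relatively open neighbourhood of $y$ in $\mathcal{A}(\phi)$ disjoint from $[x]$, which shows $\mathcal{A}(\phi)\setminus[x]$ is open in $\mathcal{A}(\phi)$, i.e.\ $[x]$ is closed in $\mathcal{A}(\phi)$. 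Combining with the previous paragraph, $[x]$ is closed in $\mathbb S^2$, and being a closed subset of the compact space $\mathbb S^2$ it is compact.

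I do not expect a genuine obstacle here; the argument is elementary point-set topology once the Jordan curve theorem is invoked. The only points requiring care are that the separating cycle in Definition~\ref{df52} is required to be \emph{disjoint from} $\mathcal{A}(\phi)$ — this is exactly what allows the single Jordan domain $D_y$ to serve both as a sphere-neighbourhood of $y$ and as a set whose intersection with $\mathcal{A}(\phi)$ avoids $[x]$ — and that closedness of $\mathcal{A}(\phi)$ in $\mathbb S^2$ is needed in order to pass from ``closed in the subspace $\mathcal{A}(\phi)$'' to ``closed in $\mathbb S^2$''.
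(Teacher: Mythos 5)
Your proof is correct and follows essentially the same route as the paper's: both take a point of $\mathcal{A}(\phi)$ outside $[x]$, invoke the separating cycle $C$ with $C\cap\mathcal{A}(\phi)=\varnothing$, and use the openness of the two components of $\mathbb S^2\setminus C$ to conclude that every nearby accumulation point is separated from $x$ by the \emph{same} cycle, hence also lies outside $[x]$. The only difference is that you explicitly verify that $\mathcal{A}(\phi)$ is closed in $\mathbb S^2$ before upgrading ``closed in $\mathcal{A}(\phi)$'' to ``closed in $\mathbb S^2$''; the paper passes over this point silently, so your extra care fills in a small but genuine detail rather than changing the argument.
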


\begin{proof}
Let $(y_n)\subset [x]$ with $y_n\to y\in \mathcal{A}(\phi)$. Suppose $y\notin [x]$. 
Then, by definition of $\sim_\phi$, there exists a cycle $C$ with $C\cap \mathcal{A}(\phi)=\varnothing$ such that $x$ and $y$ lie in different components of $\mathbb S^2\setminus C$. The two components are open; hence for all large $n$, $y_n$ lies in the same component as $y$, and thus $C$ separates $x$ from $y_n$. This contradicts $y_n\in[x]$. Hence $y\in[x]$, so $[x]$ is closed in $\mathcal{A}(\phi)$, in particular in $\mathbb S^2$. As $\mathbb S^2$ is compact, $[x]$ is compact.
\end{proof}

Let $G$ be an infinite, connected, locally finite planar graph. We identify $G$ with its Freudenthal embedding into $\mathbb{S}^2$.

\begin{definition}Let
\begin{align*}
\mathcal{F}:=\mathcal{A}/\sim_{\phi}
\end{align*}
be the set of equivalent end classes.

\end{definition}

\begin{definition}\label{df36}For each $F\in \mathcal{F}$, define
\begin{align*}
p_{c,F}^{site}(G):=\inf\{p:\PP_p(\cup_{v\in V}\cup_{a\in F}\{v\leftrightarrow a\})=1\}.
\end{align*}
Here $v\leftrightarrow a$ means there exists an infinite self-avoiding path
\begin{align*}
z_0(=v),z_1,z_2,\ldots,z_n,\ldots
\end{align*}
such that 
\begin{align*}
\lim_{n\rightarrow\infty}z_n=a.
\end{align*}
This is possible by Lemma \ref{lm27}.
\end{definition}

\begin{definition}We call $a\in \mathcal{A}$ active if $v\leftrightarrow a$ occurs or some $v\in V$. We call $F\in \mathcal{F}$ active if there exists $a\in F$ which is active.
\end{definition}

The following cases might occur
\begin{enumerate}
\item $1-p>\inf_{F\in \mathcal{F}}p_{c,F}^{site}(G)$;
\item 
\begin{align}
\pcs(G)<1-p\leq \inf_{F\in \mathcal{F}}p_{c,F}^{site}(G)\label{cs}
\end{align}
\end{enumerate}

\begin{lemma} (\ref{cs}) occurs only when $\mathcal{F}$ is uncountable. Moreover, when (\ref{cs}) holds, for any countable subset $\mathcal{F}_0\subset \mathcal{F}$;
\begin{align}
\PP_p(\cup_{F\in \mathcal{F}\setminus \mathcal{F}_0}\cup_{v\in V}\cup_{a\in F}\{v\xleftrightarrow{0} a\})=1.\label{cd2}
\end{align}
\end{lemma}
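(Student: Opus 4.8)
\medskip
\noindent\textbf{Proof plan.}
The plan is to reduce both assertions to a single device that exploits the \emph{strict} inequality $\pcs(G)<1-p$ appearing in \eqref{cs}. Assume \eqref{cs} holds. I would fix an auxiliary parameter $q'\in\bigl(\pcs(G),\,1-p\bigr)$ (a nonempty interval) and set $p':=1-q'$, so that $p<p'$; the closed vertices (the $0$-vertices) under $\PP_{p'}$ then form a \emph{supercritical} Bernoulli$(q')$ site percolation, since $q'>\pcs(G)$, while at the same time $q'<1-p\le\inf_{F\in\mathcal F}p_{c,F}^{site}(G)$, hence $q'<p_{c,F}^{site}(G)$ for \emph{every} $F\in\mathcal F$. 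Passing from $p$ to $p'$ is precisely what makes all the relevant inequalities strict, so that borderline behaviour at a critical value never enters.

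Writing $D_F:=\bigcup_{v\in V}\bigcup_{a\in F}\{v\xleftrightarrow{0}a\}$ for $F\in\mathcal F$, I would record two facts. First, for any $r$ with $1-r>\pcs(G)$ there is $\PP_r$--a.s.\ an infinite $0$-cluster, which, by Lemma~\ref{lem:end-ray} followed by the argument of Lemma~\ref{lm27}, contains an infinite self-avoiding path of closed vertices converging to an accumulation point $a\in\mathcal A$; conversely every configuration lying in some $D_F$ has an infinite $0$-cluster. Since $\mathcal F$ partitions $\mathcal A$, this gives the equality of events
\[
\bigcup_{F\in\mathcal F}D_F\;=\;\{\text{there is an infinite $0$-cluster}\},
\qquad\text{hence}\qquad
\PP_r\Bigl(\ \bigcup_{F\in\mathcal F}D_F\ \Bigr)=1\ \text{ whenever } 1-r>\pcs(G).
\]
Second, each $D_F$ is a tail event (a tail of a self-avoiding path converging to $a$ is again one and avoids any prescribed finite vertex set), so $\PP_r(D_F)\in\{0,1\}$ by Kolmogorov's $0$--$1$ law; since $D_F$ is increasing in the closed vertices, Definition~\ref{df36} then forces $\PP_r(D_F)=0$ as soon as $1-r<p_{c,F}^{site}(G)$. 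In particular $\PP_{p'}(D_F)=0$ for every $F\in\mathcal F$.

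For the first assertion, if \eqref{cs} held with $\mathcal F$ countable, then countable subadditivity would give $\PP_{p'}\bigl(\bigcup_{F\in\mathcal F}D_F\bigr)\le\sum_{F\in\mathcal F}\PP_{p'}(D_F)=0$, contradicting the displayed equality above (which applies since $1-p'=q'>\pcs(G)$); hence \eqref{cs} forces $\mathcal F$ to be uncountable. For the ``moreover'' part, fix a countable $\mathcal F_0\subseteq\mathcal F$. Countable subadditivity gives $\PP_{p'}\bigl(\bigcup_{F\in\mathcal F_0}D_F\bigr)=0$, and combining this with $\PP_{p'}\bigl(\bigcup_{F\in\mathcal F}D_F\bigr)=1$ and the decomposition $\bigcup_{F\in\mathcal F}D_F=\bigl(\bigcup_{F\in\mathcal F\setminus\mathcal F_0}D_F\bigr)\cup\bigl(\bigcup_{F\in\mathcal F_0}D_F\bigr)$ yields $\PP_{p'}\bigl(\bigcup_{F\in\mathcal F\setminus\mathcal F_0}D_F\bigr)=1$. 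Finally $\bigcup_{F\in\mathcal F\setminus\mathcal F_0}D_F$ is increasing in the closed vertices, and in the standard monotone coupling of $\PP_p$ and $\PP_{p'}$ the $\PP_p$-configuration has pointwise at least as many closed vertices (since $p<p'$), so $\PP_p\bigl(\bigcup_{F\in\mathcal F\setminus\mathcal F_0}D_F\bigr)\ge\PP_{p'}\bigl(\bigcup_{F\in\mathcal F\setminus\mathcal F_0}D_F\bigr)=1$, which is \eqref{cd2}.

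The step I expect to be the real obstacle is the criticality boundary. A direct argument at the given parameter $1-p$, rather than at $q'<1-p$, would break down if $1-p=p_{c,F}^{site}(G)$ for some $F$ while the $F$-connectivity event turns on \emph{exactly} at its threshold: then $\PP_p(D_F)=1$ and countable subadditivity at level $p$ is useless. Slipping in the intermediate parameter $q'$, where every inequality is strict, and re-ascending to level $p$ by monotonicity, is the way around this. The remaining points are routine: measurability of the events (immediate for $\bigcup_{F\in\mathcal F}D_F$, which equals the ``infinite $0$-cluster'' event, and for the countable union $\bigcup_{F\in\mathcal F_0}D_F$), and the passage from an infinite $0$-cluster to a self-avoiding closed path converging to an accumulation point, which is exactly the content of Lemmas~\ref{lem:end-ray} and~\ref{lm27}.
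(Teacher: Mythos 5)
Your proof is correct and follows essentially the same route as the paper: pass to an auxiliary parameter strictly between $\pcs(G)$ and $\inf_F p_{c,F}^{\mathrm{site}}(G)$, where each $F$-connection event has probability $0$ by tail triviality while infinite $0$-clusters exist a.s.\ and (via Lemmas~\ref{lem:end-ray} and~\ref{lm27}) must accumulate at an end of some class, giving both the uncountability of $\mathcal F$ and \eqref{cd2}. The only cosmetic difference is that you apply the monotone-coupling/stochastic-domination step uniformly to return to the original $p$, whereas the paper invokes it only for the boundary case $1-p=\inf_F p_{c,F}^{\mathrm{site}}(G)$.
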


\begin{proof}Note that (\ref{cs}) implies that $\pcs(G)< \inf_{F\in \mathcal{F}}p_{c,F}^{site}(G)$. Then there exists $p\in (0,1)$ with $\pcs(G)<1-p< \inf_{F\in \mathcal{F}}p_{c,F}^{site}(G)$. For each $F\in \mathcal{F}$,
\begin{align}
\PP_p(\cup_{v\in V}\cup_{a\in F}\{v\xleftrightarrow{0} a\})=0.\label{cd1}
\end{align}
If $\mathcal{F}$ is countable, then (\ref{cd1}) and Lemma \ref{lm27} imply a.s.~there are no infinite 0-clusters. This contradicts the fact that $1-p>\pcs(G)$; then $F$ must be uncountable and (\ref{cd2}) holds when $\pcs(G)<1-p< \inf_{F\in \mathcal{F}}p_{c,F}^{site}(G)$ follows.

When $1-p=\inf_{F\in \mathcal{F}}p_{c,F}^{site}(G)$, (\ref{cd2}) follows from stochastic domination.
\end{proof}

Let $\{v\xleftrightarrow{1} F\}=\cup_{a\in F}\{v\xleftrightarrow{1} a\}$.
Then the following cases might occur
\begin{enumerate}[label=(\alph*)]
\item Let $\mathcal{A}_1\subset \mathcal{A}$ consists of all the ends $a$ satisfying
\begin{align}
\PP_p(\cup_{v\in V}\{v\xleftrightarrow{1} a\})=1.\label{ccds}
\end{align}
then
\begin{align}
\PP_p(\cup_{a\in \mathcal{A}\setminus \mathcal{A}_1 }\cup_{v\in V}\{v\xleftrightarrow{1} a\})=1.\label{sst}
\end{align}

\item 
\begin{align}
\PP_p(\cup_{a\in \mathcal{A}\setminus \mathcal{A}_1 }\cup_{v\in V}\{v\xleftrightarrow{1} a\})=0.\label{sst1}
\end{align}
\end{enumerate}

\begin{lemma}\label{le76}Let $G=(V,E)$ be a planar, connected, locally finite graph, identified with its Freudenthal embedding into $\mathbb{S}^2$. Consider a tail trivial
site percolation process $\sigma$ on G. Let $\mathcal{N}_{\infty}$ be the number of infinite 1-clusters.
If
\begin{align}
\PP_p(1\leq \mathcal{N}_{\infty}<\infty)=1.\label{fi1}
\end{align} 
Then, there exists an accumulation point (or equivalently an end), $a\in \mathcal{A}$, such that a.s.~$\cup_{v\in V}\{v\leftrightarrow a\}$ occurs. Moreover, if $C_v$ is the 1-cluster at $v$ and $a\in \mathcal{A}$ such that $a\in \overline{C_v}$, then $\{v\leftrightarrow a\}$ occurs.
\end{lemma}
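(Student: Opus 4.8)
I would prove the second (``moreover'') assertion first, since it carries the structural content, and then derive the existence of a deterministic active accumulation point from tail triviality together with the finiteness of $\mathcal N_\infty$. For the ``moreover'' part, suppose $v$ is open, $C_v$ is its $1$-cluster, $a\in\mathcal A$, and $a\in\overline{C_v}$ (closure in $\mathbb S^2$). Since $a$ is an accumulation point, well-separatedness gives $a\notin\phi(G)$, so $C_v$ is infinite (a finite subgraph has compact, hence closed, image disjoint from $\mathcal A$). Let $\omega:=\widehat\phi^{-1}(a)\in\Omega(G)$ for the topological embedding $\widehat\phi\colon|G|\hookrightarrow\mathbb S^2$ of Proposition~\ref{th52}. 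Because $\widehat\phi(|G|)=\overline{\phi(G)}$ is closed in $\mathbb S^2$ and $\widehat\phi$ is a homeomorphism onto it, the condition $a\in\overline{\phi(C_v)}$ is equivalent to $\omega\in\overline{C_v}$ inside $|G|$, i.e.\ every basic Freudenthal neighbourhood $C_\epsilon(S,\omega)$ from~\eqref{dfce} meets the point set of $C_v$. A short case analysis of where such a meeting point can lie — a vertex of $\omega(S)$, an inner point of an edge of $\omega(S)$, or an inner point of an edge between $S$ and $\omega(S)$, the last two cases forcing both endpoints of that edge into $C_v$ — then yields: for every finite $S\subset V$ one has $\omega(S)\cap V(C_v)\neq\varnothing$.

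Next I would extract the ray. Fix a finite $S\subset V$. Local finiteness makes $C_v-S$ have only finitely many components; choosing, for finite sets $S'\supseteq S$ along an exhaustion of $V$, vertices of $C_v$ lying in $\omega(S')$ and applying the pigeonhole principle, one component of $C_v-S$ is infinite and contained in $\omega(S)$; call it $D(S)$. For $S_0\subseteq S$ the component of $C_v-S_0$ containing $D(S)$ again lies in $\omega(S_0)$, so the $D(S)$ form an inverse system of finite nonempty sets; K\"onig's lemma along an exhaustion $S_1\subseteq S_2\subseteq\cdots$ of $V$ produces nested infinite connected subgraphs $D_1\supseteq D_2\supseteq\cdots$ with $D_n\subseteq\omega(S_n)$. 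Picking $x_n\in D_n$ and concatenating an open path from $v$ to $x_1$ inside $C_v$ with, for each $n$, an open path from $x_n$ to $x_{n+1}$ inside $D_n$ (legitimate since $x_{n+1}\in D_{n+1}\subseteq D_n$), one obtains an open walk started at $v$ that visits every vertex finitely often, since its tail eventually lies in each $\omega(S_n)$; pruning this walk at last occurrences yields a self-avoiding open ray $(y_k)$ contained in $C_v$ with $y_k$ eventually in every $\omega(S_n)$, hence $\phi(y_k)\to a$. Thus $v\leftrightarrow a$. (This refines Lemma~\ref{lem:end-ray} by tracking the specific end $\omega$.) In particular $\{v\leftrightarrow a\}$ occurs precisely when $v$ is open and $a\in\overline{C_v}$.

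For the existence of a deterministic $a$, put $\mathcal A_{\mathrm{act}}:=\{a\in\mathcal A:\exists v\in V,\ v\leftrightarrow a\}$. By the previous paragraphs together with the trivial inclusion, $\mathcal A_{\mathrm{act}}=\bigcup_{C}(\overline{\phi(C)}\cap\mathcal A)$, the union over infinite $1$-clusters $C$. Since $\mathcal N_\infty\ge1$ a.s., Lemma~\ref{lm27} gives $\mathcal A_{\mathrm{act}}\neq\varnothing$ a.s.; since $\mathcal N_\infty<\infty$ a.s., $\mathcal A_{\mathrm{act}}$ is a.s.\ a finite union of closed subsets of the compact space $\mathbb S^2$, hence a.s.\ compact. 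Moreover $\mathcal A_{\mathrm{act}}$ is invariant under changing finitely many vertices of $\sigma$: a single flip only merges or splits finitely many clusters (local finiteness), and deleting finitely many vertices and edges from a subgraph leaves its accumulation set unchanged, since an accumulation point has strictly positive distance from every compact subset of $\phi(G)$; a short case check (clusters merging; an infinite cluster splitting into finitely many pieces) shows the union $\bigcup_C(\overline{\phi(C)}\cap\mathcal A)$ is preserved. Hence, for a fixed countable dense set $(a_i)\subset\mathbb S^2$, each $\dist(a_i,\mathcal A_{\mathrm{act}})$ is a real random variable for which $\{\dist(a_i,\mathcal A_{\mathrm{act}})<r\}$ says that some infinite $1$-cluster has infinitely many vertices in some closed ball contained in the open ball of radius $r$ about $a_i$ — a measurable, finite-change-invariant event, hence a tail event — so $\dist(a_i,\mathcal A_{\mathrm{act}})$ is a.s.\ a constant $c_i$. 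As the $1$-Lipschitz function $x\mapsto\dist(x,\mathcal A_{\mathrm{act}})$ is determined by its values on the dense set $(a_i)$, it a.s.\ equals a fixed deterministic function, whence $\mathcal A_{\mathrm{act}}$, being a.s.\ closed, a.s.\ equals a fixed nonempty compact set $K_0\subset\mathcal A$. Any $a\in K_0$ then satisfies $\PP_p(a\in\mathcal A_{\mathrm{act}})=1$, i.e.\ $\PP_p(\bigcup_{v\in V}\{v\leftrightarrow a\})=1$, which is the assertion.

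\textbf{Main difficulty.} The crux is the first two paragraphs: turning the topological statement ``$a\in\overline{C_v}$'' into an honest self-avoiding open ray converging to the \emph{specific} end $\omega$, which needs the K\"onig/inverse-limit bookkeeping and the ray-pruning rather than a bare appeal to Lemma~\ref{lem:end-ray}. In the last paragraph the only delicate point is the measurability of $\dist(a_i,\mathcal A_{\mathrm{act}})$, and the hypothesis $\mathcal N_\infty<\infty$ enters precisely to make $\mathcal A_{\mathrm{act}}$ closed, so that a deterministic point of its (a.s.\ constant) value is genuinely active — otherwise one controls only the closure of $\mathcal A_{\mathrm{act}}$.
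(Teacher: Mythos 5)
Your proposal is correct, but it takes a partly different route from the paper. For the existence of a deterministic active accumulation point, the paper simply invokes Lemma~2.3 of \cite{GHZ25}, which already supplies a (deterministic) $a\in\mathcal A$ with a.s.\ $a\in\overline{C_v}$ for some $v$; you instead re-derive this from scratch by showing that the random active set $\mathcal A_{\mathrm{act}}=\bigcup_{C}\bigl(\overline{\phi(C)}\cap\mathcal A\bigr)$ is invariant under finite modifications (hence governed by the tail $\sigma$-field), a.s.\ nonempty by Lemma~\ref{lm27}, and a.s.\ closed because $\mathcal N_\infty<\infty$, and then pinning it down via the a.s.\ constant distance functions $\dist(a_i,\mathcal A_{\mathrm{act}})$ over a countable dense set. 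This is a legitimate, self-contained alternative that makes explicit exactly where tail triviality and the finiteness hypothesis enter, at the cost of the measurability/invariance bookkeeping; the paper's citation is shorter but less transparent on these points. For the ``moreover'' clause both arguments use the same Freudenthal-neighbourhood mechanism: the paper notes that $a(K)\cap C_v$ contains an infinite component for every finite $K$ and concludes ``since $K$ is arbitrary,'' while you carry out the step it leaves implicit — the inverse-limit/K\"onig selection of nested infinite components $D_n\subseteq\omega(S_n)$ and the last-exit pruning of the concatenated open walk into a self-avoiding ray converging to the specific end — so your write-up is in effect a more detailed version of the paper's sketch rather than a different idea.
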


\begin{proof}For $v\in V$, let $C_v$ be the 1-cluster containing $v$. By Lemma 2.3 in \cite{GHZ25}, there exists $a\in A$, such that a.s.~$\cup_{v\in V}\{a\in \overline{C}_v\}$, where $C_v$ is the closure of the set $C_v\subset \mathbb{S}$. Under the Freudenthal embedding of $G$, each accumulation point $a\in A$ is identified with an end of $G$. More precisely, for any finite subset $K\subset V$, there is a unique infinite component of $G\setminus K$ corresponding to $a$, denoted by $a(K)$. By the definition of open sets (\ref{dfce}) and the fact that the Freudenthal embedding is a homeomorphism from $|G|$ to a subset of $\mathbb{S}^2$ (Lemma \ref{l23}), $a(K)\cap C_v$ contains infinitely many vertices since $a\in \overline{C}_v$.  It follows that $a(K)\cap C_v$ must contain an infinite component since $K$ is finite. Then the lemma follows since $K$ is arbitrary.
\end{proof}

\section{Boundary decomposition and alternating arm events}
\label{sec4}

This section develops the geometric primitives for the multi--arm exploration.
Given a finite connected vertex set $S$ and an end--equivalence class $F$, we consider the component
of $\mathbb S^2\setminus G_S$ containing $F$ and define an $F$--adapted boundary cycle
$\partial_F S$ by a wedge--neighbor enumeration followed by a pruning procedure that removes
redundant repetitions toward $F$.
The resulting cyclic sequence provides a canonical set of boundary arcs along which we define
alternating arm events in $S^c$ aimed at $F$.  These events encode the presence of many disjoint
corridors separating different regions of the complement and will be the topological input forcing
multiple infinite clusters.

\begin{definition}\label{df26}
Let \(G=(V,E)\) be an infinite, connected, locally finite planar graph, and let
\(S\subset V\) be finite with the induced subgraph \(G_S\) connected. Fix a well–separated Freudenthal
 embedding \(\phi:G\hookrightarrow\mathbb S^2\), and
identify \(G\) with \(\phi(G)\subset\mathbb S^2\). We also identify ends of $G$ with the accumulation points of the embedding $\phi$. Let $F\in \mathcal{F}$ be an end equivalent class.

Let $Q$ be the connected component of \(\mathbb S^2\!\setminus\!G_S\) 
containing $F$. Define
\begin{align}\label{dds}
S_Q\ :&=\ \Bigl\{\,u\in (V\setminus S)\cap Q:\ \exists\,y\in S\ \text{with }(u,y)\in E
\ \text{and}\ \exists\,x\in(Q\cap V)^\circ\ \text{with }(u,x)\in E\\
&\text{ and a singly infinite path in }(Q\cap V)^\circ\text{ from }x\,\Bigr\}.\notag
\end{align}
Here for each $U\subset V$, $U^{\circ}$ consists of all the vertices in $U$ with all the neighbors in $U$ as well.

Note that \(Q\) is a topological \(2\)–disc whose frontier \(\partial Q\) is the image of a closed walk in \(G_S\) (vertices/edges may repeat).
List the vertices on \(\partial Q\cap V\) as
\begin{equation}\label{dqv}
v_1,\ldots,v_n
\end{equation}
in clockwise order around an interior point of \(Q\) (a vertex may appear more than once
along \(\partial Q\)); with \((v_j,v_{j+1})\in E\) for \(1\le j\le n-1\) and \((v_n,v_1)\in E\),
and \(n\) minimal among such listings.

For each \(j\in\{1,\dots,n\}\), define the \emph{wedge neighbors} of \(v_j\) in \(Q\) by
\[
\mathcal N_j(Q):=\{\,w_{j,1},\ldots,w_{j,k_{j}}\,\}\subset S_Q,
\]
where \((v_j,w_{j,1}),\ldots,(v_j,w_{j,k_{j}})\) is the counterclockwise order (around \(v_j\))
of all edges from \(v_j\) into \(Q\) whose other endpoint lies strictly \emph{between} the boundary edges
\((v_j,v_{j-1})\) and \((v_j,v_{j+1})\) (indices modulo \(n\)) and in $S_Q$.
Concatenate these finite lists in the boundary order
\begin{equation}\label{pni}
w_{1,1},\ldots,w_{1,k_{1}},
w_{2,1},\ldots,w_{2,k_{2}},\ \ldots,\
w_{n,1},\ldots,w_{n,k_{n}}.
\end{equation}
If for some \(j\) the last element of \(\mathcal N_j(Q)\) equals the first element of \(\mathcal N_{j+1}(Q)\)
(with \(n{+}1:=1\)), delete one of these two equal consecutive terms. Denote the resulting cyclic
sequence by \(\mathcal N(Q)\).

Let \(m:=|\mathcal N(Q)|\) and write \(\mathcal N(Q)=(u_1,\dots,u_m)\).

\smallskip
We now prune repetitions toward $F$.
Set $Q_0:=Q$ and $\mathcal N_0:=\mathcal N(Q)$.
Given $(Q_r,\mathcal N_r)$, if $\mathcal N_r$ has pairwise distinct entries we stop.
Otherwise, let $u$ be the first vertex (in the linear representative $u_1,\dots,u_m$)
that appears at least twice, and let $u_a=u_b=u$ be two \emph{consecutive occurrences}
of $u$ in the cyclic order (i.e.\ $u$ does not occur among $u_{a+1},\dots,u_{b-1}$).
Let $v_x$ (resp.\ $v_y$) be the boundary vertex in $\partial Q$ whose wedge list
contains the occurrence $u_a$ (resp.\ $u_b$).
Let $Q_{r+1}$ be the connected component of
\[
Q_r\setminus\bigl(\phi(v_xu)\cup \phi(v_yu)\bigr)
\]
containing $F$, and define $\mathcal N_{r+1}:=\mathcal N(Q_{r+1})$
by repeating the above wedge construction with $Q$ replaced by $Q_{r+1}$.

Since $|\mathcal N_{r+1}|<|\mathcal N_r|$ at each pruning step, the process terminates.
Let $k$ be the first index for which $\mathcal N_k$ has pairwise distinct entries.
We define the \emph{$F$--boundary of $S$} by
\[
\partial_F S := \mathcal N_k,
\]
viewed as a cyclic sequence.
\end{definition}

\begin{definition}\label{df443}
Assume
\[
\partial_F S=(u_1,\dots,u_h),
\]
in cyclic order as in Definition \ref{df26}.
Given indices \(1\le i_1<\cdots<i_{2k}\le h\), define consecutive boundary subarcs
\[
\mathrm{Arc}_j:=\{u_{i_{j-1}+1},\dots,u_{i_j}\}\subset\partial_{F}S,\qquad i_0:=i_{2k}.
\]
For a site configuration \(\sigma\in\{0,1\}^V\) at parameter \(p\), and $a\in Q\cap A$, the \emph{alternating–arm event}
across \(S\) with respect to $a$ is
\begin{equation}\label{eq:def-Arm}
\mathrm{Arm}(i_1,\dots,i_{2k};S;a)
:= \bigcap_{j=1}^{2k}\Big(
  \{\mathrm{Arc}_j \xleftrightarrow{S^{\mathrm c}\cap\{\sigma=1\}} F\}
  \ \cap\
  \{\mathrm{Arc}_j \xleftrightarrow{S^{\mathrm c}\cap\{\sigma=0\}} F\}
\Big).
\end{equation}
\end{definition}

\section{Cutset estimates and $F$--connectivity via the $\phi$--functional}
\label{sec5}

The goal of this section is to supply quantitative control of connectivity to a prescribed end class.
We develop a cutset--based differential inequality for $F$--connectivity and recast it in terms of the
$\phi$--functional, yielding an analytic characterization of $p^{\mathrm{site}}_{c,F}(G)$ that is robust
under truncations excluding neighborhoods of other ends.
We also derive uniform supercritical bounds that allow one to pass from an infinite cluster aimed at $F$
at one parameter to high--probability $F$--connections from many well--separated vertices at a larger
parameter.  In the special case where $\mathcal F(G)$ is countable, these estimates imply the identity
$p^{\mathrm{site}}_c(G)=\inf_{F\in\mathcal F(G)}p^{\mathrm{site}}_{c,F}(G)$, which ties the global critical point
to the family of end--directed thresholds.

\begin{assumption}\label{ap36} Assume $\mathcal{F}$ is countable.
\end{assumption}

\begin{lemma}Let $G=(V,E)$ be an infinite, connected, locally finite planar graph. Under Assumption \ref{ap36}, we have
\begin{align*}
\pcs(G)=\inf_{F\in\mathcal{F}}p_{c,F}^{site}(G).
\end{align*}
\end{lemma}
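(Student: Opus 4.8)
The plan is to prove the two inequalities $\pcs(G)\le\inf_F p^{\mathrm{site}}_{c,F}(G)$ and $\pcs(G)\ge\inf_F p^{\mathrm{site}}_{c,F}(G)$ separately, using the countability hypothesis only for the second. For the first inequality, fix any $F\in\mathcal F$ and any $p>p^{\mathrm{site}}_{c,F}(G)$. By Definition~\ref{df36}, $\PP_p\bigl(\bigcup_{v\in V}\bigcup_{a\in F}\{v\leftrightarrow a\}\bigr)=1$; in particular, with probability one there is a vertex $v$ and an accumulation point $a\in F$ joined to $v$ by an infinite self-avoiding path. Such a path is an infinite open cluster, so $\PP_p(\exists\text{ infinite open cluster})=1>0$, whence $p\ge\pcs(G)$. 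Letting $p\downarrow p^{\mathrm{site}}_{c,F}(G)$ gives $\pcs(G)\le p^{\mathrm{site}}_{c,F}(G)$, and taking the infimum over $F$ yields $\pcs(G)\le\inf_F p^{\mathrm{site}}_{c,F}(G)$. This direction needs no countability and no planarity beyond what sets up Definition~\ref{df36}.

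For the reverse inequality, fix $p>\pcs(G)$; I want to produce some $F\in\mathcal F$ with $p\ge p^{\mathrm{site}}_{c,F}(G)$, equivalently (by monotonicity) I want to show $\inf_F p^{\mathrm{site}}_{c,F}(G)\le p$. Since $p>\pcs(G)$, by the Kolmogorov $0$–$1$ law $\PP_p(\exists\text{ infinite open cluster})=1$. Pick any $p'\in(\pcs(G),p)$; then an infinite open cluster $\xi$ exists a.s.\ at level $p'$. Apply Lemma~\ref{lm27} to $\xi$: there is an accumulation point $a=a(\omega)\in\mathcal A$ such that every neighborhood of $a$ meets $\phi(\xi)$ in an infinite connected subgraph, which by the description of the Freudenthal basic neighborhoods means $v\leftrightarrow a$ for some $v\in V$ (running the infinite self-avoiding path inside $\xi$ out along the nested neighborhoods of $a$, as in the proof of Definition~\ref{df36}'s well-posedness). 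So a.s.\ there exists an active end, hence an active class $F(\omega)\in\mathcal F$, i.e.
\[
\PP_{p'}\Bigl(\bigcup_{F\in\mathcal F}\bigcup_{v\in V}\bigcup_{a\in F}\{v\leftrightarrow a\}\Bigr)=1.
\]
Now invoke countability: $\mathcal F=\{F_1,F_2,\dots\}$, so by countable subadditivity there is some $i$ with $\PP_{p'}\bigl(\bigcup_{v}\bigcup_{a\in F_i}\{v\leftrightarrow a\}\bigr)>0$. This is a positive-probability statement for a fixed class; to upgrade it to probability one at the slightly larger level $p$, I would use a $0$–$1$/ergodicity argument: the event $\bigcup_{v\in V}\bigcup_{a\in F_i}\{v\leftrightarrow a\}$ is a tail event (deleting or changing finitely many vertices cannot destroy the existence of an infinite path escaping to $F_i$, since one can reroute inside the same end-component for any finite cutset $K$), hence has probability $0$ or $1$ under $\PP_{p'}$; being positive, it is $1$. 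Therefore $p^{\mathrm{site}}_{c,F_i}(G)\le p'<p$, so $\inf_F p^{\mathrm{site}}_{c,F}(G)<p$. Since $p>\pcs(G)$ was arbitrary, $\inf_F p^{\mathrm{site}}_{c,F}(G)\le\pcs(G)$, completing the proof.

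The main obstacle is the measurability/tail-triviality step for the fixed class $F_i$: one must check that $\{\exists v\in V,\ a\in F_i:\ v\leftrightarrow a\}$ is a genuine tail event (invariant under changes of finitely many coordinates) so that Kolmogorov's $0$–$1$ law applies and the positive probability can be boosted to $1$. The rerouting argument hinges on Lemma~\ref{lem:end-ray} together with the fact that membership of the limiting accumulation point in a cycle-separation class $F_i$ is determined by the end of the escaping path, and ends are insensitive to finite modifications; closure of $F_i$ (Lemma~\ref{le64}) and compactness are what keep the limit point inside the same class. A secondary technical point is confirming that the infinite self-avoiding path furnished inside $\xi$ by Lemma~\ref{lem:end-ray} does converge to the point $a$ in the sphere topology — this is exactly the content of Lemma~\ref{lm27} and the basic-neighborhood description following \eqref{dfce}, so it can be cited rather than re-proved. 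All remaining steps are routine.
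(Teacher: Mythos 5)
Your proposal is correct and follows essentially the same route as the paper: the easy inequality from the definitions, then for $p>\pcs(G)$ using the a.s.\ existence of an infinite cluster (hence an active end, via Lemma~\ref{lm27}), countability of $\mathcal F$ to extract a class with positive probability, and tail-measurability plus the Kolmogorov $0$--$1$ law to upgrade to probability one, giving $p\ge p_{c,F}^{\mathrm{site}}(G)$. The only cosmetic difference is your intermediate parameter $p'\in(\pcs(G),p)$, which the paper's argument does not need.
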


\begin{proof}From the definitions of $\pcs(G)$ and $p_{c,F}^{site}(G)$, it is straightforward to see that
\begin{align*}
\pcs(G)\leq\inf_{F\in\mathcal{F}} p_{c,F}^{site}(G).
\end{align*}
It suffices to show that whenever $p>\pcs(G)$, there exists $F\in\mathcal{F}$ such that $p\geq p_{c,F}^{site}(G)$. 

Indeed 
when $p>\pcs(G)$, by Kolmolgorov 0-1 law we have 
\begin{align*}
\PP_p(\cup_{v\in V}\cup_{F\in \mathcal{F}}\cup_{a\in F}\{v\leftrightarrow a\})=1
\end{align*}
By Assumption \ref{ap36}, $\mathcal{F}$ is countable, it follows that there exists $F\in \mathcal{F}$, s.t.
\begin{align*}
\PP_p(\cup_{v\in V}\cup_{a\in F}\{v\leftrightarrow a\})>0
\end{align*}
Since $\cup_{v\in V}\cup_{a\in F}\{v\leftrightarrow a\}$ is tail-measurable, we have
\begin{align*}
\PP_p(\cup_{v\in V}\cup_{a\in  F}\{v\leftrightarrow a\})=1,
\end{align*}
then $p\geq p_{c,F}^{site}(G)$, and the lemma follows.
\end{proof}

The following lemma is straightforward.

\begin{lemma}\label{le63}Let $G=(V,E)$ be an infinite, connected, locally finite planar graph, and let
$\phi:G\to\mathbb{S}^2$ be a well-separated embedding. If $p>\inf_{F\in\mathcal{F}}p_{c,F}^{site}(G)$, then there exists $F\in \mathcal{F}$, s.t. $\PP_p$-a.s.~$F$ is active.
\end{lemma}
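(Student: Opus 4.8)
The plan is to deduce the statement directly from Definition~\ref{df36} and the notion of ``active'' defined just after it, using nothing beyond a monotonicity (stochastic domination) argument; I expect this to be essentially immediate.

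First I would unwind the strict inequality: since $p>\inf_{F\in\mathcal{F}}p^{\mathrm{site}}_{c,F}(G)$, there is a single class $F_0\in\mathcal{F}$ with $p^{\mathrm{site}}_{c,F_0}(G)<p$. By the definition of $p^{\mathrm{site}}_{c,F_0}(G)$ as an infimum, the set $\{q:\PP_q(\bigcup_{v\in V}\bigcup_{a\in F_0}\{v\leftrightarrow a\})=1\}$ then contains some $q$ with $p^{\mathrm{site}}_{c,F_0}(G)<q\le p$, so that
\[
\PP_q\Bigl(\bigcup_{v\in V}\bigcup_{a\in F_0}\{v\leftrightarrow a\}\Bigr)=1 .
\]

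Next I would record that the event $E_{F_0}:=\bigcup_{v\in V}\bigcup_{a\in F_0}\{v\leftrightarrow a\}$ is precisely the event ``$F_0$ is active'', and that it is increasing with respect to the coordinatewise order on $\{0,1\}^V$: an infinite self-avoiding open path $z_0,z_1,\dots$ with $z_n\to a\in F_0$ under a configuration $\sigma$ is still open, and still converges to $a$, under any $\sigma'\ge\sigma$. Consequently the map $p\mapsto\PP_p(E_{F_0})$ is nondecreasing (use the standard monotone coupling $\PP_q\preceq\PP_p$ for $q\le p$), so from the displayed equality and $q\le p$ we get $\PP_p(E_{F_0})\ge\PP_q(E_{F_0})=1$, i.e.\ $\PP_p(E_{F_0})=1$. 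Since $E_{F_0}$ is the event that $F_0$ is active, the class $F:=F_0$ is as required.

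There is no real obstacle here; the only step that deserves a line of justification is passing from the defining infimum $p^{\mathrm{site}}_{c,F_0}(G)$ to the particular value $p$, and that is exactly what monotonicity of $E_{F_0}$ provides. If one prefers, one may alternatively invoke that $E_{F_0}$ is a tail event, so $\PP_p(E_{F_0})\in\{0,1\}$ by Kolmogorov's $0$--$1$ law, and then upgrade positivity at some $q<p$ to probability $1$ at $p$ in the same way; but with the monotone coupling in hand this extra step is unnecessary.
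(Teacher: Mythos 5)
Your argument is correct, and it is exactly the ``straightforward'' argument the paper has in mind (the paper states Lemma~\ref{le63} without proof): pick $F_0$ with $\PP_q(\bigcup_{v}\bigcup_{a\in F_0}\{v\leftrightarrow a\})=1$ for some $q\le p$ guaranteed by the infimum in Definition~\ref{df36}, note the event is increasing, and conclude by the monotone coupling. Only a cosmetic point: the infimum gives some $q<p$ with $\PP_q(\cdot)=1$ but not necessarily $q>p^{\mathrm{site}}_{c,F_0}(G)$; this strict inequality is never used, so nothing is affected.
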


Let $F\in\mathcal{F}$ and $p>p_{c,F}^{site}(G)$. Then ~$\PP_p$-a.s.~$F$ is active. For each $v\in V$, define 
\begin{align*}
&D_F^{\epsilon}:=\cup_{a\in F}B_{\mathbb{S}^2}(a,\epsilon);\\ 
&A_{F,c}^{\epsilon}:=\mathcal{A}\setminus D_F^{\epsilon}.\\
&D_{F,c}^{\epsilon}:=\mathbb{S}^2\setminus D_F^{\epsilon};\\
&\partial^V D_F^{\epsilon}:=\{u\in V\cap D_{F}^{\epsilon}:\exists w\notin V\cap D_{F}^{\epsilon},~s.t.~uw\in E\}.
\end{align*}
Here we write $d_{\mathbb{S}^2}$ for the geodesic (great–circle) distance on $\mathbb{S}^2$ induced by
the round metric. For $a\in\mathbb{S}^2$ and $\varepsilon>0$,
\[
B_{\mathbb{S}^2}(a,\varepsilon)
:=\{x\in\mathbb{S}^2:\ d_{\mathbb{S}^2}(x,a)\le \varepsilon\}.
\]

For any positive integers $s,t\geq 1$, define
\begin{align*}
V_{s,t}=\left\{V\setminus \left[D_{F}^{\frac{1}{s}}\cup\cup_{a\in A_{F,c}^{\frac{1}{s}}}B_{\mathbb{S}^2}\left(a,\frac{1}{t}\right)\right]\right\}\cup \partial^V D_F^{\frac{1}{s}}
\end{align*}
Here for $T\subseteq \mathbb{S}^2$, $G\setminus T$ is the subgraph of $G$ induced by all the vertices not in $T$.
Since $a$ is an accumulation point of a well-separated embedding $\phi$ of $G$, it follows that $a\notin \phi(G)$ for any $a\in A$. 
Then we have
\begin{lemma}\label{l39}Suppose the assumptions of Lemma \ref{le63} hold.
Assume $F\in \mathcal{F}$.
\begin{align*}
\cup_{a\in  F}\{v\leftrightarrow a\}=\cap_{s=1}^{\infty}\cup_{t=1}^{\infty}\{v\xleftrightarrow{V_{s,t}\cap\{\sigma=1\}} \partial^V D_{F}^{\frac{1}{s}}\}.
\end{align*}
\end{lemma}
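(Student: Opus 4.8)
The plan is to prove the displayed identity as two inclusions, configuration by configuration. Throughout abbreviate $A_{s,t}:=\{v\xleftrightarrow{V_{s,t}\cap\{\sigma=1\}}\partial^{V}D_{F}^{1/s}\}$, so that the right--hand side is $\bigcap_{s}\bigcup_{t}A_{s,t}$; note $A_{s,t}$ is nondecreasing in $t$, since $V_{s,t}$ grows with $t$ while $\partial^{V}D_{F}^{1/s}$ is independent of $t$. For the first inclusion, suppose $\sigma$ realizes $\{v\leftrightarrow a\}$ for some $a\in F$, witnessed by an open self--avoiding path $z_{0}=v,z_{1},z_{2},\dots$ with $z_{n}\to a$ in $\mathbb S^{2}$. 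Fix an $s$ with $v\notin D_{F}^{1/s}$ (all but finitely many $s$ qualify, since $D_{F}^{1/s}\downarrow F$ and the well--separated vertex $v$ is not an accumulation point). As $a$ lies in the interior of $D_{F}^{1/s}$, the tail of $(z_{n})$ enters $D_{F}^{1/s}$; let $\tau=\min\{n:z_{n}\in D_{F}^{1/s}\}\ge1$. Then $z_{\tau}\in V\cap D_{F}^{1/s}$ has the neighbour $z_{\tau-1}\notin D_{F}^{1/s}$, so $z_{\tau}\in\partial^{V}D_{F}^{1/s}$, while $z_{0},\dots,z_{\tau-1}\in V\setminus D_{F}^{1/s}$. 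The finitely many vertices $z_{0},\dots,z_{\tau}$ each sit at positive $\mathbb S^{2}$--distance from $\mathcal A$, so for all large $t$ none of them lies in $\bigcup_{a'\in A_{F,c}^{1/s}}B_{\mathbb S^{2}}(a',1/t)$; hence $z_{0},\dots,z_{\tau}\in V_{s,t}$, and being open they form a path in $V_{s,t}\cap\{\sigma=1\}$ from $v$ to $z_{\tau}\in\partial^{V}D_{F}^{1/s}$, so $\sigma\in A_{s,t}$. Thus $\sigma\in\bigcup_{t}A_{s,t}$ for every admissible $s$, hence $\sigma\in\bigcap_{s}\bigcup_{t}A_{s,t}$.

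For the reverse inclusion, suppose $\sigma\in\bigcap_{s}\bigcup_{t}A_{s,t}$, and for each $s$ pick $t_{s}$ and a vertex $u_{s}\in\partial^{V}D_{F}^{1/s}$ joined to $v$ by an open path inside $V_{s,t_{s}}$. Then each $u_{s}$ lies in the open cluster $C_{v}$ of $v$, and $u_{s}\in D_{F}^{1/s}$ forces $d_{\mathbb S^{2}}(u_{s},F)\le 1/s$, so $d_{\mathbb S^{2}}(\overline{\phi(C_{v})},F)=0$; since $\overline{\phi(C_{v})}$ and $F$ are compact they intersect, so fix $a\in\overline{\phi(C_{v})}\cap F$ and let $e\in\Omega(G)$ be the end with $\widehat\phi(e)=a$ (Proposition~\ref{th52}). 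It remains to extract an infinite open self--avoiding path in $C_{v}$ from $v$ converging to $a$. Fix a finite exhaustion $K_{1}\subseteq K_{2}\subseteq\cdots$ of $V$ with $v\in K_{1}$. Because $\widehat\phi$ is a homeomorphism onto its image, $a\in\overline{\phi(C_{v})}$ translates to $e$ lying in the closure of $C_{v}$ inside $|G|$, so every basic Freudenthal neighbourhood $C_{\varepsilon}(K_{j},e)$ meets $C_{v}$, and inspecting \eqref{dfce} this forces $C_{v}$ to contain a vertex of $e(K_{j})$ for every $j$. The sets $C_{v}\cap e(K_{j})$ are nonempty, decreasing, with empty intersection (the $K_{j}$ exhaust $V$), hence each is infinite; and each has finitely many connected components, because $C_{v}$ is connected with $v\notin e(K_{j})$, so every component of $C_{v}\cap e(K_{j})$ contains one of the finitely many vertices of $e(K_{j})$ adjacent to the finite set $C_{v}\cap K_{j}$. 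Thus each $C_{v}\cap e(K_{j})$ has an infinite component, and the inverse limit of these finite nonempty families of infinite components (with inclusion bonding maps) is nonempty, giving a decreasing chain $B^{(1)}\supseteq B^{(2)}\supseteq\cdots$ of infinite connected subgraphs of $C_{v}$ with $B^{(j)}\subseteq e(K_{j})$. Concatenating a path in $C_{v}$ from $v$ to some $w_{1}\in B^{(1)}$ with paths inside each $B^{(j)}$ from $w_{j}\in B^{(j)}$ to some $w_{j+1}\in B^{(j+1)}$ yields an infinite open walk from $v$ whose tail past $w_{j}$ stays in $B^{(j)}\subseteq e(K_{j})$; its loop--erasure is an infinite open self--avoiding path $(y_{n})$ from $v$ that is eventually inside $e(K)$ for every finite $K\subset V$, i.e.\ $\widehat\phi(y_{n})\to\widehat\phi(e)=a$. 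Hence $\{v\leftrightarrow a\}$ occurs with $a\in F$, so $\sigma$ lies in the left--hand side.

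The main obstacle I anticipate is the last step above: forcing the extracted self--avoiding path to converge to the \emph{prescribed} accumulation point $a$ rather than to some unspecified end, since Lemmas~\ref{lem:end-ray} and \ref{lm27} only furnish an end. This is exactly where one needs the finiteness of the component set of $C_{v}\cap e(K_{j})$ (itself a consequence of connectedness and local finiteness) to make a König/inverse--limit argument steer the path into $e$, and where one must carefully convert ``$a\in\overline{\phi(C_{v})}$'' into ``$C_{v}$ meets $e(K)$ for all finite $K$'' via the Freudenthal topology \eqref{dfce}. The first inclusion is comparatively routine, the only subtlety being the small--$s$ regime: the identity should be read with $s$ ranging over the cofinitely many indices with $v\notin D_{F}^{1/s}$, since for $v$ buried strictly inside $D_{F}^{1/s}$ the event $A_{s,t}$ can be empty while $\{v\leftrightarrow a\}$ still occurs.
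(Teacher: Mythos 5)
Your proof is correct, and its first inclusion is essentially the paper's argument: follow the open path witnessing $v\leftrightarrow a$, cut it at its first entry into $D_F^{1/s}$ (a vertex of $\partial^V D_F^{1/s}$), and use well--separatedness to push $t$ large enough that the finitely many vertices outside $D_F^{1/s}$ avoid the balls $B_{\mathbb S^2}(a',1/t)$. Where you genuinely diverge is the reverse inclusion. The paper argues by contraposition: if no $a\in F$ is reached, it asserts that either $C_v$ is finite or $\overline{C_v}\cap F=\emptyset$, and then uses compactness of $F$ (Lemma~\ref{le64}) to get $d_{\mathbb S^2}(F,C_v)>0$ and kill the event for large $s$. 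The nontrivial content hidden in that dichotomy is precisely the implication ``$C_v$ infinite and $\overline{\phi(C_v)}\cap F\neq\emptyset$ $\Rightarrow$ $v\leftrightarrow a$ for some $a\in F$,'' which the paper does not prove inside this lemma (it is closely related to the ``moreover'' part of Lemma~\ref{le76} and to Lemmas~\ref{lem:end-ray} and~\ref{lm27}, whose proofs are in the same spirit but are not invoked here). You prove exactly this implication directly: compactness gives a point $a\in\overline{\phi(C_v)}\cap F$, the Freudenthal topology \eqref{dfce} converts $a\in\overline{\phi(C_v)}$ into $C_v\cap e(K_j)\neq\emptyset$ for an exhaustion, the connectedness/local-finiteness argument bounds the number of components of $C_v\cap e(K_j)$, and a K\"onig/inverse-limit selection plus loop-erasure produces a self-avoiding open ray converging to the prescribed $a$. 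So your route is a more complete, direct version of the paper's contrapositive argument; what it buys is a self-contained justification of the step the paper treats as immediate, at the cost of length.

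One further point in your favour: the small-$s$ caveat you flag (if $v$ lies in the interior of $D_F^{1/s}$ with no neighbour outside, the event $\{v\xleftrightarrow{V_{s,t}\cap\{\sigma=1\}}\partial^V D_F^{1/s}\}$ can be empty while the left-hand side occurs) is real and is silently present in the paper's proof as well; it is harmless for the intended use (only the $s\to\infty$ regime enters Corollary~\ref{c310}), and your reading of the identity over the cofinitely many $s$ with $v\notin D_F^{1/s}$ is the natural repair.
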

\begin{proof}Note that for any $s\geq 1$, any infinite path starting from $v$ and converging to an accumulation point in $F$ has finitely many vertices $\{u_1,\ldots,u_l\}$ in $D_{F,c}^{\frac{1}{s}}$. Since the graph has a well-separated embedding into $\mathbb{S}^2$. For each $1\leq i\leq l$, there exists $\epsilon_i>0$, such that
\begin{align*}
B(u_i,\epsilon_i)\cap \mathcal{A}=\emptyset.
\end{align*}
Let $t:=\max_{1\leq i\leq l}\frac{1}{\epsilon_i}$, we obtain that 
\begin{align*}
\left[\cup_{a\in  F}\{v\leftrightarrow a\}\right]\subseteq\left[\cap_{s=1}^{\infty}\cup_{t=1}^{\infty}\{v\xleftrightarrow{V_{s,t}\cap\{\sigma=1\}} \partial^V D_{F}^{\frac{1}{s}}\}\right].
\end{align*}
Now we prove 
\begin{align*}
\left[\cup_{a\in  F}\{v\leftrightarrow a\}\right]\supseteq\left[\cap_{s=1}^{\infty}\cup_{t=1}^{\infty}\{v\xleftrightarrow{V_{s,t}^v\cap\{\sigma=1\}} \partial^V D_{F}^{\frac{1}{s}}\}\right].
\end{align*}
It suffices to show that 
\begin{align}
\left[\cup_{a\in  F}\{v\leftrightarrow a\}\right]^c\subseteq\left[\cap_{s=1}^{\infty}\cup_{t=1}^{\infty}\{v\xleftrightarrow{V_{s,t}\cap\{\sigma=1\}} \partial^V D_{F}^{\frac{1}{s}}\}\right]^c.\label{rd}
\end{align}
Indeed for any configuration in $\left[\cup_{a\in  F}\{v\leftrightarrow a\}\right]^c$, one of the following two cases occurs
\begin{enumerate}
\item The 1-cluster $C$ at $v$ is finite; or
\item The 1-cluster $C$ at $v$ is infinite, but $\overline{C}\cap F=\emptyset$.
\end{enumerate}
In either case we have $\mathbb{S}^2\setminus \overline{C}$ is open, and $F\subset \mathbb{S}^2\setminus  \overline{C}$. By Lemma \ref{le64}, $F$ is compact. Since $\mathbb{S}^2$ is a metric space, we have
\begin{align*}
d_0:=d_{\mathbb{S}^2}(F,C)>0.
\end{align*}
Choose $s$ such that $\frac{1}{s}<\frac{d_0}{2}$, we obtain
\begin{align*}
v\nleftrightarrow  \partial^V D_F^{\frac{1}{s}}
\end{align*}
Then (\ref{rd}) follows.
\end{proof}

\begin{corollary}\label{c310}Suppose that the Assumptions of Lemma \ref{l37} hold. For any $v\in V$ and $F\in \mathcal{F}$, we have
\begin{align}
\label{ti}\left(v\xleftrightarrow{V_{s,t}\cap\{\sigma=1\}} \partial^V D_{F}^{\frac{1}{s}}\right)\subseteq \left(v\xleftrightarrow{V_{s,t+1}\cap\{\sigma=1\}} \partial^V D_{F}^{\frac{1}{s}}\right)
\end{align}
and 
\begin{align}
\label{sd}\left(v\xleftrightarrow{V_{s+1,t}\cap\{\sigma=1\}} \partial^V D_{F}^{\frac{1}{s+1}}\right)\subseteq \left(v\xleftrightarrow{V_{s,t}\cap\{\sigma=1\}} \partial^V D_{F}^{\frac{1}{s}}\right)
\end{align}
Moreover,
\begin{align}
\PP_p(\cup_{a\in  F}\{v\leftrightarrow a\})&=\lim_{s\rightarrow\infty}\lim_{t\rightarrow\infty}\PP_p(v\xleftrightarrow{V_{s,t}\cap\{\sigma=1\}} \partial^V D_{F}^{\frac{1}{s}})\label{llst}=\inf_{s}\sup_t \PP_p(v\xleftrightarrow{V_{s,t}\cap\{\sigma=1\}} \partial^V D_{F}^{\frac{1}{s}})
\end{align}
\end{corollary}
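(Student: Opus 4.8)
The plan is to read the two claimed inclusions \eqref{ti} and \eqref{sd} as purely deterministic, configuration‑by‑configuration monotonicity statements about the restricting vertex sets $V_{s,t}$ and the targets $\partial^V D_F^{\frac1s}$, and then combine them with Lemma~\ref{l39} via continuity of $\PP_p$ along monotone sequences of events. Throughout write $A_{s,t}:=\{v\xleftrightarrow{V_{s,t}\cap\{\sigma=1\}}\partial^V D_F^{\frac1s}\}$ and $B_s:=\bigcup_{t\ge1}A_{s,t}$. Claim \eqref{ti} is immediate: since $\frac1{t+1}<\frac1t$ we have $B_{\mathbb S^2}(a,\frac1{t+1})\subseteq B_{\mathbb S^2}(a,\frac1t)$, so the deleted set $D_F^{\frac1s}\cup\bigcup_{a\in A_{F,c}^{\frac1s}}B_{\mathbb S^2}(a,\frac1t)$ shrinks as $t$ grows, while the other two pieces entering $V_{s,t}$, namely $D_F^{\frac1s}$ and $\partial^V D_F^{\frac1s}$, do not depend on $t$; hence $V_{s,t}\subseteq V_{s,t+1}$ with the same target, and any open path witnessing $A_{s,t}$ also witnesses $A_{s,t+1}$.

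The substance, and the step I expect to be the main obstacle, is \eqref{sd}. Fix a configuration in the left‑hand event, with a witnessing open path $\gamma=(v=z_0,\dots,z_m)\subseteq V_{s+1,t}$ and $z_m\in\partial^V D_F^{\frac1{s+1}}$. I would exploit three monotonicities in $s$: first, $D_F^{\frac1{s+1}}\subseteq D_F^{\frac1s}$, hence $\partial^V D_F^{\frac1{s+1}}\subseteq D_F^{\frac1s}$; second, $A_{F,c}^{\frac1s}\subseteq A_{F,c}^{\frac1{s+1}}$ (shrinking the neighbourhood of $F$ enlarges the set of other accumulation points), whence $\bigcup_{a\in A_{F,c}^{\frac1s}}B_{\mathbb S^2}(a,\frac1t)\subseteq\bigcup_{a\in A_{F,c}^{\frac1{s+1}}}B_{\mathbb S^2}(a,\frac1t)$; and third, $\partial^V D_F^{\frac1s}\subseteq V_{s,t}$ directly from the definition. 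Since $z_m\in D_F^{\frac1s}$ while $v\notin D_F^{\frac1s}$, let $k$ be the least index with $z_k\in D_F^{\frac1s}$; then $z_{k-1}\in V\setminus D_F^{\frac1s}$ is a neighbour of $z_k\in V\cap D_F^{\frac1s}$, so $z_k\in\partial^V D_F^{\frac1s}\subseteq V_{s,t}$. For $i<k$ we have $z_i\notin D_F^{\frac1s}$, hence $z_i\notin D_F^{\frac1{s+1}}$ and so $z_i\notin\partial^V D_F^{\frac1{s+1}}$; membership $z_i\in V_{s+1,t}$ then forces $z_i\notin\bigcup_{a\in A_{F,c}^{\frac1{s+1}}}B_{\mathbb S^2}(a,\frac1t)$, which by the second monotonicity gives $z_i\notin\bigcup_{a\in A_{F,c}^{\frac1s}}B_{\mathbb S^2}(a,\frac1t)$, and together with $z_i\notin D_F^{\frac1s}$ this yields $z_i\in V_{s,t}$. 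Thus the truncation $(z_0,\dots,z_k)$ is an open path in $V_{s,t}$ from $v$ to $\partial^V D_F^{\frac1s}$, establishing \eqref{sd}.

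Finally I would assemble \eqref{llst}. By \eqref{ti} the sequence $(A_{s,t})_{t\ge1}$ increases with union $B_s$, so continuity of $\PP_p$ from below gives $\PP_p(B_s)=\sup_t\PP_p(A_{s,t})=\lim_{t\to\infty}\PP_p(A_{s,t})$; by \eqref{sd} the sequence $(B_s)_{s\ge1}$ is decreasing, and Lemma~\ref{l39} identifies $\bigcap_s B_s$ with $\bigcup_{a\in F}\{v\leftrightarrow a\}$, so continuity from above (a probability measure is finite) gives $\PP_p(\bigcup_{a\in F}\{v\leftrightarrow a\})=\inf_s\PP_p(B_s)=\lim_{s\to\infty}\PP_p(B_s)$; chaining the two computations produces \eqref{llst}. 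The one delicate point, and the place where the argument is genuinely organizational rather than deep, is the use of $v\notin D_F^{\frac1s}$ in the first‑entry step of \eqref{sd}: since $F$ is compact (Lemma~\ref{le64}) and disjoint from $\phi(G)$ by well‑separatedness, $d_{\mathbb S^2}(\phi(v),F)>0$, so $v\notin D_F^{\frac1s}$ for all $s\ge s_0(v)$ — which is the only regime relevant to the intersection and infimum over $s$, exactly as in the proof of Lemma~\ref{l39} (which already operates with $\tfrac1s<d_0/2$). Using additionally that $\bigcup_{a\in F}\{v\leftrightarrow a\}\subseteq B_s$ for every $s$ (again by Lemma~\ref{l39}), restricting the decreasing family and the infimum to $s\ge s_0(v)$ alters neither $\bigcap_s B_s$ nor $\inf_s\PP_p(B_s)$, so \eqref{llst} is unaffected.
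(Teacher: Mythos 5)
Your proposal is correct and follows exactly the route the paper intends (the paper's proof is the one-line remark that the corollary follows from Lemma~\ref{l39}): deterministic monotonicity of the sets $V_{s,t}$ in $t$ and $s$ gives the two inclusions, and monotone continuity of $\PP_p$ combined with the identity of Lemma~\ref{l39} gives \eqref{llst}. Your extra care about the small-$s$ regime where $v$ might lie in $D_F^{1/s}$ (handled via compactness of $F$, well-separatedness, and the containment $\cup_{a\in F}\{v\leftrightarrow a\}\subseteq B_s$ for all $s$) is a detail the paper glosses over, and it is resolved correctly.
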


\begin{proof}The corollary follows from Lemma \ref{l39} in a straightforward way.
\end{proof}

\subsection{Definition of the $\varphi_p^v(S)$-functional}

Let $G=(V,E)$ be a graph. For each $p\in (0,1)$, let $\mathbb{P}_p$ be the probability measure of the i.i.d.~Bernoulli($p$) site percolation on $G$.
For each $S\subset V$, let $S^{\circ}$ consist of all the interior vertices of $S$, i.e., vertices all of whose neighbors are in $S$ as well.
For each  $S\subseteq V$, $v\in S$, define
\begin{align}
\varphi_p^{v}(S):=\begin{cases}\sum_{y\in S:[\partial_V y]\cap S^c\neq\emptyset}\mathbb{P}_p(v\xleftrightarrow{S^{\circ}} \partial_V y)&\mathrm{if}\ v\in S^{\circ}\\
1&\mathrm{if}\ v\in S\setminus S^{\circ}
\end{cases}\label{dpv}
\end{align}
where 
\begin{itemize}
\item $v\xleftrightarrow{S^{\circ}} x$ is the event that the vertex $v$ is joined to the vertex $x$ by an open path visiting only interior vertices in $S$;
\item let $A\subseteq V$; $v\xleftrightarrow{S^{\circ}} A$ if and only if there exists $x\in A$ such that $v\xleftrightarrow{S^{\circ}} x$;
\item $\partial_V y$ consists of all the vertices adjacent to $y$.
\end{itemize}

\begin{lemma}\label{l311}Suppose that the Assumptions of Lemma \ref{le63} hold. For $F\in\mathcal{F}$, define
\begin{align*}
    \tilde{p}_{c,F}=&\sup\{p\geq 0:\exists \epsilon_0>0, \mathrm{s.t.}\forall v\in V, \exists S_v\subseteq V\ \mathrm{satisfying}\ |S_v|<\infty\ \mathrm{and}\ v\in S_v^{\circ},\\
    &\inf_s\sup_t\varphi_p^{v}(S_v;G_{s,t}^{\infty}(F))\leq 1-\epsilon_0\}
\end{align*}
Here 
\begin{align*}
G_{s,t}^{\infty}(F)=G\setminus \left[\cup_{a\in A_{F,c}^{\frac{1}{s}}}B_{\mathbb{S}^2}\left(a,\frac{1}{t}\right)\right];
\end{align*}
and $\varphi_p^{v}(S_v;G_{s,t}^{\infty}(F))$ is defined as in (\ref{dpv}) with respect to the graph $G_{s,t}^{\infty}(F)$.
Then the following statements hold:
\begin{enumerate}
    \item If $p>\tilde{p}_{c,F}$, 
    \begin{align}
    \PP_p(\cup_{v\in V}\cup_{a\in F}\{v\leftrightarrow a\})=1.\label{pf1}
    \end{align}
    moreover, for any $\epsilon>0$ there exists a vertex $w$,  such that 
    \begin{align}
    \label{swc2}
    &\forall S_w\subseteq V\ \mathrm{satisfying}\ |S_w|<\infty\ \mathrm{and}\ w\in S_{w}^{\circ},\\
    &\inf_s\sup_t\varphi_q^{v}(S_w;G_{s,t}^{\infty}(F))> 1-\epsilon_1;\ \forall q\geq p_1\notag
    \end{align}
    where $p_1,\epsilon_1$ is such that 
    \begin{align}
    p_1\in (\tilde{p}_{c,F},p);\ \epsilon_1\in(0,\epsilon);\ \left(\frac{1-p}{1-p_1}\right)^{1-\epsilon_1}<\left(\frac{1-p}{1-\tilde{p}_{c,F}}\right)^{1-\epsilon}.\label{pepe}
    \end{align}
    Any vertex $w$ satisfying (\ref{swc2}) also satisfies
    \begin{align}
    \mathbb{P}_p(\cup_{a\in  F}\{w\leftrightarrow a\})\geq 1-\left(\frac{1-p}{1-p_1}\right)^{1-\epsilon_1}\geq 1-\left(\frac{1-p}{1-\tilde{p}_{c,F}}\right)^{1-\epsilon}.\label{lbi2}
    \end{align}
    \item If $p<\tilde{p}_c$, then for any vertex $v\in V$
    \begin{align}
        \PP_p(\cup_{v\in V}\cup_{a\in F}\{v\leftrightarrow a\})=0.\label{lbc2}
    \end{align}
\end{enumerate}
In particular, (1) and (2) implies that $p_{c,F}^{site}(G)=\tilde{p}_{c,F}$.
\end{lemma}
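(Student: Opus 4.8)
The statement is the $F$--localized version of the cutset characterization $p_c^{\mathrm{site}}(G)=\tilde p_c$ from \cite{ZL24}; the plan is to run that argument inside the truncated graphs $G_{s,t}^{\infty}(F)$ and then let $s,t\to\infty$ through Corollary~\ref{c310}. By Corollary~\ref{c310},
$\PP_p(\cup_{a\in F}\{v\leftrightarrow a\})=\inf_s\sup_t\PP_p\bigl(v\xleftrightarrow{V_{s,t}\cap\{\sigma=1\}}\partial^V D_F^{\frac1s}\bigr)$,
and $V_{s,t}\subseteq V(G_{s,t}^{\infty}(F))$, so everything in sight is a connectivity event inside the infinite, locally finite, planar graph $G_{s,t}^{\infty}(F)$ on which $\varphi_p^{v}(\,\cdot\,;G_{s,t}^{\infty}(F))$ is evaluated. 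Granting (1) and (2), the identity $p_{c,F}^{\mathrm{site}}(G)=\tilde p_{c,F}$ is immediate: (1) gives $p>\tilde p_{c,F}\Rightarrow\PP_p(\cup_v\cup_{a\in F}\{v\leftrightarrow a\})=1$, hence $p_{c,F}^{\mathrm{site}}(G)\le\tilde p_{c,F}$, and (2) gives $p<\tilde p_{c,F}\Rightarrow$ that probability is $0$, hence $p_{c,F}^{\mathrm{site}}(G)\ge\tilde p_{c,F}$. Two uniform facts will be used repeatedly: the functional is monotone in $p$ (each summand $\PP_p(v\xleftrightarrow{S^{\circ}}\partial_V y)$ is an increasing event, and $\inf_s\sup_t$ preserves the order), and, for a \emph{fixed finite} vertex set $S$, $\sup_t\varphi_p^{v}(S;G_{s,t}^{\infty}(F))\ge\varphi_p^{v}(S;G)$ for every $s$ (as $t\to\infty$ the removed balls avoid $S$ and a graph--neighbourhood of it, since accumulation points are not vertices, so $G_{s,t}^{\infty}(F)$ agrees with $G$ near $S$).

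\textbf{Part (2) (subcritical side).} Let $p<\tilde p_{c,F}$. By the definition of $\tilde p_{c,F}$ together with monotonicity in $p$, there is $\epsilon_0>0$ and, for every $u\in V$, a finite $S_u\ni u$ with $u\in S_u^{\circ}$ and $\inf_s\sup_t\varphi_p^{u}(S_u;G_{s,t}^{\infty}(F))\le 1-\epsilon_0$; by the fixed--finite--set fact this forces $\varphi_p^{u}(S_u;G)\le 1-\epsilon_0$ and, for $s,t$ large enough that the removed balls miss $S_u$, also $\varphi_p^{u}(S_u;G_{s,t}^{\infty}(F))\le 1-\epsilon_0$. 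Fix the target $v$ and build a nested exhaustion $\{v\}=W_0\subseteq W_1\subseteq\cdots$ of finite sets, $W_n$ obtained from $W_{n-1}$ by adjoining $S_u$ for each $u$ in the external vertex boundary of $W_{n-1}$, with the $S_u$'s along any ray (and their interiors) chosen pairwise disjoint. An open path in $V_{s,t}\cap\{\sigma=1\}$ from $v$ to $W_n^{c}$ must cross the $n$-th cutset, hence factors through $n$ disjointly occurring crossing events $\{u\xleftrightarrow{S_u^{\circ}}\partial_V y\}$; the van den Berg--Kesten inequality and summation over the finitely many rays of length $n$ give $\PP_p\bigl(v\xleftrightarrow{V_{s,t}\cap\{\sigma=1\}}W_n^{c}\bigr)\le\prod_u\varphi_p^{u}(S_u;G_{s,t}^{\infty}(F))\le(1-\epsilon_0)^{n}$, for all $t$ and all $s$ above a threshold determined by the finitely many $u$'s in $W_n$. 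Since $W_n$ is a fixed finite set while $\partial^V D_F^{1/s}$ recedes toward the end $F$ as $s\to\infty$, choose $s_n$ above that threshold and large enough that $\partial^V D_F^{1/s_n}\subseteq W_n^{c}$; then $\inf_s\sup_t\PP_p\bigl(v\xleftrightarrow{V_{s,t}\cap\{\sigma=1\}}\partial^V D_F^{\frac1s}\bigr)\le\sup_t\PP_p\bigl(v\xleftrightarrow{V_{s_n,t}\cap\{\sigma=1\}}W_n^{c}\bigr)\le(1-\epsilon_0)^{n}$ for every $n$, so it equals $0$. Thus $\PP_p(\cup_{a\in F}\{v\leftrightarrow a\})=0$ for each $v$, and summing over the countable set $V$ yields \eqref{lbc2}.

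\textbf{Part (1) (supercritical side).} Let $p>\tilde p_{c,F}$. Pick $p_1,\epsilon_1$ obeying \eqref{pepe}: this is possible since the left side of the last inequality in \eqref{pepe} tends to $\tfrac{1-p}{1-\tilde p_{c,F}}$ as $p_1\downarrow\tilde p_{c,F}$, $\epsilon_1\downarrow0$, while the right side strictly exceeds $\tfrac{1-p}{1-\tilde p_{c,F}}$ for each $\epsilon>0$ (base in $(0,1)$, exponent $1-\epsilon<1$). Since $p_1>\tilde p_{c,F}$, the defining property of $\tilde p_{c,F}$ fails at $p_1$ for the choice $\epsilon_0:=\epsilon_1$: there is a vertex $w$ with $\inf_s\sup_t\varphi_{p_1}^{w}(S_w;G_{s,t}^{\infty}(F))>1-\epsilon_1$ for every finite $S_w\ni w$ with $w\in S_w^{\circ}$, and monotonicity in the parameter extends this to all $q\ge p_1$, which is \eqref{swc2}. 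The crux is the bound \eqref{lbi2} for such a $w$, which is the sprinkling/exploration estimate of \cite{ZL24} run inside the $G_{s,t}^{\infty}(F)$'s: revealing the cluster of $w$ one cutset at a time, the hypothesis $\varphi_q^{\cdot}(\cdot;\cdot)>1-\epsilon_1$ (the parameters encountered all being $\ge p_1$) keeps the conditional probability of crossing the next cutset close to $1$, while comparing $p$ with $p_1$ supplies a sprinkling factor so that the probability the exploration dies is controlled, generation by generation, by $\tfrac{1-p}{1-p_1}<1$, with the $\varphi$--deficit $\le\epsilon_1$ absorbed into the exponent $1-\epsilon_1$. Chaining these estimates over all generations and through the $\inf_s\sup_t$ of Corollary~\ref{c310} yields $\PP_p\bigl(w\not\leftrightarrow\partial^V D_F^{1/s}\text{ for all }s\bigr)\le\bigl(\tfrac{1-p}{1-p_1}\bigr)^{1-\epsilon_1}$, i.e.\ $\PP_p(\cup_{a\in F}\{w\leftrightarrow a\})\ge1-\bigl(\tfrac{1-p}{1-p_1}\bigr)^{1-\epsilon_1}$; the second inequality in \eqref{lbi2} is then the numerical consequence of \eqref{pepe}. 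Finally \eqref{pf1} follows: this lower bound is $>0$, and $\cup_{v\in V}\cup_{a\in F}\{v\leftrightarrow a\}$ is a tail event (any witnessing self-avoiding path has an infinite tail which is again self-avoiding, open, and converges to the same point of $F$, starting from some vertex), so Kolmogorov's $0$--$1$ law upgrades its probability to $1$.

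\textbf{Where the work is.} In \cite{ZL24} the cutset iteration lives on a single fixed graph, whereas here the hypotheses run through $\inf_s\sup_t$ over the family $\{G_{s,t}^{\infty}(F)\}$ and the targets $\partial^V D_F^{1/s}$ themselves move with $s$. The main effort is precisely the uniform bookkeeping sketched above: exploiting that a \emph{fixed finite} configuration meets only finitely many of the deleted balls — so that $\varphi$ on $G_{s,t}^{\infty}(F)$ coincides with $\varphi$ on $G$ once $(s,t)$ is large relative to that configuration — and arranging that an $n$-generation BK bound (Part 2) or an $n$-generation sprinkling bound (Part 1) can be made to hold with a single $s_n$ that \emph{also} places $\partial^V D_F^{1/s_n}$ outside $W_n$; one then checks that these estimates survive the passage to the limit in Corollary~\ref{c310}. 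Beyond this localization/compactness step the argument reduces to the cutset machinery of \cite{ZL24}.
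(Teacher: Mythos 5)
Your overall skeleton matches the paper's: you negate the defining property of $\tilde p_{c,F}$ at a parameter $p_1\in(\tilde p_{c,F},p)$ to produce the vertex $w$ of \eqref{swc2}, you use monotonicity of $\varphi$ in $q,s,t$ and the limit identity of Corollary~\ref{c310} to transfer estimates on the truncated graphs $G_{s,t}^{\infty}(F)$ to the event $\cup_{a\in F}\{w\leftrightarrow a\}$, and you finish \eqref{pf1} with the Kolmogorov $0$--$1$ law; the deduction $p_{c,F}^{site}(G)=\tilde p_{c,F}$ from (1) and (2) is the same. Your Part (2) is also acceptable in spirit, though the paper does not use BK: it iterates the one-step decomposition inequality of Lemma~\ref{l73} (Lemma~2.7 of \cite{ZL24}) inside the truncated graphs (this is Lemma~\ref{l37}, applied with $V_{p,\epsilon_0}(F)=\emptyset$), which avoids having to arrange disjoint cutset sets $S_u$ along every ray; your BK variant would need that disjointness carried out carefully, but it is a workable alternative.

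The genuine gap is in the crux of Part (1), the quantitative bound \eqref{lbi2}. You attribute it to a ``sprinkling/exploration estimate of \cite{ZL24}'' in which the failure probability is controlled ``generation by generation by $\tfrac{1-p}{1-p_1}$ with the $\varphi$--deficit absorbed into the exponent $1-\epsilon_1$.'' That is not the mechanism, and as described it would not yield the stated inequality: a per-generation factor produces a bound decaying exponentially in the number of generations, not a bound of the form $\bigl(\tfrac{1-p}{1-p_1}\bigr)^{1-\epsilon_1}$. The actual argument is the Duminil-Copin--Tassion style differential inequality of Lemma~\ref{l312} (Lemma~2.6 of \cite{ZL24}): setting $f_{w,s,t}(p)=\PP_p\bigl(w\xleftrightarrow{V_{s,t}\cap\{\sigma=1\}}\partial^V D_F^{1/s}\bigr)$, one needs $\varphi_q^{w}(S;G_{s,t}^{\infty}(F))>1-\epsilon_1$ for \emph{all} finite $S\ni w$, all $q\in[p_1,p]$, and all $t\ge N(s)$ --- this uniformity in $t$ is obtained from the monotonicity \eqref{tis}, a step you only gesture at --- whence $f_{w,s,t}'(p)/(1-f_{w,s,t}(p))\ge(1-\epsilon_1)/(1-p)$ on $(p_1,p)$, and integrating over $[p_1,p]$ gives $1-f_{w,s,t}(p)\le\bigl(\tfrac{1-p}{1-p_1}\bigr)^{1-\epsilon_1}$; taking $\sup_t$, $\inf_s$ via \eqref{llst} and invoking \eqref{pepe} yields \eqref{lbi2}. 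Since your proposal neither states this differential inequality nor supplies any alternative derivation of the exponent $1-\epsilon_1$, the central estimate of Part (1) is not actually proved.
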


\begin{lemma}\label{l312}Let $G=(V,E)$ be an infinite, connected, locally finite, planar graph with a well-separated embedding into $\mathbb{S}^2$. For all $p>0$ 
\begin{align*}
&\frac{d}{dp}\mathbb{P}_p\left(v\xleftrightarrow{V_{s,t}\cap\{\sigma=1\}} \partial^V D_{F}^{\frac{1}{s}}\right)\\
&\geq \frac{1}{1-p}
\left[ \mathrm{inf}_{S:v\in S,|S|<\infty}\varphi_p^v(S;G_{s,t}^{\infty}(F))\right]
\left(1-\mathbb{P}_p\left(v\xleftrightarrow{V_{s,t}\cap\{\sigma=1\}} \partial^V D_{F}^{\frac{1}{s}}\right)\right)
\end{align*}
\end{lemma}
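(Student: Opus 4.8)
The plan is to read this as a Russo--Margulis differential inequality and to follow the template of the local functional characterisation of $p^{\mathrm{site}}_c$ in \cite{ZL24} (itself a site‑percolation incarnation of the Duminil‑Copin--Tassion argument \cite{DCT16}), carried out inside the truncated graph $H:=G_{s,t}^{\infty}(F)$. Fix $s,t$ and write $A:=\bigl\{v\xleftrightarrow{V_{s,t}\cap\{\sigma=1\}}\partial^{V}D_{F}^{1/s}\bigr\}$, an increasing event, and $\theta(p):=\mathbb{P}_p(A)$. First I would reduce to a finite‑range setting: $A$ is an increasing union of finitely supported increasing events $A_N\uparrow A$ (require the connecting open path to stay within graph‑distance $N$ of $v$ and to terminate at a vertex of $\partial^{V}D_{F}^{1/s}$ inside that ball), so that $\mathbb{P}_p(A_N)$ is a polynomial with $\mathbb{P}_p(A_N)\uparrow\theta(p)$; note that for each finite $S$ the functional $\varphi_p^v(S;H)$ is already a finite sum of probabilities of finitely supported events, so no truncation is needed on that side. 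For fixed $N$, Margulis--Russo gives $\frac{d}{dp}\mathbb{P}_p(A_N)=\sum_{w}\mathbb{P}_p(w\text{ pivotal for }A_N)$, and since ``$w$ pivotal and $w$ closed'' coincides with ``$w$ pivotal and $A_N$ fails'' for an increasing event and pivotality is independent of $\sigma_w$, one gets the standard rewriting
\[
\frac{d}{dp}\mathbb{P}_p(A_N)\ =\ \frac{1}{1-p}\,\mathbb{E}_p\!\Bigl[\mathbf 1_{A_N^{c}}\cdot\#\{w:\ w\text{ closed and pivotal for }A_N\}\Bigr].
\]

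The crux is to lower bound the closed‑pivotal count by $\varphi_p^v(S;H)(1-\mathbb{P}_p(A_N))$ for a $\varphi$‑minimising $S$. Given $\varepsilon>0$, choose a finite $S\ni v$ with $v\in S^{\circ}$ (in $H$) and $\varphi_p^v(S;H)\le\inf_{S'}\varphi_p^v(S';H)+\varepsilon$ (such $S$ exists: adding the neighbours of $v$ only helps, and when the infimum is near $1$ a large ball works). Reveal $\sigma|_{S^{\circ}}$ and let $C^{\circ}$ be the open cluster of $v$ inside $S^{\circ}$. For each near‑boundary vertex $y\in S\setminus S^{\circ}$ on which the ($\sigma|_{S^{\circ}}$‑measurable) event $\{v\xleftrightarrow{S^{\circ}}\partial_{V}y\}$ occurs, fix a neighbour $x_y\in C^{\circ}$ of $y$ and, in the conditionally fresh configuration off $S^{\circ}$, run an exploration from $y$ toward $\partial^{V}D_{F}^{1/s}$ inside $V_{s,t}\setminus S^{\circ}$; on $A_N^{c}$ one exhibits a canonical closed site $w(y)$ on this attempted route such that opening $w(y)$ would complete a connection of $C^{\circ}$ through $y$ to $\partial^{V}D_{F}^{1/s}$ (hence $w(y)$ is pivotal for $A_N$) and such that $y\mapsto w(y)$ is injective. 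Here the minimality of $S$ is used to ensure that attempted routes from distinct $y$'s do not interfere and that nothing is lost by a route re‑entering $S^{\circ}$; a finite‑energy / stochastic‑domination argument on the fresh region then bounds the conditional probability that such a $w(y)$ exists from below by $1-\mathbb{P}_p(A_N)$, uniformly in the revealed data. Summing over the $y$'s and using the identity $\varphi_p^v(S;H)=\mathbb{E}_p\bigl[\#\{y\in S\setminus S^{\circ}:v\xleftrightarrow{S^{\circ}}\partial_{V}y\}\bigr]$ yields
\[
\mathbb{E}_p\!\Bigl[\mathbf 1_{A_N^{c}}\cdot\#\{w:\ w\text{ closed and pivotal for }A_N\}\Bigr]\ \ge\ \Bigl(\inf_{S'}\varphi_p^v(S';H)-\varepsilon\Bigr)\bigl(1-\mathbb{P}_p(A_N)\bigr).
\]

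Combining this with the Russo identity, letting $\varepsilon\to0$ and then $N\to\infty$ (monotone convergence for $\mathbb{P}_p(A_N)\uparrow\theta(p)$, and stability of a Grönwall‑type integral inequality under such limits) gives the asserted bound. The genuinely delicate point is the second paragraph: unlike the naive Russo estimate, which only produces the inequality with a \emph{fixed} $S$, the right‑hand side here carries an infimum, and a fixed‑$S$ version is actually false for small $p$ (e.g.\ $S=\{v\}$ has $\varphi_p^v(S;H)=1$ while the derivative is arbitrarily small there); the argument must therefore produce the $\varphi$‑minimising set intrinsically, charge its near‑boundary connections to genuine closed pivotal sites injectively, and bound the ``exterior'' contribution below by $1-\theta(p)$. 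Controlling the interference between attempted routes, and carrying this out inside $H=G_{s,t}^{\infty}(F)$ (whose removed balls around the other ends must not obstruct the routes aimed at $F$) while the connection event $A$ itself lives in the smaller vertex set $V_{s,t}$, is where the real work lies, and is the adaptation of the bookkeeping of \cite{ZL24} to the present truncated, end‑directed setting.
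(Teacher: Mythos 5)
The paper's own proof of Lemma~\ref{l312} is a one--line reduction: it applies Lemma~2.6 of \cite{ZL24} to site percolation on the truncated graph $G_{s,t}^{\infty}(F)$, taking $\Lambda:=V_{s,t}^{\infty}\setminus D_F^{1/s}$, so the entire content is delegated to that reference. Your plan to reprove a DCT--type differential inequality from scratch is legitimate in principle, and your outer scaffolding (finite approximations $A_N\uparrow A$, Russo--Margulis, the rewriting of the derivative as $\tfrac{1}{1-p}\,\mathbb{E}_p[\mathbf 1_{A_N^{c}}\,\#\{\text{closed pivotal sites}\}]$, and the limiting step) is fine.

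The gap is exactly at the step you flag as the crux, and it is a genuine one. Near--minimality of the \emph{number} $\varphi_p^v(S;H)$ carries no geometric information about $S$, so it cannot ``ensure that attempted routes from distinct $y$'s do not interfere'' or make $y\mapsto w(y)$ injective; and the asserted bound that, conditionally on the revealed interior data and on $A_N^{c}$, the exploration from $y$ produces a closed site $w(y)$ whose opening alone joins $C^{\circ}$ to $\partial^V D_F^{1/s}$ with probability at least $1-\mathbb{P}_p(A_N)$ ``uniformly in the revealed data'' is false: such a $w(y)$ requires an open route from $y$ all the way to the target with exactly one closed blocking site, an event whose conditional probability is exponentially small in the distance to $\partial^V D_F^{1/s}$ when $p$ is small, whereas $1-\mathbb{P}_p(A_N)$ is then close to $1$. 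The correct mechanism (as in \cite{DCT16} and its site version in \cite{ZL24}) never fixes a deterministic $S$: one works on $A_N^{c}$, takes the \emph{random} set $\mathcal S$ of vertices not joined to $\partial^V D_F^{1/s}$ by an open path in the relevant region, observes that the closed pivotal sites are then exactly boundary sites of $\mathcal S$ reached from $v$ inside $\mathcal S^{\circ}$, and uses that $\{\mathcal S=S\}$ is measurable with respect to the configuration off $S^{\circ}$, so the interior probabilities $\mathbb{P}_p(v\xleftrightarrow{S^{\circ}}\partial_V y)$ factor out; summing over the possible values $S\ni v$ yields $\inf_S\varphi_p^v(S;H)$ times $\mathbb{P}_p(A_N^{c})=1-\mathbb{P}_p(A_N)$. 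In other words, the factor $1-\theta$ is the total mass of the partition $\{\mathcal S=S\}$, not a per--vertex conditional success probability, and the infimum appears because $\mathcal S$ is random, not because one pre--selects a minimizer. Since your sketch replaces this decoupling by an unspecified exploration resting on these unjustified claims (and only gestures at the $V_{s,t}$ versus $G_{s,t}^{\infty}(F)$ bookkeeping that the paper settles by its choice of $\Lambda$), the central step of the proof is missing; it can be repaired either by carrying out the random--set conditioning above or simply by citing Lemma~2.6 of \cite{ZL24} for $G_{s,t}^{\infty}(F)$, as the paper does.
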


\begin{proof}Let $V_{s,t}^{\infty}$ be the vertex set of $G_{s,t}^{\infty}(F)$.
The lemma follows from Lemma 2.6 in \cite{ZL24} by considering the site percolation on the graph $G_{s,t}^{\infty}(F)$ and letting 
\begin{align*}
\Lambda:=V_{s,t}^{\infty}\setminus D_F^{\frac{1}{s}}.
\end{align*}
\end{proof}

\begin{lemma}(Lemma 2.7 in \cite{ZL24})\label{l73}Let $G=(V,E)$ be an infinite, connected, locally finite graph. Let $p>0$, $u\in S\subset A$ and $B\cap S=\emptyset$. Then
\begin{itemize}
\item If $u\in S^{\circ}$
\begin{align*}
    \mathbb{P}_p(u\xleftrightarrow{A} B)\leq \sum_{y\in S:\partial_V y\cap S^c\neq \emptyset}
    \mathbb{P}_p(u\xleftrightarrow{S^{\circ}}\partial_V y )\mathbb{P}_p(y\xleftrightarrow{A} B).
\end{align*}
\item If $u\in S\setminus S^{\circ}$,
\begin{align*}
    \mathbb{P}_p(u\xleftrightarrow{A} B)\leq \sum_{y\in S:\partial_V y\cap S^c\neq \emptyset}
    \mathbf{1}_{y=u}\mathbb{P}_p(y\xleftrightarrow{A} B).
\end{align*}
\end{itemize}
\end{lemma}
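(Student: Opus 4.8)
The statement is a one-step cutset (first-passage) decomposition, and the plan is to prove it by routing every open path from $u$ to $B$ through the \emph{inner boundary} $\partial^{\mathrm{in}}S:=\{y\in S:\partial_V y\cap S^c\neq\varnothing\}$ of $S$ and then decoupling the two resulting pieces. The case $u\in S\setminus S^\circ$ is trivial: such a $u$ has, by definition, a neighbour outside $S$, hence $u\in\partial^{\mathrm{in}}S$ and the right-hand side equals $\sum_{y}\mathbf 1_{y=u}\,\mathbb P_p(y\xleftrightarrow{A}B)=\mathbb P_p(u\xleftrightarrow{A}B)$, so the inequality holds with equality. Assume from now on that $u\in S^\circ$.

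Let $\mathcal C$ be the open cluster of $u$ in the graph induced on $S^\circ$, i.e.\ the set of vertices reached from $u$ by open paths all of whose vertices lie in $S^\circ$; fix a value $D$ that can occur as $\mathcal C$ and work on the event $\{\mathcal C=D\}$. On $\{u\xleftrightarrow{A}B\}$ pick a self-avoiding open path $\gamma=(z_0=u,z_1,\dots,z_m)$ in $A$ with $z_m\in B$. Since $D\subseteq S$ and $B\cap S=\varnothing$, the path $\gamma$ leaves $D$; let $z_l$ be the \emph{last} vertex of $\gamma$ lying in $D$ and set $y:=z_{l+1}$. Then $y$ is adjacent to $z_l\in S^\circ$, so $y\in S$ (interior vertices have all their neighbours in $S$), and $y\notin S^\circ$: otherwise $y$ would be an open vertex of $S^\circ$ adjacent to $\mathcal C$, hence in $\mathcal C=D$, contradicting the maximality of $l$. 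Thus $y\in\partial^{\mathrm{in}}S$, $z_l\in\partial_V y\cap\mathcal C$, and the suffix $(z_{l+1},\dots,z_m)$ is an open path from $y$ to $B$ inside $A$ avoiding $D$; hence on $\{\mathcal C=D\}$,
\[
\{u\xleftrightarrow{A}B\}\ \subseteq\ \bigcup_{\,y\in\partial^{\mathrm{in}}S,\ \partial_V y\cap D\neq\varnothing}\{y\xleftrightarrow{A\setminus D}B\}.
\]

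Conditionally on $\{\mathcal C=D\}$ the configuration is frozen on $D\cup\bigl(\partial_V D\cap(S^\circ\setminus D)\bigr)$ --- open on $D$, closed on the rest --- and i.i.d.\ Bernoulli$(p)$ elsewhere. Since $y\notin D\cup S^\circ$, on $\{\mathcal C=D\}$ the event $\{y\xleftrightarrow{A\setminus D}B\}$ depends only on the free coordinates (the frozen-open vertices lie in $D$ and are already excluded, the frozen-closed ones cannot lie on an open path), whence $\mathbb P_p(y\xleftrightarrow{A\setminus D}B\mid\mathcal C=D)\le\mathbb P_p(y\xleftrightarrow{A}B)$ by monotonicity in the ambient region. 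A union bound over $y$ followed by a Fubini-type sum over $D$ then gives
\[
\mathbb P_p(u\xleftrightarrow{A}B)\ \le\ \sum_{y\in\partial^{\mathrm{in}}S}\mathbb P_p(y\xleftrightarrow{A}B)\,\mathbb P_p(\partial_V y\cap\mathcal C\neq\varnothing)\ =\ \sum_{y\in\partial^{\mathrm{in}}S}\mathbb P_p(u\xleftrightarrow{S^\circ}\partial_V y)\,\mathbb P_p(y\xleftrightarrow{A}B),
\]
because $\{\partial_V y\cap\mathcal C\neq\varnothing\}$ is precisely the event that $u$ reaches a neighbour of $y$ by an open path inside $S^\circ$; this is the claimed bound. (Alternatively one can derive the same inequality from the van den Berg--Kesten inequality applied to the increasing events $\{u\xleftrightarrow{S^\circ}\partial_V y\}$ and $\{y\xleftrightarrow{A}B\}$, whose disjoint occurrence on $\{u\xleftrightarrow{A}B\}$ is witnessed by the two subpaths of $\gamma$; I prefer the conditioning route since it sidesteps invoking BK for possibly non-cylindrical events.) I expect the only genuine care to lie in the bookkeeping of the conditioning step --- pinning down exactly which coordinates $\{\mathcal C=D\}$ freezes and verifying that $\{y\xleftrightarrow{A\setminus D}B\}$ is independent of them --- which is precisely why $y$ must be chosen as the first vertex \emph{after the last visit of $\gamma$ to $\mathcal C$} rather than as the first time $\gamma$ exits $S^\circ$.
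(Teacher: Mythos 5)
Your argument is correct. Note that the paper itself does not prove this lemma — it is imported verbatim as Lemma~2.7 of \cite{ZL24} — and your proof is essentially the standard one underlying that reference (in the spirit of Duminil-Copin--Tassion): condition on the open cluster $\mathcal C$ of $u$ inside $S^{\circ}$, take the last exit of an open $u$--$B$ path from $\mathcal C$ so that the next vertex $y$ lies in $S\setminus S^{\circ}$, and use that $\{\mathcal C=D\}$ is a cylinder event on $D\cup(\partial_V D\cap(S^{\circ}\setminus D))$ while $\{y\xleftrightarrow{A\setminus D}B\}$ only involves free coordinates, giving the product bound after summing over $D$; the boundary case $u\in S\setminus S^{\circ}$ is indeed trivial since $y=u$ appears in the index set. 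The details you flag as delicate (choosing $y$ after the \emph{last} visit to $\mathcal C$, and the exact set of frozen coordinates) are handled correctly, so I see no gap.
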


\bigskip
\noindent\textbf{Proof of Lemma \ref{l311}(1).}
Let $p>\tilde{p}_{c,F}$. Define
\begin{align}
f_{v,s,t}(p):=\mathbb{P}_p\left(v\xleftrightarrow{V_{s,t}\cap\{\sigma=1\}} \partial^V D_{F}^{\frac{1}{s}}\right)\label{dfv}
\end{align}
Since $p>\tilde{p}_{c,F}$, for any $\epsilon>0$, there exists $w\in V$, such that (\ref{swc2}), (\ref{pepe}) holds. Note that
\begin{align}
\varphi_q^v(S_w,G_{s,t}^{\infty}(F))\leq \varphi_q^v(S_w,G_{s,t+1}^{\infty}(F))\label{tis}
\end{align}
and
\begin{align*}
\varphi_q^v(S_w,G_{s+1,t}^{\infty}(F))\leq \varphi_q^v(S_w,G_{s,t}^{\infty}(F))
\end{align*}

Given (\ref{swc2}), for all $s\geq 1$ 
\begin{align*}
\sup_t\varphi_q^v(S_w,G_{s,t}^{\infty}(F))>1-\epsilon_1;\ \forall q\geq p_1
\end{align*}
and by (\ref{tis}) there exists $N(s)\geq 1$,  for all $t\geq N(s)$,
\begin{align*}
\varphi_q^v(S_w,G_{s,t}^{\infty}(F))>1-\epsilon_1;\ \forall q\geq p_1
\end{align*}

By Lemma \ref{l312}, for any $\epsilon>0$, there exists $w\in V$
\begin{align*}
    \frac{f_{w,s,t}'(p)}{1-f_{w,s,t}(p)}\geq \frac{1-\epsilon_1}{1-p},\ \forall p>p_1;\ \mathrm{and}\ t\geq N(s).
\end{align*}
Integrating both the left hand side and right hand side from $p_1>\tilde{p}_{c,F}$ to $p$, using the fact that $f_{w,s,t}\left(p_1\right)\geq 0$, and (\ref{pepe}) we obtain 
\begin{align*}
f_{w,s,t}(p)\geq 1-\left(\frac{1-p}{1-p_1}\right)^{1-\epsilon_1}>1-\left(\frac{1-p}{1-\tilde{p}_{c,F}}\right)^{1-\epsilon}
\end{align*}
then by (\ref{llst}) taking sup over $t$ and inf over $s$ (\ref{dfv}) follows; (\ref{pf1}) follows from the Kolmogorov 0-1 law.
\hfill$\Box$

Before proving Lemma \ref{l311}(2), we first prove a lemma
\begin{lemma}\label{l37}
Define
\begin{align}
V_{p,\epsilon}(F):=\{v\in V:\forall S_v\subset V\ \mathrm{finite\ and}\ v\in S_v^{\circ},\inf_s\sup_t\varphi_p^{v}(S_w;G_{s,t}^{\infty}(F))> 1-\epsilon\}\label{dve}
\end{align}
Let $W:=V\setminus V_{p,\epsilon}(F)$.
 Let $v\in W$ be arbitrary.
\begin{align}
\PP_{p}(v\xleftrightarrow{W}F)=0.\label{p10}
\end{align}
\end{lemma}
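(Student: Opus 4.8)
\medskip
\noindent\textbf{Proof plan.}
The plan is a van den Berg--Kesten self-consistency argument on the quantity $\theta:=\sup_{u\in W}\PP_{p}(u\xleftrightarrow{W}F)\in[0,1]$. Along any open self-avoiding path from $v$ that stays inside $W$, \emph{every} vertex lies in $W$ and hence carries its own finite cutset on which the $\varphi$-functional is at most $1-\epsilon$; peeling off the first such cutset will cost a multiplicative factor $\le 1-\epsilon<1$ and leave a tail event of the same type, which forces $\theta=0$.

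\emph{Step 1 (removing the truncation).} First I would show that $v\in W$ yields a finite $S_{v}\subset V$ with $v\in S_{v}^{\circ}$ and $\varphi_{p}^{v}(S_{v};G)\le 1-\epsilon$. By well--separatedness, $\mathcal A$ is closed in $\mathbb S^{2}$ and disjoint from $\phi(G)$, hence compact; as $S_{v}$ together with its (finitely many, by local finiteness) neighbours and incident edges is a compact subset of $\phi(G)$, it has positive distance $\delta>0$ to $\mathcal A\supseteq A_{F,c}^{1/s}$. So for every $s$ and every $t>1/\delta$ the removed balls $B_{\mathbb S^{2}}(a,1/t)$, $a\in A_{F,c}^{1/s}$, miss a neighbourhood of $S_{v}$; there $G_{s,t}^{\infty}(F)$ agrees with $G$, so by \eqref{dpv} one has $\varphi_{p}^{v}(S_{v};G_{s,t}^{\infty}(F))=\varphi_{p}^{v}(S_{v};G)$. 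Hence $\inf_{s}\sup_{t}\varphi_{p}^{v}(S_{v};G_{s,t}^{\infty}(F))\ge\varphi_{p}^{v}(S_{v};G)$, and $v\notin V_{p,\epsilon}(F)$ (the set in \eqref{dve}) forces $\varphi_{p}^{v}(S_{v};G)\le 1-\epsilon$.

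\emph{Step 2 (first--exit decomposition, BK, self--consistency).} Fix $v\in W$ and a self--avoiding open path $z_{0}=v,z_{1},z_{2},\dots$ realizing $\{v\xleftrightarrow{W}F\}$, so every $z_{i}\in W$ and $z_{n}\to a\in F$. Let $k\ge 1$ be minimal with $z_{k}\notin S_{v}^{\circ}$; then $z_{k}\in S_{v}$ (as a neighbour of $z_{k-1}\in S_{v}^{\circ}$) with $\partial_{V}z_{k}\cap S_{v}^{c}\ne\varnothing$, the arc $z_{0},\dots,z_{k-1}\subset S_{v}^{\circ}$ witnesses $\{v\xleftrightarrow{S_{v}^{\circ}}\partial_{V}z_{k}\}$, and the tail $z_{k},z_{k+1},\dots\subset W$ witnesses $\{z_{k}\xleftrightarrow{W}F\}$, the two witnesses being vertex--disjoint. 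Therefore
\[
\{v\xleftrightarrow{W}F\}\ \subseteq\ \bigcup_{y\in S_{v},\ \partial_{V}y\cap S_{v}^{c}\ne\varnothing}\ \bigl(\{v\xleftrightarrow{S_{v}^{\circ}}\partial_{V}y\}\circ\{y\xleftrightarrow{W}F\}\bigr),
\]
where $\circ$ denotes disjoint occurrence. Since $\{v\xleftrightarrow{S_{v}^{\circ}}\partial_{V}y\}$ depends only on the finitely many sites of $S_{v}^{\circ}$ and $\{y\xleftrightarrow{W}F\}$ is increasing, the BK inequality gives $\PP_{p}(v\xleftrightarrow{W}F)\le\sum_{y}\PP_{p}(v\xleftrightarrow{S_{v}^{\circ}}\partial_{V}y)\,\PP_{p}(y\xleftrightarrow{W}F)$, the sum over boundary vertices $y$ of $S_{v}$. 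For each such $y$, either $y\notin W$ and $\{y\xleftrightarrow{W}F\}=\varnothing$, or $y\in W$ and $\PP_{p}(y\xleftrightarrow{W}F)\le\theta$, so the factor is $\le\theta$; combining with Step~1, $\PP_{p}(v\xleftrightarrow{W}F)\le\theta\,\varphi_{p}^{v}(S_{v};G)\le(1-\epsilon)\,\theta$. Taking the supremum over $v\in W$ yields $\theta\le(1-\epsilon)\theta$, hence $\theta=0$ since $\theta\le 1$ and $\epsilon>0$; in particular $\PP_{p}(v\xleftrightarrow{W}F)=0$ for every $v\in W$, which is \eqref{p10}. The steps needing most care are the truncation removal in Step~1 and the check that the first--exit decomposition produces genuinely disjoint witnesses so that BK applies; the self--consistency is then immediate.
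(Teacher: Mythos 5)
Your argument is correct in substance but takes a genuinely different route from the paper. The paper keeps the truncated graphs $G_{s,t}^{\infty}(F)$ throughout: it extracts from $v\in W$ a bound $\varphi_p^{v}(S_v;G_{s,t}^{\infty}(F))\le 1-\tfrac{\epsilon}{2}$ for all large $s$ (and all $t$), applies the imported cutset inequality (Lemma~\ref{l73}, Lemma~2.7 of \cite{ZL24}) inside those truncated graphs, iterates it infinitely many times with nested truncation thresholds, and finally passes to the limit via Corollary~\ref{c310} to get $\PP_p(v\xleftrightarrow{W}F)\le\lim_m(1-\tfrac\epsilon2)^m=0$. You instead (i) remove the truncation once and for all: well--separatedness gives a positive distance from the finite set $S_v\cup\partial_V S_v$ to $\mathcal A$, so $G_{s,t}^{\infty}(F)$ agrees with $G$ near $S_v$ for $t$ large and $\varphi_p^{v}(S_v;G)\le\inf_s\sup_t\varphi_p^{v}(S_v;G_{s,t}^{\infty}(F))\le 1-\epsilon$ (note you do not even need the monotonicity in $t$ the paper invokes); (ii) re-derive the one-step cutset inequality by a first-exit decomposition plus BK rather than citing Lemma~\ref{l73}; and (iii) replace the infinite iteration by the fixed-point bound $\theta\le(1-\epsilon)\theta$ for $\theta=\sup_{u\in W}\PP_p(u\xleftrightarrow{W}F)\le 1$. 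Your version is shorter, avoids the bookkeeping of the nested $N$'s, and yields the cleaner constant $1-\epsilon$; the paper's version stays inside the $(s,t)$-framework used elsewhere and leans on the externally proved decomposition lemma instead of BK.

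One point you should justify explicitly: the BK step pairs a finitely determined increasing event with $\{y\xleftrightarrow{W}F\}$, which is an increasing event determined by infinitely many sites and whose target is a set of ends rather than vertices. The standard fix is to approximate: by (the $W$-relative analogue of) Lemma~\ref{l39}, $\{y\xleftrightarrow{W}F\}$ is obtained from events $\{y\xleftrightarrow{W\cap V_{s,t}}\partial^V D_F^{1/s}\}$, each of which is a countable union of finite open-path cylinder events, so BK applies at each truncation and the bound passes to the limit (this is exactly the role Corollary~\ref{c310} plays in the paper, which itself applies Lemma~\ref{l73} with the end class $F$ as target and thus makes the same implicit extension). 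With that sentence added, your proof is complete.
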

\begin{proof}
For any $v\in W$, there exists a finite $S_v\subseteq V$ satisfying $v\in S_v^{\circ}$ and 
\begin{align*}
\inf_{s}\sup_{t}\varphi_{p}^v(S_v;G_{s,t}^{\infty}(F))\leq 1-\epsilon.
\end{align*}

It follows that there exists $N_v$, such that for any $s\geq N_v$ and $t\geq 1$
\begin{align*}
 &1-\frac{\epsilon}{2}
\geq\sup_{t}\varphi_{p}^v(S_v;G_{s,t}^{\infty}(F))\geq  \varphi_{p}^v(S_v;G_{s,t}^{\infty}(F))
\end{align*}

By Lemma \ref{l73},
\begin{align*}
    \mathbb{P}_{p}(v\xleftrightarrow{W} F;G_{s,t}^{\infty}(F))\leq \sum_{y\in S_v:\partial_V y\cap S_v^{c}\neq \emptyset}\mathbb{P}_{p}(v\xleftrightarrow{S_v^{\circ}}\partial_V y;G_{s,t}^{\infty}(F))\mathbb{P}_{p}(y\xleftrightarrow{W} F;G_{s,t}^{\infty}(F))
\end{align*}
Similarly, there exists a finite $S_y\subseteq V$ satisfying $y\in S_y^{\circ}$ and 
\begin{align*}
\inf_s\sup_t\varphi_{p}^y(S_y;G_{s,t}^{\infty}(F))\leq 1-\epsilon.
\end{align*}
There exists $N_y$, such that for any $s\geq N_y$, $t\geq 1$, we have
\begin{align*}\varphi_{p}^v(S_y;G_{s,t}^{\infty}(F))\leq 1-\frac{\epsilon}{2}
\end{align*}

Again, by Lemma \ref{l73},
\begin{align*}
    \mathbb{P}_{p}(y\xleftrightarrow{W} F;G_{s,t}^{\infty}(F))\leq \sum_{y_1\in S_y:\partial_V y_1\cap S_y^{c}\neq \emptyset}\mathbb{P}_{p}(y\xleftrightarrow{S_v^{\circ}}\partial_V y_1;G_{s,t}^{\infty}(F))\mathbb{P}_{p}(y_1\xleftrightarrow{W} F;G_{s,t}^{\infty}(F))
\end{align*}
Choose 
\begin{align*}
N=\max\{N_v,N_y\}_{y\in S_v:\partial_{v}y\cap S_v^c\neq\emptyset}
\end{align*}
We have for all $s\geq N$, $t\geq N$, 
\begin{align*}
    \mathbb{P}_{p}(v\xleftrightarrow{W} F;G_{s,t}^{\infty}(F))\leq \left(1-\frac{\epsilon}{2}\right)^2.
\end{align*}
Since the graph is locally finite, the process can continue for infinitely many steps. Hence it follows from Corollary \ref{c310} that
\begin{align*}
    \mathbb{P}_{p}(v\xleftrightarrow{W} F)\leq\lim_{m\rightarrow\infty}\left(1-\frac{\epsilon}{2}\right)^m=0.
\end{align*}
Then the lemma follows.
\end{proof}

\bigskip
\noindent\textbf{Proof of Lemma \ref{l311}(2).} We shall use $p_{c,F}$ to denote $p_{c,F}^{site}(G)$ when there is no confusion. 

From the definition of $\tilde{p}_{c,F}$, we see that if $p<\tilde{p}_{c,F}$, there exists $\epsilon_0>0$, $V_{p',\epsilon_0}=\emptyset$. Then \ref{lbc2} follows from Lemma \ref{l37}.
\hfill$\Box$
\\

There is a natural way to couple the percolation process for all $p$ simultaneously. Equip the vertices of $G$ with i.i.d.~random variables $\{U(v)\}_{v\in V}$, uniform in [0,1], and write $\Psi^G$ for the resulting product measure on $[0,1]^V$. For each $p$, the edge set 
\begin{align*}
\{v\in V: U(v)\leq p\}
\end{align*}
has the same distribution as the set of open vertices under $\mathbb{P}_p^{site}$.

Using this standard coupling, one can see that for any $0<p_1\leq p_2\leq 1$, every infinite 1-cluster at level $p_1$ is a subset of an infinite 1-cluster at level $p_2$.

\begin{lemma}\label{l74}Let $G=(V,E)$ be an infinite, locally finite planar graph with a well-separated embedding into the plane. For each $p_2>p_1>\inf_{F\in\mathcal{F}}p_{c,F}^{site}(G)$  there exists $F\in\mathcal{F}$, such that for each $\epsilon>0$, $\Psi^{G}$-a.s.~every infinite 1-cluster $\xi$ at level $p_1$ satisfying
\begin{align}
\ol{\xi}\cap F\neq \emptyset\label{xf}
\end{align}
has infinitely many vertices $w$ satisfying
\begin{align}
\mathbb{P}_{p_2}(w\leftrightarrow F)\geq 1-\left(\frac{1-p_2}{1-p_1}\right)^{1-\epsilon}\label{lbi}
\end{align}
\end{lemma}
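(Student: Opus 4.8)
The plan is to fix an end class $F$ to which $p_1$–percolation connects, to isolate a \emph{deterministic} ``good set'' $\mathcal G\subset V$ of vertices obeying the quantitative bound~(\ref{lbi}) at level $p_2$, and then to show, by a disconnection estimate for the complement of $\mathcal G$, that every infinite $p_1$–cluster whose closure meets $F$ must contain infinitely many vertices of $\mathcal G$. Since $p_1>\inf_{F'\in\mathcal F}p^{site}_{c,F'}(G)$, by definition of the infimum there is $F\in\mathcal F$ with $p^{site}_{c,F}(G)<p_1<p_2$; by Lemma~\ref{l311} this value equals $\tilde p_{c,F}$. We may assume $\epsilon\in(0,1)$ and $p_2<1$: if $\epsilon\ge 1$ the right-hand side of~(\ref{lbi}) is $\le 0$ and the claim is empty, while if $p_2=1$ the unique cluster is all of $G$, for which Lemma~\ref{le76} gives $w\leftrightarrow F$ for every $w$.

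Next I would build $\mathcal G$. Let $\mathcal G:=V_{p_1,\epsilon}(F)$ be the set defined in Lemma~\ref{l37} at level $p_1$ with exponent $\epsilon$. Integrating the differential inequality of Lemma~\ref{l312} over $[p_1,p_2]$ exactly as in the proof of Lemma~\ref{l311}(1) — with its internal threshold ``$p_1$'' taken to be our $p_1$, its ``$\epsilon_1$'' taken to be our $\epsilon$, and ``$p$'' $:=p_2$ — and then passing to the limit through Corollary~\ref{c310}, one gets that every $w\in\mathcal G$ satisfies
\[
\PP_{p_2}(w\leftrightarrow F)\ \ge\ 1-\Bigl(\tfrac{1-p_2}{1-p_1}\Bigr)^{1-\epsilon},
\]
which is~(\ref{lbi}); only the first, ``$p_1$–flavoured'' inequality in~(\ref{lbi2}) is needed, so no comparison with $\tilde p_{c,F}$ as in~(\ref{pepe}) is required. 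Simultaneously, Lemma~\ref{l37} applied at level $p_1$ and exponent $\epsilon$ gives, with $W:=V\setminus\mathcal G$, that $\PP_{p_1}(v\xleftrightarrow{W}F)=0$ for every $v\in W$; a countable union then shows that $\Psi^G$–almost surely no vertex of $W$ is $p_1$–connected to $F$ through vertices of $W$ only.

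Work on this full-measure event, and let $\xi$ be any infinite $p_1$–cluster with $\overline\xi\cap F\ne\varnothing$. Pick $a\in\overline\xi\cap F$, which is an accumulation point and hence corresponds to an end $\omega$ with associated components $a(K)\subset G\setminus K$ for finite $K\subset V$. By the second assertion of Lemma~\ref{le76}, every $v\in\xi$ has $v\leftrightarrow a$: there is an infinite open self-avoiding path $z_0=v,z_1,z_2,\dots$ lying in $\xi$ with $z_n\to a$ in $\mathbb S^2$. Suppose toward a contradiction that $\xi\cap\mathcal G$ is finite, and set $K:=\xi\cap\mathcal G$. Because $z_n\to a$, the Freudenthal description of the neighbourhood basis at $a$ (Proposition~\ref{th52}) forces $z_n\in a(K)$ for all $n\ge N$, some $N$; hence $z_n\notin K$, and since $z_n\in\xi$ this yields $z_n\in W$ for all $n\ge N$. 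But then the tail $(z_N,z_{N+1},\dots)$ is an infinite $p_1$–open self-avoiding path in $W$ converging to $a\in F$, so $z_N\in W$ satisfies $z_N\xleftrightarrow{W}F$, contradicting the event we are on. Therefore $\xi\cap\mathcal G$ is infinite, and each of its vertices obeys~(\ref{lbi}); finally, Lemma~\ref{l311}(1) guarantees that $\PP_{p_1}$–a.s.\ such a cluster $\xi$ exists, so the statement is non-vacuous.

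The main obstacle is the parameter bookkeeping in the construction of $\mathcal G$: the disconnection estimate of Lemma~\ref{l37} controls connections inside $W$ only at the level at which it is invoked, so $\mathcal G$ \emph{must} be the complement of the bad set at the level $p_1$ of the cluster, not at a level $p^*<p_1$ near $\tilde p_{c,F}$ as a naive reading of~(\ref{pepe}) would suggest. Reconciling this with the quantitative bound is precisely what requires re-running the integration of Lemma~\ref{l311}(1) with its internal threshold equal to our $p_1$ and discarding the comparison inequality. The remaining ingredients — that an infinite cluster touching $F$ has, for every finite $K$, an open self-avoiding tail landing in the component $a(K)$, together with the union bound — are routine given Lemma~\ref{le76} and the Freudenthal topology.
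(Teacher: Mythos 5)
Your proposal is correct and follows essentially the same route as the paper's own proof: choose $F\in\mathcal F$ with $p_{c,F}^{site}(G)<p_1$, take the good set $V_{p_1,\epsilon}(F)$ of Lemma~\ref{l37}, obtain (\ref{lbi}) for every good vertex by integrating the differential inequality of Lemma~\ref{l312} from $p_1$ to $p_2$ as in Lemma~\ref{l311}(1), and exclude $|\xi\cap V_{p_1,\epsilon}(F)|<\infty$ by producing a vertex of $W=V\setminus V_{p_1,\epsilon}(F)$ connected to $F$ inside $W$ at level $p_1$, contradicting (\ref{p10}). The only cosmetic point is your citation of the ``Moreover'' clause of Lemma~\ref{le76}, whose stated hypothesis (\ref{fi1}) is not assumed here; since that clause is established without using (\ref{fi1}), your tail argument (which in fact spells out a step the paper leaves implicit) stands.
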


\begin{proof}Since $p_1>\inf_{F\in\mathcal{F}}p_{c,F}^{site}(G)$, by Lemma \ref{l37}, there exists $F\in \mathcal{F}$, s.t.
\begin{align*}
\PP_p(\cup_{v\in V}\{v\leftrightarrow F\})=1;\ \forall p\geq p_1.
\end{align*}
In other words, there exists $F\in \mathcal{F}$, such that $p_1>p_{c,F}^{site}(G)$.

Let $\xi$ be an infinite 1-cluster at level $p_1$ satisfying (\ref{xf}). Then $\xi$ must be  contained in an infinite 1-cluster satisfying (\ref{xf}) at level $p_2$ in the standard coupling.

By Lemma \ref{l37}, we have for any $u\in V\setminus V_{p_1,\epsilon}(F)$,
\begin{align*}
\PP_{p_1}(u\xleftrightarrow{V\setminus V_{p_1,\epsilon}(F)} F)=0.
\end{align*}
Assume $|V_{p_1,\epsilon}(F)\cap \xi|<\infty$, then there exists $u\in \xi\setminus V_{p_1,\epsilon}(F)$, such that $u\xleftrightarrow{V\setminus V_{p_1,\epsilon}(F)}F$ occurs at level $p_1$. But (\ref{p10}) implies that this event has probability 0. It follows that almost surely $|V_{p_1,\epsilon}(F)\cap \xi|=\infty$. By Lemma \ref{l311}(1), it follows that for any vertex $w\in V_{p_1,\epsilon}(F)\cap \xi$, (\ref{lbi}) holds.
\end{proof}

\begin{lemma}\label{lem316}Let $G=(V,E)$ be an infinite, locally finite planar graph with a well-separated embedding into the plane. Let $p_2>p_1>p_{c,F}^{site}(G)$ and $\epsilon>0$. Let $\xi$ be an infinite 1-cluster at level $p_1$ such that (\ref{xf}) holds (which must be contained in an infinite 1-cluster at level $p_2$ in the standard coupling). Then for any $\delta>0$, there exist $w_1,\ldots,w_N\in \xi$ satisfying (\ref{lbi2}), such that for any $R\subset V$ satisfying $\{w_1,\ldots,w_N\}\subset R$, one  has
\begin{align*}
\mathbb{P}_{p_2}(R\leftrightarrow F)>1-\delta.
\end{align*}
\end{lemma}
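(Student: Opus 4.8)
The plan is to manufacture, inside $\xi$, a large family of well--separated ``good'' vertices and then drive the failure probability $\mathbb P_{p_2}(R\nleftrightarrow F)$ to zero by an amplification--by--independence scheme resting on the differential inequality of Lemma~\ref{l312} and on the additivity of the $\varphi$--functional over non--adjacent witness sets.

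First I would record, from Lemma~\ref{l74} (which uses $p_1>p^{\mathrm{site}}_{c,F}(G)$ via Lemma~\ref{l37}), that $\Psi^G$--a.s.\ the set $V_{p_1,\epsilon}(F)\cap\xi$ is infinite and that every such vertex $w$ satisfies \eqref{lbi2}, i.e.\ $\mathbb P_{p_2}(w\leftrightarrow F)\ge 1-q$ with $q:=\bigl(\tfrac{1-p_2}{1-p_1}\bigr)^{1-\epsilon}<1$; moreover, by the definition \eqref{dve} of $V_{p_1,\epsilon}(F)$ and the monotonicity of $\varphi_q^w$ in $q$, such $w$ obey $\inf_s\sup_t\varphi_q^w(S;G_{s,t}^{\infty}(F))>1-\epsilon$ for \emph{every} finite $S\ni w$ with $w\in S^\circ$ and every $q\ge p_1$. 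Since $G$ is locally finite, I then choose $w_1,\dots,w_N\in V_{p_1,\epsilon}(F)\cap\xi$ (with $N$ fixed at the end) at pairwise graph distance at least $4$, so that the sets $B_i:=\{w_i\}\cup\{\text{$G$--neighbours of }w_i\}$ are pairwise disjoint and mutually non--adjacent, $w_i\in B_i^\circ$, and for all large $s,t$ one has $B_i\subset G_{s,t}^{\infty}(F)$ and $\varphi_q^{w_i}(B_i;G_{s,t}^{\infty}(F))=\varphi_q^{w_i}(B_i;G)>1-\epsilon$ for $q\ge p_1$.

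The main step is a multi--source version of the proof of Lemma~\ref{l311}(1). Given any $R\supset\{w_1,\dots,w_N\}$, monotonicity in the source set reduces the claim to $R=\{w_1,\dots,w_N\}$; attaching a ghost vertex $g$ adjacent to $w_1,\dots,w_N$ and declared always open (its finitely many edges lie away from $\mathcal A$, hence do not disturb the truncations $G_{s,t}^{\infty}(F)$ nor the apparatus of Section~\ref{sec5}) turns $\{R\leftrightarrow F\}$ into $\{g\leftrightarrow F\}$, and I run the differential inequality for $h(p):=\mathbb P_p\bigl(g\xleftrightarrow{V_{s,t}\cap\{\sigma=1\}}\partial^V D_{F}^{\frac{1}{s}}\bigr)$. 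Because the $B_i$ are pairwise non--adjacent, the cutset/pivotal estimate underlying Lemma~\ref{l312} localises in the $N$ disjoint neighbourhoods of the $B_i$ and the $\varphi$--input splits additively, $\varphi_q^{\{w_i\}}\!\bigl(\{g\}\cup\bigcup_i B_i;\cdot\bigr)=\sum_{i=1}^N\varphi_q^{w_i}(B_i;\cdot)>N(1-\epsilon)$, so Lemma~\ref{l312} upgrades to $h'(p)/(1-h(p))\ge N(1-\epsilon)/(1-p)$ for $p\ge p_1$ and $t\ge N(s)$. Integrating from $p_1$ to $p_2$ (with $h(p_1)\ge0$) gives $h(p_2)\ge 1-q^N$, and passing to $\inf_s\sup_t$ via Corollary~\ref{c310} (exactly as in Lemma~\ref{l311}(1)) yields
\[
\mathbb P_{p_2}\bigl(\{w_1,\dots,w_N\}\leftrightarrow F\bigr)\ \ge\ 1-q^N .
\]
Choosing $N$ with $q^N<\delta$ then gives, for every $R\supset\{w_1,\dots,w_N\}$, the bound $\mathbb P_{p_2}(R\leftrightarrow F)\ge 1-q^N>1-\delta$, while each $w_i$ satisfies \eqref{lbi2}, as required.

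The hard part is precisely the localisation in the main step. For a single vertex one always has $\inf_S\varphi_p^v(S)\le 1$ (any finite $S$ with $v\notin S^\circ$ contributes $1$), so the large exponent $N(1-\epsilon)$ cannot be read off a one--vertex estimate: it must be extracted from the separated geometry by showing that, along $G_{s,t}^{\infty}(F)$, the pivotal/cutset bound behind Lemma~\ref{l312} can be run simultaneously and independently in the $N$ disjoint regions around the $B_i$, so that their contributions to $\mathbb E_p[\#\{\text{pivotal vertices}\}\cdot\mathbf 1_{\{g\,\nleftrightarrow\,\cdot\,\}}]$ add. Making this decomposition precise — using the non--adjacency of the witness balls, the finite--energy/conditional--independence structure across separated regions (cf.\ the boundary decomposition of Section~\ref{sec4}), and the exact additivity of $\varphi$ displayed above — and checking that it is stable under the limits $\inf_s\sup_t$ of Corollary~\ref{c310}, is where I expect essentially all of the work to lie.
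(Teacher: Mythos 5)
Your reduction to $R=\{w_1,\dots,w_N\}$ and the ghost--vertex encoding are fine, but the central amplification step is not justified, and in fact the strategy cannot work for witnesses chosen only at pairwise distance $4$. Lemma~\ref{l312} gives the differential inequality with the factor $\inf_{S:\,g\in S,\,|S|<\infty}\varphi_p^{g}(S;G_{s,t}^{\infty}(F))$; you may not substitute the value of $\varphi$ at your one favourable set $S=\{g\}\cup\bigcup_i B_i$, because the set produced by the exploration/pivotality argument behind that lemma is not of your choosing --- that is precisely why the infimum appears. Membership of each $w_i$ in $V_{p_1,\epsilon}(F)$ only yields, for \emph{every} admissible $S$, a bound of order $1-\epsilon$, never $N(1-\epsilon)$: for a large $S$ containing all the $B_i$, the connections from distinct $w_i$ to $\partial S$ may funnel through the same boundary vertices, and since $g$ is wired to all the $w_i$ one only gets $\PP_p(g\xleftrightarrow{S^{\circ}}\partial_V y)\ \ge\ \max_i\PP_p(w_i\xleftrightarrow{S^{\circ}}\partial_V y)$, not the sum, so $\inf_S\varphi_p^{g}(S)\ge N(1-\epsilon)$ is false in general. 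Correspondingly, the target bound $\PP_{p_2}(\{w_1,\dots,w_N\}\leftrightarrow F)\ge 1-q^{N}$ need not hold for an arbitrary $4$--separated choice: the events $\{w_i\nleftrightarrow F\}$ are decreasing, hence positively correlated (FKG bounds their intersection from \emph{below} by the product, the wrong direction), and if all routes from the $w_i$ to $F$ merge into a common corridor beyond their immediate neighbourhoods, the joint failure probability remains of order $q$, not $q^{N}$. Distance $4$ separates only the balls $B_i$, not the paths to $F$, so the conditional independence you invoke is exactly what is missing.

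This is the point where the paper's proof does genuinely different work: the witnesses are chosen \emph{nested toward} $F$, not merely separated. After fixing $w_1$, each subsequent $w_{j+1}$ is taken (via Lemma~\ref{l74} applied to the induced subgraph on $D_F^{1/s_j}\setminus\partial_V D_F^{1/s_j}$) deep inside a smaller neighbourhood of $F$, so that conditioning on the failure of $w_1,\dots,w_j$ is dominated by an unconditional failure probability in a graph obtained by deleting finitely many vertices, which leaves $p_{c,F}^{\mathrm{site}}$ unchanged; iterating gives $\PP_{p_2}(\{w_1,\dots,w_M\}\nleftrightarrow F)\le\bigl((\tfrac{1-p_2}{1-p_1})^{1-\epsilon}+\delta'\bigr)^{M}+2\delta'$, and then $M\to\infty$, $\delta'\to0$. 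To salvage your approach you would have to prove the localisation of the $\varphi$--infimum that you yourself flag as ``the hard part''; for merely distance--separated witnesses this is not a technical gap but an obstruction, so the proposal as written does not establish the lemma.
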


\begin{proof}Let $U_1$ be the set consisting of all the vertices in $\xi$ satisfying (\ref{lbi2}). Then by Lemma \ref{l74}, $|U_1|=\infty$. Let $w_1\in U_1$ be arbitrary, then $w_1$ satisfies (\ref{lbi2}). It follows that
\begin{align}
\PP_{p_2}(w_1\nleftrightarrow F)
&=\sup_s\inf_t \PP_{p_2}(w_1\nleftrightarrow \partial_V D_F^{\frac{1}{s}};G_{s,t}^{\infty}(F))\leq \left(\frac{1-p_2}{1-p_1}\right)^{1-\epsilon}\label{ftn}
\end{align}
Here for each event $E$, $\PP_{p_2}(E;G_{s,t}^{\infty}(F))$ is the probability of $E$ with respect to the i.i.d. Bernoulli($p_2$) site percolation process on the graph $G_{s,t}^{\infty}(F)$.
Then for any $\delta'>0$, there exists $N\geq 1$ s.t.~for any $s\geq N$ 
\begin{align}
\inf_t\PP_{p_2}(w_1\nleftrightarrow \partial_V D_F^{\frac{1}{s}};G_{s,t}^{\infty}(F)) \leq\PP_{p_2}(w_1\nleftrightarrow F)\leq \inf_t\PP_{p_2}(w_1\nleftrightarrow \partial_V D_F^{\frac{1}{s}};G_{s,t}^{\infty}(F))+\delta'.\label{w1d1}
\end{align}

Note that in $\xi\cap \left[D_{F}^{\frac{1}{s}}\setminus \partial_V D_{F}^{\frac{1}{s}}\right]$, there exists an infinite 1-cluster $\xi_1$ at level $p_1$ satisfying
\begin{align*}
\ol{\xi}_1\cap F\neq \emptyset.
\end{align*}

By Lemma \ref{l74}, the set of vertices $U_2\subset \xi_1$ satisfying (\ref{lbi}) in the graph $G_{\left[D_{F}^{\frac{1}{s}}\setminus \partial_V D_{F}^{\frac{1}{s}}\right]}$ (the subgraph of $G$ induced by vertices in $\left[D_{F}^{\frac{1}{s}}\setminus \partial_V D_{F}^{\frac{1}{s}}\right]$) contains infinitely many vertices.
Let $w_2\in U_2$ and $G_1:=G_{\left[D_{F}^{\frac{1}{s}}\setminus \partial_V D_{F}^{\frac{1}{s}}\right]}$. By (\ref{w1d1}), we have 
\begin{align*}
\PP_{p_2}(\{w_1,w_2\}\nleftrightarrow F)
\leq \PP_{p_2}(\cap_t\{w_1\nleftrightarrow \partial_V D_F^{\frac{1}{s}};G_{s,t}^{\infty}(F)\}\cap\{w_2\nleftrightarrow  F\})+\delta'
\end{align*}
It then follows from (\ref{ftn}) that there exists $N_1\geq1$ s.t.~for all $s,s_1\geq N_1$ we have
\begin{align*}
&\PP_{p_2}([\cap_t\{w_1\nleftrightarrow \partial_V D_F^{\frac{1}{s}};G_{s,t}^{\infty}(F)\}]\cap[\cap_{t_1}\{w_1\nleftrightarrow \partial_V D_F^{\frac{1}{s_1}};G_{s_1,t_1}^{\infty}(F)\}])
\leq \PP_{p_2}(\{w_1,w_2\}\nleftrightarrow F)
\\&\leq \PP_{p_2}([\cap_t\{w_1\nleftrightarrow \partial_V D_F^{\frac{1}{s}};G_{s,t}^{\infty}(F)\}]\cap[\cap_{t_1}\{w_1\nleftrightarrow \partial_V D_F^{\frac{1}{s_1}};G_{s_1,t_1}^{\infty}(F)\}])+\delta'\left(1+\frac{1}{2}
\right)
\end{align*}

One can continue this process and obtain that here exists $N_{M-1}\geq 1$, s.t.
\begin{itemize}
\item \begin{align*}
N<N_1<N_2<\ldots< N_{M-1}
\end{align*}
\item $s\geq N,s_1\geq N_1,\ldots,s_{M-1}\geq N_{M-1}$ are arbitrary;
\item Note that for every $s\geq 1$,
\begin{align*}
p_{c,F}^{site}(G)=p_{c,F}^{site}\left(G_{\left[D_{F}^{\frac{1}{s}}\setminus \partial_V D_{F}^{\frac{1}{s}} \right]}\right).
\end{align*}

For each $1\leq j\leq M-1$, $\xi_j$ is the infinite 1-cluster in $\xi_{j-1}\cap D_F^{\frac{1}{s_{j-1}}}$ in the graph $G_{\left[D_{F}^{\frac{1}{s_{j-1}}}\setminus \partial_V D_{F}^{\frac{1}{s_{j-1}}} \right]}$ satisfying
\begin{align*}
\ol{\xi}_j\cap F\neq\emptyset;
\end{align*}    
where $\xi_0=\xi$, $s_0=s$.
\item For each $2\leq j\leq M$, the set of vertices $U_j\subset \xi_1$ satisfying (\ref{lbi}) is infinite; let $w_j\in U_j$
\item For all $s\geq N,s_1\geq N_1,\ldots,s_{M-1}\geq N_{M-1}$ we have
\begin{align}
\label{tc2}&\PP_{p_2}(\cap_{j=0}^{M-1}\cap_{t_j}\{w_{j+1}\nleftrightarrow \partial_V D_F^{\frac{1}{s_j}};G_{s_j,t_j}^{\infty}\})\leq \PP_{p_2}(\{w_1,w_2,\ldots,w_M\}\nleftrightarrow F)\\
&\leq \PP_{p_2}(\cap_{j=0}^{M-1}\cap_{t_j}\{w_{j+1}\nleftrightarrow \partial_V D_F^{\frac{1}{s_j}};G_{s_j,t_j}^{\infty}\})+\delta'\left(\sum_{j=0}^{M-1}\frac{1}{2^j}
\right)\notag
\end{align}
where $t_0=t$.
\end{itemize}
Note that for any $t_0,\ldots,t_{M-1}\geq 1$,
\begin{align*}
\PP_{p_2}(\cap_{j=0}^{M-1}\cap_{t_j}\{w_{j+1}\nleftrightarrow \partial_V D_F^{\frac{1}{s_j}};G_{s_j,t_j}^{\infty}\})
\leq \PP_{p_2}(\cap_{j=0}^{M-1}\{w_{j+1}\nleftrightarrow \partial_V D_F^{\frac{1}{s_j}};G_{s_j,t_j}^{\infty}\});
\end{align*}
and there exists $L_0,\ldots,L_j$ (depend on $s_0,\ldots,s_j$), such that for any $t_j\geq L_j\ (0\leq j\leq M-1)$, and any $0\le m\leq M-1$, we have
\begin{align}
\PP_{p_2}(\cap_{j=0}^{m}\{w_{j+1}\nleftrightarrow \partial_V D_F^{\frac{1}{s_j}};G_{s_j,t_j}^{\infty}\})\leq 
\PP_{p_2}(\cap_{j=0}^{m}\cap_{t_j}\{w_{j+1}\nleftrightarrow \partial_V D_F^{\frac{1}{s_j}};G_{s_j,t_j}^{\infty}\})+\delta'\left(\sum_{j=0}^m\frac{1}{2^j}\right)\label{tc1}
\end{align}
Moreover,
\begin{align}
\label{s20}
&\PP_{p_2}(\{w_1\nleftrightarrow \partial_V D_F^{\frac{1}{s}};G_{s,t}^{\infty}\}\cap \{w_2\nleftrightarrow \partial_V D_F^{\frac{1}{s_1}};G_{s_1,t_1}^{\infty}\})\\
&=\PP_{p_2}(\{w_2\nleftrightarrow \partial_V D_F^{\frac{1}{s_1}};G_{s_1,t_1}^{\infty}\}|\{w_1\nleftrightarrow \partial_V D_F^{\frac{1}{s}};G_{s,t}^{\infty}\})\PP_{p_2}(\{w_1\nleftrightarrow \partial_V D_F^{\frac{1}{s}};G_{s,t}^{\infty}\})\notag\\
&\leq \PP_{p_2}(\{w_2\nleftrightarrow \partial_V D_F^{\frac{1}{s_1}};G_{s_1,t_1}^{\infty}\}|\{w_1\nleftrightarrow \partial_V D_F^{\frac{1}{s}};G_{s,t}^{\infty}\})
\left[\PP_{p_2}(\cap_t\{w_1\nleftrightarrow \partial_V D_F^{\frac{1}{s}};G_{s,t}^{\infty}\})+\delta'\right]\notag\\
&\leq \PP_{p_2}(\{w_2\nleftrightarrow \partial_V D_F^{\frac{1}{s_1}};G_{s_1,t_1}^{\infty}\}|\{w_1\nleftrightarrow \partial_V D_F^{\frac{1}{s}};G_{s,t}^{\infty}\})
\left[\PP_{p_2}(w_1\nleftrightarrow F)+\delta'\right]\notag\\
&\leq \PP_{p_2}(\{w_2\nleftrightarrow \partial_V D_F^{\frac{1}{s_1}};G_{s_1,t_1}^{\infty}\}|\{w_1\nleftrightarrow \partial_V D_F^{\frac{1}{s}};G_{s,t}^{\infty}\})
\left[\left(\frac{1-p_2}{1-p_1}\right)^{1-\epsilon}+\delta'\right]\notag
\end{align}
where the third line follows from (\ref{tc1}), and the fourth line follows from (\ref{tc2}), and the fifth line follows from (\ref{ftn}).
Furthermore, let
\begin{align*}
G_{1,c}:=G_{s,t}^{\infty}\setminus [D_F^{\frac{1}{s}}\setminus \partial_V D_F^{\frac{1}{s}}];\qquad
G_1:=G_{s_1,t_1}^{\infty}\setminus G_{1,c}.
\end{align*}
Then
\begin{align}
\PP_{p_2}(\{w_2\nleftrightarrow \partial_V D_F^{\frac{1}{s_1}};G_{s_1,t_1}^{\infty}\}|\{w_1\nleftrightarrow \partial_V D_F^{\frac{1}{s}};G_{s,t}^{\infty}\})\leq \PP_{p_2}(\{w_2\nleftrightarrow \partial_V D_F^{\frac{1}{s_1}};G_1\}).\label{s21}
\end{align}
Note that $G_{1,c}$ is a finite graph, and therefore we have $p_{c,F}^{site}(G)=p_{c,F}^{site}(G\setminus G_{1,c})<p_1<p_2$. Then
\begin{align}
\PP_{p_2}(w_2\nleftrightarrow F; G\setminus G_{1,c})=
\sup_{s_1}\inf_{t_1}\PP_{p_2}(\{w_2\nleftrightarrow \partial_V D_F^{\frac{1}{s_1}};G_1\})\label{s22}
\end{align}
Since $w_2$ is chosen to be a vertex in $G_{\left[D_{F}^{\frac{1}{s}}\setminus \partial_VD_{F}^{\frac{1}{s}}\right]}$ satisfying (\ref{lbi}), and $G_{\left[D_{F}^{\frac{1}{s}}\setminus \partial_VD_{F}^{\frac{1}{s}}\right]}$ is a subgraph of $G\setminus G_{1,c}$, it follows that 
\begin{align}
\PP_{p_2}(w_2\nleftrightarrow F; G\setminus G_{1,c})\leq \PP_{p_2}\left(w_2\nleftrightarrow F; G_{\left[D_{F}^{\frac{1}{s}}\setminus \partial_VD_{F}^{\frac{1}{s}}\right]}\right)\leq\left(\frac{1-p_2}{1-p_1}\right)^{1-\epsilon}\label{s23}
\end{align}
By (\ref{s22}), (\ref{s23}), we infer that for all $s_1\geq 1$, there exists $\tilde{L}_1\geq 1$ (depend on $s_1$), such that for all $t_1\geq \tilde{L}_1$
\begin{align}
 \PP_{p_2}(\{w_2\nleftrightarrow \partial_V D_F^{\frac{1}{s_1}};G_1\})\le \PP_{p_2}(w_2\nleftrightarrow F;G\setminus G_{1,c})+\delta'\leq \left(\frac{1-p_2}{1-p_1}\right)^{1-\epsilon}+\delta'\label{s24}
\end{align}
It follows from (\ref{s20}) (\ref{s21}) and (\ref{s24}) that
\begin{align*}
\PP_{p_2}(\{w_1\nleftrightarrow \partial_V D_F^{\frac{1}{s}};G_{s,t}^{\infty}\}\cap \{w_2\nleftrightarrow \partial_V D_F^{\frac{1}{s_1}};G_{s_1,t_1}^{\infty}\})\leq \left[\left(\frac{1-p_2}{1-p_1}\right)^{1-\epsilon}+\delta'\right]^2
\end{align*}
We shall continue this process. Suppose
\begin{align}
s<s_1<s_2<\ldots<s_{m}.\label{sic}
\end{align}
Assume we have proved that 
\begin{align}
\PP_{p_2}(\cap_{j=0}^{m-1}\{w_{j+1}\nleftrightarrow \partial_V D_F^{\frac{1}{s_j}};G_{s_j,t_j}^{\infty}\})\leq 
\left(\left(\frac{1-p_2}{1-p_1}\right)^{1-\epsilon}+\delta'\right)^{m}\label{pwm}
\end{align}
Then 
\begin{align}
\label{s20m}
&\PP_{p_2}(\cap_{j=0}^{m}\{w_{j+1}\nleftrightarrow \partial_V D_F^{\frac{1}{s_j}};G_{s_j,t_j}^{\infty}\})\\
&=\PP_{p_2}(\{w_{m+1}\nleftrightarrow \partial_V D_F^{\frac{1}{s_m}};G_{s_m,t_m}^{\infty}\}|\cap_{j=0}^{m-1}\{w_{j+1}\nleftrightarrow \partial_V D_F^{\frac{1}{s_j}};G_{s_j,t_j}^{\infty}\})\PP_{p_2}(\cap_{j=0}^{m-1}\{w_{j+1}\nleftrightarrow \partial_V D_F^{\frac{1}{s_j}};G_{s_j,t_j}^{\infty}\})\notag\\
&\leq \PP_{p_2}(\{w_{m+1}\nleftrightarrow \partial_V D_F^{\frac{1}{s_m}};G_{s_m,t_m}^{\infty}\}|\cap_{j=0}^{m-1}\{w_{j+1}\nleftrightarrow \partial_V D_F^{\frac{1}{s_j}};G_{s_j,t_j}^{\infty}\})\left(\left(\frac{1-p_2}{1-p_1}\right)^{1-\epsilon}+\delta'\right)^{m}\notag
\end{align}
where the third line follows from (\ref{pwm}).

Furthermore, let
\begin{align*}
G_{j,c}:=G_{s_{j-1},t_{j-1}}^{\infty}\setminus [D_F^{\frac{1}{s_{j-1}}}\setminus \partial_V D_F^{\frac{1}{s_{j-1}}}];\ \forall 1\leq j\leq m.
\end{align*}
and
\begin{align*}
G_m:=G_{s_m,t_m}^{\infty}\setminus[\cup_{j=1}^m G_{j,c}].
\end{align*}
Then
\begin{align}
\PP_{p_2}(\{w_{m+1}\nleftrightarrow \partial_V D_F^{\frac{1}{s_m}};G_{s_m,t_m}^{\infty}\}|\cap_{j=0}^{m-1}\{w_{j+1}\nleftrightarrow \partial_V D_F^{\frac{1}{s_j}};G_{s_j,t_j}^{\infty}\})\leq \PP_{p_2}(\{w_{m+1}\nleftrightarrow \partial_V D_F^{\frac{1}{s_1}};G_m\})\label{s21m}
\end{align}
Note that $\cup_{j=1}^mG_{j,c}$ is a finite graph, and therefore we have $p_{c,F}^{site}(G)=p_{c,F}^{site}(G\setminus [\cup_{j=1}^mG_{j,c}])<p_1<p_2$. Then
\begin{align}
\PP_{p_2}(w_{m+1}\nleftrightarrow F; G\setminus [\cup_{j=1}^m G_{j,c}])=
\sup_{s_m}\inf_{t_m}\PP_{p_2}(\{w_{m+1}\nleftrightarrow \partial_V D_F^{\frac{1}{s_m}};G_m\})\label{s22m}
\end{align}
Since $w_{m+1}$ is chosen to be a vertex in $G_{\left[D_{F}^{\frac{1}{s_m}}\setminus \partial_VD_{F}^{\frac{1}{s_m}}\right]}$ satisfying (\ref{lbi}), and $G_{\left[D_{F}^{\frac{1}{s_m}}\setminus \partial_VD_{F}^{\frac{1}{s_m}}\right]}$ is a subgraph of $G\setminus [\cup_{j=1}^mG_{j,c}]$ given (\ref{sic}), it follows that 
\begin{align}
\PP_{p_2}(w_{m+1}\nleftrightarrow F; G\setminus [\cup_{j=1}^mG_{j,c}])\leq \PP_{p_2}\left(w_{m+1}\nleftrightarrow F; G_{\left[D_{F}^{\frac{1}{s_m}}\setminus \partial_VD_{F}^{\frac{1}{s_m}}\right]}\right)\leq\left(\frac{1-p_2}{1-p_1}\right)^{1-\epsilon}\label{s23m}
\end{align}
By (\ref{s22m}), (\ref{s23m}), we infer that for all $s_m\geq 1$, there exists $\tilde{L}_m\geq 1$ (depend on $s_m$), such that for all $t_m\geq \tilde{L}_m$
\begin{align}
 \PP_{p_2}(\{w_{m+1}\nleftrightarrow \partial_V D_F^{\frac{1}{s_m}};G_m\})\le \PP_{p_2}(w_{m+1}\nleftrightarrow F;G\setminus [\cup_{j=1}^mG_{j,c}])+\delta'\leq \left(\frac{1-p_2}{1-p_1}\right)^{1-\epsilon}+\delta'\label{s24m}
\end{align}
It follows from (\ref{s20m}) (\ref{s21m}) and (\ref{s24m}) that
\begin{align}
\PP_{p_2}(\cap_{j=0}^{m}\{w_{j+1}\nleftrightarrow \partial_V D_F^{\frac{1}{s_j}};G_{s_j,t_j}^{\infty}\})\leq \left[\left(\frac{1-p_2}{1-p_1}\right)^{1-\epsilon}+\delta'\right]^{m+1}\label{ts3}
\end{align}
By (\ref{tc2}), (\ref{ts3}) we have
\begin{align*}
\PP_{p_1}(\{w_1,\ldots,w_M\}\nleftrightarrow F)\leq\left[\left(\frac{1-p_2}{1-p_1}\right)^{1-\epsilon}+\delta'\right]^{M}+2\delta' 
\end{align*}
Then the lemma follows by letting $M\rightarrow\infty$ and $\delta'\rightarrow 0$.
\end{proof}

\begin{lemma}\label{l27}
Let $G=(V,E)$ be an infinite, connected, locally finite, planar graph with a Freudenthal embedding in $\mathbb{S}^2$. Let $F\in \mathcal{F}$. Assume 
\begin{align*}
p_c^{site}(G)\le p_{c,F}^{site}(G)<\frac{1}{2}\leq p<1-p_{c,F}^{site}(G)\le 1-p_c^{site}(G). 
\end{align*}
Let $p_1\in \left(p,1-p_{c,F}^{site}(G)\right)$ be fixed. Let $\xi$ be an infinite 0-cluster at level $p_1$ satisfying (\ref{xf}), which must be contained in an infinite 0-cluster at level $p$. Then for any $\epsilon\in(0,1)$ and $k\geq 1$, there exist a finite subset $R$ of vertices in $\xi$ satisfying
\begin{align}
\PP_{p}(\cup_{a\in F}\{\partial_FR\xleftrightarrow{0}F\})>1-\left(\frac{\epsilon}{8k}\right)^{2k};\label{rcif}
\end{align}
s.t.~$R$ induces a connected subgraph $G_{R}$ of $G$, and $i_1<i_2<\ldots<i_{2k}$ satisfying
\begin{align}
\PP_p\!\big(\mathrm{Arm}(i_1,\ldots,i_{2k};R;F)\big)\ \ge\ 1-\eps.\label{alb1}
\end{align}
\end{lemma}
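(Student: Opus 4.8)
The plan is to reduce both displayed estimates to union bounds over a controlled collection of ``good'' vertices planted on the $F$–boundary cycle of a suitably chosen finite connected $R\subseteq\xi$, the only probabilistic input being the coexistence of supercriticality in both colours and the nested–region mechanism already developed in the proof of Lemma~\ref{lem316}. \textbf{Step 1 (coexistence and nested good vertices).} First I would note that the chain $p_{c,F}^{site}(G)<1-p\le\tfrac12\le p<1-p_{c,F}^{site}(G)$ forces $\PP_p$–site percolation to be $F$–supercritical both in the $1$'s (since $p>p_{c,F}^{site}(G)$) and in the $0$'s (since $1-p>p_{c,F}^{site}(G)$); also $p_{c,F}^{site}$ is unchanged under deleting a finite subgraph and under passing to the graph $G_{(Q\cap V)^\circ}$ induced on the interior of a region $Q$, this being the surgery stability already invoked in the proof of Lemma~\ref{lem316}. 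Applying Lemma~\ref{l74} and the iteration in the proof of Lemma~\ref{lem316} in the $0$–colour to $\xi$ (legitimate since $p<p_1<1-p_{c,F}^{site}(G)$), and in the $1$–colour to a $1$–cluster at some fixed auxiliary level $q\in(p_{c,F}^{site}(G),p)$, one gets: for every integer $M$ there are nested neighbourhoods $D_F^{1/s_0}\supset\cdots\supset D_F^{1/s_M}$ of $F$ and a ``good'' vertex $w_r$ in the $r$-th annulus (in the appropriate colour and induced region) such that, by the telescoping conditional–independence estimate (the computation leading to \eqref{ts3}), the probability that \emph{all} of $w_0,\dots,w_M$ fail to connect to $F$ is at most $\theta^{M+1}$, where $\theta=\theta(p,p_1,q)<1$ equals $(p/p_1)^{1-\epsilon'}+\delta'$ for the $0$–colour and $((1-p)/(1-q))^{1-\epsilon'}+\delta'$ for the $1$–colour, with $\epsilon',\delta'$ auxiliary small parameters.

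\textbf{Step 2 (construction of $R$).} Given $k$ and $\epsilon\in(0,1)$, fix $M$ with $\theta^M<\epsilon/(8k)$. I would select $2k$ consecutive ``bunches'' $B_1,\dots,B_{2k}$ of good vertices arranged cyclically around $F$ (equivalently, in cyclic order along $\partial_F R$), with $B_j$ containing $M$ $0$–good vertices of $\xi$ together with $M$ $1$–good vertices of (what will become) $Q$, so that (i) the $2kM$ chosen $0$–good vertices lie on a \emph{single} nested chain toward $F$ (those of $B_1$ outermost, those of $B_{2k}$ innermost), while (ii) within each $B_j$ the $M$ $1$–good vertices lie on their own nested chain. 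I then take $R$ to be a finite connected subset of $\xi$ — a connected hull in $\xi$ of the part lying behind the shell $\bigcup_j B_j$ — chosen so that $G_R$ is connected, every vertex of $\bigcup_j B_j$ lies on the $F$–boundary cycle $\partial_F S=\partial_F R$ of Definition~\ref{df26}, and these vertices occur along $\partial_F R$ in the cyclic order $B_1,\dots,B_{2k}$. Cutting $\partial_F R$ at the $2k$ gaps between consecutive bunches produces $i_1<\cdots<i_{2k}$ and arcs $\mathrm{Arc}_j$ with $B_j\subseteq\mathrm{Arc}_j$.

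\textbf{Step 3 (the two estimates).} For \eqref{rcif}: if no vertex of $\partial_F R$ is joined to $F$ by an open path of $0$'s then, a fortiori, all $2kM$ chosen $0$–good vertices fail, so by Step~1 this has probability at most $\theta^{2kM}<(\epsilon/8k)^{2k}$. For \eqref{alb1}: since $R\subseteq\xi$ consists only of $0$–vertices, a path of $1$'s from any vertex of $Q$ avoids $R$ by planarity and hence stays in $Q$, so $\{\mathrm{Arc}_j\xleftrightarrow{R^c\cap\{\sigma=1\}}F\}$ fails only if all $M$ $1$–good vertices of $B_j$ miss $F$; and, the $0$–good vertices having been chosen good in $G_{(Q\cap V)^\circ}$, their $0$–arms stay in $R^c\cap Q$, so $\{\mathrm{Arc}_j\xleftrightarrow{R^c\cap\{\sigma=0\}}F\}$ fails only if all $M$ $0$–good vertices of $B_j$ miss $F$. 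Each of these $4k$ bunch failures has probability at most $\theta^M$ by Step~1, so a union bound gives $\PP_p\bigl(\mathrm{Arm}(i_1,\dots,i_{2k};R;F)^c\bigr)\le 4k\,\theta^M<\epsilon$.

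\textbf{Expected main obstacle.} The hard part is Step~2: exhibiting one finite connected $R\subseteq\xi$ whose $F$–boundary cycle carries all $4kM$ chosen good vertices \emph{in the prescribed cyclic order} while keeping the $2kM$ $0$–good vertices on a single chain nested toward $F$ — in effect a boundary cycle that spirals inward through the successive bunches before closing up, i.e.\ reconciling the ``spread around $\partial_F R$'' condition with the ``nested toward $F$'' condition. I would construct it by fixing the bunches one after another in pairwise disjoint, successively shrinking neighbourhoods of points of $F$ placed in cyclic order (each bunch produced colour–by–colour and annulus–by–annulus via Lemmas~\ref{l74}, \ref{l311}, \ref{lem316}), and only then letting $R$ be a connected hull in $\xi$ of everything between a base vertex of $\xi$ and this shell; the wedge enumeration and pruning of Definition~\ref{df26} then place exactly these vertices on $\partial_F R$ in the right cyclic order, after which Step~3 applies verbatim. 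Verifying the geometric details of this hull construction and of the cyclic/radial compatibility is where the real work lies.
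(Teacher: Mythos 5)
Your proposal diverges from the paper's proof at exactly the point you flag as ``the main obstacle,'' and that gap is fatal as described. The paper never plants pre-selected good vertices of both colours on $\partial_F R$ in a prescribed cyclic pattern. It only uses Lemma~\ref{lem316} in the $0$--colour to get finitely many $w_1,\dots,w_N\in\xi$ with $\PP_p(\{w_1,\dots,w_N\}\xleftrightarrow{0}F)\ge 1-(\epsilon/8k)^{2k}$, takes $R$ to be \emph{any} finite connected subset of $\xi$ containing them (this already gives \eqref{rcif}), and then produces the indices $i_1<\cdots<i_{2k}$ \emph{a posteriori} by a greedy pigeonhole along the cyclic order of $\partial_F R$: setting $q_{i,j}:=\PP_p(\{u_i,\dots,u_j\}\xnleftrightarrow{[G\setminus R]\cap\{\sigma=0\}}F)$, FKG for decreasing events gives the quasi-multiplicative step $q_{i_1+1,M}\le q_{1,M}/q_{1,i_1}$, so the single bound $q_{1,M}<(\epsilon/8k)^{2k}$ splits the boundary into $2k$ arcs each failing with probability at most $\epsilon/4k$. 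The $1$--arm estimates then come for free from stochastic monotonicity, because $p\ge\tfrac12$ makes $1$--connections at level $p$ dominate $0$--connections at level $p$; no $1$--good vertices are needed anywhere. Your plan misses both of these devices, and instead requires a geometric construction you do not carry out.

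Concretely, two parts of your Step~2 do not survive scrutiny. First, the $1$--good vertices produced by Lemma~\ref{l74}/\ref{lem316} at an auxiliary level $q<p$ lie in $1$--clusters, hence are disjoint from $\xi$; membership in $\partial_F R$ is dictated by the wedge enumeration and pruning of Definition~\ref{df26} once $R\subset\xi$ is fixed, and nothing forces externally chosen vertices of $Q$ (which need not even be adjacent to $\xi$) to appear there, let alone in a prescribed cyclic order interleaved with the $0$--good vertices. Second, your anchoring of the $2k$ bunches to ``pairwise disjoint, successively shrinking neighbourhoods of points of $F$ placed in cyclic order'' presupposes that $F$ contains many accumulation points arranged around the boundary; but $F$ is a single cycle--separation equivalence class and may well be a single end (a single point of $\mathcal A$), in which case no such disjoint neighbourhoods of distinct points of $F$ exist. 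The alternation in the paper is generated by the cyclic structure of $\partial_F R$ together with the pigeonhole argument, not by geometric separation within $F$, which is why the paper's proof works uniformly even for a one--point class $F$. Until you either carry out a construction immune to these two objections or adopt the paper's splitting-plus-domination mechanism, the argument for \eqref{alb1} is incomplete.
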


\begin{proof}
Since $p_1<1-p_{c,F}^{site}(G)$, we have $1-p_1>p_{c,F}^{site}(G)$; thus the $0$-phase is supercritical at level $p_1$. By Lemma \ref{lem316}, for any $\epsilon>0$ and $k\geq 1$ in $\xi$ there exists $N$ vertices $w_1,\ldots,w_N$ such that
\begin{align}
\PP_{p}(\{w_1,\ldots,w_N\}\xleftrightarrow{\{\si=0\}}F)\geq 1-\left(\frac{\epsilon}{8k}\right)^{2k}\label{wni}
\end{align}
Let $R\subset V$ satisfy
\begin{itemize}
\item $\{w_1,\ldots,w_N\}\subseteq R\subset \xi$; and
\item the induced subgraph $G_{R}$ of $R$ in $G$ is connected; and
\item $|R|<\infty$.
\end{itemize}
Then (\ref{rcif}) follows from (\ref{wni}). Label all the vertices in $\partial_F R$ by $\{u_1,\ldots,u_M\}$ in cyclic order as in Definition \ref{df26}.

For $1\le i\le j\le M$, let
\[
q_{i,j}:=\PP^{\mathrm{site}}_p\!\Big(\ \{u_i,\ldots,u_j\}\ \xnleftrightarrow{\ [G\setminus R]\cap \{\sigma =0\}\ }\ F\ \Big);
\]
where $G\setminus R$ is the subgraph of $G$ induced by vertices in $V\setminus R$.
Then $q_{1,i}$ is decreasing in $i$. Then (\ref{wni}) implies that
\[
q_{1,M}\ <\ \left(\frac{\epsilon}{8k}\right)^{\,2k}.
\]
Let
\[
i_1:=\min\Big\{i\in\{1,\ldots,M\}: q_{1,i}\le \frac{\epsilon}{4k}\Big\}.
\]
By FKG for decreasing events,
\[q_{1,i_1}\geq q_{1,i_{1}-1}\mathbb{P}_c^{site}(\si(u_{i_1})=1)\geq \frac{\epsilon}{8k}.\]
It follows that
\[
q_{i_1+1,M}\ \le\ \frac{q_{1,M}}{q_{1,i_1}}\ \le\  \left(\frac{\epsilon}{8k}\right)^{\,2k-1}.
\]
Iterating this selection, we find indices $1\le i_1<\cdots<i_{2k}=M$ such that for each $j=1,\ldots,2k$,
\[
q_{\,i_{j-1}+1,\,i_j}\ \le\  \frac{\eps}{4k}.
\]

Define similarly
\[
r_{i,j}:=\PP^{\mathrm{site}}_p\!\Big(\ \{u_i,\ldots,u_j\}\ \xnleftrightarrow{\ [G\setminus R]\cap \{\sigma=1\}\ }F\ \Big),
\]
By stochastic monotonicity, we have for each $j$,
\[
r_{\,i_{j-1}+1,\,i_j}\ \le\ \frac{\eps}{4k},
\]
A union bound over the $4k$ arm events then yields (\ref{alb1}).
\end{proof}

\section{Infinitely many infinite clusters in the supercritical half}
\label{s5}

We now combine the geometric arm events from Section~\ref{sec4} with the end--directed
connectivity estimates from Section~\ref{sec5}.
The core argument is an exploration of a supercritical $0$--cluster at a slightly larger parameter,
followed by a conditional arm construction in the unexplored complement.
Topological separation on $\mathbb S^2$ enforces conditional independence across suitably separated
regions, and an amplification step then yields arbitrarily many disjoint infinite $1$--clusters.
This proves the main non--uniqueness statement in the supercritical half of the coexistence picture
under the corresponding hypothesis on $p^{\mathrm{site}}_{c,F}(G)$ (and in particular in the countable
$\mathcal F(G)$ regime).

We now describe an exploration process to construct a 0-cluster at a vertex $v_0\in V$ at level $p_1$. Let $\si\in\{0,1\}^V$ be a random site percolation configuration on $G$ at level $p_1$. In the process we will generate a random sequence of two finite subsets of $V$, $C_i^{p_1}(v_0)$ and $\partial C_i^{p_1}(v_0)$, $i=0,1,\ldots$, with the first one growing the 0-cluster at $v_0$ and the second one growing towards the boundary of this cluster. For this purpose we order the vertices in $V$ in an arbitrary way. If $\si(v_0)=1$ set
\begin{align*}
C_0^{p_1}(v_0)=\emptyset;\ \mathrm{and}\ \partial C_0^{p_1}(v_0)=\emptyset.
\end{align*}
Then recursively proceed as follows. In the $i$-th step ask whether there is any vertex which is at distance 1 in $G$ from $C_{i-1}^{p_1}(v_0)$ but not in $\partial C_{i-1}^{p_1}(v_0)$. In case no such vertex exists, the construction has been completed, our sequences are finite ones $C^{p_1}(v_0)=C_{i-1}^{p_1}(v_0)$. Otherwise let $v_i$ be the first such vertex. If $\sigma(v_i)=1$, set
\begin{align*}
C_i^{p_1}(v_0)=C_{i-1}^{p_1}(v_0);\ \ \mathrm{and}\ \partial C_i^{p_1}(v_0)=
\partial C_{i-1}^{p_1}(v_0)\cup\{v_i\}.
\end{align*}
If $\sigma(v_i)=0$, set
\begin{align*}
C_i^{p_1}(v_0)=C_{i-1}^{p_1}(v_0)\cup \{v_i\};\ \ \mathrm{and}\ \partial C_i^{p_1}(v_0)=
\partial C_{i-1}^{p_1}(v_0).
\end{align*}
Note that the vertices which will eventually be contained in $C_i^{p_1}(v_0)$ (resp.\ $\partial C_i^{p_1}(v_0)$) are precisely the vertices of $C^{p_1}(v_0)$ (resp.\ $\partial C^{p_1}(v_0)$); here $C^{p_1}(v_0)$ is the 0-cluster at $v_0$ at level $p_1$ and $\partial C^{p_1}(v_0)$ consist of all the vertices not in $C^{p_1}(v_0)$ but has distance 1 with $C^{p_1}(v_0)$ in the graph $G$.

It is straightforward to verify the following lemma by planarity.
\begin{lemma}[Alternating--arm separation at a fixed end equivalent class]\label{lem:sep}
Let $G$ be a locally finite planar graph equipped with a well--separated
Freudenthal embedding $\phi\colon G\hookrightarrow\mathbb S^2$, and let
$\widehat\phi\colon|G|\to\mathbb S^2$ denote the induced embedding of the
Freudenthal compactification so that $\widehat\phi(\Omega(G))=\mathcal{A}\subset\mathbb S^2$
is the set of accumulation points (ends). Let $R\subset V$ be finite and connected.
Fix an end equivalence class $F\in \mathcal{F}$.

Suppose there exist $2k$ boundary subarcs
$\mathrm{Arc}_1,\ldots,\mathrm{Arc}_{2k}\subset\partial_{F}R$ and an
alternating--arm configuration in $R^{\mathrm c}$ with the following
properties:
\begin{itemize}
\item For every odd $j$, there is a $1$--arm $\gamma^{(1)}_j$ starting from
$\mathrm{Arc}_j$ and running to infinity in $R^{\mathrm c}$;
\item For every even $j$, there is a $0$--arm $\gamma^{(0)}_j$ starting from
$\mathrm{Arc}_j$ such that $\widehat\phi(\gamma^{(0)}_j)$ converges to the same end $F$.
\end{itemize}
Let $t(F)$ be the number of even indices $j$ for which $\gamma^{(0)}_j$ tends to $F$.
Then the curves $\phi(R)\cup \widehat\phi(\gamma^{(0)}_{j_1})\cup
\widehat\phi(\gamma^{(0)}_{j_2})$ with $j_1<j_2$ consecutive in this collection
divide $G\setminus [R\cup[\cup_{j}\gamma_{2j}^{(0)}]]$ into pairwise disjoint subgraphs, and each subgraph contains the image of
some odd arc $\mathrm{Arc}_{\ell}$ together with its $1$--arm $\gamma^{(1)}_{\ell}$.
Consequently, $R^{\mathrm c}$ contains at least $t(F)$
pairwise distinct infinite $1$--clusters.
\end{lemma}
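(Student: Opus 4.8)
\noindent\emph{Proof strategy.}
The plan is a Jordan--curve argument carried out inside the unique face of the embedded finite graph that contains $F$.
Write $\phi(G_R)$ for the image of the finite connected subgraph induced on $R$; since $\phi(G_R)$ is connected, $\mathbb S^2\setminus\phi(G_R)$ is a disjoint union of open topological discs, and since every cycle contained in $G_R$ is a cycle of $G$ avoiding $\mathcal A$, Definition~\ref{df52} forbids it from separating two points of the class $F$, so (by planarity) all of $F$ lies in a single such face $Q$, whose closure $\overline Q$ is a closed disc bounded by the closed walk tracing $\partial Q$.
First I would record three elementary consequences of well--separatedness and of $\phi$ being an embedding: (a)~every arm of the hypothesis is a connected subgraph of $G\setminus R$ issuing from a vertex of $\partial_F R\subset Q$, hence its image lies in $Q$ (a connected subset of $\mathbb S^2\setminus\phi(G_R)$ meeting $Q$ cannot leave it), and it accumulates at a point of $\mathcal A\cap\overline Q$, which, since $\mathcal A\cap\phi(G)=\varnothing$, actually lies in the open disc $Q$; (b)~a $1$--vertex is never a $0$--vertex, so every $1$--arm is vertex--disjoint from every $0$--arm and therefore (no crossings) their images are disjoint, and both are disjoint from the edges of $G$ joining $V\setminus R$ to $R$; (c)~after a harmless general--position reduction, which the corridor construction of Sections~\ref{sec4}--\ref{s5} already provides, the $0$--arms are pairwise vertex--disjoint.

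\noindent\emph{Closing the $F$--arms into a genuine separating curve.}
List the $t:=t(F)$ even indices whose $0$--arm tends to $F$, in the cyclic boundary order of Definitions~\ref{df26}--\ref{df443}, as $j_1,\dots,j_t$, with corresponding feet $p_{j_a}\in\partial Q$ appearing in this cyclic order; the arm $\gamma^{(0)}_{j_a}$ together with its edge to $R$ and its limit point $a_{j_a}\in F\subset Q$ is then a simple arc $\beta_{j_a}\subset\overline Q$ from $p_{j_a}$ to the interior point $a_{j_a}$, and the $\beta_{j_a}$ are pairwise disjoint.
Because an arc from boundary to interior does not separate a disc, the crucial step is to close these arms up at their tips: using $F\subset Q$ I would pick pairwise disjoint simple arcs $\delta_a\subset Q$ joining $a_{j_a}$ to $a_{j_{a+1}}$ (cyclically), each $\delta_a$ avoiding all the other arms, all edges joining $V\setminus R$ to $R$, and the images of all vertices outside $R$---possible since deleting finitely many pairwise disjoint simple arcs and a countable set from the open disc $Q$ leaves a path--connected set.
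Then $\Delta:=\bigcup_a\delta_a$ is a Jordan curve through the tips, the disjoint arms $\beta_{j_a}$ cross the annulus between $\Delta$ and $\partial Q$, and together they cut that annulus into $t$ closed discs $D_1,\dots,D_t$, where $D_a$ lies between $\beta_{j_a}$ and $\beta_{j_{a+1}}$ and $D_a\cap\partial Q$ is the sub--arc $\alpha_a$ carrying the feet of exactly the boundary arcs $\mathrm{Arc}_m$ with $j_a<m<j_{a+1}$.
(When all the $F$--arms accumulate at one common point, $\Delta$ collapses to that point and the $\beta_{j_a}$ form a pie decomposition of $\overline Q$ directly.)

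\noindent\emph{Reading off the clusters, and the main obstacle.}
Since $j_a$ and $j_{a+1}$ are consecutive even indices of the $F$--collection, $\alpha_a$ contains at least the odd index $j_a+1$, so $D_a^{\circ}$ contains the starting vertex of some odd $1$--arm $\gamma^{(1)}_{\ell(a)}$; by (b) and the choices above $\phi(\gamma^{(1)}_{\ell(a)})$ is disjoint from the Jordan curve $\Gamma_a:=\beta_{j_a}\cup\alpha_a\cup\beta_{j_{a+1}}\cup(\Delta\text{ between }a_{j_a}\text{ and }a_{j_{a+1}})$ and hence, being connected, stays in the single complementary component $D_a^{\circ}$; while for $b\ne a$ the arc $\partial Q\setminus\alpha_a$ lies in the other component of $\mathbb S^2\setminus\Gamma_a$, so the feet of the odd arcs of sector $b$, and therefore all of $\gamma^{(1)}_{\ell(b)}$, lie outside $\Gamma_a$.
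Now any $1$--path in $G\setminus R$ has image disjoint from $\phi(G_R)\supset\alpha_a$ and, being built from $1$--vertices, disjoint from the $0$--arms $\beta_{j_a},\beta_{j_{a+1}}$ and from $\Delta$; so it cannot connect $\gamma^{(1)}_{\ell(a)}$ to $\gamma^{(1)}_{\ell(b)}$ without crossing $\Gamma_a$, which the Jordan curve theorem forbids.
Thus the rooms $D_1,\dots,D_t$ realize the claimed decomposition of $G\setminus[R\cup\bigcup_j\gamma^{(0)}_{2j}]$ into pairwise disjoint subgraphs, each carrying an odd arc with its $1$--arm, and these host pairwise distinct infinite $1$--clusters, yielding at least $t(F)$ distinct infinite $1$--clusters in $R^{\mathrm c}$ (for $t(F)\le1$ the statement is vacuous, respectively immediate from a single $1$--arm).
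I expect the only genuine difficulty to be exactly this topological bookkeeping---manufacturing honest Jordan curves from the $0$--arms, which is where $F\subset Q$ and the disjointness/pruning are used; once that is in place, the colour dichotomy of (b) together with the non--crossing property of the embedding makes every separation step immediate.
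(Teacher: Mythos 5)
There is a genuine gap, and it sits exactly where you locate the difficulty: the closing-up of the $0$--arms into Jordan curves via the auxiliary arcs $\delta_a$. First, even granting the $\delta_a$ as you specify them, the final separation step fails: you only require $\delta_a$ to avoid the other arms, the edges joining $V\setminus R$ to $R$, and the images of vertices outside $R$, but a $1$--path in $G\setminus R$ also uses \emph{edges between two vertices outside $R$}, and nothing prevents $\delta_a$ from crossing the interior of such an edge. Then the path crosses $\Delta$, so ``being built from $1$--vertices, disjoint from $\Delta$'' does not follow, and $\Gamma_a$ no longer separates the sectors. Second, you cannot repair this by demanding that $\delta_a$ avoid all of $\phi(G\setminus R)$: when $F$ contains two distinct ends $a_{j_a}\neq a_{j_{a+1}}$, these are accumulation points of the embedding, and in general no arc in $Q$ joins them while avoiding the graph (equivalence in Definition~\ref{df52} only says no \emph{cycle} separates them; it does not make the two ends accessible from a common face of $G$ -- already for a tree drawn with ends accumulating on both sides of the prospective arc this can fail, and your existence justification ``finitely many disjoint arcs plus a countable set'' also ignores that the $1$--arms are non-compact and their closures, arm plus accumulation set, may themselves separate the two tips inside $Q$). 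So the Jordan curves $\Gamma_a$ are not available in general; only your degenerate ``pie'' case (all $F$--arms sharing one tip) is sound.

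The missing idea is to argue directly against the definition of the class $F$ rather than to manufacture a separating curve in the complement of the graph. Suppose a $1$--path $P$ in $G\setminus R$, necessarily vertex-disjoint from the $0$--arms, joined the $1$--arms of two different sectors. Concatenating $P$ (extended along the $1$--arms to their starting vertices in $\partial_F R$) with the two attaching edges into $R$ and a path inside $G_R$ yields a cycle $C$ of $G$ whose intersection with $\overline Q$ is a crosscut of the face $Q$, with endpoints on $\partial Q$ placed so that, in the boundary order of Definition~\ref{df26}, the feet of the two $0$--arms lie on opposite arcs of $\partial Q$. Since each $0$--arm is connected, lies in $Q$, and is disjoint from $C$ (different colours, planarity, well--separatedness), the two arms lie in different components of $\mathbb S^2\setminus C$, and hence so do their limit points. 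If the limits are distinct ends of $F$, this cycle separates them, contradicting $\sim_\phi$; if they coincide, the common limit point would have to lie on $C\subset\phi(G)$, contradicting well--separatedness. Note that in your write-up the defining property of $F$ is used only to place $F$ in a single face $Q$ and never again, which is a symptom of the missing mechanism: the no-separating-cycle property is exactly what replaces the (generally impossible) closing arcs $\Delta$. The remaining care needed is bookkeeping (extracting a simple cycle from the closed walk, and handling the non-simple boundary walk of $Q$, which is what the pruning in Definition~\ref{df26} is designed for), not topology of the complement of the graph.
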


\begin{theorem}\label{thm:main}
Let $G=(V,E)$ be an infinite, connected, locally finite graph with a well-separated Freudenthal embedding in the plane. If $\inf_{F\in\mathcal{F}} p_{c,F}^{site}(G)<\tfrac12$, then for any $p\in\big(\tfrac12,\,1-\inf_{F\in\mathcal{F}} p_{c,F}^{site}(G)\big)$, then $\mathbb{P}_{p}^{site}$-a.s.~there are infinitely many infinite 1-clusters.
\end{theorem}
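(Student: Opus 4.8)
The plan is to fix an integer $k\ge 1$ and prove $\mathbb P_p(N_\infty\ge k)=1$; since $\{N_\infty=\infty\}=\bigcap_{k\ge 1}\{N_\infty\ge k\}$, this yields $\mathbb P_p(N_\infty=\infty)=1$. (One may also note that $\{N_\infty=\infty\}$ is a tail event: flipping a single vertex changes $N_\infty$ by a finite amount, hence preserves both $\{N_\infty<\infty\}$ and $\{N_\infty=\infty\}$, so by Kolmogorov $0$--$1$ a positive-probability lower bound already suffices; the quantitative estimate below in fact delivers probability $1-\varepsilon$ for every $\varepsilon>0$.)

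First I would choose the target end class and an auxiliary parameter. Since $\inf_{F\in\mathcal F}p^{\mathrm{site}}_{c,F}(G)<\tfrac12$ and $p<1-\inf_{F\in\mathcal F}p^{\mathrm{site}}_{c,F}(G)$, we may pick $F\in\mathcal F$ with
\[
p^{\mathrm{site}}_c(G)\ \le\ p^{\mathrm{site}}_{c,F}(G)\ <\ \min\Bigl\{\tfrac12,\ 1-p\Bigr\},
\]
so that the chain $p^{\mathrm{site}}_c(G)\le p^{\mathrm{site}}_{c,F}(G)<\tfrac12\le p<1-p^{\mathrm{site}}_{c,F}(G)$ --- the hypothesis of Lemma~\ref{l27} --- holds. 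Fix $p_1\in\bigl(p,\,1-p^{\mathrm{site}}_{c,F}(G)\bigr)$, so $1-p_1>p^{\mathrm{site}}_{c,F}(G)$. By Definition~\ref{df36} applied to the $0$--phase (equivalently, to $1$--percolation at level $1-p_1$) together with Lemma~\ref{lm27}, $\mathbb P_{p_1}$--almost surely there is an infinite $0$--cluster $\xi$ with $\overline\xi\cap F\ne\varnothing$; working in the standard monotone coupling $\{U(v)\}_{v\in V}$, this $\xi$ is contained in an infinite $0$--cluster at level $p$, and since $p<p_1$ every vertex of $\xi$ is closed at level $p$.

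Next I would feed $\xi$ into the arm machinery. Fix $\varepsilon\in(0,1)$. By Lemma~\ref{l27}, applied with this $F$, this $p_1$, and the chosen $k$ and $\varepsilon$, for $\mathbb P_{p_1}$--a.e.\ such $\xi$ there is a finite $R\subset\xi$ inducing a connected subgraph $G_R$, together with indices $1\le i_1<\cdots<i_{2k}$ along $\partial_F R$, such that
\[
\mathbb P_p\bigl(\mathrm{Arm}(i_1,\dots,i_{2k};R;F)\bigr)\ \ge\ 1-\varepsilon,
\]
where the probability is understood conditionally on the level-$p_1$ cluster exploration that reveals $\xi$ (hence $R$): $R$ is forced closed at level $p$, and on $R^{\mathrm c}$ the level-$p$ configuration keeps the appropriate conditional law, the $1$--arm half of the event being controlled via $p\ge\tfrac12$ exactly as inside the proof of Lemma~\ref{l27}. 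On the event $\mathrm{Arm}(i_1,\dots,i_{2k};R;F)$ each of the $2k$ boundary arcs sends both a $1$--arm and a $0$--arm to $F$ inside $R^{\mathrm c}$; in particular the $k$ odd arcs send $1$--arms to infinity and the $k$ even arcs send $0$--arms to $F$. Lemma~\ref{lem:sep} then applies with $t(F)=k$: the $k$ even $0$--arms, together with $\phi(R)$ (which is entirely closed at level $p$), cut $R^{\mathrm c}$ into $k$ pairwise disjoint regions, each carrying an odd arc with its infinite $1$--arm, so $G$ contains at least $k$ distinct infinite $1$--clusters. Thus the arm event is contained in $\{N_\infty\ge k\}$.

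Putting the pieces together, $\mathbb P_p(N_\infty\ge k)\ge 1-\varepsilon$ for every $\varepsilon>0$, hence $\mathbb P_p(N_\infty\ge k)=1$, and letting $k\to\infty$ yields $\mathbb P_p(N_\infty=\infty)=1$, which is the assertion. I expect the main obstacle to be the conditioning step: the alternating--arm event is non-monotone (it mixes the increasing $1$--arm events with the decreasing $0$--arm events), so one cannot simply invoke stochastic domination. One must decompose along the level-$p_1$ exploration of $\xi$ and its (level-$p_1$) closed boundary, observe that conditionally $\xi$ is closed while $V\setminus(\xi\cup\partial\xi)$ carries an i.i.d.\ Bernoulli$(p)$ configuration with $\partial\xi$ only more open, and check that the per-arc estimates from the proof of Lemma~\ref{l27} --- the $0$--arm bounds, coming from supercriticality of the $0$--phase toward $F$ at level $p_1<1-p^{\mathrm{site}}_{c,F}(G)$, and the $1$--arm bounds, coming from $p\ge\tfrac12$ making the $1$--phase dominate the $0$--phase --- are undisturbed by this conditioning. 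It is exactly this double supercriticality that the range $\bigl(\tfrac12,\,1-\inf_{F}p^{\mathrm{site}}_{c,F}(G)\bigr)$ is arranged to provide.
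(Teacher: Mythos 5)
Your outline follows the paper's skeleton (choose $F$ with $p^{\mathrm{site}}_{c,F}(G)<\min\{\tfrac12,1-p\}$, take $p_1\in(p,1-p^{\mathrm{site}}_{c,F}(G))$, produce a supercritical level-$p_1$ $0$--cluster $\xi$ aimed at $F$, invoke Lemma~\ref{l27} and the separation Lemma~\ref{lem:sep}), but the step you yourself flag as ``the main obstacle'' is exactly where the argument breaks, and your proposed resolution does not work. Conditioning on the \emph{entire} infinite cluster $\xi$ reveals that every vertex of its outer boundary $\partial\xi$ has $U\le p_1$, i.e.\ is biased towards being \emph{open} at level $p$; this damages precisely the decreasing ($0$--arm) half of $\mathrm{Arm}(i_1,\dots,i_{2k};R;F)$, while the revealed cluster $\xi\setminus R$ (forced closed) damages the increasing half, so the conditional law in $R^{\mathrm c}$ is neither above nor below $\mathbb P_p$ and no domination argument applies to the non-monotone arm event. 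Moreover $R$ is itself a function of this infinite conditioning, so the unconditional estimate \eqref{alb1} of Lemma~\ref{l27} cannot simply be ``re-read'' as a conditional estimate; the claim that the per-arc bounds are ``undisturbed'' is asserted, not proved, and as stated (``$\partial\xi$ only more open'') it is wrong for the $0$--arms.

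The paper's proof is organized specifically to avoid this. It explores the level-$p_1$ $0$--cluster only up to the finite stopping time $\tau$ at which $R\subset C^{p_1}_\tau(x)$, and then applies the arm construction not to $R$ but to the enlarged set $C^{p_1}_\tau(x)\cup\tilde\partial C^{p_1}_\tau(x)$, so that the arm event depends only on the unexplored vertices, which are still i.i.d.; this is how the $1-\epsilon$ bound survives the conditioning. A second ingredient you omit is then forced: since the revealed boundary $\tilde\partial C^{p_1}_\tau(x)$ is open at level $p_1$ and may well be open at level $p$, the $k$ infinite $1$--clusters produced in the unexplored region could merge through it, so the separating barriers must be completed by boundary vertices $u_1,\dots,u_k$ (each adjacent to an even arc) being closed at level $p$; each is closed with conditional probability at least $(p_1-p)/p_1$, independently, and the proof concludes with the binomial estimate
\[
\mathbb P_p(N_1<l)\ \le\ \epsilon+\sum_{j=0}^{l-1}\binom{k}{j}\Bigl(\tfrac{p_1-p}{p_1}\Bigr)^{j}\Bigl(\tfrac{p}{p_1}\Bigr)^{k-j},
\]
letting $\epsilon\to0$ and $k\to\infty$ for each fixed $l$. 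Your proposal jumps directly from the (unjustified) conditional arm bound to $\mathbb P_p(N_\infty\ge k)\ge1-\epsilon$, skipping both the finite-exploration/enlargement device and the barrier-completion-plus-amplification step; without these repairs the argument has a genuine gap.
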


\begin{proof}Let $p_1\in \left(p,1-\inf_{F\in\mathcal{F}} p_{c,F}^{site}(G)\right)$. 
Since $1-p_1>\inf_{F\in\mathcal{F}} p_{c,F}^{site}(G)$, there exists $F\in \mathcal{F}$, such that $1-p_1>p_{c,F}^{site}(G)$. Then there exists $x\in V$, such that
\begin{align*}
 \PP_{p_1}(x\xleftrightarrow{G\cap \{\si=0\}} F)>0.
\end{align*}

Let $\xi_x$ be the infinite 0-cluster at $x$ at level $p_1$ satisfying
\begin{align*}
\ol{\xi}_{x}\cap F\neq \emptyset 
\end{align*}

By Lemma \ref{l27}, we have for any $\epsilon>0$ and any positive integer $k$, there exists a finite set of vertices $R\subset \xi_{x}$ such that 
\begin{align}
\PP^{\mathrm{site}}_p\!\big(\mathrm{Arm}(i_1,\ldots,i_{2k};R;F)\big)\ \ge\ 1-\eps,\label{pg1}
\end{align}
for some $i_1,\ldots,i_{2k}$.
In particular if the event $\mathrm{Arm}(i_1,\ldots,i_{2k};R;F)$ occurs, then the number of\ infinite 0-clusters, whose closure intersect $F$, at level $p$ in 
$\mathbb{S}^2\setminus R$ intersecting $\partial_F R$ is at least $k$, and the number of infinite 1-clusters at level $p$, whose closure intersect $F$, in\ $\mathbb{S}^2\setminus R$ intersecting $\partial_F R$ is at least $k$.

Explore the 0-cluster at $x$ at level $p_1$ until the first time $\tau$ for which $R\subset C_{\tau}^{p_1}(x)$. By Lemma \ref{l27} and the exploration process described above, a.s.~$\tau<\infty$.

 Let $\tilde{\partial} C_{\tau}^{p_1}(x)$ be the set of vertices in $V\setminus C_{\tau}^{p_1}(x)$ that are adjacent to $C_{\tau}^{p_1}(x)$ in $G_*$. Since the exploration process up to time $\tau$ gives no information about the set of vertices in $V\setminus [C_{\tau}^{p_1}(x)\cup\tilde{\partial} C_{\tau}^{p_1}(x)]$, by (\ref{pg1}) we have
\begin{align*}
&\PP_p(\mathrm{Arm}(j_1,\ldots,j_{2k};C_{\tau}^{p_1}(x)\cup \tilde{\partial} C_{\tau}^{p_1}(x);F)\ \mathrm{for\ some}\ j_1,\ldots,j_{2k}\\
&|\mathrm{the\ exploration\ process\ up\ to\ time\ }\tau)\geq 1-\epsilon.
\end{align*}
 In particular, if $\mathrm{Arm}(j_1,\ldots,j_{2k};C_{\tau}^{p_1}(x)\cup \tilde{\partial} C_{\tau}^{p_1}(x);F)$ occurs, 
the the percolation process at level $p$ restricted to the vertex set $V\setminus [C_{\tau}^{p_1}(x)\cup \tilde{\partial} C_{\tau}^{p_1}(x)]$ has at least $k$ infinite 0-clusters that
 contain some vertex with distance 1 in $G$ to $\tilde{\partial} C_{\tau}^{p_1}(v_0)$; and at least $k$ distinct infinite 1-clusters restricted to the vertex set $V\setminus [C_{\tau}^{p_1}(x)\cup \tilde{\partial} C_{\tau}^{p_1}(x)]$.

If $x$ is in an infinite 0-cluster at level $p_1$, then 
\begin{align}
U(w)> p_1;\ \forall w\in C_{\tau}^{p_1}(x)\label{uwl}
\end{align}
Given (\ref{uwl}) and the exploration process up to time $\tau$, each $v\in \tilde{\partial} C_{\tau}^{p_1}(x)$ is closed at level $p$ independently with conditional probability at least $\frac{p_1-p}{p_1}$. If $\mathrm{Arm}(j_1,\ldots,j_{2k};C_{\tau}^{p_1}(x)\cup \tilde{\partial} C_{\tau}^{p_1}(x);F)$ occurs, one may pick $u_1,\ldots,u_k\in \tilde{\partial} C_{\tau}^{p_1}(x)$ adjacent to, in $G$, $k$ different 0-clusters at level $p$ in $V\setminus[C_{\tau}^{p_1}(v_0)\cup \tilde{\partial} C_{\tau}^{p_1}(v_0)]$ whose closure intersect $F$; such that
\begin{itemize}
\item for each $1\leq t\leq k$, $u_t$  is adjacent to $\mathrm{Arc}_{2t}$ of $\partial_F[C_{\tau}^{p_1}(x)\cup \tilde{\partial} C_{\tau}^{p_1}(x)]$
\item for each $1\leq t\leq k$,
$\mathrm{Arc}_{2t-1}\xleftrightarrow {[C_{\tau}^{p_1}(x)\cup \tilde{\partial} C_{\tau}^{p_1}(x)]^c\cap \{\sigma=1\}}F$ occurs.
\end{itemize}

From the exploration process we see that the state of each $u_i (1\leq i\leq k)$ is either undetermined or open at level $p_1$. We have
\begin{itemize}
\item If the state of $u_i$ is undetermined at level $p_1$, then at level $p$, with probability $1-p$ $u_i$ is closed;
\item If the state of $u_i$ is open at level $p_1$, then at level $p$, with probability $\frac{p_1-p}{p_1}$ $u_i$ is closed by the standard coupling.
\end{itemize}
In either case with probability at least $\frac{p_1-p}{p_1}$ $u_i$ is closed at level $p$.

If $x$ is in an infinite 0-cluster at level $p_1$ (hence must be in an infinite 0-cluster at level $p$) and at least $l$ of the vertices $u_1,\ldots,u_k$ are closed at level $p$, then there are at least $l$ distinct infinite 1-clusters at level $p$. Let $N_1$ be the total number of infinite 1-clusters. Then
\begin{align*}
\PP_p(N_1<l)\leq \epsilon+\sum_{j=0}^{l-1}{k\choose j}
\left(\frac{p_1-p}{p_1}\right)^j
\left(\frac{p}{p_1}\right)^{k-j}
\end{align*}
For each fixed $l$, letting $\epsilon\rightarrow 0$ and $k\rightarrow\infty$, we obtain 
\begin{align*}
\PP_p(N_1<l)=0
\end{align*}
 Then the theorem follows.
\end{proof}

\section{Uncountably many end--equivalence classes: the positive regime}
\label{s6}

In this section we address the regime in which $\mathcal F(G)$ is uncountable.
We focus on Case~(2a) in Section~\ref{sec3}, where the percolation parameter lies in a range that forces many ends to remain
inactive for infinite $1$--clusters while infinite 1-clusters still occurs in the complement of neighborhoods of the
active set.
Assuming toward contradiction that the number of infinite $1$--clusters is almost surely finite, we use
compactness of the relevant active set of ends together with a boundary--avoidance argument to produce
an infinite $1$--cluster accumulating at an end that is provably inactive, yielding a contradiction.
Consequently, in this positive regime one still has almost surely infinitely many infinite $1$--clusters.

Recall that $\mathcal{A}$ is the set consisting of all the accumulation points of an embedding of $G$ in $\mathbb{S}^2$. Under the Freudenthal embedding of the graph in $\mathbb{S}^2$, we can identify each end with an accumulation point.
Under the topology defined on the ends of the graph (see Definition \ref{df21}), $\mathcal{A}$ is a compact Hausdorff space; given that $G$ is infinite, connected and locally finite (see also Page 76 of \cite{bs96}).

\begin{lemma}\label{le78}Let $\mathcal{A}_1\subset \mathcal{A}$ consist of all the $a\in \mathcal{A}$ satisfying (\ref{ccds}).
Assume (\ref{fi1}) holds.
then $\mathcal{A}_1$ is a closed subset of $\mathcal{A}$; hence
$\mathcal{A}_1$ is a compact Hausdorff space.
\end{lemma}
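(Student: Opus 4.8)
The plan is to prove that $\mathcal A_1$ is sequentially closed in $\mathcal A$; since $\mathcal A\subset\mathbb S^2$ is metrizable this yields closedness, and then $\mathcal A_1$, being a closed subset of the compact Hausdorff space $\mathcal A$, is itself compact Hausdorff. So it suffices to take $a_n\in\mathcal A_1$ with $a_n\to a$ in $\mathcal A$ and to show $a\in\mathcal A_1$.

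First I would record a pathwise reformulation of membership in $\mathcal A_1$: for every $a'\in\mathcal A$,
\[
a'\in\mathcal A_1\quad\Longleftrightarrow\quad \PP_p\bigl(\text{there is an infinite }1\text{-cluster }C\text{ with }a'\in\overline C\bigr)=1,
\]
where $\overline C$ denotes the closure of $\phi(C)$ in $\mathbb S^2$. For the forward implication, an infinite open self-avoiding path from a vertex $v$ converging to $a'$ lies inside the $1$-cluster $C_v$ and forces $a'\in\overline{C_v}$. For the reverse, if $a'\in\overline{C_v}$ then $\{v\leftrightarrow a'\}$ occurs by the ``moreover'' clause of Lemma~\ref{le76} (equivalently, by Lemma~\ref{lm27} applied to $\xi=C_v$ together with the description of the Freudenthal neighbourhoods). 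Consequently $a_n\in\mathcal A_1$ gives $\PP_p(E_n)=1$ for $E_n:=\{\text{there is an infinite }1\text{-cluster }C:\ a_n\in\overline C\}$, hence $\PP_p\bigl(\bigcap_{n\ge1}E_n\bigr)=1$.

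The crux is to combine this with hypothesis~\eqref{fi1}. Work on the probability-one event $\bigcap_{n\ge1}E_n\cap\{1\le\mathcal N_\infty<\infty\}$. On it the collection $\mathcal C$ of infinite $1$-clusters is finite and nonempty, and for each $n$ there is some $C_n\in\mathcal C$ with $a_n\in\overline{C_n}$. By pigeonhole some fixed $C\in\mathcal C$ satisfies $C_n=C$ for all $n$ in an infinite set $N\subset\NN$; for those $n$ we have $a_n\in\overline C$, and since $\overline C$ is closed in $\mathbb S^2$ and $a_n\to a$ we conclude $a\in\overline C$. Thus on this event there is an infinite $1$-cluster whose closure contains $a$, which by the first step is precisely the statement that $\bigcup_{v\in V}\{v\xleftrightarrow{1}a\}$ occurs. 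Therefore $\PP_p\bigl(\bigcup_{v\in V}\{v\xleftrightarrow{1}a\}\bigr)=1$, i.e.\ $a\in\mathcal A_1$, and $\mathcal A_1$ is closed.

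I expect the only real obstacle to be the use of finiteness in the last step: without~\eqref{fi1} the clusters $C_n$ need not stabilize along a subsequence and the pigeonhole argument collapses — one could then imagine infinitely many distinct clusters each reaching a single $a_n$ on the sphere yet none reaching the limit $a$. The remaining ingredients are routine: the equivalence in the first step is just unwinding the definition of $\{v\xleftrightarrow{1}a\}$ through Lemma~\ref{le76}, while closedness of $\overline C$ and metrizability of $\mathcal A\subset\mathbb S^2$ are standard.
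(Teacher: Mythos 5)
Your proposal is correct and follows essentially the same route as the paper: take $a_n\in\mathcal A_1$ with $a_n\to a$, use the hypothesis $\PP_p(1\le\mathcal N_\infty<\infty)=1$ to find a single infinite $1$-cluster whose closure captures the limit point $a$, and then invoke the ``moreover'' clause of Lemma~\ref{le76} to conclude $a\in\mathcal A_1$. Your pigeonhole step is in fact a slightly more careful justification of the paper's identity $\overline{\cup_v C_v}=\cup_v\overline{C_v}$ under finiteness of the number of infinite clusters, but it is the same argument in substance.
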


\begin{proof}It suffices to show that for any $a\in \overline{\mathcal{A}}_1$, we have $a\in \mathcal{A}_1$.

Assume there exists $a\in \overline{\mathcal{A}}_1\setminus \mathcal{A}_1$. Under the Freudenthal embedding of $G$ into $\mathbb{S}^2$, the topology on ends of the graph in Definition \ref{df21} coincides with the subspace topology on $\mathcal{A}$ induced by the topology on $\mathbb{S}^2$; see Proposition \ref{th52}. In this sence $\mathcal{A}$ is a closed metric space ($T_4$ space).
Then there exists a sequence $\{a_n\}_{n=1}^{\infty}\subset \mathcal{A}_1$, such that
\begin{align}
\lim_{n\rightarrow\infty}a_n=a;\label{sq1}
\end{align}
and 
\begin{align}
\PP_p(\cup_{v\in V}\{v\xleftrightarrow{1}a_n\})=1;\ \forall n\geq 1.\label{sq2}
\end{align}
For each $v\in V$, let $C_v$ be the 1-cluster at vertex $v$. Then (\ref{sq1}) and (\ref{sq2}) imply that
\begin{align*}
\PP_p(a\in \overline{\cup_v {C_v}})=1
\end{align*}
If $1\leq \mathcal{N}_{\infty}<\infty$, we have $\overline{\cup_v {C_v}}=\cup_{v\in V}\overline{C_v}$. Then when (\ref{fi1}) holds, a.s.~there exists $v\in V$, such that $a\in \overline{C}_v$. By Lemma \ref{le76}, a.s.~there exists $v\in V$, such that $v\xleftrightarrow{1}a$ occurs. Then (\ref{ccds}) holds; and therefore $a\in \mathcal{A}_1$.
\end{proof}

\begin{lemma}\label{lm79}Let 
\begin{align}
\max\left\{\frac{1}{2},1-\inf_{F\in\mathcal{F}}p_{c,F}^{site}(G)\right\}\leq p<1.\label{pdm}
\end{align}
Consider case (2a) in Section~\ref{sec3}.
 Assume that (\ref{fi1}) holds. Then there exists $\epsilon>0$ ,  such that all the following hold:
\begin{enumerate}[label=(\Alph*)]
\item 
$G$ has uncountably many ends in 
\begin{align}
\{\mathbb{S}^2\setminus [\cup_{a\in \mathcal{A}_1}B_{\mathbb{S}^2}(a,\epsilon)]\};\label{sa3}
\end{align}
and 
\item to each end $a$ in (\ref{sa3}), 
\begin{align}
\PP_p(\cup_{v\in V}\{v\xleftrightarrow{1} a\})=0.\label{ccd3}
\end{align}

\item In the graph $G\setminus [\cup_{a\in\mathcal{A}_1}B_{\mathbb{S}^2}(a,\epsilon)]$, a.s.~there are infinite 1-clusters.
\end{enumerate}
\end{lemma}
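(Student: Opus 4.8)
The plan is to exhibit one $\epsilon>0$ serving all three clauses, by choosing it small enough for (A) and for (C); clause (B) will then hold for \emph{every} $\epsilon>0$. The starting observation is a $0$--$1$ law: for each fixed end $a\in\mathcal A$ the event $\bigcup_{v\in V}\{v\xleftrightarrow{1}a\}$ (``some infinite self-avoiding open path has its vertices converging to $a$'') is unchanged under altering finitely many vertex states --- replace the path by a tail of it --- hence is a tail event, so by Kolmogorov's $0$--$1$ law its $\PP_p$--probability is $0$ or $1$, and by the definition of $\mathcal A_1$ in $(\ref{ccds})$ it is $1$ exactly when $a\in\mathcal A_1$. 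This already gives (B), since an end lying in the set $(\ref{sa3})$ is not in $\mathcal A_1$, hence has connection probability $\ne 1$, hence $0$. It also forces $\mathcal A\setminus\mathcal A_1$ to be uncountable: Case~(2a) is the regime where $(\ref{sst})$ holds, i.e.\ $\PP_p$--a.s.\ there is an end of $\mathcal A\setminus\mathcal A_1$ carrying an infinite open ray; were $\mathcal A\setminus\mathcal A_1$ countable, the event in $(\ref{sst})$ would be a countable union of $\PP_p$--null events, a contradiction.

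For (A) I would invoke Lemma~\ref{le78}, which says $\mathcal A_1$ is closed (so compact). Then
\[
\mathcal A\setminus\mathcal A_1=\bigcup_{k\ge1}\bigl\{a\in\mathcal A:\ d_{\mathbb S^2}(a,\mathcal A_1)>1/k\bigr\},
\]
and by uncountability some term, say $k=k_0$, is uncountable. Since these sets decrease in $1/k$, for every $\epsilon\le\epsilon_A:=1/k_0$ the set $\{a\in\mathcal A:d_{\mathbb S^2}(a,\mathcal A_1)>\epsilon\}$ is uncountable, and by compactness of $\mathcal A_1$ it coincides with $\mathcal A\cap\bigl(\mathbb S^2\setminus\bigcup_{a'\in\mathcal A_1}B_{\mathbb S^2}(a',\epsilon)\bigr)$; this is exactly (A).

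For (C), set $F_n:=\{\,G\setminus[\bigcup_{a'\in\mathcal A_1}B_{\mathbb S^2}(a',1/n)]\text{ contains an infinite }1\text{-cluster}\,\}$. As before $F_n$ is a tail event (an infinite open self-avoiding path inside that subgraph persists, up to a tail, under finite modifications), so $\PP_p(F_n)\in\{0,1\}$; and $F_n\subseteq F_{n+1}$ because shrinking $1/n$ enlarges the subgraph. One checks $\PP_p(\bigcup_n F_n)=1$: on the a.s.\ event of $(\ref{sst})$, fix an end $a\notin\mathcal A_1$ and an infinite self-avoiding open path $P=(z_0,z_1,\dots)$ with $z_n\to a$ (the meaning of $v\xleftrightarrow{1}a$ in Definition~\ref{df36}; one could instead use the ``moreover'' clause of Lemma~\ref{le76}); since $\mathcal A_1$ is closed, $\delta:=d_{\mathbb S^2}(a,\mathcal A_1)>0$, and for $n$ with $1/n<\delta$ the open neighbourhood $\{x:d_{\mathbb S^2}(x,\mathcal A_1)>1/n\}$ of $a$ contains all but finitely many $z_n$, so a tail of $P$ lies in $G\setminus[\bigcup_{a'\in\mathcal A_1}B_{\mathbb S^2}(a',1/n)]$ and witnesses $F_n$. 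Hence $\PP_p(F_N)=1$ for some $N$; set $\epsilon:=\min\{\epsilon_A,1/N\}$. Then $\bigcup_{a'\in\mathcal A_1}B_{\mathbb S^2}(a',\epsilon)\subseteq\bigcup_{a'\in\mathcal A_1}B_{\mathbb S^2}(a',1/N)$, so an infinite $1$--cluster of $G\setminus[\bigcup B_{\mathbb S^2}(a',1/N)]$ lies inside one of $G\setminus[\bigcup B_{\mathbb S^2}(a',\epsilon)]$; thus $\PP_p(F_N)=1$ yields (C), while $\epsilon\le\epsilon_A$ yields (A), and (B) holds for this $\epsilon$.

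The only genuine obstacle is coordinating a single $\epsilon$ across all three clauses, together with the small topological step in (C): passing from an infinite $1$--cluster that accumulates at an end at distance $>1/n$ from $\mathcal A_1$ to an infinite $1$--cluster lying entirely in the truncated graph $G\setminus[\bigcup_{a'\in\mathcal A_1}B_{\mathbb S^2}(a',1/n)]$. Both are dispatched by the ``tail of a path'' device, once one uses that ends correspond to accumulation points (Proposition~\ref{th52}) and that $\mathcal A_1$ is closed (Lemma~\ref{le78}).
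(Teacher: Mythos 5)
Your proposal is correct and follows essentially the same route as the paper: (A) via the $1/k$-exhaustion of $\mathcal A\setminus\mathcal A_1$ using closedness/compactness of $\mathcal A_1$ (Lemma~\ref{le78}) and the contradiction with~(\ref{sst}), (B) from tail-triviality of $\bigcup_{v}\{v\xleftrightarrow{1}a\}$, and (C) via the increasing tail events at scales $1/n$ whose union has probability $1$, then taking $\epsilon$ as the minimum of the two thresholds. The only difference is cosmetic (you derive uncountability of $\mathcal A\setminus\mathcal A_1$ first rather than running the paper's contradiction over all $\epsilon$ at once), and you make explicit the tail-of-path argument that the paper leaves implicit.
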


\begin{proof}

Suppose that for any $\epsilon>0$ there are countably many ends in (\ref{sa3}). Then for any positive integers $k\geq 1$, $G$ has countably many ends in
\begin{align}
 \left\{\mathbb{S}^2\setminus \left[\cup_{a\in\mathcal{A}_1}B_{\mathbb{S}^2}\left(a,\frac{1}{k}\right)\right]\right\};\label{ss2}
\end{align}

Under the Freudenthal embedding, each end is an accumulation point.
By Lemma \ref{le78}, when (\ref{fi1}) holds, the set $\mathcal{A}_1$ of all the ends satisfying (\ref{ccds}) is a compact subset of $\mathbb{S}^2$. Therefore, for any  
\begin{align}
a\in\mathcal{A}\setminus \mathcal{A}_1,\label{fcae}
\end{align}
there exists a positive integer $m$, such that
\begin{align*}
a\in \mathbb{S}^2\setminus \left[\cup_{b\in\mathcal{A}_1}B_{\mathbb{S}^2}\left(b,\frac{1}{m}\right)\right]
\end{align*}
Then all the end equivalence classes satisfying (\ref{fcae}) must be contained in 
\begin{align}
\cup_{k=1}^{\infty} \left\{\mathbb{S}^2\setminus \left[\cup_{b\in\mathcal{A}_1}B_{\mathbb{S}^2}\left(b,\frac{1}{k}\right)\right]\right\}\label{sd3}
\end{align}
By (\ref{ss2}), there are countably many ends in (\ref{sd3}); hence there are countably many ends satisfying (\ref{fcae}). Moreover, for any end satisfying (\ref{fcae}), (\ref{ccd3}) holds. Then (\ref{ccd3}), together with the countability of $\mathcal{A}\setminus \mathcal{A}_1$, implies that
\begin{align*}
\PP_p(\cup_{a\in \mathcal{A}\setminus \mathcal{A}_1 }\cup_{v\in V}\{v\xleftrightarrow{1} a\})=0.
\end{align*}
which contradicts (\ref{sst}). Then there exists $\epsilon_0> 0$, such that for all $\epsilon\leq \epsilon_0$, (\ref{sa3}) holds.

(B) is straightforward.

Now we prove (C). For each $k\geq 1$, define an event
\begin{align*}
E_{k}:=\left\{\mathrm{there\ are\ infinite\ 1-clusters\ in\ the\ graph}\ G\setminus \left[\cup_{b\in \mathcal{A}_1}B_{\mathbb{S}^2}\left(b,\frac{1}{k}\right)\right]\right\}
\end{align*}
Then we have
\begin{align*}
E_k\subseteq E_{k+1}.
\end{align*}

By (\ref{sst}), 
\begin{align*}
\lim_{k\rightarrow\infty}\PP_p(E_k)=\PP_p\left(\cup_{k=1}^{\infty}E_{k}\right)=1.
\end{align*}
Therefore there exists $k_1\geq 1$, s.t. $\PP_p(E_{k_1})>\frac{1}{2}$. Since $E_{k_1}$ is tail measurable, it follows that $\PP_p(E_{k_1})=1$. 
In particular there exists $\epsilon_1:=\frac{1}{k_1}$, such that for any $\epsilon\leq \epsilon_1$, (C) holds.

Choose $\epsilon =\min\{\epsilon_0,\epsilon_1\}$, we have (A) and (C) hold with the same $\epsilon$.
\end{proof}

\begin{lemma}\label{le7a}Suppose that (\ref{pdm}) and (\ref{fi1}) holds.
Consider case (2a).
Let $\mathcal{A}_1$ be all the ends satisfying (\ref{ccds}). Then there exists an open set $\mathcal{O}\supset \mathcal{A}_1$ such that 
\begin{enumerate}[label=(\Alph*)]
\item in
\begin{align}
\mathcal{A}\cap \{\mathbb{S}^2\setminus \mathcal{O}\};\label{ss3}
\end{align}
$G$ has uncountably many ends; and 
\item For each end $a$ in (\ref{ss3}), 
\begin{align}
\PP_p(\cup_{v\in V}\{v\xleftrightarrow{1} a\})=0.\label{ccd2}
\end{align}
\item In the graph $G\setminus \mathcal{O}$, a.s.~there are infinite 1-clusters.
\item Let $\partial{\mathcal{O}}:=\overline{\mathcal{O}}\setminus \mathcal{O}$, then $\partial \mathcal{O}\cap \mathcal{A}=\emptyset$.
\end{enumerate}
\end{lemma}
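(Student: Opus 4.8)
Here is the plan.

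\smallskip

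The strategy is to keep the radius $\epsilon>0$ produced by Lemma~\ref{lm79} but to replace the crude tube $\bigcup_{a\in\mathcal{A}_1}B_{\mathbb{S}^2}(a,\epsilon)$ by an open set $\mathcal{O}$ whose topological boundary contains \emph{no} accumulation point. The only ingredient beyond Lemma~\ref{lm79} is topological: by Definition~\ref{df21} and Proposition~\ref{th52} the accumulation set $\mathcal{A}$, with the subspace topology from $\mathbb{S}^2$, is homeomorphic to the end space $\Omega(G)$, and $\Omega(G)$ is a compact Hausdorff \emph{zero-dimensional} space — it has a sub-basis, hence a basis, of clopen sets $\{e:e(K)=F\}$. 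I will also use that $\mathcal{A}_1$ is compact (Lemma~\ref{le78}, using \eqref{fi1}) and nonempty (Lemma~\ref{le76}), and that, since \eqref{pdm}, \eqref{fi1} and Case~(2a) hold, Lemma~\ref{lm79} supplies an $\epsilon>0$ for which (i) the set $\mathcal{A}\cap\{x:\dist(x,\mathcal{A}_1)>\epsilon\}$ is uncountable and (ii) a.s.\ the graph $G\setminus[\bigcup_{a\in\mathcal{A}_1}B_{\mathbb{S}^2}(a,\epsilon)]$ has an infinite $1$-cluster.

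\smallskip

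The construction I would carry out has three steps. First, set $W:=\mathcal{A}\cap\{x:\dist(x,\mathcal{A}_1)<\epsilon/2\}$, a relatively open neighbourhood of the compact set $\mathcal{A}_1$ in $\mathcal{A}$; using zero-dimensionality, cover $\mathcal{A}_1$ by finitely many clopen subsets of $\mathcal{A}$ contained in $W$ and let $K$ be their union, obtaining a set $K$ clopen in $\mathcal{A}$ with $\mathcal{A}_1\subseteq K\subseteq W$. Second, since $K$ is clopen in the compact space $\mathcal{A}$, both $K$ and $\mathcal{A}\setminus K$ are compact and disjoint, and $\mathcal{A}\setminus K\neq\emptyset$ because it contains the uncountable set from Lemma~\ref{lm79}, so $d_0:=\dist(K,\mathcal{A}\setminus K)>0$. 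Third, put $\eta:=\tfrac12\min\{\epsilon,d_0\}$ and define the open set
\[
\mathcal{O}:=\{x\in\mathbb{S}^2:\ \dist(x,K)<\eta\},
\]
which contains $K$, hence $\mathcal{A}_1$. Property (D) then follows: $\overline{\mathcal{O}}\subseteq\{x:\dist(x,K)\le\eta\}$, and every end in $\mathcal{A}\setminus K$ has distance $\ge d_0\ge 2\eta>\eta$ from $K$, hence lies outside $\overline{\mathcal{O}}$, while every end in $K$ lies in $\mathcal{O}$; thus $\mathcal{A}\cap\overline{\mathcal{O}}=K\subseteq\mathcal{O}$, so $\partial\mathcal{O}\cap\mathcal{A}=(\overline{\mathcal{O}}\setminus\mathcal{O})\cap\mathcal{A}=\emptyset$.

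\smallskip

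The remaining items are routine. A triangle-inequality computation with $\eta\le\epsilon/2$ and $K\subseteq W$ shows $\mathcal{O}\subseteq\{x:\dist(x,\mathcal{A}_1)<\epsilon\}\subseteq\bigcup_{a\in\mathcal{A}_1}B_{\mathbb{S}^2}(a,\epsilon)$; consequently $\mathcal{A}\cap(\mathbb{S}^2\setminus\mathcal{O})$ contains the uncountable set $\mathcal{A}\cap\{x:\dist(x,\mathcal{A}_1)>\epsilon\}$ of Lemma~\ref{lm79}, which is (A), and $G\setminus\mathcal{O}\supseteq G\setminus[\bigcup_{a\in\mathcal{A}_1}B_{\mathbb{S}^2}(a,\epsilon)]$, so by monotonicity of ``there exists an infinite $1$-cluster'' the a.s.\ infinite $1$-cluster of Lemma~\ref{lm79} survives in $G\setminus\mathcal{O}$, which is (C). For (B): any $a\in\mathcal{A}\cap(\mathbb{S}^2\setminus\mathcal{O})$ lies outside $\mathcal{O}\supseteq\mathcal{A}_1$, hence $a\notin\mathcal{A}_1$, and since $\bigcup_{v\in V}\{v\xleftrightarrow{1}a\}$ is a tail event, the Kolmogorov $0$--$1$ law together with the defining property \eqref{ccds} of $\mathcal{A}_1$ forces $\PP_p(\bigcup_{v\in V}\{v\xleftrightarrow{1}a\})=0$, which is \eqref{ccd2}. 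I expect the only genuine obstacle to be producing $\mathcal{O}$ with $\partial\mathcal{O}\cap\mathcal{A}=\emptyset$: the naive attempt of taking a sublevel set $\{x:\dist(x,\mathcal{A}_1)<\delta\}$ fails in general, because the continuous image $\dist(\mathcal{A},\mathcal{A}_1)\subseteq[0,\infty)$ can be a whole interval even though $\mathcal{A}$ is totally disconnected (compare the Cantor function), so $\{x:\dist(x,\mathcal{A}_1)=\delta\}$ may meet $\mathcal{A}$ for every $\delta$. Enlarging $\mathcal{A}_1$ first to a \emph{clopen} chunk $K$ of the end space is precisely what creates the gap $d_0>0$ that makes the $\eta$-neighbourhood have boundary disjoint from $\mathcal{A}$, and this is the only place where zero-dimensionality of $\Omega(G)$ enters.
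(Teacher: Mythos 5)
Your proposal is correct, and all four items are established: (A) and (C) follow exactly as in the paper, by showing $\mathcal{O}\subseteq\bigcup_{a\in\mathcal{A}_1}B_{\mathbb{S}^2}(a,\epsilon)$ and invoking monotonicity together with Lemma~\ref{lm79}, and (B) follows from tail-triviality of $\bigcup_{v\in V}\{v\xleftrightarrow{1}a\}$ plus the definition \eqref{ccds} of $\mathcal{A}_1$ (the paper treats this as the ``straightforward'' part). Where you genuinely differ is the construction of $\mathcal{O}$ satisfying (D). The paper first covers the compact set $\mathcal{A}_1$ (Lemma~\ref{le78}) by finitely many $\epsilon$-balls and then, in the case where the boundary of this finite union meets $\mathcal{A}$ in a nonempty compact set $A_0$, repairs it by choosing for each $a\in A_0$ and $b\in\mathcal{A}_1$ a finite vertex separator $K_{a,b}$, hence an open set $T_{a,b}$ containing the component $b(K_{a,b})$ with $\partial T_{a,b}\cap\mathcal{A}=\emptyset$, and runs two successive compactness covers (over $\mathcal{A}_1$, then over $A_0$) to define $\mathcal{O}=[\bigcap_i T_{x_i}]\cap[\bigcup_j B_{\mathbb{S}^2}(z_j,\epsilon)]$. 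You instead observe that by Definition~\ref{df21} and Proposition~\ref{th52} the accumulation set $\mathcal{A}$ is homeomorphic to the end space, which is compact and zero-dimensional (the sets $\{e:e(K)=F\}$ are clopen since they finitely partition $\Omega(G)$ for fixed finite $K$), fatten $\mathcal{A}_1$ to a relatively clopen $K\subseteq\mathcal{A}$ inside the $\epsilon/2$-tube, and exploit the positive gap $d_0=\mathrm{dist}(K,\mathcal{A}\setminus K)$ to make a metric neighbourhood of $K$ whose closure meets $\mathcal{A}$ only inside $K\subseteq\mathcal{O}$. The two arguments rest on the same structural fact — ends can be separated by finite vertex sets — but yours packages it as an abstract clopen-plus-gap argument that avoids the paper's case distinction on whether $\partial(\bigcup_j B)$ meets $\mathcal{A}$ and its second covering step, at the price of importing the (standard, easily verified) zero-dimensionality of the end topology; the paper's version stays closer to the graph, producing the separating sets $T_{a,b}$ explicitly from the Freudenthal structure.
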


\begin{proof}By Lemma \ref{lm79}, there exists $\epsilon>0$, such that (A)(B)(C) holds with $\mathcal{O}$ replaced by 
\begin{align*}
\cup_{b\in\mathcal{A}_1} B_{\mathbb{S}^2}\left(b,\epsilon\right)
\end{align*}
By Lemma \ref{le78}, when (\ref{fi1}) holds, $\mathcal{A}_1$ is compact, and $\{B_{\mathbb{S}^2}\left(b,\epsilon\right)\}_{b\in \mathcal{A}_1}$ form an open cover of $\mathcal{A}_1$, there exists a finite positive integer $M$ and $z_1,\ldots,z_M\in \mathcal{A}_1$, s.t.
\begin{align*}
\mathcal{A}_1\subset \cup_{j=1}^M B_{\mathbb{S}^2}(z_j,\epsilon).
\end{align*}
When $\mathcal{O}$ is replaced by $\cup_{j=1}^M B_{\mathbb{S}^2}(z_j,\epsilon)$, (A) (B) (C) still hold.

If  $[\partial\cup_{j=1}^M B_{\mathbb{S}^2}(z_j,\epsilon)]\cap \mathcal{A}=\emptyset$; let $\mathcal{O}:=\cup_{j=1}^M B_{\mathbb{S}^2}(a_j,\epsilon)$, then (D) holds.

If  $A_0:=\{[\partial\cup_{j=1}^M B_{\mathbb{S}^2}(z_j,\epsilon)]\cap \mathcal{A}\}\neq \emptyset$, since both $\partial\cup_{j=1}^M B_{\mathbb{S}^2}(z_j,\epsilon)$ and $\mathcal{A}$ are compact, their intersection $A_0$ is compact as well. For each $a\in A_0$ and $b\in\mathcal{A}_1$ there exists a finite set  $K_{a,b}$ of vertices such that 
\begin{itemize}
\item $a(K_{a,b})$ and $b(K_{a,b})$ are two distinct infinite components of $G\setminus K_{a,b}$. 
\end{itemize}

By Lemma \ref{lm27}, the Freudenthal embedding is a homeomorphism from $|G|$ to a subset of $\mathbb{S}^2$, where the latter has subspace topology. Hence, the component $b(K_{a,b})$ lies in an open set $T_{a,b}$ of $\mathbb{S}^2$ and $a\in R_{a,b}:=\mathbb{S}^2\setminus \overline{T_{a,b}}$ and $\partial T_{a,b}\cap \mathcal{A}=\emptyset$. Then 
\begin{align*}
\mathcal{A}_1\subset \cup_{b\in\mathcal{A}_1}T_{a,b}
\end{align*}
In other words, $T_{a,b} (b\in\mathcal{A}_1)$ form an open cover of $\mathcal{A}_1$. By Lemma \ref{le78}, when (\ref{fi1}) holds, $\mathcal{A}_1$ is compact we can find a finite positive integer $K\geq 1$, $\{b_1,\ldots,b_K\}\subset\mathcal{A}_1$, such that
\begin{align*}
\mathcal{A}_1\subset \cup_{j=1}^K T_{a,b_j}
\end{align*}
Let 
\begin{align*}
R_a=\cap_{j=1}^K R_{a,b_j};\qquad T_a=\cup_{j=1}^K T_{a,b_j}
\end{align*}
then both $R_a$ and $T_a$ are open. 
Then $\{R_a\}_{a\in A_0}$ form an open cover of $A_0$, since $A_0$ is compact, there exists a finite positive integer $L$ and
\begin{align*}
x_1,\ldots,x_L\in A_0
\end{align*}
such that 
\begin{align*}
A_0\subset \cup_{i=1}^L R_{x_i}.
\end{align*}
Let 
\begin{align*}
\mathcal{O}=[\cap_{i=1}^{L}T_{x_i}]\cap [\cup_{j=1}^M B_{\mathbb{S}^2}(a_j,\epsilon)];
\end{align*}
then the lemma follows. In particular $\partial\mathcal{O}\cap \mathcal{A}=\emptyset$.
\end{proof}

\begin{lemma}Suppose that (\ref{pdm}) holds. Consider case (2a) in Section~\ref{sec3}; then a.s.~there are infinitely many infinite 1-clusters.
\end{lemma}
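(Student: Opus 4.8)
\medskip
\noindent\textbf{Proof strategy.}
I would argue by contradiction and rule out the only competing scenario. Let $\mathcal N_\infty$ be the number of infinite $1$--clusters. The events $\{\mathcal N_\infty=\infty\}$, $\{\mathcal N_\infty=0\}$ and $\{1\le\mathcal N_\infty<\infty\}$ are tail events, so by the Kolmogorov $0$--$1$ law exactly one of them has probability $1$. The Case~(2a) hypothesis \eqref{sst} forces an infinite self--avoiding $1$--path to exist $\PP_p$--a.s., so $\{\mathcal N_\infty=0\}$ is excluded and it suffices to show that \eqref{fi1} is impossible. So assume \eqref{fi1}. Then a.s.\ there are finitely many (and at least one) infinite $1$--clusters $C_1,C_2,\dots$; write $\mathcal I:=\bigcup_i C_i$ (so $\overline{\mathcal I}=\bigcup_i\overline{C_i}$, a finite union). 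By Lemma~\ref{le7a} we may also fix an open set $\mathcal O\supseteq\mathcal A_1$ with properties (A)--(D); below I only use that $\mathcal O$ is open, that $\mathcal A_1\subseteq\mathcal O$, and property (C).

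\medskip
\noindent\textbf{Key claim: under \eqref{fi1}, a.s.\ $\overline{\mathcal I}\cap\mathcal A\subseteq\mathcal A_1$.}
To prove this, fix a countable base $\{B_j\}_{j\ge1}$ of $\mathbb S^2$ and set $E_j:=\{\mathcal I\cap B_j\text{ is infinite}\}$. The decisive point is that each $E_j$ is a \emph{tail} event: by \eqref{fi1} there are only finitely many infinite clusters; deleting a finite vertex set $F$ from $G$ breaks each of them into finitely many components (local finiteness), at most one per cluster being infinite, while adding $F$ enlarges $\mathcal I$ by only finitely many vertices; hence changing $\sigma$ on $F$ alters $\mathcal I$ by a finite symmetric difference, which does not affect whether $\mathcal I\cap B_j$ is infinite. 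Thus $\PP_p(E_j)\in\{0,1\}$. Let $\mathcal U$ be the union of those $B_j$ with $\PP_p(E_j)=0$, an open set. A.s.\ $\mathcal I\cap B_j$ is finite for every such $j$ at once (a countable intersection of a.s.\ events), so a.s.\ no accumulation point of $\mathcal I$ lies in $\mathcal U$; hence a.s.\ $\overline{\mathcal I}\cap\mathcal A\subseteq\mathcal A\setminus\mathcal U$. Conversely, if $a\in\mathcal A\setminus\mathcal U$ then every basic neighbourhood of $a$ has probability $1$ under the corresponding $E_j$, so running along a decreasing neighbourhood basis at $a$ gives $a\in\overline{\mathcal I}$ a.s.; since $\overline{\mathcal I}=\bigcup_i\overline{C_i}$, a.s.\ $a\in\overline{C_i}$ for some $i$, and the ``moreover'' clause of Lemma~\ref{le76} then yields $v\xleftrightarrow{1}a$ for $v\in C_i$, i.e.\ $\PP_p\big(\bigcup_v\{v\xleftrightarrow{1}a\}\big)=1$, i.e.\ $a\in\mathcal A_1$. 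So $\mathcal A\setminus\mathcal U\subseteq\mathcal A_1$ and the claim follows.

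\medskip
\noindent\textbf{Deriving the contradiction.}
The claim says a.s.\ $\overline{\mathcal I}\cap\mathcal A\subseteq\mathcal A_1$; but \eqref{sst} says that a.s.\ there is an end $a\notin\mathcal A_1$ and a vertex $v$ with $v\xleftrightarrow{1}a$, whence $a\in\overline{C_v}\subseteq\overline{\mathcal I}$ and $a\notin\mathcal A_1$ -- a contradiction. (If one prefers to stay closer to the set--up of Lemma~\ref{le7a}: since $\mathcal A_1\subseteq\mathcal O$ with $\mathcal O$ open and $\mathbb S^2\setminus\mathcal O$ compact, the claim forces $\mathcal I$ to have only finitely many vertices in $\mathbb S^2\setminus\mathcal O$ -- infinitely many would accumulate at a point of $\mathbb S^2\setminus\mathcal O$ lying in $\mathcal A$, contradicting $\overline{\mathcal I}\cap\mathcal A\subseteq\mathcal O$ -- so $\mathcal I\cap(V\setminus\mathcal O)$ is finite, and therefore every infinite $1$--cluster of $G\setminus\mathcal O$, being contained in some $C_i$, is in fact finite, contradicting (C).) Either way \eqref{fi1} cannot hold, and by the dichotomy of the first paragraph $\mathcal N_\infty=\infty$ a.s.

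\medskip
\noindent\textbf{Where the work is.}
The delicate step is the tail--measurability of $E_j=\{\mathcal I\cap B_j$ infinite$\}$, and it genuinely relies on \eqref{fi1}. Without finiteness of $\mathcal N_\infty$, opening one vertex might absorb a second infinite cluster into $\mathcal I$ and closing one vertex might shatter an infinite cluster into infinitely many new components -- either would destroy the tail property. Under \eqref{fi1} there is no second infinite cluster to absorb and local finiteness caps the number of components a finite deletion can produce, so the symmetric difference stays finite. The rest (the $0$--$1$ law, the countable--base bookkeeping, and the compactness step in the previous paragraph) is routine.
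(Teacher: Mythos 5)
Your argument is correct, and its main line is a genuinely different implementation of the same contradiction. The paper, after assuming $\PP_p(\mathcal N_\infty=\infty)=0$ and hence \eqref{fi1}, builds the open set $\mathcal O\supset\mathcal A_1$ of Lemma~\ref{le7a} (which rests on compactness of $\mathcal A_1$, Lemmas~\ref{le78} and~\ref{lm79}), passes to the truncated graph $G\setminus\mathcal O$ — where property (C) guarantees percolation and the fact that $\partial\mathcal O$ meets only finitely many vertices and edges transfers the finiteness of $\mathcal N_\infty$ — and then reapplies Lemma~\ref{le76} in that subgraph to produce an end outside $\mathcal O$ that is a.s.\ connected, contradicting Lemma~\ref{le7a}(B). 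You instead prove directly that under \eqref{fi1} every end in $\overline{\mathcal I}\cap\mathcal A$ lies in $\mathcal A_1$: for a countable base $\{B_j\}$ the events $E_j=\{\mathcal I\cap B_j\ \text{infinite}\}$ are tail, the $0$--$1$ law upgrades positive probability of accumulation at an end $a$ to probability one, and the ``moreover'' clause of Lemma~\ref{le76} converts $a\in\overline{C_i}$ into $a\in\mathcal A_1$ via \eqref{ccds}; this contradicts \eqref{sst} outright. Your route is leaner for this lemma — it needs neither the compactness of $\mathcal A_1$ nor the open-cover constructions behind Lemma~\ref{le7a}, only the deterministic part of Lemma~\ref{le76} — at the modest cost of the tail-measurability verification for $E_j$, which you carry out correctly. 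Two small remarks: the invocation of Lemma~\ref{le7a} at the start of your proof is superfluous to your main line (only the parenthetical alternative uses it); and the tail property of $E_j$ in fact holds without \eqref{fi1} (opening a finite set can only adjoin finitely many previously finite clusters, and closing a finite set splits each affected cluster into finitely many pieces by local finiteness, only finitely many clusters being affected at all), so your hedge — and the inessential claim that at most one piece per cluster is infinite, which can fail — could simply be dropped; neither point affects correctness.
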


\begin{proof}
Let $\mathcal{N}_{\infty}$ be the number of infinite 1-clusters. The event that $\{\mathcal{N}_{\infty}=\infty\}$ is tail-measurable; one has $\PP_p(\mathcal{N}_{\infty}=\infty)\in\{0,1\}$.

Suppose that the conclusion of the proposition does not hold, then $\PP_p(\mathcal{N}_{\infty}=\infty)=0$. Given that $p>\pcs(G)$, we have $\PP_p(1\leq \mathcal{N}_{\infty}<\infty)=1$. Since $p\in(0,1)$ by finite energy we have $\PP_p(\mathcal{N}_{\infty}=1)>0$.

By Lemma \ref{le78}, the set $\mathcal{A}_1$ is a compact metric space.
Let $\mathcal{O}$ be given as in Lemma \ref{le7a}. By Lemma \ref{le7a}(C), in the graph $G\setminus \mathcal{O}$, a.s. percolation occurs. By Lemma \ref{le7a}(D), $\partial \mathcal{O}\cap \mathcal{A}=\emptyset$, and $\partial{\mathcal{O}}$ is compact, it follows that $\partial \mathcal{O}$ intersects at most finitely many vertices and edges of $G$. Since in $G$ a.s.~there are finitely many infinite 1-clusters,
in $G\setminus \mathcal{O}$ a.s.~there are finitely many infinite 1-clusters. 

Then by Lemma \ref{le76} there exists an end $a\in \mathbb{S}^2\setminus \mathcal{O}$, such that
\begin{align*}
\PP_p(\cup_{v\in V\cap [G\setminus \mathcal{O}]}v\xleftrightarrow{[G\setminus\mathcal{O}]\cap \{\sigma=1\}}a)=1.
\end{align*}
Then 
\begin{align*}
\PP_p(\cup_{v\in V}v\xleftrightarrow{\{\sigma=1\}}a)=1.
\end{align*}
But this contradicts Lemma \ref{le7a}(B).
\end{proof}

\section{Counterexamples and Sharpness}\label{s8}

We conclude by showing that the coexistence picture cannot be extended without additional hypotheses.
First we construct, in the uncountable end--equivalence regime (Case~(2b) in Section~\ref{sec3}), an explicit infinite,
connected, locally finite planar graph for which the inequality $p^{\mathrm{site}}_u(G)\ge 1-p^{\mathrm{site}}_c(G)$
fails; equivalently, there exists $p\in(1/2,\,1-p^{\mathrm{site}}_c(G))$ for which only finitely many infinite
$1$--clusters occur $\PP_p$-a.s.
We then give a separate planar example that is not locally finite and in which Benjamini--Schramm type
coexistence predictions break down even more strongly, demonstrating that local finiteness is indispensable
for statements in Conjectures \ref{c7bs} and \ref{c8bs}.

\subsection{Case (2b) in Section~\ref{sec3}.}

 We shall construct an explicit example of an infinite, connected, locally finite graph $G$ for which $p_c^{site}(G)<\frac{1}{2}$ and $p_u^{site}<1-p_c^{site}(G)$.

\begin{construction}[Two--sided $M$--adic tree of strips with triangulated faces]\label{c75}
Let $d\ge 3$ and $M\geq2$ be integers. Set $B:=M^d$.
Let $T_B$ be the rooted $B$--ary tree: the root has degree $B$, and every other vertex
has degree $B+1$ (one parent and $B$ children). For $n\ge 0$ let $L_n$ denote the set of
vertices at depth $n$ (so $|L_n|=B^n=M^{dn}$).

\smallskip
\noindent\textbf{Step 1 (planar level embedding and order).}
Embed $T_B$ in the upper half plane so that all vertices of $L_n$ lie on a horizontal line $\ell_n$,
and so that for each vertex $v\in L_{n-1}$ its $B$ children appear consecutively on $\ell_n$; and all the accumulation points are along the real line.
Moreover, require that these consecutive child--blocks are ordered from left to right
according to the left--to--right order of the parents on $\ell_{n-1}$.
This induces a canonical left--to--right order on $L_n$; we label
\[
L_n=\{(n,i):1\le i\le M^{dn}\},
\]
where $(n,1),(n,2),\dots,(n,M^{dn})$ are listed from left to right along $\ell_n$.

\smallskip
\noindent\textbf{Step 2 ($M$--adic grouping).}
Write $\Sigma:=\{0,1,2,\ldots,M-1\}$ and for $n\ge 1$ set
\[
s_n:=M^{(d-1)n}.
\]
For a word $\mathbf b=(b_1,\dots,b_n)\in \Sigma^n$, define its base--$M$ address
\[
\addr(\mathbf b):=\sum_{t=1}^n b_t\,M^{\,n-t}\in\{0,1,\dots,M^n-1\},
\]
and define the corresponding \emph{group/block}
\[
Q_{\mathbf b}
:=\Bigl\{(n,j):\addr(\mathbf b)\,s_n+1\le j\le (\addr(\mathbf b)+1)\,s_n\Bigr\}\subseteq L_n.
\]
Then $(Q_{\mathbf b})_{\mathbf b\in\Sigma^n}$ partitions $L_n$ into $M^n$ contiguous blocks,
each of size $s_n$.

\smallskip
\noindent\textbf{(Compatibility with the tree structure).}
For every prefix $\mathbf b\in\Sigma^{n-1}$, the set of children (in $L_n$) of vertices in $Q_{\mathbf b}$
is exactly
\[
\bigcup_{r\in\Sigma} Q_{(\mathbf b,r)},
\]
and these $M$ blocks appear consecutively from left to right in increasing order of $r=0,1,\ldots,M-1$.

\smallskip
\noindent\textbf{Step 3 (horizontal strip edges and triangulation: first copy).}
Starting from the embedded tree, add \emph{horizontal} edges inside each group:
for every $n\ge 1$, every $\mathbf b\in\Sigma^n$, and every integer
\[
\addr(\mathbf b)\,s_n+1\le j\le (\addr(\mathbf b)+1)\,s_n-1,
\]
add the edge $\{(n,j),(n,j+1)\}$.
Let $G_{\mathrm{strip}}$ be the resulting embedded planar graph.

Triangulate every finite face $f$ of $G_{\mathrm{strip}}$ whose boundary cycle has length $\ge 4$
by a deterministic fan rule:
let $w(f)$ be an uppermost boundary vertex, and among those choose the \emph{leftmost} one.
Add all diagonals inside $f$ from $w(f)$ to every other boundary vertex of $f$ not already adjacent to $w(f)$.
This produces a triangulation of each finite face without crossings.
Denote the resulting graph by $\widetilde G$.

\smallskip
\noindent\textbf{Step 4 (doubling and vertical gluing).}
Let $\widetilde G^{(1)}$ and $\widetilde G^{(2)}$ be two disjoint copies of $\widetilde G$,
such that $\widetilde G^{(1)}$ has the same embedding as $\widetilde G$ in the upper half plane, and
 $\tilde{G}^{(2)}$ is the reflection of $\tilde{G}^{(1)}$ with resepect to the real axis.

For each $n\ge 1$ and each $1\le i\le M^n-1$, define the left and right endpoints of the $i$-th block at level $n$ by
\[
\ell_{n,i}:=(n,(i-1)s_n+1),
\qquad
r_{n,i}:=(n,is_n).
\]
Add \emph{vertical} edges joining the corresponding vertices across the two copies:
\[
\{\ell_{n,i}^{(1)},\ell_{n,i}^{(2)}\}
\quad\text{and}\quad
\{r_{n,i}^{(1)},r_{n,i}^{(2)}\}
\qquad (n\ge 1,\ 1\le i\le M^n-1).
\]
Let $G_0$ denote the resulting planar graph.

\smallskip
\noindent\textbf{Step 5 (final triangulation).}
Finally, triangulate every finite face of $G_0$ using the same fan rule as in Step~3. 
Let $G$ be the resulting graph.
\end{construction}

\begin{remark}\label{c75r}
The graph $G$ is infinite, connected, locally finite, and planar; moreover, every finite face of $G$ is a triangle.
In addition, the fan triangulation rule implies that each vertex at depth $n\ge1$ has at most two neighbours
at depth $n-1$ (its tree-parent and at most one further depth-$(n-1)$ neighbour created by a fan diagonal).
\end{remark}



\begin{theorem}\label{t76}There exist a positive integer $d\geq 3$, $M\geq 17$ such that in the graph $G$ constructed as in Construction \ref{c75},
\begin{enumerate}
\item $\PP_p$-a.s.~there are finitely many infinite 1-clusters for some $p\in \left(\frac{1}{2},1-\pcs(G)\right)$; and
\item $p_u^{site}<1-p_c^{site}(G)$.
\end{enumerate}
\end{theorem}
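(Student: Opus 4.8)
I would organise the proof around the fact that the whole statement reduces to one assertion about the $1$–phase, and then attack that assertion by a coarse–graining along the branching structure of Construction~\ref{c75}. First, $p_c^{site}(G)<\tfrac12$: the rooted $M^{d}$–ary tree $T_{M^{d}}$ from Step~1 of Construction~\ref{c75} is a subgraph of $G$, since Steps~2–5 only add edges (the horizontal strip edges, the fan diagonals, the doubling vertical edges) and pass to two copies, never deleting a vertex or subdividing an edge; moreover every vertex has its $M^{d}$ tree–children as neighbours, so $\delta(G)\ge M^{d}\ge 17^{3}$ and $G$ satisfies the hypothesis of Conjecture~\ref{c7bs}. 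As the site critical probability is non–increasing under adding edges, $p_c^{site}(G)\le p_c^{site}(T_{M^{d}})=M^{-d}<\tfrac12$, so $(\tfrac12,\,1-M^{-d})\subseteq(\tfrac12,\,1-p_c^{site}(G))$ is a nonempty subinterval of the claimed window. It therefore suffices to exhibit one $p\in(\tfrac12,\,1-M^{-d})$ with $\PP_p(\mathcal N_\infty<\infty)=1$: this is conclusion~(1), and conclusion~(2) follows because $p>\tfrac12\ge p_c^{site}(G)$ forces $\mathcal N_\infty\ge1$ a.s., so $\PP_p(1\le\mathcal N_\infty<\infty)=1$, whence finite energy on the connected graph $G$ gives $\PP_p(\mathcal N_\infty=1)>0$ and thus $p_u^{site}(G)\le p<1-p_c^{site}(G)$.

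For the coarse–graining I would fix $p$ close to $1$ (allowed because $1-M^{-d}$ is within $M^{-d}$ of $1$, so the window contains $[1-\eta,\,1-M^{-d})$ for a small constant $\eta\gg M^{-d}$) and work on the $M$–ary block tree $\mathcal T$ whose vertices are the groups $Q_{\mathbf b}$ of Step~2. A block $Q_{\mathbf b}$ at depth $n$ is carried in $G$ by two triangulated strips of $s_n=M^{(d-1)n}$ vertices, glued at their two endpoints by the Step~4 vertical edges, and every strip vertex owns $B=M^{d}$ independent children in the child strips. Call $Q_{\mathbf b}$ \emph{good} if the open vertices of its two strips, the two endpoint gluing edges, and the tree edges up to the parent strip and down to the $M$ child strips together realise an open connected ``frame'' $\Phi_{\mathbf b}$ linking the parent frame to every child frame, and if in addition each open run in either strip is attached to $\Phi_{\mathbf b}$ through a common parent vertex or through its $B$ children. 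The role of $M\ge17$, $d\ge3$ (i.e.\ $B=M^{d}$ large) is precisely here: a typical open run has length $\asymp 1/(1-p)<M^{d}$ in the window, so it carries $\asymp B$ child–runs and fails to attach to $\Phi_{\mathbf b}$ with probability at most $(\beta')^{B}$ for some $\beta'<1$; this makes $\PP_p(Q_{\mathbf b}\text{ not good})$ decay rapidly enough in $n$ that $\sum_{n}M^{n}\,\PP_p(\text{a depth-}n\text{ block is not good})<\infty$, and Borel–Cantelli gives that a.s.\ only finitely many blocks are not good.

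Deleting the (random, finite) family of non–good blocks together with all their ancestors removes a finite vertex set $F$; because the ancestors of a block form a path to the root, $F$ is carried by a finite subtree of $\mathcal T$, so $G\setminus F$ has only finitely many infinite components and every block meeting such a component is good. On each such component I would run an exploration of the frame from its top and use the two good–block clauses to prove, inductively downward, that this frame is the unique infinite $1$–cluster of the component while every other open vertex lies in a finite cluster. Consequently $\mathcal N_\infty(G)$ is bounded a.s.\ by a finite function of $|F|$, so $\PP_p(\mathcal N_\infty<\infty)=1$, which is conclusion~(1) (and hence, by the reduction above, conclusion~(2)).

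The hard part will be the inductive step: one must show that ``every block of a component is good'' really does force a single infinite $1$–cluster there, i.e.\ that the many open runs inside each long strip are organised—through the endpoint vertical–gluing edges and the common–parent links built into the construction—into one cluster, with all the quantitative estimates valid simultaneously at every depth while remaining in the regime $1-p>M^{-d}=p_c^{site}(G)$ (which is exactly the regime in which a length-$s_n$ strip already contains many closed vertices, so ``good'' cannot demand a full open crossing of a strip). Calibrating the good event to be simultaneously strong enough to carry the downward induction and likely enough for the Borel–Cantelli step, with the numerology $d\ge3$, $M\ge17$ making these two requirements compatible inside the window, is the crux; it is what the doubling, the contiguous $M$–adic blocking, and the choice $s_n=M^{(d-1)n}$ of strip lengths in Construction~\ref{c75} are engineered to make possible.
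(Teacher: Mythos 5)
Your reduction steps are fine and agree with the paper: $T_{M^d}\subset G$ gives $\pcs(G)\le M^{-d}<\tfrac12$, and part (2) follows from part (1) by finite energy exactly as in the paper. The problem is the core of your argument. The Borel--Cantelli step for the good blocks cannot work as stated: at any admissible $p$ one has $q:=1-p>M^{-d}$, a depth-$n$ block carries $2s_n=2M^{(d-1)n}$ vertices and hence of order $qM^{(d-1)n}\to\infty$ open runs, and any ``good'' event that demands \emph{every} open run attach to the frame fails, per run, with a probability bounded below by a constant independent of $n$ (e.g.\ a single open vertex whose strip-neighbours, parent and all $B$ children are closed occurs with a fixed positive probability per location). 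Your bound $(\beta')^{B}$ is a constant in $n$ while the number of (essentially independent) locations per block grows geometrically in $n$, so $\PP_p(\text{block not good})\to 1$ rather than decaying, $\sum_n M^n\,\PP_p(\text{depth-}n\text{ block not good})$ diverges, and a.s.\ infinitely many blocks are bad. On top of this, the step you yourself identify as the crux --- that goodness of all blocks of a component forces a unique infinite $1$-cluster there --- is not supplied, so even with a repaired block event the proposal is a plan, not a proof. A further warning: your freedom to take $q\le\eta$ with $\eta$ a constant $\gg M^{-d}$ is not innocuous; the corridors $G_{\mathbf b}$ have effective branching only $M^{d-1}$, so for $q$ of constant size their closed phase can be supercritical, and nothing in the construction suggests the conclusion of Theorem~\ref{t76}(1) at such $p$ --- the paper never claims it there.

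The paper's mechanism is genuinely different and is worth internalizing: it fixes $q=M^{-d}(1+\epsilon)$, i.e.\ $q$ squeezed into the gap $\pcs(G)<q<\inf_{\mathbf b}\pcs(\widehat G_{\mathbf b})$ established in Lemma~\ref{le85} (the corridor $0$-phase is subcritical at this $q$, via the block estimate of order $(4/M+9\epsilon/M)^n$). It then argues by planarity that two infinite $1$-clusters would force either (A) a $0$-connection between two far-apart boundary vertices inside one copy $\widetilde G^{(i)}$, which is excluded almost surely for all but finitely many pairs by the exponential decay of two-point $0$-connectivity inherited from the minimum-degree-$7$ triangulation (Lemma~\ref{lem:ed}, quoting Lemma 4.9 of the earlier work) together with Borel--Cantelli and the separation estimate of Lemma~\ref{lem:12b}; or (B) two infinite $0$-clusters inside a single doubled corridor minus a finite set, which is excluded by the BK inequality combined with the corridor subcriticality (Lemma~\ref{lem713}). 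Your proposal engages with neither ingredient --- neither the choice of $q$ just above $M^{-d}$, nor the dichotomy (A)/(B), nor the decay estimates that kill each case --- so it does not recover the paper's route, and its own two key steps (summable bad-block probabilities, and the uniqueness induction) are respectively false as stated and missing.
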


In the rest of this section we prove Theorem~\ref{t76}.

Let $\mathcal{B}$ be the collection of all infinite sequences where each digit is a number in $\Sigma$. We can see that
\begin{itemize}
\item Each class of equivalent ends in $G$ contains exactly one end; and
\item Under the Freudenthal embedding $\phi:|G|\hookrightarrow \mathbb S^2$, the map
$\Omega(G)\to \mathcal{A}(\phi)$ is a bijection; combined with the natural identification
$\Omega(G)\cong\mathcal B$, this yields a canonical parametrization
$\mathcal{A}(\phi)\cong\mathcal B$.
\end{itemize}

For each $\mathbf{b}=(b_1,b_2,\ldots)\in\mathcal{B}$, 
we construct a subgraph $G_{\mathbf{b}}$ of $\tilde{G}$ as follows.
\begin{itemize}
\item  For $n\geq 1$, let $G_{\mathbf{b},n}$ be the subgraph of $\tilde{G}$ induced by all the following vertices of $T_{M^d}$
\begin{itemize}
\item in depth 0; and 
\item For $1\leq i\leq n$, vertices in depth $i$ and in the group labeled by $(b_1,\ldots,b_i)$.
\end{itemize}
\item Define $G_{\mathbf{b}}=\cup_{n=1}^{\infty}G_{\mathbf{b},n}$
\end{itemize}

\begin{definition}[Left/right boundary of $G_{\mathbf b}$]\label{def:LRboundary}
Fix $\mathbf b=(b_1,b_2,\dots)\in\mathcal B=\Sigma^{\mathbb N}$.
For each $n\ge 0$ set
\[
A_n(\mathbf b):=\sum_{i=1}^n b_i\,M^{\,n-i}\in\left\{0,1,\dots,M^n-1\right\},
\qquad (A_0(\mathbf b):=0).
\]
Recall that the depth-$n$ vertices in the $\mathbf b$--group are exactly those labeled
$(n,j)$ with
\[
A_n(\mathbf b)\,M^{(d-1)n}+1\ \le\ j\ \le\ \bigl(A_n(\mathbf b)+1\bigr)\,M^{(d-1)n}.
\]
Define the \emph{left boundary vertex} and \emph{right boundary vertex} of $G_{\mathbf b}$
at level (depth) $n$ by
\[
\ell_n(\mathbf b):=\Bigl(n,\,A_n(\mathbf b)\,M^{(d-1)n}+1\Bigr),
\qquad
r_n(\mathbf b):=\Bigl(n,\,\bigl(A_n(\mathbf b)+1\bigr)\,M^{(d-1)n}\Bigr).
\]
We call
\[
\partial_{\mathrm L}G_{\mathbf b}:=\{\ell_n(\mathbf b):n\ge 0\},
\qquad
\partial_{\mathrm R}G_{\mathbf b}:=\{r_n(\mathbf b):n\ge 0\}
\]
the \emph{left} and \emph{right boundaries} of $G_{\mathbf b}$.
When we say ``$v$ is on the left (resp.\ right) boundary of $G_{\mathbf b}$ at level $t$'',
we mean $v=\ell_t(\mathbf b)$ (resp.\ $v=r_t(\mathbf b)$).
\end{definition}

\begin{lemma}\label{lem:12b}
Let $\mathbf b_1,\mathbf b_2\in\Sigma^{\mathbb N}$, where $\Sigma=\{0,1,\ldots,M-1\}$.
Let $t_1,t_2\ge0$ and set $t:=\min\{t_1,t_2\}$.
Assume that one of the following holds:
\begin{enumerate}
\item $v_1=\ell_{t_1}(\mathbf b_1)$ and $v_2=r_{t_2}(\mathbf b_2)$; or
\item $v_1=\ell_{t_1}(\mathbf b_1)$ and $v_2=\ell_{t_2}(\mathbf b_2)$, and the first $t$ digits of
$\mathbf b_1$ and $\mathbf b_2$ are not equal; or
\item $v_1=r_{t_1}(\mathbf b_1)$ and $v_2=r_{t_2}(\mathbf b_2)$, and the first $t$ digits of
$\mathbf b_1$ and $\mathbf b_2$ are not equal.
\end{enumerate}
If $d\geq 3$ and $M\geq 2$, then
\begin{equation}\label{eq:12b}
d_{\widetilde G}(v_1,v_2)\ \ge\ \frac{1}{2}\,(t_1+t_2).
\end{equation}
\end{lemma}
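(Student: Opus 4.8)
The plan is to equip $\widetilde G$ with two coordinates: the \emph{level} $\mathrm{lev}(v)=n$ for $v\in L_n$, and the \emph{normalized horizontal position} $h((n,j)):=(j-1)M^{-dn}\in[0,1)$. The lower bound on $d_{\widetilde G}(v_1,v_2)$ will come from combining a vertical estimate, counting level changes, with a horizontal estimate, counting changes of $h$; the two are glued together by the rigidity of the $M$-adic blocks.

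First I would record three edgewise facts, each proved by inspecting the three edge types of $\widetilde G$ (tree edges of $T_{M^d}$, horizontal strip edges, and fan diagonals). (i) Every edge changes $\mathrm{lev}$ by at most $1$; the only nonobvious point is that a fan diagonal joins two consecutive levels, which holds because every finite face of $G_{\mathrm{strip}}$ is a triangle $\{p,(n,j),(n,j{+}1)\}$ (common parent $p$) or a quadrilateral $\{p,(n,j),(n,j{+}1),p'\}$ spanning levels $n{-}1,n$, so its diagonal stays between these levels. (ii) Along any edge $\{u,w\}$, $|h(u)-h(w)|\le M^{-d\min(\mathrm{lev}(u),\mathrm{lev}(w))}$, since the $M^d$ children of a level-$(n{-}1)$ vertex lie in an $h$-interval of length $M^{-d(n-1)}$. (iii) For $k\le\min(\mathrm{lev}(u),\mathrm{lev}(w))$ the block $\mathrm{grp}_k(\cdot)$ containing the level-$k$ tree-ancestor (equivalently, the first $k$ digits of the address word) is the same for $u$ and $w$; hence if a path stays at level $\ge k$ throughout, $\mathrm{grp}_k$ is constant along it. I would also record the identities $h(\ell_n(\mathbf b))=\sum_{i=1}^n b_iM^{-i}$ and $h(r_n(\mathbf b))=\sum_{i=1}^n b_iM^{-i}+M^{-n}-M^{-dn}$, the block statement that the cone below $\mathrm{grp}_{i_0}(\ell_n(\mathbf b))$ occupies the $h$-interval $[A_{i_0}(\mathbf b)M^{-i_0},(A_{i_0}(\mathbf b)+1)M^{-i_0})$ (so if $\mathbf b_1,\mathbf b_2$ first disagree at digit $i_0$ then the two cones are disjoint intervals of length $M^{-i_0}$), and the recursion $p_{k-1}=b_kM^{(d-1)(k-1)-1}+\lceil p_k/M^d\rceil$ (with $p_n=1$) for the position $p_k$ of the level-$k$ ancestor of $\ell_n(\mathbf b)$ inside its block.

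Next I would run the main dichotomy. We may assume $t_1\le t_2$; if $t_2\ge 3t_1$ then $d_{\widetilde G}(v_1,v_2)\ge t_2-t_1\ge\frac12(t_1+t_2)$ from the net level change, so assume $t_1\le t_2<3t_1$. Fix a geodesic $\gamma$, let $\ell_{\min}$ be its minimal level, realized first at $y_1$ and last at $y_2$, and write $\gamma=P_1P_2P_3$. Then $|P_1|\ge t_1-\ell_{\min}$, $|P_3|\ge t_2-\ell_{\min}$, and by fact (iii) $\mathrm{grp}_{\ell_{\min}}$ is constant along $\gamma$, so $b_i^{(1)}=b_i^{(2)}$ for $i\le\ell_{\min}$ and $y_1,y_2$ lie on the common horizontal block $Q_{\mathbf c}$ with $\mathbf c=(b_1,\dots,b_{\ell_{\min}})$; moreover $y_1,y_2$ descend into distinct $M$-adic sub-blocks of $Q_{\mathbf c}$, so $|P_2|\ge\Delta:=|\mathrm{pos}_{Q_{\mathbf c}}(y_1)-\mathrm{pos}_{Q_{\mathbf c}}(y_2)|\ge 1$, and also $|P_2|\ge|h(y_1)-h(y_2)|\,M^{d\ell_{\min}}$. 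If $\ell_{\min}\le\frac14(t_1+t_2)$ then $|P_1|+|P_3|\ge\frac12(t_1+t_2)$ and we are done. Otherwise $\ell_{\min}>\frac14(t_1+t_2)$; fact (iii) forces $\ell_{\min}\le i_0-1$ in Cases~(2) and (3) (and $b_1^{(1)}=b_1^{(2)}$ in Case~(1)), so $i_0$ is comparable to $\min(t_1,t_2)$, and it remains to supply enough extra length, through $\Delta$ or through wandering in $P_1,P_3$, to make $t_1+t_2-2\ell_{\min}+\Delta\ge\frac12(t_1+t_2)$.

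The hard part will be exactly this ``high-floor'' regime. One must show that a geodesic whose length is near $t_1+t_2-2(i_0-1)$ has to run essentially monotonically up from $v_1$ and from $v_2$ to a common vertex near $Q_{\mathbf c}$; by the $p_k$-recursion this is possible only when the digit tails $b_{i_0+1}^{(\cdot)},b_{i_0+2}^{(\cdot)},\dots$ are at the extreme values $0$ or $M-1$, and then the attendant ``deficit'' recursion $\mathrm{deficit}_{k-1}=\lfloor\mathrm{deficit}_k/M^d\rfloor$ forces $t_j\gtrsim d(i_0-1)$; since $d\ge 3$ this gives $t_1+t_2\ge d(i_0-1)+i_0\ge 4(i_0-1)$, exactly the inequality needed. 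In every other configuration the missing length is extracted from a genuinely long horizontal traversal along $Q_{\mathbf c}$, using $|P_2|\ge|h(y_1)-h(y_2)|M^{d\ell_{\min}}$ together with the disjointness of the two cones' $h$-intervals, or from forced extra length in $P_1,P_3$. A few small values of $t_1,t_2$ (handled using that $d_{\widetilde G}(v_1,v_2)\in\mathbb{Z}$) and the $d=3$ boundary of these estimates need separate, routine checking. I expect the rigidity step — pinning down that near-optimal geodesics force extreme digit tails and hence $t_j\gtrsim d\,\ell_{\min}$ — to be the main obstacle; the rest is bookkeeping with the two coordinates.
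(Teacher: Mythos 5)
Your setup (levels, the normalized horizontal coordinate, the edgewise Lipschitz facts, and the dichotomy over the minimal depth $m$ of a geodesic versus $\tfrac14(t_1+t_2)$) coincides with the skeleton of the paper's proof: the paper projects the geodesic to the level-$m$ horizontal paths via the ancestor map, notes that each edge moves the projection by at most one, and concludes immediately when $m\le\tfrac14(t_1+t_2)$ from the vertical count $t_1+t_2-2m$. Up to that point your plan is correct and essentially identical.

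The gap is that you never actually prove the lemma in the regime you yourself single out as ``the hard part,'' namely $m>\tfrac14(t_1+t_2)$. Your proposed route there --- showing that a near-minimal-length geodesic must ascend almost monotonically, that this rigidity forces the digit tails to be extreme ($0$ or $M-1$) via a ``deficit recursion,'' and that this in turn forces $t_j\gtrsim d\,(i_0-1)$ --- is only asserted, and it does not obviously go through. In particular, in Case (1) there is no hypothesis that $\mathbf b_1$ and $\mathbf b_2$ disagree, so the first-disagreement index $i_0$ on which your whole high-floor bookkeeping rests need not exist (take $\mathbf b_1=\mathbf b_2$, $v_1=\ell_{t_1}(\mathbf b)$, $v_2=r_{t_2}(\mathbf b)$); this is exactly the configuration where the bound is tightest (e.g.\ $d=3$, $t_2$ close to $3t_1$, the distance exceeds $\tfrac12(t_1+t_2)$ by less than one unit), so it cannot be dismissed as routine boundary checking. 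The paper closes this case by a different and much shorter mechanism, its block-compression estimate: the depth-$t_i$ block of $v_i$, of size $M^{(d-1)t_i}$, projects at level $m$ onto an interval of length at least $M^{dm-t_i}$, and since $v_1,v_2$ occupy extremal positions of their blocks, their level-$m$ projections are forced apart by an amount that, for $d\ge3$ and $m>\tfrac14(t_1+t_2)$, already exceeds $\tfrac12(t_1+t_2)$; no rigidity analysis of geodesics or of digit tails is needed. Your own horizontal estimate $|P_2|\ge|h(y_1)-h(y_2)|\,M^{d\ell_{\min}}$ could in principle be turned into such a bound (after controlling the $h$-drift of $P_1,P_3$ and treating Case (1) by the left/right-endpoint structure rather than by $i_0$), but as written the decisive quantitative step is missing, so the proposal does not yet constitute a proof.
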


\begin{proof}
Write $B:=M^d$.
By the planar level embedding in Construction~\ref{c75}, the parent of a depth-$n$ vertex
$(n,i)$ in the tree $T_B$ is
\begin{align*}
\mathrm{par}(n,i):=(n-1,\ \lceil i/B\rceil).
\end{align*}
For $0\le m\le n$ define the depth-$m$ ancestor map $\pi_m:L_n\to L_m$ by iterating the parent map:
\[
\pi_m(n,i):=(m,\ \lceil i/B^{\,n-m}\rceil).
\]

\smallskip
\noindent\textbf{Step 1: A coarse Lipschitz property of $\pi_m$.}
Fix $m\ge0$. We claim that for any edge $xy$ of $\widetilde G$ with
$\min\{\mathrm{depth}(x),\mathrm{depth}(y)\}\ge m$ one has
\begin{equation}\label{eq:Lip}
d_{H_m}\bigl(\pi_m(x),\pi_m(y)\bigr)\le 1,
\end{equation}
where $H_m$ denotes the disjoint union of the horizontal paths on level $m$ inside each
depth-$m$ block (so $d_{H_m}$ is the path metric inside the unique block containing the
points; it is $+\infty$ if they are in different blocks).

Indeed, every edge of $\widetilde G$ belongs to one of the following types:
\begin{itemize}
\item a tree edge between a parent and a child (then $\pi_m$ is unchanged);
\item a horizontal edge $(n,j)(n,j+1)$ inside a depth-$n$ block (then dividing by $B^{n-m}$
changes $\lceil\cdot\rceil$ by at most $1$);
\item a diagonal introduced by the fan triangulation of a quadrilateral face between
levels $n-1$ and $n$ (then the two endpoints differ by $O(B)$ in the level-$n$ index,
hence their depth-$m$ ancestors differ by at most $1$ as well).
\end{itemize}
Thus \eqref{eq:Lip} follows.

Consequently, for any path $\gamma=(x_0,x_1,\dots,x_L)$ in $\widetilde G$ with
$\min_i \mathrm{depth}(x_i)\ge m$,
the sequence $(\pi_m(x_i))_{i=0}^L$ is a nearest-neighbour walk in $H_m$ and hence
\begin{equation}\label{eq:pathproj}
L\ \ge\ d_{H_m}\bigl(\pi_m(x_0),\pi_m(x_L)\bigr).
\end{equation}

\smallskip
\noindent\textbf{Step 2: A block-compression estimate.}
Let $n\ge m\ge0$ and let $I=[a,a+s_n]$ be an interval of indices of length $s_n=M^{(d-1)n}$.
Since each depth-$m$ vertex corresponds to a consecutive block of $B^{n-m}$ vertices on level $n$,
the set $\pi_m\bigl(\{(n,i):i\in I\}\bigr)$ is a consecutive interval in $L_m$ of length at least
\begin{equation}\label{eq:compress}
\left\lceil \frac{s_n}{B^{n-m}}\right\rceil
=\left\lceil M^{dm-n}\right\rceil.
\end{equation}

\smallskip
\noindent\textbf{Step 3: Apply to boundary vertices and optimize.}
Let $v_1,v_2$ be as in (1)--(3).
Fix a shortest path $\gamma$ from $v_1$ to $v_2$ in $\widetilde G$, and let
\[
m:=\min\{\mathrm{depth}(x): x\in \gamma\}.
\]
Then $\gamma$ is contained in the induced subgraph on vertices of depth at least $m$,
so \eqref{eq:pathproj} applies (with this $m$) and yields
\begin{equation}\label{eq:lower1}
d_{\widetilde G}(v_1,v_2)\ \ge\ d_{H_m}\bigl(\pi_m(v_1),\pi_m(v_2)\bigr).
\end{equation}

On the other hand, since every edge of $\widetilde G$ changes depth by at most $1$,
any path from a depth-$t_1$ vertex to a depth-$t_2$ vertex that attains depth $m$
must have length at least
\begin{equation}\label{eq:lower2}
d_{\widetilde G}(v_1,v_2)\ \ge\ (t_1-m)+(t_2-m)=t_1+t_2-2m.
\end{equation}

Now choose
\[
m_0:=\frac{t_1+t_2}{4}.
\]
If $m\le m_0$, then \eqref{eq:lower2} gives
\[
d_{\widetilde G}(v_1,v_2)\ge t_1+t_2-2m\ \ge\ t_1+t_2-2m_0
\ \ge\ \frac{t_1+t_2}{2}.
\]

If $m>m_0$, then in each of the three cases (1)--(3), the two boundary vertices $v_1,v_2$
lie in distinct extremal positions inside the relevant depth-$\min\{t_1,t_2\}$ blocks,
and the projection interval estimate \eqref{eq:compress} implies that their depth-$m$ ancestors
are separated in $H_m$ by at least $M^{dm-\min\{t_1,t_2\}}-1\ge M^{d(m_0+1)-\min\{t_1,t_2\}}-1$.
In particular, when $d\geq 3$ the right-hand side 
\begin{align*}
M^{d(m_0+1)-\min\{t_1,t_2\}}-1\geq M^{d+\frac{t_1+t_2}{2}}-1\geq \frac{t_1+t_2}{2};
\end{align*}
So \eqref{eq:lower1} yields
\[
d_{\widetilde G}(v_1,v_2)\ \ge\ d_{H_m}\bigl(\pi_m(v_1),\pi_m(v_2)\bigr)\ \ge\ \frac{t_1+t_2}{2}.
\]
Combining the two cases proves \eqref{eq:12b}.
\end{proof}

For each $n\geq 1$ and $(b_1,\ldots,b_{n})\in\{0,1,\ldots,M-1\}^{n}$ with
\begin{align*}
Q_{(b_1,
\ldots,b_{n})}:=\{\mathrm{all\ the\ depth\ } n\ \mathrm{vertices\ in\ the\ group\ labeled\ by}\ (b_1,\ldots,b_{n})\}
\end{align*}

Let $G_{\mathbf{b}}^{(1)}$ (resp.\ $G_{\mathbf{b}}^{(2)}$) be the copy of $G_{\mathbf{b}}$ in $\tilde{G}^{(1)}$ (resp.\ $\tilde{G}^{(2)}$).


For any $\mathbf b\in\mathcal B$, let
\[
\widehat G_{\mathbf b}:=
G\Big[\,V(G_{\mathbf b}^{(1)})\cup V(G_{\mathbf b}^{(2)})\,\Big]
\]
be the induced subgraph of $G$ on the vertex set of
$G_{\mathbf b}^{(1)}\cup G_{\mathbf b}^{(2)}$.
It is straightforward to see that $\widehat G_{\mathbf b}$ is one-ended.

Similarly, define 
\[
\widehat G_{\mathbf b,n}:=
G\Big[\,V(G_{\mathbf b,n}^{(1)})\cup V(G_{\mathbf b,n}^{(2)})\,\Big]
\]

\begin{lemma}[Uniform exponential decay across a corridor]\label{lem:ed}
Fix $\epsilon>0$ and set $q:=M^{-d}(1+\epsilon)$.
\begin{itemize}
\item There exists a constant $\alpha_{d,\epsilon}>0$ (independent of $\mathbf b_1,\mathbf{b}_2$) such that
for every pair of vertices $v_1,v_2$ satisfying one of conditions (1)-(3) in Lemma \ref{lem:12b},
\begin{equation}\label{ed}
\PP_{q}\!\bigl(v_2\xleftrightarrow[\tilde{G}]{1} v_2\bigr)
\ \le\ \exp\!\bigl(-\alpha_{d,\epsilon}\, d_{\tilde{G}}(u,v)\bigr).
\end{equation}
Moreover, for every fixed $\epsilon>0$,
\begin{equation}\label{ed1}
\alpha_{d,\epsilon}\ \xrightarrow[d\to\infty]{}\ \infty.
\end{equation}
\item If $u$ and $v$ are vertices in $G_{\mathbf{b}}$ with depth at least $k$,
\begin{equation}\label{ed2}
\PP_{q}\!\bigl(\partial_V u\xleftrightarrow[G_{\mathbf b}\setminus G_{\mathbf b,k-1}]{1}\partial_V v\bigr)
\ \le\ \exp\!\bigl(-\beta_{d,\epsilon}\, [d_{G_{\mathbf b}\setminus G_{\mathbf b,k-1}}(u,v)-2]\bigr).
\end{equation}
Moreover, $\beta_{d,\epsilon}$ is independent of $k$. For every fixed $\epsilon>0$,
\begin{equation}\label{ed3}
\beta_{d,\epsilon}\ \xrightarrow[d\to\infty]{}\ \infty.
\end{equation}
\end{itemize}
\end{lemma}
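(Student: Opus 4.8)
The plan is to prove both displays by the elementary Peierls (first-moment) bound, reducing everything to one combinatorial estimate on the number of self-avoiding paths between two \emph{prescribed} endpoints. For disjoint $A,B\subset V(H)$,
\[
\PP_{q}\bigl(A\xleftrightarrow[H]{1}B\bigr)\ \le\ \sum_{\gamma}q^{\,|\gamma|},
\]
the sum over self-avoiding paths $\gamma$ in $H$ from $A$ to $B$ ($|\gamma|$ = number of vertices). The subtlety is that $q=M^{-d}(1+\epsilon)$ is \emph{super}critical for $\widetilde G$ (it contains the $M^{d}$-ary tree, so $p_c^{\mathrm{site}}(\widetilde G)\le M^{-d}<q$, and $\sum_\gamma q^{|\gamma|}$ over paths from a \emph{single} vertex diverges). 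The sum over paths between two \emph{fixed} vertices is nevertheless controlled by a much smaller ``two-point connective constant'', because such a path must be essentially a tree geodesic decorated with bounded-complexity excursions. Writing $N_\ell(A,B;H)$ for the number of length-$\ell$ self-avoiding paths in $H$ from $A$ to $B$ and $D:=d_H(A,B)$, the goal is
\[
N_\ell(A,B;H)\ \le\ C_1\,C_2^{\,D}\,\bigl(C_3\,M^{d/2}\bigr)^{\,\ell-D}\qquad(C_1,C_2,C_3\text{ absolute}),
\]
uniformly in $d,M$, in $\mathbf b$, and in $k$.

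To obtain this I would decompose such a path by its depth profile (which changes by $\le1$ per step), separating the geodesic ``skeleton'' (up from the deeper endpoint to a minimal depth $m$, a few horizontal steps at level $m$, then down to the other endpoint) from the excursions away from it. Three local features of Construction \ref{c75} drive the count: (i) by Remark \ref{c75r} each vertex has $\le2$ neighbours one level up, so every upward step has $\le2$ continuations; (ii) a downward step aimed at a fixed target has a unique correct tree-child (the target's ancestor), and the fan diagonals --- which by the leftmost/uppermost rule only join a vertex to a child of its horizontal neighbour --- add $O(1)$ continuations per step; (iii) the $M^{d}$ tree-children of a fertile vertex form a single \emph{path}, so wandering among them is one-dimensional, and a wrong turn into a subtree not containing the target must be undone by a horizontal hop plus an upward return, never re-using the (already visited) entrance. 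Hence an excursion of length $L$ has $\le L/2$ downward steps (balanced by the upward ones returning it to its start depth), each costing $\le M^{d}+O(1)$ choices and every other step a bounded factor; summing over the placements and sizes of the excursions (a composition of $\ell-D$) gives the displayed bound on $N_\ell$. For \eqref{ed2} the same decomposition runs verbatim inside $G_{\mathbf b}\setminus G_{\mathbf b,k-1}$: deleting the top $k-1$ levels does not change the local structure the argument uses (tree edges, within-group horizontal paths, fan diagonals, $\le2$ up-neighbours), so $C_1,C_2,C_3$ --- hence the rate below --- are independent of $k$ and of $\mathbf b$.

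Granting this, the conclusions follow by summing a geometric series. For \eqref{ed}--\eqref{ed1}, with $H=\widetilde G$, $A=\{v_1\}$, $B=\{v_2\}$ for $v_1,v_2$ as in (1)--(3) of Lemma \ref{lem:12b} and $D=d_{\widetilde G}(v_1,v_2)$,
\[
\PP_{q}\bigl(v_1\xleftrightarrow[\widetilde G]{1}v_2\bigr)\ \le\ \sum_{\ell\ge D}C_1C_2^{\,D}\bigl(C_3M^{d/2}q\bigr)^{\ell-D}q^{D}\ =\ \frac{C_1\,(C_2q)^{D}}{1-C_3M^{-d/2}(1+\epsilon)},
\]
the series converging once $M^{d/2}>C_3(1+\epsilon)$ --- automatic for the (large) $d,M$ of interest and ever more so as $d$ or $M$ grows. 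This is \eqref{ed} with $\alpha_{d,\epsilon}:=\tfrac12\log\tfrac1q=\tfrac12\bigl(d\log M-\log(1+\epsilon)\bigr)$ (the prefactor absorbed into the exponent once $D$ exceeds a fixed threshold, the finitely many smaller $D$ being trivial since the probability is $\le1$), whence $\alpha_{d,\epsilon}\to\infty$ as $d\to\infty$, i.e.\ \eqref{ed1}. For \eqref{ed2}--\eqref{ed3}, with $H=G_{\mathbf b}\setminus G_{\mathbf b,k-1}$, $A=\partial_Vu$, $B=\partial_Vv$: any $\partial_Vu$-to-$\partial_Vv$ connection is a self-avoiding path of length $\ge d_{G_{\mathbf b}\setminus G_{\mathbf b,k-1}}(u,v)-2$ (triangle inequality, one vertex of slack on each side), so the same summation yields \eqref{ed2} with $\beta_{d,\epsilon}:=\tfrac12\log\tfrac1q$, manifestly independent of $k$ and $\mathbf b$ and satisfying \eqref{ed3}.

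The main obstacle is the uniform two-point count $N_\ell\le C_1C_2^{D}(C_3M^{d/2})^{\ell-D}$: one must organise the excursion bookkeeping (including excursions within excursions) so that the $M^{d}$-branching at fertile hubs is cashed in at a rate of at most one such factor per two excess steps, and verify that $C_1,C_2,C_3$ carry no hidden dependence on $d,M,\mathbf b,k$ --- the key leverage being self-avoidance forbidding re-use of a subtree's entrance, and the hub's children forming a path. If this explicit count proves unwieldy, a softer route for \eqref{ed2} is to check that $q<p_c^{\mathrm{site}}(G_{\mathbf b}\setminus G_{\mathbf b,k-1})$ uniformly in $k,\mathbf b$ and invoke sharpness of the site-percolation transition (e.g.\ the $\varphi$-functional characterisation of $p_c^{\mathrm{site}}$ in \cite{ZL24}); this yields exponential decay with \emph{some} positive rate, but the divergence \eqref{ed3} --- and \eqref{ed1}, for which no such softening exists since $q$ is supercritical on $\widetilde G$ --- genuinely requires the quantitative $M^{d/2}$ excursion bound above.
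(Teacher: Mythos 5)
Your proposal hinges entirely on the two-point path count $N_\ell(A,B;H)\le C_1C_2^{\,D}(C_3M^{d/2})^{\,\ell-D}$, and this is exactly the step that is not proved: your closing paragraph concedes it is the ``main obstacle'' and offers only heuristics. The gap is not cosmetic. The crude depth-profile count (down-steps have $\le M^d+O(1)$ continuations, up/horizontal steps $O(1)$, and down-steps exceed up-steps by the fixed depth difference $\delta$) only gives $N_\ell\le C^\ell (M^d)^{(\ell+\delta)/2}$, and after multiplying by $q^\ell$ the prefactor $(M^d)^{\delta/2}$ destroys the decay precisely for the admissible pairs of Lemma~\ref{lem:12b} in which $|\,t_1-t_2|$ is comparable to $d_{\tilde G}(v_1,v_2)$ (e.g.\ $v_1$ shallow, $v_2$ deep on another corridor boundary). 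So everything rests on the refined claim that ``aimed'' descents carry only $O(1)$ entropy, i.e.\ essentially a uniform bound on the number of neighbours of an arbitrary vertex that strictly decrease the distance to the target, together with bookkeeping that charges each genuinely free $M^d$-branching to two excess steps even for paths with nested excursions, wrong turns followed by long horizontal detours at deeper levels, and crossings of group boundaries where horizontal edges are absent. That is a nontrivial geometric lemma about $\widetilde G$ (comparable in difficulty to the proof of Lemma~\ref{lem:12b} itself), and neither it nor its uniformity in $d,M,\mathbf b,k$ is established. In addition, for \eqref{ed2} your ``one vertex of slack on each side'' fixes the path length but not the entropy: the sums start from the sets $\partial_Vu,\partial_Vv$, each of size up to $M^d+O(1)$, and this extra factor $M^{2d}$ must be absorbed (it costs several units of distance and makes the stated exponent $d_{G_{\mathbf b}\setminus G_{\mathbf b,k-1}}(u,v)-2$ fail for small distances unless one argues, as one can but you do not, that only $O(1)$ boundary pairs are relevant there).

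For comparison, the paper avoids path counting altogether. It observes that $\widetilde G$ is properly embeddable with minimum degree at least $7$ and applies Lemma~4.9 of \cite{ZL231}: along a geodesic from $u$ to $v$ there are $\lfloor d_{\tilde G}(u,v)/9\rfloor$ disjoint blocking structures (chandeliers or infinite faces), each of which, independently, contains a blocking $0$-path with probability $h(q)\ge\bigl[\tfrac{1-2q}{1-q}\bigr]^2(1-q)^3$, so the two-point function is at most $(1-h(q))^{\lfloor d_{\tilde G}(u,v)/9\rfloor}$; since $q=M^{-d}(1+\epsilon)\to0$ as $d\to\infty$, $h(q)\to1$ and \eqref{ed1} follows, and \eqref{ed2}--\eqref{ed3} are obtained by enlarging $G_{\mathbf b}\setminus G_{\mathbf b,k-1}$ to a properly embedded planar graph of minimum degree $7$ (attaching $6$-ary trees to leaves) and repeating the argument, which also gives the $k$- and $\mathbf b$-uniformity for free. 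Your observation that no subcritical ``sharpness'' shortcut can give \eqref{ed1} (since $q$ is supercritical on $\widetilde G$) is correct, but as it stands your counting route is a plan, not a proof.
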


\begin{proof}We only prove (\ref{ed}) and (\ref{ed1}) here; (\ref{ed2}) and (\ref{ed3}) follow by similar argument.

Note that $\tilde{G}$ is an infinite, connected, locally finite graph that can be properly embedded into the plane (no finite accumulation points) with minimal vertex degree at least 7. 

Now apply Lemma~4.10 of \cite{ZL231} to $\tilde{G}$ to obtain \eqref{ed}
with a constant $\alpha_{d,\epsilon}$ depending only on $(d,\epsilon)$.

The divergence \eqref{ed1} follows by tracing the dependence in the proof of Lemma 4.10 in \cite{ZL231} as $d\to\infty$.
More precisely, let $l_{u,v}$ be the shortest path joining $u$ and $v$ in $G_{\mathbf{b}}$. Then there are 
\begin{align}
\left\lfloor\frac{d_{\tilde G}(u,v)}{9}\right\rfloor\label{cnt}
\end{align}
disjoint doubly infinite binary trees (chandeliers), or singly infinite tree union infinite faces, or two infinite faces along two sides of $l_{u,v}$ (see the proof of Lemma 4.10 in \cite{ZL231} for the choice of constant $\frac{1}{9}$). Then if $u$ and $v$ are in the same 1-cluster in $\tilde{G}$, none of these 
$\left\lfloor\frac{d_{\tilde{G}}(u,v)}{9}\right\rfloor$ structures have
\begin{enumerate}
\item a doubly infinite 0-path; if the structure is a doubly infinite tree
\item a singly infinite 0-path; if the structure is a singly infinite tree union an infinite face.
\end{enumerate}

For $1\leq i\leq \left\lfloor\frac{d_{\tilde{G}}(u,v)}{9}\right\rfloor$, let $E_i$ be the event that (1) or (2) occur in the $i$th structure. It is known that in the i.i.d.~Bernoulli($q$) site percolation in $T_2$, the probability that the root is in an infinite path is $\frac{2q-1}{q}$ when $q>\frac{1}{2}$. 

Then we have 
\begin{align*}
\PP_p(E_i)\geq \left[\frac{1-2p}{1-p}\right]^2(1-p)^3:=h(p);
\end{align*}
See Definition 4.1 of \cite{ZL231} for the construction of each doubly infinite tree (chandelier). then one can choose
\begin{align*}
e^{-\alpha_{d,\epsilon}}=(1-h(p))^{\frac{1}{9}}
\end{align*}
For each fixed $\epsilon>0$, when $d\rightarrow\infty$, $p\rightarrow 0$ and $h(p)\rightarrow 1$, then (\ref{ed1}) follows.

To prove (\ref{ed2}) and (\ref{ed3}),
note that $G_{\mathbf{b}}\setminus G_{\mathbf{b},k-1}$ is an infinite, connected, locally finite graph that can be properly embedded into the plane. Enlarge $G_{\mathbf b}\setminus G_{\mathbf b,k-1}$ to a properly embedded planar graph $\overline{G_{\mathbf b}\setminus G_{\mathbf b,k-1}}$
with minimum degree at least $7$ by attaching a $6$-ary tree to each leaf of the tree core
$[G_{\mathbf b}\setminus G_{\mathbf b,k-1}]\cap T_{5^d}$. Following similar argument as in the proof of (\ref{ed}) and (\ref{ed1}), and Lemma 4.9 in \cite{ZL231}, we obtain
\begin{align*}
&\PP_{q}\!\bigl(\partial_V u\xleftrightarrow[G_{\mathbf b}\setminus G_{\mathbf b,k-1}]{1}\partial_V v\bigr)\leq 
\PP_{q}\!\bigl(\partial_V u\xleftrightarrow[\overline{G_{\mathbf b}\setminus G_{\mathbf b,k-1}}]{1}\partial_V v\bigr)
\\
&\ \le\ \exp\!\bigl(-\beta_{d,\epsilon}\, [d_{\overline{G_{\mathbf b}\setminus G_{\mathbf b,k-1}}}(u,v)-2]\bigr)= \exp\!\bigl(-\beta_{d,\epsilon}\, [d_{G_{\mathbf b}\setminus G_{\mathbf b,k-1}}(u,v)-2]\bigr).
\end{align*}
Then (\ref{ed2}) and (\ref{ed3}) follows.
\end{proof}

\begin{lemma}[OR--projection from two copies]\label{lem:OR-proj}
Fix $\mathbf b\in\Sigma^{\mathbb N}$ and $k\ge 0$.  Consider i.i.d.\ Bernoulli$(p)$ site percolation
on the doubled corridor $\widehat G_{\mathbf b}$.

For every vertex $x\in V(G_{\mathbf b})$, let $x^{(1)},x^{(2)}$ be its two copies in $\widehat G_{\mathbf b}$, and define
a projected configuration $\omega^\oplus\in\{0,1\}^{V(G_{\mathbf b})}$ by
\[
\omega^\oplus(x):=\max\{\omega(x^{(1)}),\omega(x^{(2)})\}.
\]
Then $\omega^\oplus$ is i.i.d.\ Bernoulli$(p^\oplus)$ on $V(G_{\mathbf b})$, where
\[
p^\oplus:=1-(1-p)^2=2p-p^2.
\]

Moreover, for every vertex $v\in V(G_{\mathbf b})$ and every finite set $W\subseteq V(G_{\mathbf b})$,
\begin{equation}\label{eq:OR-dom}
\PP_{p}\!\Bigl(
v^{(1)} \xleftrightarrow[\widehat G_{\mathbf b}\setminus \widehat G_{\mathbf b,k-1}]{} (W^{(1)}\cup W^{(2)})
\Bigr)
\ \le\
\PP_{p^\oplus}\!\Bigl(
v \xleftrightarrow[G_{\mathbf b}\setminus G_{\mathbf b,k-1}]{} W
\Bigr),
\end{equation}
\end{lemma}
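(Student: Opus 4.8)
The plan is to prove the two assertions in order, the first being purely distributional and the second a path--folding argument. For the distributional claim: the two families $\{x^{(1)}:x\in V(G_{\mathbf b})\}$ and $\{x^{(2)}:x\in V(G_{\mathbf b})\}$ are disjoint subsets of $V(\widehat G_{\mathbf b})$, so under $\PP_p$ the variables $\bigl(\omega(x^{(i)})\bigr)_{x,i}$ are i.i.d.\ Bernoulli$(p)$; the value $\omega^\oplus(x)=\max\{\omega(x^{(1)}),\omega(x^{(2)})\}$ depends only on the pair $(\omega(x^{(1)}),\omega(x^{(2)}))$, distinct vertices use disjoint pairs, and $\PP_p(\omega^\oplus(x)=1)=1-(1-p)^2=p^\oplus$. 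Hence $\omega^\oplus$ is i.i.d.\ Bernoulli$(p^\oplus)$ on $V(G_{\mathbf b})$.

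For \eqref{eq:OR-dom} I would fold the doubled corridor onto the single one via $\pi\colon V(\widehat G_{\mathbf b})\to V(G_{\mathbf b})$, $x^{(i)}\mapsto x$. The structural input is the observation (in the same spirit as the already--noted fact that $\widehat G_{\mathbf b}$ is one--ended):
\[
\text{(F)}\qquad\text{for every edge }\{u,w\}\text{ of }\widehat G_{\mathbf b}:\qquad \pi(u)=\pi(w)\ \ \text{or}\ \ \{\pi(u),\pi(w)\}\in E(G_{\mathbf b}).
\]
One checks (F) by running through the edge types of $\widehat G_{\mathbf b}$ produced by Construction~\ref{c75}: an edge internal to a single copy $G_{\mathbf b}^{(i)}$ projects to an edge of $G_{\mathbf b}$ since $G_{\mathbf b}$ is an \emph{induced} subgraph of $\widetilde G$; a vertical gluing edge $\{\ell_n(\mathbf b)^{(1)},\ell_n(\mathbf b)^{(2)}\}$ or $\{r_n(\mathbf b)^{(1)},r_n(\mathbf b)^{(2)}\}$ projects to a trivial loop; and the only remaining case is a chord added by the Step~5 fan triangulation of a face $f$ of $G_0$ with apex $w(f)\in V(G_{\mathbf b}^{(1)})\cup V(G_{\mathbf b}^{(2)})$, whose endpoints then lie on two consecutive gluing edges bordering $f$ and for which a local look at the corridor shows $\{\pi(u),\pi(w)\}$ is a tree edge of $G_{\mathbf b}$.

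Granting (F), the comparison is routine. On the event on the left of \eqref{eq:OR-dom} choose an $\omega$--open path $z_0=v^{(1)},z_1,\dots,z_m$ in $\widehat G_{\mathbf b}\setminus\widehat G_{\mathbf b,k-1}$ with $z_m\in W^{(1)}\cup W^{(2)}$, and set $y_j:=\pi(z_j)$. Then $y_0=v$ and $y_m\in W$; since $\pi$ carries $V(\widehat G_{\mathbf b,k-1})$ onto $V(G_{\mathbf b,k-1})$, each $y_j$ avoids $V(G_{\mathbf b,k-1})$; writing $z_j=y_j^{(i)}$ with $\omega(z_j)=1$ forces $\omega^\oplus(y_j)=1$; and by (F) consecutive $y_j,y_{j+1}$ are equal or adjacent in $G_{\mathbf b}$. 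Deleting consecutive repetitions turns $(y_j)$ into an $\omega^\oplus$--open path in $G_{\mathbf b}\setminus G_{\mathbf b,k-1}$ from $v$ to $W$. Thus the left--hand event is contained in the event that $\omega^\oplus$ connects $v$ to $W$ inside $G_{\mathbf b}\setminus G_{\mathbf b,k-1}$; taking $\PP_p$--probability and using the distributional identity above gives
\[
\PP_{p}\!\Bigl(v^{(1)} \xleftrightarrow[\widehat G_{\mathbf b}\setminus \widehat G_{\mathbf b,k-1}]{}\ \bigl(W^{(1)}\cup W^{(2)}\bigr)\Bigr)\ \le\ \PP_{p^\oplus}\!\Bigl(v \xleftrightarrow[G_{\mathbf b}\setminus G_{\mathbf b,k-1}]{}\ W\Bigr),
\]
which is \eqref{eq:OR-dom}.

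The main obstacle is property (F): ruling out ``long'' fan--triangulation chords that are internal to the doubled corridor, i.e.\ checking that every non--triangular face of $G_0$ with at least two boundary vertices in $V(G_{\mathbf b}^{(1)})\cup V(G_{\mathbf b}^{(2)})$ either has its apex outside this set (hence contributes no edge to $\widehat G_{\mathbf b}$) or is a quadrilateral bounded by two tree edges and two gluing edges (hence its single chord projects to a tree edge), using that the non--triangular faces of $G_0$ are precisely those incident to the vertical gluing edges. If some chord should violate (F), one works throughout with the enlargement of $G_{\mathbf b}$ by the (locally finitely many) projected chords, which is still an infinite, connected, locally finite, properly embeddable planar corridor, so that Lemma~\ref{lem:ed} and everything downstream apply to it unchanged and the argument above goes through verbatim.
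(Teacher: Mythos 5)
Your argument is correct and essentially the paper's own: the paper likewise folds the doubled corridor onto a single copy (via the quotient graph $G_{\mathbf b}^{\mathrm{quot}}$ obtained by identifying $x^{(1)}\sim x^{(2)}$), notes that the projected configuration is i.i.d.\ Bernoulli$(p^\oplus)$, projects open paths, and then asserts in \eqref{eqOR2} that connectivity in the quotient agrees with connectivity in $G_{\mathbf b}$ --- which is precisely your property (F), there dismissed as ``straightforward to verify.'' Your explicit edge-type check of (F) is simply a more careful rendering of that same step; just note that your fallback (enlarging $G_{\mathbf b}$ by projected chords) would establish a variant of the bound rather than \eqref{eq:OR-dom} as stated, so it should remain a safety net rather than the main line.
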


\begin{proof}
$G_{\mathbf b}^{\mathrm{quot}}$ denotes the quotient graph obtained from
$\widehat G_{\mathbf b}$ by identifying $x^{(1)}\sim x^{(2)}$ for every $x\in V(G_{\mathbf b})$
(and keeping all induced edges).
The law of $\omega^\oplus$ is immediate since for each $x$ the pair
$(\omega(x^{(1)}),\omega(x^{(2)}))$ consists of two independent Bernoulli$(p)$ variables.

For \eqref{eq:OR-dom}, suppose the left-hand event occurs and let $\gamma$ be an open path in
$\widehat G_{\mathbf b}\setminus \widehat G_{\mathbf b,k-1}$ from $v^{(1)}$ to a vertex in $W^{(1)}\cup W^{(2)}$.
Project $\gamma$ to the quotient graph $G_{\mathbf b}^{\mathrm{quot}}$ by identifying each vertex
$x^{(i)}$ with $x$. Every vertex of the projected walk is $\omega^\oplus$--open (since the original
vertex used by $\gamma$ was open in its copy), and every edge of $\gamma$ projects to an edge of
$G_{\mathbf b}^{\mathrm{quot}}$ by definition of the quotient.
Hence the projection yields an $\omega^\oplus$--open path in
$G_{\mathbf b}^{\mathrm{quot}}\setminus G_{\mathbf b,k-1}^{\mathrm{quot}}$ from $v$ to $W$.
Taking probabilities gives \begin{equation}\label{eqOR1}
\PP_{p}\!\Bigl(
v^{(1)} \xleftrightarrow[\widehat G_{\mathbf b}\setminus \widehat G_{\mathbf b,k-1}]{} (W^{(1)}\cup W^{(2)})
\Bigr)
\ \le\
\PP_{p^\oplus}\!\Bigl(
v \xleftrightarrow[G_{\mathbf b}^{\mathrm{quot}}\setminus G_{\mathbf b,k-1}^{\mathrm{quot}}]{} W
\Bigr).
\end{equation}
It is the straight forward to verify that the i.i.d.~Bernoulli($p$) site percolation has the same distribution on $G_{\mathbf b}\setminus G_{\mathbf b,k-1}$ and on $G_{\mathbf b}^{\mathrm{quot}}\setminus G_{\mathbf b,k-1}^{\mathrm{quot}}$.
Therefore 
\begin{align}
\PP_{p^\oplus}\!\Bigl(
v \xleftrightarrow[G_{\mathbf b}\setminus G_{\mathbf b,k-1}]{} W\Bigr)=\PP_{p^\oplus}\!\Bigl(
v \xleftrightarrow[G_{\mathbf b}^{\mathrm{quot}}\setminus G_{\mathbf b,k-1}^{\mathrm{quot}}]{} W \Bigr)\label{eqOR2}.
\end{align}
Then (\ref{eq:OR-dom}) follows from (\ref{eqOR1}) and (\ref{eqOR2}).
\end{proof}

\begin{lemma}\label{le85}
\noindent
(i) (\emph{Critical inequalities.}) 
\[
p_c^{\mathrm{site}}(G)\ \le\ p_c^{\mathrm{site}}(\widetilde G)\ \le\ M^{-d};\qquad p_c^{\mathrm{site}}(\widehat G_{\mathbf b})
\ \le\ p_c^{\mathrm{site}}(G_{\mathbf b})
\ .
\]

\smallskip
\noindent
(ii) (\emph{Uniform connectivity decay to deep cutsets.})

Fix $M\geq 5$, $\epsilon\in\bigl(0,\tfrac{M-4}{9}\bigr)$. There exists $D=D(M,\epsilon)\in\mathbb N$ such that for all $d\geq D$,
for the graph $G$ in Construction~\ref{c75}, the following hold.
Let
\begin{align}
p_1:=M^{-d}(1+2\epsilon)\,.\label{dp1}
\end{align}
Then for every fixed depth-$k$ vertex $v$ in $G$ there exists $N=N(d,k)$ such that for all $n\ge N$,
\begin{align}\label{ps2}
&\max_{\mathbf s\in\{0,1,\ldots,M-1\}^{n+k}}
\PP_{p_1}\!\Bigl(
v\xleftrightarrow{\widehat G_{\mathbf b}\setminus \widehat G_{\mathbf b,k-1}}
\bigl[Q_{\mathbf s}^{(1)}\cup Q_{\mathbf s}^{(2)}\bigr]
\Bigr)
\ \le\ \Bigl(\frac{4}{M}+\frac{9\epsilon}{M}\Bigr)^n.
\end{align}
Here $\mathbf b$ denotes any infinite extension of the word $\mathbf s$. It follows that for every $\mathbf b\in\mathcal B$,
\begin{align*}
p_c^{\mathrm{site}}(G)\ \le\ p_c^{\mathrm{site}}(\widetilde G)\ \le\ M^{-d}< M^{-d}(1+\epsilon)\ <\ p_c^{\mathrm{site}}(\widehat G_{\mathbf b})
\ \le\ p_c^{\mathrm{site}}(G_{\mathbf b})
\end{align*}
\end{lemma}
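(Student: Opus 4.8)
\medskip
\noindent\textbf{Proof proposal.}
The approach is to dispatch the monotonicity bounds in (i) by elementary subgraph comparisons, to prove the decay estimate in (ii) by recognising $Q_{\mathbf s}^{(1)}\cup Q_{\mathbf s}^{(2)}$ as a full level of $\widehat G_{\mathbf b}$ and then reducing to a single corridor via the OR--projection and running a level--by--level cutset recursion built on the $M$--adic bottleneck, and finally to read the displayed chain of critical inequalities off the decay estimate together with (i). For (i), note that $\widetilde G^{(1)}$ is a subgraph of $G$ and $G_{\mathbf b}^{(1)}$ a subgraph of $\widehat G_{\mathbf b}$, each on a subset of the vertex set (all their edges already appear in $G_0$, hence in $G$, by Construction~\ref{c75}); under the natural monotone coupling an open cluster of a subgraph lies inside an open cluster of the ambient graph, so the existence of an infinite cluster passes upward, giving $p_c^{\mathrm{site}}(G)\le p_c^{\mathrm{site}}(\widetilde G)$ and $p_c^{\mathrm{site}}(\widehat G_{\mathbf b})\le p_c^{\mathrm{site}}(G_{\mathbf b})$. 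Moreover the rooted $B$--ary tree $T_B$ with $B=M^d$ is a spanning subgraph of $\widetilde G$ (its tree core), and $p_c^{\mathrm{site}}(T_B)=1/B=M^{-d}$, since the root's open cluster is a Galton--Watson process with offspring distribution $\mathrm{Bin}(B,p)$, supercritical exactly for $p>1/B$, and adjoining edges only lowers $p_c$; hence $p_c^{\mathrm{site}}(\widetilde G)\le M^{-d}$.

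For the decay estimate in (ii), fix $\mathbf s\in\Sigma^{n+k}$ together with its chosen infinite extension $\mathbf b$; if $v\notin V(G_{\mathbf b})$ the probability is $0$, so assume $v$ is a depth-$k$ vertex of $G_{\mathbf b}$. By the $M$--adic grouping the depth-$(n+k)$ level of $G_{\mathbf b}$ is exactly the block $Q_{(b_1,\dots,b_{n+k})}=Q_{\mathbf s}$, so $Q_{\mathbf s}^{(1)}\cup Q_{\mathbf s}^{(2)}$ is the \emph{entire} depth-$(n+k)$ level of $\widehat G_{\mathbf b}$, and the event is that the open cluster of $v$ explored inside $\widehat G_{\mathbf b}$, barred from depths $<k$, reaches depth $n+k$. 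I would then apply the OR--projection of Lemma~\ref{lem:OR-proj} to replace $\widehat G_{\mathbf b}$ at level $p_1$ by the single corridor $G_{\mathbf b}$ at level $p^{\oplus}:=1-(1-p_1)^2\le 2p_1=M^{-d}(2+4\epsilon)$; since $p^{\oplus}\to0$ as $d\to\infty$, for $d$ large the chandelier estimates behind Lemma~\ref{lem:ed} apply at level $p^{\oplus}$ (only $p^{\oplus}<\tfrac12$ is needed).

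It then remains to bound $\PP_{p^{\oplus}}\bigl(v\xleftrightarrow{G_{\mathbf b}\setminus G_{\mathbf b,k-1}}Q_{\mathbf s}\bigr)$ by $\bigl(\tfrac4M+\tfrac{9\epsilon}M\bigr)^n$. The structural input is that the set $\mathrm{Ch}_j$ of depth-$j$ vertices of $G_{\mathbf b}$ having a child in $G_{\mathbf b}$ is a single contiguous sub-block of size $s_j/M=M^{(d-1)j-1}$ inside the depth-$j$ level, and that (with the fan diagonals, which by Remark~\ref{c75r} give every depth-$(j{+}1)$ vertex at most two depth-$j$ neighbours) a bounded thickening of $\mathrm{Ch}_j$ separates depths $\le j$ from depths $\ge j+1$. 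Iterating the cutset inequality Lemma~\ref{l73} with the slabs $S=\{\text{depths in }[k,j]\}$, peeling one level at a time, gives
\[
\PP_{p^{\oplus}}\!\bigl(v\xleftrightarrow{G_{\mathbf b}\setminus G_{\mathbf b,k-1}}Q_{\mathbf s}\bigr)\ \le\ \prod_{j=k}^{n+k-1}\Theta_j,
\]
where $\Theta_j$ is the supremum over the depth-$j$ entry vertex of the probability of reaching $\mathrm{Ch}_j$ inside depth $j$ and then stepping down once. For the bound $\Theta_j\le\tfrac4M+\tfrac{9\epsilon}M$ at deep $j$: the one--level descent contributes $\le M^d p^{\oplus}\le 2+4\epsilon$ (expected number of open children); and the fact that $\mathrm{Ch}_j$ is only a $1/M$--fraction of the level, combined with the exponential decay in horizontal distance at level $p^{\oplus}<\tfrac12$ (Lemma~\ref{lem:ed} with the distance lower bounds of Lemma~\ref{lem:12b}, whose chandelier count supplies the constant $\tfrac19$) and the geometric control of the one--dimensional horizontal clusters, supplies the remaining $\tfrac1M$, the $O(1)$ fan--diagonal corrections being absorbed into the inflation from $8\epsilon$ to $9\epsilon$ once $d\ge D(M,\epsilon)$; the trivial bound $\Theta_j\le1$ covers the finitely many ($O_{d,k}(1)$) shallow levels, which is why the claim is asserted only for $n\ge N(d,k)$. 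Since $\epsilon<\tfrac{M-4}{9}$ is exactly $4+9\epsilon<M$, the product is $\le\bigl(\tfrac4M+\tfrac{9\epsilon}M\bigr)^n$ with base $<1$.

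Finally, as $\widehat G_{\mathbf b}$ is one--ended, an infinite open cluster through a fixed depth-$k$ vertex $v$ must reach depth $n+k$ for every $n$, so $\PP_{p_1}(v\leftrightarrow\infty\text{ in }\widehat G_{\mathbf b})\le\bigl(\tfrac4M+\tfrac{9\epsilon}M\bigr)^n\to0$; since $v$ is arbitrary, $\PP_{p_1}$--a.s.\ $\widehat G_{\mathbf b}$ has no infinite cluster, so $p_c^{\mathrm{site}}(\widehat G_{\mathbf b})\ge p_1=M^{-d}(1+2\epsilon)>M^{-d}(1+\epsilon)$, and together with the first paragraph this yields the displayed chain $p_c^{\mathrm{site}}(G)\le p_c^{\mathrm{site}}(\widetilde G)\le M^{-d}<M^{-d}(1+\epsilon)<p_c^{\mathrm{site}}(\widehat G_{\mathbf b})\le p_c^{\mathrm{site}}(G_{\mathbf b})$. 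I expect the main obstacle to be the per--level estimate $\Theta_j\le\tfrac4M+\tfrac{9\epsilon}M$: one must control, uniformly in $j$, in $\mathbf b$ and in the entry vertex, the complicated level--to--level correlated random set the cluster cuts out of each depth level—especially the (subcritical, hence geometrically short but nonzero) horizontal excursions and the fan--diagonal shortcuts between consecutive levels—so that the $1/M$ bottleneck from the $M$--adic grouping genuinely survives with only the stated $O(\epsilon)$ loss; the OR--projection, the reduction of $Q_{\mathbf s}$ to a full level, the subgraph monotonicities, and the passage from decay to a lower bound on $p_c$ are all routine.
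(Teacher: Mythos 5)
Your overall architecture matches the paper's: part (i) by subgraph monotonicity plus $T_{M^d}\subset\widetilde G$, the reduction of $\widehat G_{\mathbf b}$ at $p_1$ to the single corridor $G_{\mathbf b}$ at $p^{\oplus}=p_1(2-p_1)$ via Lemma~\ref{lem:OR-proj}, the identification of $Q_{\mathbf s}^{(1)}\cup Q_{\mathbf s}^{(2)}$ with a full depth level, a level-by-level decay with per-level factor $\approx\tfrac{4+9\epsilon}{M}$, and the final passage from \eqref{ps2} to $p_c^{\mathrm{site}}(\widehat G_{\mathbf b})\ge p_1>M^{-d}(1+\epsilon)$ are all the paper's steps (for the last one, note that \eqref{ps2} only controls connections staying at depths $\ge k$, so one should pass through the tail of an infinite self-avoiding path to find a vertex with an infinite open path staying at depths at least its own; this is routine and the paper is equally terse). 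The genuine gap is exactly the step you flag as the main obstacle, and it is worse than ``unproven'': with $\Theta_j$ defined as a \emph{supremum over the depth-$j$ entry vertex}, the bound $\Theta_j\le\tfrac4M+\tfrac{9\epsilon}M$ is false. The children of $\mathrm{Ch}_{j-1}$ are all of $Q_{\mathbf s_j}$, so nothing prevents the entry vertex from lying inside $\mathrm{Ch}_j$; in that case ``reaching $\mathrm{Ch}_j$ inside depth $j$'' is automatic and ``stepping down once'' has probability $1-(1-p^{\oplus})^{M^{d}}\to 1-e^{-(2+4\epsilon)}$, a constant (about $0.87$) that is far larger than $(4+9\epsilon)/M$ for $M\ge 17$. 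The statement that $\mathrm{Ch}_j$ is a $1/M$-fraction of the level is a cardinality statement and does not convert into a probability bound uniform in the entry vertex, so a recursion built on this supremum cannot produce a base smaller than a constant.

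The paper closes this step differently: the factor $1/M$ per level enters as an entropy (counting) factor, not a pointwise probability. After the OR-projection it applies the BK inequality at the last visited vertex of each corridor block $Q_{\mathbf s_{k+j}}$, so the per-level contribution is (at most two upward neighbours, Remark~\ref{c75r}) times the \emph{sum over the block} of the same-level two-point function, which is $\le p_2\bigl(1+\tfrac{18p_2}{1-e^{-\beta_{d,\epsilon}}}\bigr)$ by the corridor decay \eqref{ed2}, while the number of admissible relay vertices grows only by $M^{d-1}$ per level because the corridor retains one of the $M$ child sub-blocks. Since $p_2\le 2M^{-d}(1+2\epsilon)$, this yields roughly $\tfrac{4+8\epsilon}{M}$ per level, and the correction factors (the $\beta_{d,\epsilon}$ term and the leftover $M^{(d-1)k}$ prefactor) are absorbed by taking $d\ge D(M,\epsilon)$ and $n\ge N(d,k)$ -- the role your $\epsilon$-inflation and $N(d,k)$ were meant to play. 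To repair your argument you would have to either adopt this relay-sum form (summing a two-point function over the small corridor block at each level, e.g.\ via BK rather than the slab version of Lemma~\ref{l73}), or genuinely track the location/distribution of the entry vertex across levels; as written, the base inequality of your recursion fails and the product bound does not follow.
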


\begin{proof}
Part (i) follows from monotonicity of $p_c^{\mathrm{site}}$ under taking super/subgraphs,
together with $T_{M^d}\subset \widetilde G$ and the explicit value $p_c^{\mathrm{site}}(T_{M^d})=M^{-d}$.

For (ii), let
\begin{align}
p_2:=p_1^\oplus=p_1(2-p_1)\label{dp2}
\end{align}
Fix $v$ of depth $k$. By BK inequality, we have
\begin{align*}
&\PP_{p_2}\!\Bigl(
v\xleftrightarrow{G_{\mathbf b}\setminus G_{\mathbf b,k-1}}
Q_{\mathbf s}\bigr]
\Bigr)\\
&\ \le\
\sum_{v_1\in Q_{\mathbf s_{k}}}\mathbb{P}_{p_2}(v\xleftrightarrow{G_{\mathbf b}\setminus G_{\mathbf b,k-1}}v_1)
\PP_{p_2}\!\Bigl(
[\partial_Vv_1]\cap \big[ G_{\mathbf b}\setminus G_{\mathbf b,k}\big]\xleftrightarrow{ G_{\mathbf b}\setminus G_{\mathbf b,k}}
Q_{\mathbf s}
\Bigr)
\\
&\leq \sum_{v_1\in Q_{\mathbf s_{k}}}\mathbb{P}_{p_2}(v\xleftrightarrow{ G_{\mathbf b}\setminus G_{\mathbf b,k-1}}v_1)
\sum_{v_2\in [\partial_Vv_1]\cap \big[ G_{\mathbf b}\setminus G_{\mathbf b,k}\big]}\PP_{p_2}\!\Bigl(v_2
\xleftrightarrow{ G_{\mathbf b}\setminus G_{\mathbf b,k}}
Q_{\mathbf s}
\Bigr)\\
&\leq \sum_{v_1\in Q_{\mathbf s_{k}}}\mathbb{P}_{p_2}(v\xleftrightarrow{ G_{\mathbf b}\setminus G_{\mathbf b,k-1}}v_1)
\sum_{v_2\in [\partial_Vv_1]\cap \big[ G_{\mathbf b}\setminus G_{\mathbf b,k}\big]}\sum_{v_3\in Q_{\mathbf{s}_{k+1}}}\PP_{p_2}(v_2\xleftrightarrow{G_{\mathbf{b}}\setminus G_{\mathbf{b},k}} v_3)
\\
&\times\PP_{p_2}\!\Bigl(\partial_V v_3
\xleftrightarrow{ G_{\mathbf b}\setminus G_{\mathbf b,k+1}}
Q_{\mathbf s}
\Bigr)
\end{align*}
where 
\begin{itemize}
\item For $1\leq i\leq n$, $\mathbf{s}_{k+i}$ is the truncated sequence of $\mathbf{s}$ consisting of the first $k+i$ digits of $\mathbf{s}$;
\end{itemize}

The process above can be continued until we find $v_{2n-1}\in Q_{\mathbf{s}_{k+n-1}}$. Then we obtain
\begin{align*}
&\PP_{p_2}\!\Bigl(
v\xleftrightarrow{ G_{\mathbf b}\setminus G_{\mathbf b,k-1}}
Q_{\mathbf s}
\Bigr)\\
&\leq \sum_{v_1\in Q_{\mathbf s_{k}}}\mathbb{P}_{p_2}(v\xleftrightarrow{G_{\mathbf b}\setminus G_{\mathbf b,k-1}}v_1)
\sum_{v_2\in [\partial_Vv_1]\cap \big[ G_{\mathbf b}\setminus G_{\mathbf b,k}\big]}\sum_{v_3\in Q_{\mathbf{s}_{k+1}}}\PP_{p_2}(v_2\xleftrightarrow{G_{\mathbf{b}}\setminus G_{\mathbf{b},k}} v_3)
\\
&\sum_{v_4\in [\partial_Vv_3]\cap \big[ G_{\mathbf b}\setminus G_{\mathbf b,k+1}\big]}\sum_{v_5\in Q_{\mathbf{s}_{k+2}}}\PP_{p_2}(v_{4}\xleftrightarrow{G_{\mathbf{b}}\setminus G_{\mathbf{b},k+1}} v_5)\\
&\ldots\sum_{v_{2n-2}\in [\partial_Vv_{2n-3}]\cap \big[G_{\mathbf b}\setminus G_{\mathbf b,k+n-2}\big]}\sum_{v_{2n-1}\in Q_{\mathbf{s}_{k+n-1}}}\PP_{p_2}(v_{2n-2}\xleftrightarrow{G_{\mathbf{b}}\setminus G_{\mathbf{b},k+n-2}} v_{2n-1})
\\&\times\PP_{p_2}\!\Bigl(\partial_V v_{2n-1}
\xleftrightarrow{ G_{\mathbf b}\setminus G_{\mathbf b,k+n-1}}Q_{\mathbf s}
\Bigr)
\end{align*}

We can change the order of the summation and obtain

\begin{align*}
&\PP_{p_2}\!\Bigl(
v\xleftrightarrow{ G_{\mathbf b}\setminus G_{\mathbf b,k-1}}
Q_{\mathbf s}
\Bigr)\\
&\leq \sum_{v_{2n-1}\in Q_{\mathbf{s}_{k+n-1}}}\PP_{p_2}\!\Bigl(\partial_V v_{2n-1}
\xleftrightarrow{ G_{\mathbf b}\setminus G_{\mathbf b,k+n-1}}Q_{\mathbf s}
\Bigr)\sum_{v_{2n-2}\in Q_{\mathbf{s}_{k+n-1}}}\PP_{p_2}(v_{2n-2}\xleftrightarrow{G_{\mathbf{b}}\setminus G_{\mathbf{b},{k+n-2}}} v_{2n-1})\\
&\sum_{\{v_{2n-3}\in Q_{\mathbf{s}_{k+n-2}}\cap \partial_V v_{2n-2}\}}
\sum_{\{v_{2n-4}\in Q_{\mathbf{s}_{k+n-2}}\}}
\PP_{p_2}(v_{2n-3}\xleftrightarrow{G_{\mathbf{b}}\setminus G_{\mathbf{b},{k+n-3}}} v_{2n-4})\\
&\ldots \sum_{\{v_{3}\in Q_{\mathbf{s}_{k+1}}\cap \partial_V v_{4}\}}
\sum_{\{v_{2}\in Q_{\mathbf{s}_{k+1}}\}}
\PP_{p_2}(v_{3}\xleftrightarrow{G_{\mathbf{b}}\setminus G_{\mathbf{b},{k+n-3}}} v_{2})\sum_{v_1\in [\partial_V v_2]\cap Q_{\mathbf{s}_k}}\mathbb{P}_{p_2}(v\xleftrightarrow{G_{\mathbf b}\setminus G_{\mathbf b,k-1}}v_1)
\end{align*}

By (\ref{cnt}) we infer that 
\begin{align*}
&\sum_{v_{2j}\in Q_{\mathbf{s}_{k+j}}}\PP_{p_2}(v_{2j}\xleftrightarrow{G_{\mathbf{b}}\setminus G_{\mathbf{b},k-1+j}} v_{2j+1})
\leq p_2+2\cdot 9p_2^2\left[1+e^{-
\beta_{d,\epsilon}}+e^{-2
\beta_{d,\epsilon}}+\ldots\right]\\
&\leq p_2\left(1+\frac{18p_2}{1-e^{-\beta_{d,\epsilon}}}\right)
\end{align*}
Moreover for each fixed $v_{2j}\in Q_{\mathbf{s}_{k+j}}$
\begin{align*}
|\partial_V v_{2j}\cap Q_{\mathbf{s}_{k+j-1}}|\leq 2
\end{align*}
and
\begin{align*}
|\{v_{2n-1}\in  [Q_{\mathbf{s}_{k+n-1}}]:\PP_{p_1}\!\Bigl(\partial_V v_{2n-1}
\xleftrightarrow{ G_{\mathbf b}\setminus G_{\mathbf b,k+n-1}}
Q_{\mathbf s}
\Bigr)>0\}|\leq M^{(d-1)(k+n-1)-1}
\end{align*}
It follows that 
\begin{align*}
\PP_{p_2}\!\Bigl(
v\xleftrightarrow{ G_{\mathbf b}\setminus G_{\mathbf b,k-1}}
Q_{\mathbf s}
\Bigr)\leq 2^n\left(M^{(d-1)(1+\frac{k}{n})}\right)^np_2^n\left(1+\frac{18p_2}{1-e^{-\beta_{d,\epsilon}}}\right)^n
\end{align*}

By (\ref{dp1}) (\ref{dp2}) this becomes
\[\PP_{p_1}\!\Bigl(
v\xleftrightarrow{ \widehat{G}_{\mathbf b}\setminus \widehat{G}_{\mathbf b,k-1}}
\bigl[Q_{\mathbf s}^{(1)}\cup Q_{\mathbf s}^{(2)}\bigr]
\Bigr)
\le\ \Bigl(\frac{4}{M}+\frac{9\epsilon}{M}\Bigr)^n.
\]
when $d$ is sufficiently large and $n\geq (d-1)^2 k$. Then the lemma follows.

\end{proof}

\begin{lemma}[Elimination of Case (B)]\label{lem713}
Fix $p\in(1/2,\,1-p_c^{\mathrm{site}}(G))$ and write $q:=1-p$.
Assume $q<p_1$ where $p_1$ is as in Lemma~\ref{le85}, so that by monotonicity
\eqref{ps2} holds with $p_1$ replaced by $q$.
Let $M\geq 17$. Set
\[
\vartheta:=\tfrac4M+\epsilon.
\]
Assume the convergence condition
\begin{equation}\label{eq:theta-cond}
M\,\vartheta^2<1
\qquad\Bigl(\text{equivalently }\ \epsilon<\frac{\sqrt{M}-4}{M}\Bigr).
\end{equation}

Let $\mathsf{B}$ be the event that there exist $\mathbf b\in\mathcal B$ and a finite set
$K\subset V(\widehat G_{\mathbf b})$ such that $\widehat G_{\mathbf b}\setminus K$ contains at least
two infinite $0$-clusters (i.e.\ two infinite open clusters for Bernoulli$(q)$ site percolation).
Then
\[
\PP_p(\mathsf{B})=0.
\]
\end{lemma}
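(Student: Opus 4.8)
The plan is to show that in each doubled corridor $\widehat G_{\mathbf b}$, the Bernoulli$(q)$ site percolation a.s.\ has at most one infinite open cluster even after deleting any finite set; since $\mathcal B$ is uncountable, a single union bound will not suffice, so instead I will exploit the fact that the event $\mathsf B$ is in fact measurable with respect to the whole configuration on $G$ and will derive a contradiction from a \emph{uniform} decay estimate valid simultaneously for all $\mathbf b$. The key analytic input is the uniform bound \eqref{ps2} (with $p_1$ replaced by $q$, permissible by monotonicity since $q<p_1$), which gives, for every depth-$k$ vertex $v$ and every word $\mathbf s$ of length $k+n$,
\[
\PP_q\!\Bigl(v\xleftrightarrow{\widehat G_{\mathbf b}\setminus\widehat G_{\mathbf b,k-1}}[Q^{(1)}_{\mathbf s}\cup Q^{(2)}_{\mathbf s}]\Bigr)\ \le\ \vartheta^{\,n},\qquad \vartheta=\tfrac4M+\epsilon,
\]
for all $d$ large and $n$ large (depending on $d,k$).

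First I would reduce Case (B) to a two-arms-to-depth-$n$ event inside a single corridor. If $\widehat G_{\mathbf b}\setminus K$ has two infinite $0$-clusters for some finite $K$, then (enlarging $K$ to a finite set whose deletion separates the two clusters, using local finiteness) there are two disjoint infinite open paths in $\widehat G_{\mathbf b}$ emanating from $\partial_V K$, and by \hyperref[lem:end-ray]{Lemma~\ref*{lem:end-ray}} each reaches the unique end of $\widehat G_{\mathbf b}$, hence passes through every deep level $L_{n}$ block $Q^{(1)}_{\mathbf s}\cup Q^{(2)}_{\mathbf s}$ with $\mathbf s$ the depth-$n$ truncation of $\mathbf b$. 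Thus for all large $n$ there are two \emph{vertex-disjoint} open paths from the (finite) set $\partial_V K$ to $Q^{(1)}_{\mathbf s}\cup Q^{(2)}_{\mathbf s}$ inside $\widehat G_{\mathbf b}\setminus \widehat G_{\mathbf b,k-1}$, where $k$ bounds the depth of $K$. By the van den Berg--Kesten inequality the probability of this disjoint-occurrence event is at most the square of the one-arm probability, hence at most $\bigl(|\partial_V K|\,\vartheta^{\,n}\bigr)^2$, using \eqref{ps2} and a union bound over the starting vertex. Summing over the (at most $M^{\,k+n}$) choices of $\mathbf s$ of length $k+n$ and over the $\le M^{(d-1)(k+n)}/M$ possible blocks yields a total bound of order $M^{\,Ck}\,(M\vartheta^2)^{\,n}$; here the hypothesis \eqref{eq:theta-cond}, $M\vartheta^2<1$ (equivalently $\epsilon<(\sqrt M-4)/M$, valid for $M\ge17$), makes this tend to $0$ as $n\to\infty$. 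Since the events are decreasing in $n$ along a fixed $K$ and $k$, the limit is the probability of the corresponding event for that $K$; summing over the countably many finite sets $K$ and over a countable dense collection of depths shows $\PP_q$ of Case (B) restricted to corridors rooted at any fixed finite set is $0$, and since every corridor's defining end is reached from some fixed finite set, a final countable union over finite $K\subset V(G)$ gives $\PP_q(\mathsf B)=0$, i.e.\ $\PP_p(\mathsf B)=0$.

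The main obstacle is the passage from "for each fixed $\mathbf b$" to "simultaneously over the uncountable family $\mathcal B$": one must organize the union bound so that it runs only over \emph{finite words} $\mathbf s$ (of which there are countably many, with controlled cardinality at each length) rather than over $\mathcal B$ itself. This is exactly why \eqref{ps2} is stated uniformly in $\mathbf s\in\{0,\ldots,M-1\}^{n+k}$ and why the combinatorial factor $M^{\,k+n}$ must be beaten by the geometric gain $\vartheta^{\,2n}$ coming from \emph{two} disjoint arms --- a single arm would only give $M\vartheta<1$, which fails for the relevant range of $\epsilon$, so the use of BK to get the square is essential. A secondary technical point is justifying that deleting a finite $K$ and having two infinite $0$-clusters forces two \emph{disjoint} deep arms in the corridor containing the relevant end; this follows from Menger's theorem (a finite vertex cut of size $<2$ cannot separate two infinite clusters that both persist to the end) together with the one-endedness of $\widehat G_{\mathbf b}$ noted after the definition of $\widehat G_{\mathbf b}$.
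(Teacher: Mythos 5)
Your proposal is correct and follows essentially the same route as the paper's proof: reduce $\mathsf B$ to a countable family of two-disjoint-arm events, force both arms to hit the depth-$n$ block $Q^{(1)}_{\mathbf s}\cup Q^{(2)}_{\mathbf s}$ indexed by a \emph{finite} word $\mathbf s$, apply BK together with the uniform bound \eqref{ps2} at parameter $q$, and beat the $M^{n}$ word count by $\vartheta^{2n}$ using \eqref{eq:theta-cond}, finishing with countable subadditivity. The only (cosmetic) difference is that the paper anchors the two disjoint arms at an arbitrary pair of vertices $x\neq y$, one in each infinite $0$-cluster, and unions over pairs, which avoids your slightly delicate step of making the arms emanate from $\partial_V K$ (the clusters of $\widehat G_{\mathbf b}\setminus K$ need not meet $\partial_V K$, and the Menger argument is unnecessary).
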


\begin{proof}
Fix $x\neq y\in V(G)$ and let $E_{x,y}$ be the event that there exist two vertex-disjoint
infinite $0$-paths from $x$ and $y$ converging to the same end, staying at depths at least
$\mathrm{depth}(x)$ and $\mathrm{depth}(y)$ respectively. As in the standard two-arm reduction,
$\mathsf{B}\subseteq\bigcup_{x\neq y}E_{x,y}$.

For $n\ge1$ and $\mathbf s\in\{0,1,\ldots,M-1\}^n$ let $\Pi(\mathbf s):=Q_{\mathbf s}^{(1)}\cup Q_{\mathbf s}^{(2)}$,
and let $F_n(\mathbf s)$ be the disjoint occurrence that $x\xleftrightarrow{0}\Pi(\mathbf s)$ and
$y\xleftrightarrow{0}\Pi(\mathbf s)$. By BK,
\[
\PP_p(F_n(\mathbf s))
\le \PP_p(x\xleftrightarrow{0}\Pi(\mathbf s))\,\PP_p(y\xleftrightarrow{0}\Pi(\mathbf s)).
\]
Using \eqref{ps2} at parameter $q$ gives $\PP_p(F_n(\mathbf s))\le \vartheta^{2n-\ell(x)-\ell(y)}$
for all large $n$. Summing over $\mathbf s\in\{0,1\}^n$ yields
\[
\PP_p(E_{x,y})
\le \lim_{n\to\infty}M^n\,\vartheta^{2n-\ell(x)-\ell(y)}
=\vartheta^{-\ell(x)-\ell(y)}\lim_{n\to\infty}(M\vartheta^2)^n
=0
\]
by \eqref{eq:theta-cond}. Finally, countable subadditivity over $(x,y)$ gives $\PP_p(\mathsf{B})=0$.
\end{proof}


\begin{lemma}\label{le711}
Let $p:=1-\frac{1}{M^d}(1+\epsilon)\in (\frac{1}{2},1-\pcs(G))$. Then when $d,M$ are sufficiently large, $\PP_p$-a.s.~there are finitely many infinite 1-clusters in $G$.
\end{lemma}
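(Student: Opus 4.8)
The plan is to prove $N_\infty<\infty$ almost surely by excluding $N_\infty=\infty$, where $N_\infty$ is the number of infinite $1$-clusters at level $p$. First, since $p>\tfrac12\ge M^{-d}\ge p_c^{\mathrm{site}}(G)$ by Lemma~\ref{le85}(i), we have $N_\infty\ge 1$ a.s.; as $\{N_\infty=\infty\}$ is a tail event, $N_\infty$ is a.s.\ a finite constant or a.s.\ $\infty$, so it suffices to rule out the latter (the value $1$, and hence uniqueness, then follows from finite energy on a connected graph as in the remark after Theorem~\ref{t76}). Choose $M\ge 17$ and $\epsilon\in\bigl(0,\tfrac{\sqrt M-4}{M}\bigr)$, and then $d$ large enough that Lemma~\ref{le85}(ii) applies. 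Writing $q:=1-p=M^{-d}(1+\epsilon)$, one has $q<p_1=M^{-d}(1+2\epsilon)$ and $M\vartheta^2<1$ for $\vartheta=\tfrac4M+\epsilon$, so Lemma~\ref{lem713} gives $\PP_p(\mathsf B)=0$, where $\mathsf B$ is the event that some corridor $\widehat G_{\mathbf b}$, after removal of a finite vertex set, still carries two disjoint infinite $0$-clusters (equivalently, two disjoint infinite open clusters for Bernoulli$(q)$ site percolation). Note also that $q>p_c^{\mathrm{site}}(G)$ while $q<p_c^{\mathrm{site}}(\widehat G_{\mathbf b})$ for every $\mathbf b$, so infinite $0$-clusters exist in $G$ but none is confined to a single corridor.

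Now assume toward a contradiction that $N_\infty=\infty$ a.s. Since every finite face of $G$ is a triangle, site percolation on $G$ is self-matching away from its ends, so between any two disjoint infinite $1$-clusters there is an infinite $0$-cluster separating them on $\mathbb S^2$ (the planar separation lemma in the spirit of \cite{GHZ25}, attaching accumulation points via Lemma~\ref{lm27}). Arranging the infinitely many infinite $1$-clusters in the cyclic (spherical) order determined by their accumulation sets, one extracts infinitely many pairwise disjoint infinite $0$-clusters $D_1,D_2,\dots$, each sitting in the gap between two consecutive $1$-clusters. By the cutset structure of Construction~\ref{c75} (each edge changes depth by at most one, and the depth-$n$ layer splits $G$ into a root part and one subtree per level-$(n+1)$ block), each $D_i$ meets every level-$n$ cutset; König's lemma applied to the nested block tree $\bigcup_n S_n(D_i)$ yields an end $\mathbf b^{(i)}$ to which a sub-ray of $D_i$ converges, and moreover $D_i$ is eventually confined to the cone below $(b^{(i)}_1,\dots,b^{(i)}_n)$ for every $n$.

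Finally one squeezes two of the $D_i$ into a common corridor. The ends $\mathbf b^{(i)}$ lie in the compact Cantor set $\mathcal B$, so along a subsequence $\mathbf b^{(i_k)}\to\mathbf b_*$; using the planar cyclic order of the $D_{i_k}$ together with the nesting of the cones below $(b^*_1,\dots,b^*_n)$ (within each such cone the $M$ level-$(n+1)$ doubled blocks separate it into $M$ sub-cones), one locates, for all large $n$, two of the $D_{i_k}$ staying off a finite part inside the doubled block $\mathcal C_{(b^*_1,\dots,b^*_n)}$ and its corridor continuation; letting $n\to\infty$ produces a single end $\mathbf b$ and a finite set $K$ with $\widehat G_{\mathbf b}\setminus K$ carrying two disjoint infinite $0$-clusters, i.e.\ the event $\mathsf B$, contradicting $\PP_p(\mathsf B)=0$. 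The main obstacle is precisely this confinement step: an infinite $0$-cluster accumulating at $\mathbf b$ is a priori trapped only in the shrinking \emph{cones} below $(b_1,\dots,b_n)$, not in the much thinner corridor $\widehat G_{\mathbf b}$, since it may make excursions out of the corridor at ever-increasing depths. The resolution should be that any such excursion enters a fresh sub-copy of $G$ in which the cluster, accumulating only at $\mathbf b$, must be finite, so the excursions are sparse enough to be absorbed into the finite truncation $K$; once this is done the within-corridor connectivity bound \eqref{ps2} of Lemma~\ref{le85}(ii) applies and Lemma~\ref{lem713} closes the argument.
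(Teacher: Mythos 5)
Your first half matches the paper: the choice of parameters so that $q=1-p$ satisfies $\pcs(G)<q<\inf_{\mathbf b}\pcs(G_{\mathbf b})$, and the use of Lemma~\ref{lem713} to kill the event $\mathsf B$ (two infinite $0$-clusters inside a single doubled corridor after deleting a finite set) are exactly the paper's steps. But from there you diverge, and the divergence is where your argument breaks. The paper does \emph{not} try to confine infinite $0$-clusters of $G$ into a corridor. Instead it splits ``at least two infinite $1$-clusters'' into two cases: (B) as above, and (A) the existence of a $0$-connection inside a single copy $\widetilde G^{(i)}$ between two corridor-boundary vertices $v_1,v_2$ of the type classified in Lemma~\ref{lem:12b}; case (A) is then handled by the uniform exponential decay \eqref{ed} of Lemma~\ref{lem:ed} (valid because $q<M^{-d}(1+\epsilon)$) together with Borel--Cantelli, summing $(2M)^{t_1+t_2}e^{-\alpha(t_1+t_2)/2}$ over depths and using \eqref{ed1} to make $2Me^{-\alpha/2}<1$ for large $d$. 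Your proposal never invokes Lemma~\ref{lem:12b} or Lemma~\ref{lem:ed}, and it has no substitute for case (A).

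The gap you yourself flag — the ``confinement step'' — is genuine and is not repairable by the sketch you give. An infinite $0$-cluster of $G$ accumulating at the end $\mathbf b$ is only trapped, at each level $n$, in the cone below the prefix $(b_1,\dots,b_n)$, which is exponentially larger than the thin block $Q_{(b_1,\dots,b_n)}$; it can exit the corridor at unboundedly deep levels, and such excursions cannot be ``absorbed into the finite truncation $K$'' because $K$ must be finite while the excursions occur at arbitrarily large depth. Your proposed resolution (``each excursion enters a fresh sub-copy in which the cluster must be finite'') also does not hold: each cone below a prefix contains a copy of the full branching structure, and since $q>\pcs(G)$ the restriction of the $0$-cluster to such a sub-copy is not forced to be finite merely because the cluster has a ray converging to $\mathbf b$ (the cluster need not accumulate only at $\mathbf b$, and even the trace of a one-ended cluster in the sub-copy can be an infinite but corridor-avoiding set). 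Finally, note that the event $\mathsf B$ in Lemma~\ref{lem713} concerns open clusters of the \emph{subgraph} $\widehat G_{\mathbf b}\setminus K$; producing two infinite $0$-clusters of $G$ whose closures approach $\mathbf b$ does not produce two infinite clusters of $\widehat G_{\mathbf b}\setminus K$, since the intersections with the corridor may be disconnected inside the corridor or even finite. So the contradiction with $\PP_p(\mathsf B)=0$ is not reached, and the missing ingredient is precisely the boundary-to-boundary decay estimate that the paper supplies through Lemmas~\ref{lem:12b} and~\ref{lem:ed} and the Borel--Cantelli count in case (A).
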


\begin{proof}By Lemma \ref{le85}, we can choose $p\in \left(\frac{1}{2},1-\pcs(G)\right)$, such that $q=1-p<\frac{1}{2}$ satisfies
\begin{align}
\pcs(G)<q<\inf_{\mathbf{b}\in\mathcal{B}}\pcs(G_{\mathbf{b}}).\label{pqb}
\end{align}
(\ref{pqb}) implies that $\PP_p$-a.s.~there are infinite 0-clusters in $G$, but for any $\mathbf{b}\in \mathcal{B}$, $\PP_p$-a.s.~there are no infinite 0-clusters in $G_{\mathbf{b}}$.

If there are at least two infinite 1-clusters in $G$, one of the following two cases must occur
\begin{enumerate}[label=(\Alph*)]
\item there exist two vertices $v_1,v_2$ satisfying one of conditions (1)-(3) as in Lemma \ref{lem:12b}, such that $\{v_1\xleftrightarrow{\tilde{G}^{(1)},0}v_2\}\cup \{v_1\xleftrightarrow{\tilde{G}^{(2)},0}v_2\}$ occurs;

\item there exists $\mathbf{b}\in \mathcal{B}$ and a finite set of vertices $K$ of $\widehat{G}_{\mathbf{b}}$, such that in $\widehat{G}_{\mathbf{b}}\setminus K$, there are at least two infinite 0-clusters.
\end{enumerate}

By Lemma \ref{lem713}, the event in (B) has $\PP_p$ probability 0.

For case (A), we shall use the Borel-Contelli lemma to show that a.s. $\{v_1\xleftrightarrow{\tilde{G}^{(1)},0}v_2\}\cup \{v_1\xleftrightarrow{\tilde{G}^{(2)},0}v_2\}$ occurs for~finitely many pairs $(v_1,v_2)$.

Let $X_{t_1,t_2}\subset V^2$ be the set of all pairs $(v_1,v_2)$  such that $\mathrm{depth}(v_1)=t_1$ and $\mathrm{depth}(v_2)=t_2$ satisfying one of the conditions (1)-(3) in Lemma \ref{lem:12b}.

Then by (\ref{ed})
\begin{align}
&\sum_{t_1=1}^{\infty}\sum_{t_2=1}^{\infty}\sum_{(v_1,v_2)\in X_{t_1,t_2}}\PP_p(v_1\xleftrightarrow{0,\tilde{G}^{(1)}} v_2)+\PP_p(v_1\xleftrightarrow{0,\tilde{G}^{(2)}} v_2)\leq 
2\left[\sum_{t_1=1}^{\infty}\sum_{t_2=1}^{\infty}(2M)^{t_1+t_2} e^{-\frac{\alpha}{2} \left[t_1+t_2\right] }\right]\label{gc2}
\end{align}
where $\alpha:=\alpha_{d,\epsilon}$.

By (\ref{ed1}), when $d$ is sufficiently large
\begin{align}
2Me^{-\frac{\alpha}{2}}<1.\label{gcl}
\end{align}
When (\ref{gcl}) holds, the right hand side of (\ref{gc2}) is finite.
By the Borel--Contelli lemma, for all $(v_1,v_2)\in\cup_{t_1=1}^{\infty}\cup_{t_2=1}^{\infty}X_{t_1,t_2}$ and $i\in\{1,2\}$, the event $\{v_1\xleftrightarrow{0,\tilde{G}^{(i)}} v_2\}$ occurs finitely many times. 

Since $\PP_p$-a.s., (B) does not occur and (A) occurs finitely many times, $\PP_p$-a.s.~there are finitely many infinite 1-clusters.
\end{proof}

\noindent{\textbf{Proof of Theorem \ref{t76}}.}  Theorem \ref{t76}(1) follows from Lemma \ref{le711}. Theorem \ref{t76}(2) follows from (1) and (\ref{dpu}) by noting that if a.s.~there are finitely many infinite 1-clusters, by finite energy with positive probability there exists a unique infinite 1-cluster.






\subsection{Necessity of Local Finiteness}\label{s82}

\begin{example}[Necessity of local finiteness]
Let $T$ be an infinite tree (root degree $7$, others $8$) and let $G$ be obtained by adding a new
vertex $v_0$ adjacent to every vertex of $T$. Then $G$ is planar and not locally finite.
For i.i.d.\ site percolation on $G$,
\begin{align*}
&p_c(G)=0,\\
&\PP_p(\text{unique infinite open cluster})=p,\\
&\PP_p(\text{infinitely many infinite open clusters})=1-p\quad (p>1/7).
\end{align*}
Thus without the assumption of local finiteness, Conjectures \ref{c7bs} and \ref{c8bs} do not hold.
\end{example}

\bibliographystyle{plain}
\bibliography{rf,psg}

\begin{thebibliography}{10}

\bibitem{BeffaraDuminilCopin2013}
Vincent Beffara and Hugo Duminil-Copin.
\newblock Lectures on planar percolation with a glimpse of schramm--loewner
  evolution.
\newblock {\em Probability Surveys}, 10:1--50, 2013.

\bibitem{bsjams}
I.~Benjamini and O.~Schramm.
\newblock Percolation in the hyperbolic plane.
\newblock {\em J. Amer. Math. Soc.}, 14:487--507, 2001.

\bibitem{BLPS1999}
Itai Benjamini, Russell Lyons, Yuval Peres, and Oded Schramm.
\newblock Group-invariant percolation on graphs.
\newblock {\em Geometric and Functional Analysis}, 9:29--66, 1999.

\bibitem{bs96}
Itai Benjamini and Oded Schramm.
\newblock Percolation beyond $\mathbb{Z}^d$: many questions and a few answers.
\newblock {\em Electronic Communications in Probability}, 1:71--82, 1996.

\bibitem{BenjaminiSchramm2001}
Itai Benjamini and Oded Schramm.
\newblock Percolation in the hyperbolic plane.
\newblock {\em Journal of the American Mathematical Society}, 14(2):487--507,
  2001.

\bibitem{BollobasRiordan2006}
B{\'e}la Bollob{\'a}s and Oliver Riordan.
\newblock {\em Percolation}.
\newblock Cambridge University Press, 2006.

\bibitem{BH57}
S.~R. Broadbent and J.~M. Hammersley.
\newblock Percolation processes: I. crystals and mazes.
\newblock {\em Mathematical Proceedings of the Cambridge Philosophical
  Society}, 53(3):629--641, 1957.

\bibitem{hb09}
Henning Bruhn.
\newblock Graphs and their circuits —from finite to infinite—.
\newblock 2009.
\newblock
  \url{https://www.uni-ulm.de/fileadmin/website_uni_ulm/mawi.inst.081/Henning/habil.pdf?utm_source=chatgpt.com}.

\bibitem{BurtonKeane1989}
Robert Burton and Michael Keane.
\newblock Density and uniqueness in percolation.
\newblock {\em Communications in Mathematical Physics}, 121:501--505, 1989.

\bibitem{Diestel2017}
Reinhard Diestel.
\newblock {\em Graph Theory}, volume 173 of {\em Graduate Texts in
  Mathematics}.
\newblock Springer, 5 edition, 2017.
\newblock See especially Chapter~8 (Infinite Graphs).

\bibitem{DuminilCopin2017Sixty}
Hugo Duminil-Copin.
\newblock Sixty years of percolation.
\newblock {\em arXiv preprint}, 2017.

\bibitem{DC20}
Hugo Duminil-Copin, Subhajit Goswami, Aran Raoufi, Franco Severo, and Ariel
  Yadin.
\newblock Existence of phase transition for percolation using the gaussian free
  field.
\newblock {\em Duke Mathematical Journal}, 169(18):3539--3563, dec 2020.

\bibitem{DCT16}
Hugo Duminil-Copin and Vincent Tassion.
\newblock A new proof of the sharpness of the phase transition for bernoulli
  percolation and the ising model.
\newblock {\em Communications in Mathematical Physics}, 343:725--745, 2016.

\bibitem{GGR1988}
A.~Gandolfi, G.~Grimmett, and L.~Russo.
\newblock On the uniqueness of the infinite open cluster in the percolation
  model.
\newblock {\em Communications in Mathematical Physics}, 114:549--552, 1988.

\bibitem{GHZ25}
Alexander Glazman, Michael Harel, and Nikolai Zelesko.
\newblock Planar percolation and the loop {$O(N)$} model.
\newblock 2025.
\newblock \url{https://arxiv.org/abs/2508.20917}.

\bibitem{GrZL22}
G.~Grimmett and Z.~Li.
\newblock Hyperbolic site percolation.
\newblock {\em Random Structures and Algorithms}, 2024.
\newblock \url{https://arxiv.org/abs/2203.00981}.

\bibitem{GrZL221}
G.~Grimmett and Z.~Li.
\newblock Percolation critical probabilities of matching lattice-pairs.
\newblock {\em Random Structures and Algorithms}, 2024.
\newblock \url{https://arxiv.org/abs/2205.02734}.

\bibitem{Grimmett1999}
Geoffrey Grimmett.
\newblock {\em Percolation}, volume 321 of {\em Grundlehren der Mathematischen
  Wissenschaften}.
\newblock Springer, 2nd edition, 1999.

\bibitem{HaggstromPeres1999}
Olle H{\"a}ggstr{\"o}m and Yuval Peres.
\newblock Monotonicity of uniqueness for percolation on cayley graphs: all
  infinite clusters are born simultaneously.
\newblock {\em Probability Theory and Related Fields}, 113:273--285, 1999.

\bibitem{HM57}
J.~M. Hammersley.
\newblock Percolation processes: Lower bounds for the critical probability.
\newblock {\em The Annals of Mathematical Statistics}, 28(4):790--795, 1957.

\bibitem{HL60}
T.~E. Harris.
\newblock A lower bound for the critical probability in a certain percolation
  process.
\newblock {\em Mathematical Proceedings of the Cambridge Philosophical
  Society}, 56(1):13--20, 1960.

\bibitem{HP21}
John Haslegrave and Christoforos Panagiotis.
\newblock Site percolation and isoperimetric inequalities for plane graphs.
\newblock {\em Random Structures \& Algorithms}, 58(1):150--163, 2021.
\newblock First published online: 25 June 2020.

\bibitem{HeSchramm93}
Zheng-Xu He and Oded Schramm.
\newblock Fixed points, {K}oebe uniformization and circle packings.
\newblock {\em Ann. of Math. (2)}, 137(2):369--406, 1993.

\bibitem{JK03}
Jeff Kahn.
\newblock Inequality of two critical probabilities for percolation.
\newblock {\em Electronic Communications in Probability}, 8:184--187, 2003.

\bibitem{Kesten1980}
Harry Kesten.
\newblock The critical probability of bond percolation on the square lattice
  equals $\frac12$.
\newblock {\em Communications in Mathematical Physics}, 74:41--59, 1980.

\bibitem{Kesten1982}
Harry Kesten.
\newblock {\em Percolation Theory for Mathematicians}.
\newblock Birkh{\"a}user, 1982.

\bibitem{SL98}
Steven~P. Lalley.
\newblock Percolation on fuchsian groups.
\newblock {\em Annales de l'Institut Henri Poincaré, Probabilités et
  Statistiques}, 34(2):151--177, 1998.

\bibitem{SL01}
Steven~P. Lalley.
\newblock Percolation clusters in hyperbolic tessellations.
\newblock {\em Geometric and Functional Analysis}, 11:971--1030, 2001.

\bibitem{ZL231}
Z.~Li.
\newblock Planar site percolation via tree embeddings.
\newblock 2023.
\newblock \url{https://arxiv.org/abs/2304.00923}.

\bibitem{ZL23}
Zhongyang Li.
\newblock Planar site percolation on semi-transitive graphs.
\newblock 2023.
\newblock \url{https://arxiv.org/abs/2304.01431}.

\bibitem{ZL24}
Zhongyang Li.
\newblock Critical site percolation and cutsets.
\newblock {\em Electron. Commun. Probab.}, 30:1--9, 2025.

\bibitem{ZL262}
Zhongyang Li.
\newblock {\em Quantitative supercritical bounds for disconnection in Bernoulli
  site percolation}.
\newblock 2026.
\newblock \url{https://arxiv.org/abs/2601.09950}.

\bibitem{LP16}
R.~Lyons and Y.~Peres.
\newblock {\em Probability on Trees and Networks}, volume~42 of {\em Cambridge
  Series in Statistical and Probabilistic Mathematics}.
\newblock Cambridge University Press, New York, 2016.
\newblock Available at \url{https://rdlyons.pages.iu.edu/}.

\bibitem{JPM23}
Joseph~Paul MacManus.
\newblock Accessibility, planar graphs, and quasi-isometries.
\newblock 2023.
\newblock \url{https://arxiv.org/pdf/2310.15242}.

\bibitem{NewmanSchulman1981}
Charles~M. Newman and Leonard~S. Schulman.
\newblock Infinite clusters in percolation models.
\newblock {\em Journal of Statistical Physics}, 26:613--628, 1981.

\bibitem{Tang2023}
Pengfei Tang.
\newblock A note on some critical thresholds of bernoulli percolation.
\newblock {\em Electronic Journal of Probability}, 28:1--22, 2023.

\end{thebibliography}

\end{document}